\def\utilde{\undertilde}
\def\eps{\varepsilon}
\def\epsi{\epsilon}
\font\tencmmib=cmmib10 \skewchar\tencmmib '60
\def\lessim{\ \lower4pt\hbox{$
		\buildrel{\displaystyle <}\over\sim$}\ }
\def\gessim{\ \lower4pt\hbox{$\buildrel{\displaystyle >}
		\over\sim$}\ }
\theoremstyle{plain}
\newtheorem{theorem}{Theorem}[section]
\newtheorem{lemma}{Lemma}
\newtheorem{corollary}[theorem]{Corollary}
\DeclarePairedDelimiter\floor{\lfloor}{\rfloor}
\DeclarePairedDelimiter{\norm}{\lVert}{\rVert}
\DeclarePairedDelimiter{\abs}{\lvert}{\rvert}
\providecommand{\abs}[1]{\left\lvert#1\right\rvert}
\providecommand{\norm}[1]{\left\lVert#1\right\rVert}
\renewcommand{\hat}{\widehat}
\renewcommand{\tilde}{\widetilde}
\renewcommand{\hat}{\widehat}
\newcommand{\bfm}[1]{\ensuremath{\mathbf{#1}}}
\newcommand\numberthis{\addtocounter{equation}{1}\tag{\theequation}}
\def\ba{\bfm a}   \def\bA{\bfm A}  
\def\bb{\bfm b}   \def\bB{\bfm B}
   \def\bD{\bfm D}  
\def\be{\bfm e}   \def\bE{\bfm E}  \def\EE{\mathbb{E}}
   \def\bG{\bfm G}  
     \def\II{\mathbb{I}}
   \def\bM{\bfm M}  
   \def\bN{\bfm N}  \def\NN{\mathbb{N}}
\def\bo{\bfm o}   \def\bO{\bfm O}  
\def\bp{\bfm p}   \def\bP{\bfm P}  \def\PP{\mathbb{P}}
\def\bq{\bfm q}   \def\bQ{\bfm Q}  
     \def\RR{\mathbb{R}}
   \def\bS{\bfm S}  \def\SS{\mathbb{S}}
\def\bu{\bfm u}   \def\bU{\bfm U}  
\def\bv{\bfm v}   \def\bV{\bfm V}  
\def\bw{\bfm w}     
\def\bx{\bfm x}   \def\bX{\bfm X}  
   \def\bY{\bfm Y}  
\def\bz{\bfm z}   \def\bZ{\bfm Z}  
\def\calA{{\cal  A}}
\def\calL{{\cal  L}} 
\def\calM{{\cal  M}} 
\def\calN{{\cal  N}} 
\def\calO{{\cal  O}} 
\def\calP{{\cal  P}} 
\def\calQ{{\cal  Q}} 
\def\calS{{\cal  S}}
\newcommand{\bfsym}[1]{\ensuremath{\boldsymbol{#1}}}
\def\balpha{\bfsym \alpha}
             \def\bGamma{\bfsym \Gamma}
\def\btheta{\bfsym {\theta}}           
             \def\bSigma{\bfsym \Sigma}
\DeclareMathOperator{\argmin}{argmin}
\DeclareMathOperator{\Cov}{Cov}
\DeclareMathOperator{\E}{E}
\DeclareMathOperator{\Var}{Var}
\DeclareMathOperator{\argmax}{argmax}
\def\eps{\varepsilon}
\newcommand{\vertiii}[1]{{\left\vert\kern-0.25ex\left\vert\kern-0.25ex\left\vert #1 
		\right\vert\kern-0.25ex\right\vert\kern-0.25ex\right\vert}}
\def\scrT{\mathscr{T}} 
\def\scrX{\mathscr{X}}
\def\scrE{\mathscr{E}}
\def\scrA{\mathscr{A}}
\def\scrM{\mathscr{M}}
\newcommand{\COMMENT}[2][.5\linewidth]{%
	\leavevmode\hfill\makebox[#1][l]{//~#2}}
\begin{document}

	\title{Large Dimensional Independent Component Analysis:\\ Statistical Optimality and Computational Tractability$^\ast$}
	\author{Arnab Auddy and Ming Yuan\\
		Department of Statistics\\
		Columbia University}
	\date{(\today)}
	
	\maketitle
	
\begin{abstract}
In this paper, we investigate the optimal statistical performance and the impact of computational constraints for independent component analysis (ICA). Our goal is twofold. On the one hand, we characterize the precise role of dimensionality on sample complexity and statistical accuracy, and how computational consideration may affect them. In particular, we show that the optimal sample complexity is linear in dimensionality, and interestingly, the commonly used sample kurtosis-based approaches are necessarily suboptimal. However, the optimal sample complexity becomes quadratic, up to a logarithmic factor, in the dimension if we restrict ourselves to estimates that can be computed with low-degree polynomial algorithms. On the other hand, we develop computationally tractable estimates that attain both the optimal sample complexity and minimax optimal rates of convergence. We study the asymptotic properties of the proposed estimates and establish their asymptotic normality that can be readily used for statistical inferences. Our method is fairly easy to implement and numerical experiments are presented to further demonstrate its practical merits.
\end{abstract}
	\footnotetext[1]{
	This research was supported by NSF Grants DMS-2015285 and DMS-2052955.}
	
	\noindent
	
\section{Introduction}	
Independent component analysis (ICA) is a powerful and general data analysis tool. It was initially introduced as a blind source separation technique \citep{jutten1991blind,comon1994independent} and has since found tremendous success in applications from numerous scientific and engineering fields such as neuroimaging, cognitive science, signal processing, and machine learning, just to name a few. The basic premise of ICA is that the coordinates of a random vector $\bX\in \RR^d$ can be expressed as linear combinations of $d$ independent latent variables $S_1,\ldots, S_d$ such that $\bX=\bA\bS$ where $\bS=(S_1,\ldots, S_d)^\top$ and $\bA\in \RR^{d\times d}$ is a deterministic but unknown matrix, referred to as the mixing matrix. ICA can be viewed as a refinement of the principal component analysis (PCA) which allows us to write each coordinate of $\bX$ as linear combinations of $d$ uncorrelated random variables. While there are infinitely many ways to express $\bX$ as linear transformations of uncorrelated variables, remarkably, there is essentially only one way to write it as linear transformations of independent random variables if there is at most one Gaussian source among $S_1,\ldots, S_d$, i.e., the mixing matrix $\bA$ is identifiable up to rescaling and signed permutation of its columns. See, e.g. \cite{comon1994independent,eriksson2004identifiability}.
	
The goal of ICA is to recover from a sample, consisting of $n$ independent copies $\bX_1,\ldots, \bX_n$ of $\bX$, the independent components $\bS_1=\bA^{-1}\bX_1, \ldots, \bS_n=\bA^{-1}\bX_n$, or equivalently to estimate the mixing matrix $\bA$. Over the years, numerous methods and algorithms have been developed for this purpose. Notable examples include \cite{delfosse1995adaptive, pham1996blind, pham1997blind, amari1997blind, cardoso1999high, hyvarinen1999fast, lee1999independent, hastie2002independent, eriksson2003characteristic},  \cite{samarov2004nonparametric, chen2005consistent, chen2006efficient, ilmonen2011semiparametrically, samworth2012independent, belkin2018eigenvectors} among numerous others. See, e.g., \cite{cardoso1996independent, hyvarinen2000independent, roberts2001independent, stone2004independent,comon2010handbook, nordhausen2018independent} for overviews and surveys of a wide-ranging list of algorithms and applications of ICA. Most of these earlier developments for ICA have focused on the classical fixed dimension paradigm when the number of sources is fixed and the sample size diverges. Both the statistical and algorithmic aspects of ICA in such a setting are by now well-understood.

There are, however, an increasing amount of empirical evidence that these theories cannot fully capture the complexity, both statistically and computationally, of the problem or explain the lackluster performance of these methods often observed in modern applications. This predicament can be largely attributed to the effect of dimension $d$ which is largely unaccounted for and has inspired a flurry of recent research to better understand its role in ICA. See, e.g., \cite{belkin2013blind, anand2014tensor, anand2014sample, bhaskara2014uniqueness, goyal2014fourier, vempala2015max, voss2015pseudo, belkin2018eigenvectors} among others. These works typically take an algorithmic viewpoint and focus on developing estimation procedures with both sample and computational complexities that are polynomials of $d$. Yet it remains unclear what the best possible sample complexity is, and how considerations of computational tractability may affect it. Even less is known about the statistical properties of ICA when $d$ is not assumed fixed. The goal of the current work is to specifically address these fundamental questions, and develop estimating procedures for ICA that are both statistically efficient and computationally tractable.

In particular, we derive information theoretical limits for ICA by establishing the minimax optimal rates for estimating the mixing matrix $\bA$. The minimax optimal rates of convergence can be equivalently expressed in terms of the optimal sample complexity: a sample size $n\gtrsim d/\varepsilon^2$ is necessary and sufficient to ensure the existence of estimates of the unmixing directions with an error up to $\varepsilon$. To prove these bounds, we show that no estimates can converge at faster rates and also introduce an estimate that can attain these rates. The minimax optimal estimation technique is based on two crucial observations. The first is that, if $\bX$ is centered and pre-whitened so that its covariance matrix is the identity matrix, then the column vectors of $\bA$ can be identified with linear transforms of $\bX$ that maximize the kurtosis. Interestingly, we show that while maximizing sample kurtosis leads to consistent estimates of column vectors of the mixing matrix when $n\gg d^2$, it is necessarily inconsistent when $n\ll d^2$ and thus cannot be minimax optimal at least in the regime $d\ll n\ll d^2$. To overcome this difficulty, we appeal to another key insight that, for any given $\bu\in \SS^{d-1}$, we can derive estimates of the kurtosis of $\bu^\top\bX$ with improved tail behavior than the sample kurtosis. This builds upon the sub-gaussian mean estimation framework that has recently received significant attention; see, e.g., \cite{catoni2012challenging, minsker2015geometric,hsu2016loss, lugosi2016risk, minsker2018sub, ke2019user}. These estimates of kurtosis can be ensembled to allow for better control of the estimation error of kurtosis uniformly over all linear transformations, and subsequently, lead to minimax optimal estimate of the mixing matrix.
	
The minimax optimal estimates we introduced however incur a prohibitive computational cost and are not feasible for problems of large or even moderate dimensions. A natural question is whether or not a computationally tractable estimating procedure can be minimax optimal. More specifically, we focus our attention on a large class of computationally tractable algorithms called low-degree polynomial algorithms. See, e.g.,  \cite{hopkins2018statistical, kunisky2022notes, schramm2022computational}. We show that no consistent estimate of the mixing matrix can be computed using such algorithms if $n\lesssim d^2$, up to a logarithmic factor, which leads to a lower bound of $n\gtrsim \max\{d^2, d/\varepsilon^2\}$ (up to a logarithmic factor) for optimal sample size over this class of algorithms. This sample size requirement is indeed (nearly) optimal as we also develop a computationally tractable estimating procedure that attains this sample complexity.

Our estimate is motivated by a close inspection of the popular FastICA algorithm and its pitfalls when $d$ is not small or fixed. FastICA can be viewed as a fixed-point algorithm for maximizing sample kurtosis. Because of the inherent nonconvexity of the underlying optimization problem, it is well known that the performance of FastICA is sensitive to its initialization. See, e.g., \cite{zarzoso2006fast}. And this is especially problematic in moderate or large dimensions when random initialization can only yield a nontrivial estimate of the mixing direction with exponentially (in $d$) many tries. To address this challenge, we propose a new initialization strategy that is guaranteed to produce a nontrivial initial estimate whenever $n\gtrsim d^2$. The proposed initialization procedure is prompted by a careful inspection of how random slicing behaves and is based on improved moment estimators. With such an initialization, we show that the fixed-point iteration and deflation steps of FastICA are in fact effective in high dimensions, yielding an estimate that is both computationally tractable and statistically optimal. In addition to the convergence rates, we also establish the distributional properties of our estimate. Specifically, we provide normal approximations to estimates of linear and bilinear forms of the mixing matrix. These results can be readily used to construct confidence intervals or conduct hypothesis testing. The close connection with the FastICA also means that our estimating procedure can be easily implemented and the practical merits of the proposed method are further demonstrated by numerical, both simulated and real data, examples.

Our work is closely related to a fast-growing literature on using tensor methods in statistics and machine learning. See, e.g., \cite{kolda2009tensor,anandkumar2014tensor,cichocki2015tensor,sidiropoulos2017tensor}. In particular, we leverage the fact that unmixing directions can be identified with the eigenvectors of a cumulant tensor and the estimation error of the mixing matrix can therefore be viewed as the perturbation effect of the sample cumulant tensor. There is a fruitful line of research in developing algorithm-dependent bounds for general perturbed low-rank tensors. See, e.g., \cite{anand2014tensor, mu2015successive,mu2017greedy, belkin2018eigenvectors, zhang2018tensor, auddy2021estimating,auddy2020perturbation}. But an observation key to our development is that the estimation error of the sample cumulant tensor has unique structures and appropriately leveraging them can lead to more efficient algorithms and sharper statistical performance. In particular, our results suggest that the (un)mixing directions can be estimated at a much faster rate (i.e., $\sqrt{d/n}$) than the sample kurtosis tensor (i.e., $\sqrt{d^2/n}$). Also as a byproduct of our analysis, we show that if the signals have nonvanishing excess kurtosis, then there is virtually no benefit in considering higher-order moments in ICA.

The rest of this paper is organized as follows. In the next section, we study the information theoretical limits of ICA as well as the limits for estimates that can be computed using low-degree polynomial algorithms. Section~\ref{sec:tract} introduces our proposed ICA method and proves that it is both polynomial-time computable and minimax optimal when $n\gtrsim d^2$. Section~\ref{sec:asy-dist} presents the asymptotic properties of the proposed estimate. To complement the theoretical and methodological developments, numerical experiments are given in Section~\ref{sec:simul} to demonstrate the practical merits of our approach. We conclude with a brief summary in Section~\ref{sec:conclude}. All proofs are relegated to the Appendix for space considerations.

\section{Information Theoretical and Computational Limits}\label{sec:inf-th}

In this section, we establish the information theoretical limits for ICA and develop an approach that achieves them. This is followed up by a study of the impact of computational constraints on the optimal sample complexity.

\subsection{Setup and Minimax Lower Bounds}

Without loss of generality, we shall assume in the rest of the paper that each component of source $\bS$ is centered and has a unit variance. In addition, we shall assume that each signal $S_j$ has finite $(8+\epsilon)$th moment for some $\epsilon>0$, and their (excess) kurtosis $\kappa_4(S_j):=\EE(S_j^4)-3$ is bounded away from zero and infinity. Note that all our results can be straightforwardly generalized to the case when one source has zero excess kurtosis, e.g., when it is Gaussian. This is because its corresponding (un)mixing direction can be determined by virtue of all the other directions. For brevity, we shall not delve into such a special case in what follows. Denote by $\calP_{\rm ICA}(\bA; \epsilon, M_1, M_2)$ the collection of all probability measures of $\bX\in {\mathbb R}^d$ that follows an ICA model with mixing matrix $\bA$ and latent signal $\bS$ obeying these assumptions, that is,
\begin{eqnarray*}
\calP_{\rm ICA}(\bA; \epsilon, M_1, M_2)=\{\calL(\bX): \bX=\bA\bS,\, M_1^{-1}\le |\kappa_4(S_j)|\le M_1,\, \EE(|S_j|^{8+\epsilon})\le M_2,\\
 \EE S_j=0,\, \EE S_j^2=1,\, S_j{\rm s\ are \ independent}\},
\end{eqnarray*}
where $\calL(\bX)$ is the law of the random variable $\bX$.

It is also customary in the ICA literature to pre-whiten the data so that one can assume that $\texttt{cov}(\bX)=\II_d$. To see the rationale behind pre-whitening and the simplification it brings about, consider the case when $\texttt{cov}(\bX)=\Sigma$ is known. Let $\tilde{\bX}=\Sigma^{-1/2}\bX$. It is clear that $\tilde{\bX}$ has an identity covariance matrix and also follows an ICA model with mixing matrix $\bB:=\Sigma^{-1/2}\bA$. In other words, there is no loss of generality to assume that $\texttt{cov}(\bX)=\II_d$ if the covariance matrix of $\bX$ is known in advance. Note that both the new mixing matrix $\bB$ and its corresponding unmixing matrix ($\bB^{-1}=\bB^\top$) belong to the set $\calO(d)$ of orthonormal $d \times d$ matrices. Note that there is no loss of generality to adopt this simplifying assumption when discussing information theoretical or computational lower bounds since a lower bound for the special case (assuming $\bA\in \calO(d)$) is necessarily a lower bound for the more general case. How to appropriately pre-whiten, however, is an important practical aspect for ICA. To fix ideas, we shall nonetheless follow the convention and assume that $\bA\in \calO(d)$ with the exception of Section \ref{sec:prewhiten} where we  specifically address the issue of prewhitening and its implications.

Note that any signed permutation of the columns of a mixing matrix leads to essentially the same ICA representation, and one cannot differentiate between an orthonormal matrix $\bA$ and its signed permutation based on $\bX$. To explicitly account for such ambiguity in measuring the quality of an estimate of $\bA$, the following loss function is commonly used:
$$
\ell_M(\bA,\bB)=\min_{\pi:[d]\to [d]}\max_{1\le k\le d} \sin\angle(\ba_k, \bb_{\pi(k)})
$$
where the minimum is taken over all permutations, $\angle (\bu,\bv)$ is the angle between two vectors $\bu$ and $\bv$ taking value in $[0,\pi/2]$, and $\ba_k$ and $\bb_k$ are the $k$th column vectors of $\bA$ and $\bB$ respectively. In the context of ICA, $\ell_M$ measures how poorly each ``unmixing directions'' $\ba_j$ is estimated since $S_j=\ba_j^\top \bX$, $j=1,2,\ldots,d$. Alternatively, we can also look at the quality of an estimate in an averaged sense:
$$
\ell_A(\bA,\bB)=\min_{\pi}\left({1\over d}\sum_{1\le k\le d} \sin^2\angle(\ba_k, \bb_{\pi(k)})\right)^{1/2}.
$$
Both loss functions quantify the overall accuracy of the estimated ``unmixing directions''. Our first result gives a minimax lower bound for estimating $\bA$:

\begin{theorem}
\label{th:ICAlower}
For any $\epsilon, M_1, M_2>0$, there exists a constant $c>0$ such that 
$$
\inf_{\tilde{\bA}}\sup_{\substack{\calL(\bX)\in \calP_{\rm ICA}(\bA; \epsilon, M_1, M_2)\\ \bA\in \calO(d)}} 
\EE[\ell(\tilde{\bA}, \bA)]\ge c\cdot \min\left\{\sqrt{d\over n},1\right\},
$$
where the loss function $\ell$ is either $\ell_M$ or $\ell_A$ defined above and the infimum is taken over all estimators of $\bA$ based on observing $n$ independent copies $\bX_1,\ldots,\bX_n$ of $\bX$.
\end{theorem}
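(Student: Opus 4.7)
The plan is Fano's inequality applied to a carefully constructed packing of the orthogonal group. Since $\ell_M(\bA,\bB) \geq \ell_A(\bA,\bB)$ pointwise, any lower bound on $\EE\ell_A$ transfers to $\EE\ell_M$, so I focus on $\ell_A$. The first move is to restrict the supremum to the sub-family of $\calP_{\rm ICA}(\bA;\epsilon,M_1,M_2)$ in which the signals are i.i.d.\ from a fixed density $p_\star$ chosen with smooth log-density (bounded third derivative), finite Fisher information $J(p_\star)>0$, bounded $(8+\epsilon)$th moments, and nonzero excess kurtosis; a scaled two-component Gaussian mixture works. Restricting the supremum only weakens the lower bound.

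For the packing, I parameterize $\calO(d)$ near $\bI$ via the exponential map. In the $d(d-1)/2$-dimensional space of skew-symmetric matrices, a volume argument yields $N \geq \exp(c_1 d^2)$ matrices $\bH_1,\ldots,\bH_N$ inside the Frobenius ball of radius $2\rho$ with pairwise Frobenius distance $\geq \rho$, where $\rho := c_0 \min(d/\sqrt{n},\,\sqrt{d})$ and $c_0$ is a small constant. Setting $\bA_k := \exp(\bH_k)$, any non-identity permutation in $\ell_A(\bA_k,\bA_l)$ would leave some column of $\bA_k$ nearly orthogonal to its match in $\bA_l$ (an $O(1)$ loss), so the identity permutation is optimal once $\rho$ is bounded away from the diameter. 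Combined with $\bA_k \approx \bI + \bH_k$,
\[
\ell_A(\bA_k,\bA_l)^2 \;\asymp\; \|\bA_k - \bA_l\|_F^2/d \;\gtrsim\; \rho^2/d,
\]
so the $\ell_A$-separation is at least $c_2 \rho/\sqrt{d} \asymp \min(\sqrt{d/n},\,1)$.

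The key step is a global divergence bound: for any $\bA_0,\bA_1 \in \calO(d)$,
\[
\operatorname{KL}(\bP_{\bA_0\bS}\,\|\,\bP_{\bA_1\bS}) \;\leq\; C_\star \|\bA_0 - \bA_1\|_F^2,
\]
with $C_\star$ depending only on $p_\star$. Setting $\bU := \bA_0^\top \bA_1$, a change of variables reduces this to bounding $f(\bU) := \operatorname{KL}(\bP_\bS \| \bP_{\bU \bS})$ by $C_\star\|\bI - \bU\|_F^2$ uniformly on $\calO(d)$. Near $\bI$, write $\bU = \exp(\bH)$ for skew-symmetric $\bH$ and Taylor-expand $\log p_\bS(\bU \bS)$ around $\bS$ with $\psi := -(\log p_\star)'$: the first-order term has mean zero, because $\EE[S_i\psi(S_j)] = \delta_{ij}$ (by independence and integration by parts) and $\tr(\bH) = 0$ by skew-symmetry; the second-order term evaluates to $\tfrac{1}{2} J(p_\star)\|\bH\|_F^2$ using $\EE[\psi'(S_j)] = J(p_\star)$ and $\EE[S_i^2]=1$; the third-order remainder is controlled by $\sup|(\log p_\star)'''|$, the $(8+\epsilon)$th-moment bound on $\bS$, and $\|\bI - \bU\|_{\operatorname{op}} \leq 2$. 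Away from $\bI$, entropy/moment bounds on the rotated marginals give $f(\bU) \leq C'd$, and since $\|\bI-\bU\|_F^2 = 2d - 2\tr(\bU)$ stays $\Theta(d)$ in the intermediate region (away from the signed-permutation zeros of $f$), the ratio $f(\bU)/\|\bI-\bU\|_F^2$ remains bounded by a dimension-free constant.

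Applying Fano's inequality yields
\[
\inf_{\tilde\bA}\max_k \Pr(\tilde\bA \neq \bA_k) \;\geq\; 1 - \frac{4C_\star n\rho^2 + \log 2}{c_1 d^2},
\]
and since $n\rho^2 \leq c_0^2 d^2$ by construction, the right-hand side is $\geq 1/2$ once $c_0$ is chosen small enough. Combining with the $\ell_A$-separation then gives $\inf_{\tilde\bA}\sup \EE\,\ell_A(\tilde\bA,\bA) \gtrsim \rho/\sqrt{d} \asymp \min(\sqrt{d/n},\,1)$, which is the claim (and then also for $\ell_M$ by $\ell_M \geq \ell_A$). The hardest step will be the global KL bound with a dimension-free constant: the local Taylor expansion at $\bI$ is routine, but controlling the constant uniformly in the intermediate regime -- where $\bA_0,\bA_1$ are neither close to each other nor close to any signed permutation -- requires careful interpolation between the local quadratic expansion and global entropy-based bounds on the rotated marginal densities.
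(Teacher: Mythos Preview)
Your approach is sound and genuinely different from the paper's. Both arguments use Fano's inequality over a packing of $\calO(d)$ of cardinality $\exp(cd^2)$, but the KL control diverges. The paper takes the specific source $S_k=\alpha Z_k+\sqrt{1-\alpha^2}\,R_k$ with $Z_k$ Gaussian and $R_k$ Rademacher; because the Gaussian part is rotation-invariant, the density ratio collapses to a product of $\cosh$ factors, and an exact cumulant expansion of $\log\cosh$ shows that the second-cumulant contribution cancels and the leading term is the fourth cumulant, yielding $\mathrm{KL}\le C(1-\alpha^2)^4\|\bA^{(i)}-\bA^{(j)}\|_F^2$ with the kurtosis-squared constant explicit. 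Your local Fisher-information expansion is the more classical LAN-style route and works for any smooth $p_\star$; the paper's choice buys an explicit dependence on $\kappa_4$ (which they record in their proof but the theorem statement does not need) and sidesteps any Taylor-remainder analysis entirely.

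Two small corrections to your sketch. First, the second-order coefficient is $(J(p_\star)-1)/2$, not $J(p_\star)/2$: the first-order Taylor term of $\log p_\star$ applied to the second-order part $\tfrac12\bH^2$ of $\exp(\bH)-\bI$ contributes $-\tfrac12\|\bH\|_F^2$, via $\EE[\psi(S_j)S_j]=1$ and $\tr(\bH^2)=-\|\bH\|_F^2$. The constant is still positive for any non-Gaussian $p_\star$ (the Gaussian minimizes Fisher information among unit-variance densities), so the argument is unaffected. Second, the global KL bound you flag as the hard step is unnecessary: your packing sits inside $\{\|\bH\|_F\le 2\rho\}$ with $\rho\le c_0\sqrt{d}$, and if you take a Varshamov--Gilbert packing over sign patterns of the off-diagonal entries (so that each $\|\bH_k\|_{\mathrm{op}}\lesssim c_0$ as well), then every pairwise $\bU=\bA_k^\top\bA_l$ satisfies $\|\bI-\bU\|_{\mathrm{op}}\lesssim c_0$. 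In that regime the third-order Taylor remainder is bounded by $C\sup|(\log p_\star)'''|\cdot\|\bI-\bU\|_{\mathrm{op}}\cdot\|\bI-\bU\|_F^2$, already dominated by the second-order term for small $c_0$; no interpolation with global entropy bounds is needed.
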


It is worth noting that the lower bound given in Theorem \ref{th:ICAlower} holds for any $\epsilon>0$. In other words, one cannot hope to improve it with a higher moment (larger $\epsilon$) requirement for the sources. In fact, what we proved is slightly stronger than stated here: the lower bound holds even if all source variables are subgaussian.

The above minimax lower bound can be equivalently expressed in terms of the sample complexity commonly used in the literature. Specifically, to ensure that there exists an estimate $\tilde{\bA}$ satisfying $\ell(\tilde{\bA},\bA)\le_p \varepsilon$, we need a sample size $n\gtrsim d/\varepsilon^2$. This sample complexity is much smaller than those established for the state-of-the-art ICA techniques. See, e.g., \cite{anand2014sample, belkin2018eigenvectors}. Naturally, one asks whether this sample size requirement is indeed attainable. The answer is affirmative, if doubtful at the first glance, and we shall introduce an estimate that attains the optimal sample complexity later in the section. Our approach is motivated by a close inspection of the connection between ICA and moment estimation underlying many existing ICA techniques. Our investigation starts by revealing a fundamental yet subtle limitation of how this connection is commonly exploited, and hence why the earlier approaches are necessarily suboptimal.

\subsection{ICA and Moment Estimation}

Many popular ICA methods are based upon the crucial observation that, if $\bX$ is centered and pre-whitened so that its covariance matrix is the identity matrix, then the column vectors, $\ba_1,\ldots, \ba_d$, of $\bA$ can be identified with linear transforms of $\bX$ that optimize the kurtosis. A useful way to understand this property is through the decomposition of the fourth-order cumulant tenor of $\bX$.

Recall that $\bX=\bA\bS$ with $\bA\in \calO(d)$. Since $\{S_j:1\le j\le d\}$ are independent random variables, the characteristic function of $\bX\in \RR^d$ can be written as
$$
\phi(\bv)=\EE\exp(i\bv^{\top}\bX)=\prod_{j=1}^d\EE\exp(i(\bv^{\top}\ba_j)S_j)
=\prod_{j=1}^d\phi_j(\bv^{\top}\ba_j)
$$
for any $\bv\in \RR^d$. Here $\ba_j$ is the $j$th column of $\bA$ and $\phi_j(\cdot)$ is the characteristic function of $S_j$. This implies that the cumulant generating function of $\bX$ can be expressed as
\begin{equation}\label{eq:log-cf}
	\psi(\bv):=\log \phi(\bv)=\sum_{j=1}^d\log(\phi_j(\bv^{\top}\ba_j))
	=\sum_{j=1}^d\psi_j(\bv^{\top}\ba_j),
\end{equation}
where $\psi_j(\cdot)=\log(\phi_j(\cdot))$ is the cumulant generating function of $S_j$. Differentiating both sides of the equation \eqref{eq:log-cf}, four times with respect to $\bv$, one has
$$
\kappa_4(\bv^\top\bX)
=\sum_{j=1}^d\kappa_4(S_j)(\langle\ba_j,\bv\rangle)^4=(\scrM_4(\bX)-\scrM_0)\times_1\bv\times_2\bv\times_3\bv\times_4\bv,
$$
where
$$\scrM_4(\bX)=\EE (\bX\circ\bX\circ\bX\circ \bX)$$
and 
\begin{equation}\label{eq:defM0}
	\scrM_0=\sum_{\{i,j,k,l\}=\{i_1,i_2\}}\sum_{i_1,i_2=1}^d \be_i\circ\be_j\circ\be_k\circ\be_l.
\end{equation}
Hereafter $\circ$ represents the outer product, i.e., the $(i,j,k,l)$ entry of $\scrM_4(\bX)$ is $[\scrM_4(\bX)]_{ijkl}=\EE(X_iX_jX_kX_l)$, $\be_i$ is the $i$th canonical basis of $\RR^d$, and $\times_j$ means multiplication between a tensor along the $j$th mode and a vector of conformable dimension.

The expression above holds for all $\bv\in \RR^d$, and hence, $\scrM_4(\bX)-\scrM_0$, the fourth cumulant tensor of $\bX$, is orthogonally decomposable (ODECO):
\begin{equation}\label{eq:odec}
	\scrM_4(\bX)-\scrM_0=\sum_{j=1}^d\kappa_4(S_j)\ba_j\circ\ba_j\circ\ba_j\circ\ba_j.
\end{equation}
We shall refer to $\ba_j$s and $\kappa_4(S_j)$s as the eigenvectors and eigenvalues of $\scrM_4(\bX)-\scrM_0$. As such, the $\ba_j$s are the complete enumeration of the local optima of $f(\bu):=\kappa_4(\bu^\top \bX)$ with respect to the domain ${\mathbb S}^{d-1}$. This, in particular, implies that random initialization followed by gradient iterations can recover $\ba_j$s with probability one. See, e.g., \cite{belkin2018eigenvectors} for detailed discussions. Inspired by this observation, many approaches to ICA aim at finding the local optima of the sample kurtosis:
$$
\hat{f}(\bu):=\hat{\kappa}^{\rm sample}_4(\bu^\top \bX)=(\hat{\scrM}^{\rm sample}_4(\bX)-\scrM_0)\times_1\bu\times_2\bu\times_3\bu\times_4\bu
$$
where
$$\hat{\scrM}^{\rm sample}_4(\bX)={1\over n}\sum_{i=1}^n (\bX_i\circ\bX_i\circ\bX_i\circ \bX_i),$$
is the fourth-order sample moment tensor. For example, the popular FastICA can be viewed as a fixed-point algorithm to optimize $\hat{f}$ over ${\mathbb S}^{d-1}$.

It is important to note that, even though $f$ only has $d$ local optima, $\hat{f}$ may have many more. The success of any algorithm hinges upon its ability to find the local optima of $\hat{f}$ that are close to those of $f$. Clearly, regardless of which algorithm to use, how well this strategy works depends on how well the sample kurtosis approximates its population counterpart. Perhaps, somewhat surprisingly, the following result indicates that, no matter which algorithm is used to optimize $\hat{f}$, the resulting estimating procedure cannot achieve the optimal sample complexity given by Theorem \ref{th:ICAlower}.

\begin{theorem}\label{th:samp-lbd}
Suppose that $\bX_1,\ldots, \bX_n$ are $n$ independent copies of a random vector $\bX$ such that $\calL(\bX)\in \calP_{\rm ICA}(\bA;\epsilon, M_1, M_2)$ for some $\bA\in \calO(d)$ and constants $\epsilon, M_1,M_2>0$. There exist constants $C_1,C_2>0$ such that if $n\le C_1d^2$ then,
$$
\sup_{\bu\in \SS^{d-1}}|\hat{\kappa}_4^{\rm sample}(\bu^\top \bX)-\kappa_4(\bu^\top \bX)|\ge 4
$$
with probability at least $1-d^{-4}$. Moreover,
$$
\max_{1\le j\le d}|\langle\ba_j,\hat{\bu}\rangle|\le C_2\sqrt{d^{-1/4}\log d},
$$
with probability at least $1-n^{-\epsilon/8}$, where
$$
\hat{\bu}\in\argmax_{\bu:\|\bu\|=1}\hat{f}(\bu).
$$
\end{theorem}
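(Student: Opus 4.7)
For Part (i), my plan is to exhibit a data-dependent direction along which the sample kurtosis is anomalously large. The natural choice is $\bu_0 = \bX_{i^*}/\|\bX_{i^*}\|$ with $i^* \in \argmax_i \|\bX_i\|$; retaining only the $i^*$-th term in the sample sum yields
\[
\hat\kappa_4^{\rm sample}(\bu_0^\top \bX) \ge \frac{\|\bX_{i^*}\|^4}{n} - 3.
\]
It then suffices to show $\max_i \|\bX_i\|^2 \ge d/2$ with probability at least $1 - d^{-4}$, and since $\max_i \|\bX_i\|^2 \ge n^{-1}\sum_i \|\bX_i\|^2$, this reduces to concentration of $n^{-1}\sum_{i,k}S_{ik}^2$ around its mean $d$. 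Rosenthal's inequality for the $nd$ iid summands $S_{ik}^2-1$ under their $(4+\epsilon/2)$-th moment bound (which is a consequence of $\EE|S_k|^{8+\epsilon}\le M_2$) gives concentration at rate $(nd)^{-(2+\epsilon/4)}$, easily beating $d^{-4}$ for $n\asymp d^2$. Taking $C_1 \le (4(7+M_1))^{-1}$ then yields $\hat\kappa_4^{\rm sample}(\bu_0^\top \bX) \ge M_1 + 4$, completing Part (i) since $|\kappa_4(\bu_0^\top\bX)| \le M_1$.

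For Part (ii), since $\hat\bu$ maximizes $\hat f$, we have $\hat f(\hat\bu)+3 \ge \hat f(\bu_0)+3 \ge 1/(4C_1)$ on the same event. To bound $|\alpha_j| := |\ba_j^\top \hat\bu|$ I would decompose $\hat\bu = \alpha_j \ba_j + \sqrt{1-\alpha_j^2}\,\bv$ with $\bv \perp \ba_j$, $\|\bv\|=1$, and expand
\[
\hat f(\hat\bu)+3 = \sum_{k=0}^{4}\binom{4}{k}\alpha_j^k (1-\alpha_j^2)^{(4-k)/2}\,\hat m_{k,4-k}(\bv),
\]
where $\hat m_{p,q}(\bv) := n^{-1}\sum_i S_{ij}^p (\bv^\top\bX_i)^q$. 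Because $\bv\perp\ba_j$ renders $S_{ij}$ and $\bv^\top\bX_i$ independent, $\EE \hat m_{3,1}(\bv) = \EE \hat m_{1,3}(\bv) = 0$, $\EE \hat m_{2,2}(\bv) = 1$, and $\EE \hat m_{4,0} = 3+\kappa_4(S_j)$. I would control the cross moments ($k\in\{1,3\}$) and the $\hat m_{2,2}$ fluctuation uniformly over $\bv\perp\ba_j$ by some $\eta = \eta(n,d)$ using tensor concentration under the $(8+\epsilon)$-th moment assumption. Upon substituting and using $\hat m_{0,4}(\bv) \le \hat f(\hat\bu)+3$ (a consequence of global maximality of $\hat\bu$), the expansion rearranges to
\[
(\hat f(\hat\bu)+3)\,\alpha_j^2(2-\alpha_j^2) \le \alpha_j^4 (3+M_1) + 6\alpha_j^2 + O(|\alpha_j|\,\eta).
\]
Dividing by $\alpha_j^2$ and using $\hat f(\hat\bu)+3 \ge 1/(4C_1)$ with $C_1$ small yields $|\alpha_j| \lesssim C_1\eta$. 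The stated bound $|\alpha_j|\le C_2\sqrt{d^{-1/4}\log d}$ then follows provided $\eta \lesssim d^{-1/8}\sqrt{\log d}$.

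The main obstacle is establishing the uniform rate $\eta = O(d^{-1/8}\sqrt{\log d})$ for the cross-moment tensors $\hat m_{3,1}(\bv)$ (linear in $\bv$) and $\hat m_{1,3}(\bv)$ (cubic in $\bv$) under the heavy-tailed $(8+\epsilon)$-th moment assumption. Each has bounded entry-wise variance of order $1/n$, but their suprema over the sphere are sensitive to the data's tail behavior; in particular, uniform bounds on $\max_i \|\bX_i\|$ and $\max_{i,j}|S_{ij}|$ via Markov on high moments (together with the union-bound factor $\log d$ from the maximum over $j$) will enter the bound. The probability $1-n^{-\epsilon/8}$ in the statement likewise traces back to these maxima being controlled by $n^{1/(8+\epsilon)}$-type quantities.
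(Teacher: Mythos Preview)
Your approach coincides with the paper's. For Part~(i) the paper also plugs in the data-dependent direction $\bX_1/\|\bX_1\|$ (you take the maximum over $i$, a cosmetic improvement) and controls $\|\bS_1\|^2-d$ via a Rosenthal-type moment bound. For Part~(ii) the paper performs exactly the same decomposition $\hat\bu=\sqrt{1-\rho^2}\,\ba_j+\rho\bv$ and expands $(\scrT+\scrE)\times_{1,2,3,4}\hat\bu$ multilinearly; your cross moments $\hat m_{3,1},\hat m_{2,2}-1,\hat m_{1,3}$ are precisely the quantities the paper calls $\Delta_1,\Delta_2,\Delta_3$ (operator norms of $\scrE$ contracted along three, two, one copies of $\ba_j$), and the ``main obstacle'' you flag---a uniform rate for the third-order tensor governing $\hat m_{1,3}$ under only an $(8+\epsilon)$-th moment---is exactly the content of the paper's supporting Lemma on $\|\scrE\times_4\ba_j\|$, proved by truncation plus matrix Bernstein and yielding $\Delta_3\lesssim\sqrt{d\log d/n}+d^{13/8}(\log d)/n^{7/8}$, which at $n\asymp d^2$ is of order $d^{-1/8}\log d$.

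The one substantive difference is in how the pure-$\bv$ term is eliminated. You compare $\hat m_{0,4}(\bv)$ directly to $L=\hat f(\hat\bu)+3$ via global maximality, so that $(1-\alpha_j^2)^2\hat m_{0,4}(\bv)$ cancels exactly against $(1-\alpha_j^2)^2L$ on the left. The paper instead bounds this term by $\rho^4\|\scrE\|$ and offsets it against a separate lower bound $\lambda\ge\|\scrE\|-M_1$ obtained at the noise-maximizing direction. Your route is cleaner here: it avoids the additive $M_1$ that in the paper's written chain leaves a non-vanishing term $\propto nM_1/d^2$ on the right-hand side. One algebraic slip to fix: after arriving at $L\alpha_j^2(2-\alpha_j^2)\le\alpha_j^4(3+M_1)+6\alpha_j^2(1-\alpha_j^2)+O(|\alpha_j|\eta)$, divide by $|\alpha_j|$ rather than $\alpha_j^2$---the dominant cross term $4\alpha_j(1-\alpha_j^2)^{3/2}\hat m_{1,3}(\bv)$ is linear in $\alpha_j$, so division by $\alpha_j^2$ would leave $\eta/|\alpha_j|$. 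Dividing by $|\alpha_j|$ gives
\[
|\alpha_j|\bigl[(2-\alpha_j^2)L-6(1-\alpha_j^2)-(3+M_1)\alpha_j^2\bigr]\lesssim\eta,
\]
and since the bracket is at least $L$ when $L\ge 6$, this yields $|\alpha_j|\lesssim\eta/L\lesssim C_1\eta$ as you claim.
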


The first statement of Theorem \ref{th:samp-lbd} suggests that the sample kurtosis does not approximate the true kurtosis well for some $\bu\in \SS^{d-1}$ and hence maximizing the sample kurtosis may not lead to consistent estimates of the maximizer of the population kurtosis. Indeed, the second statement states more precisely how maximizing the sample kurtosis fails to recover any of the unmixing directions when $n\ll d^2$: even if we can overcome the nonconvexity of $\hat{f}$ and optimize it, the optimizer is not a good estimate. More specifically, when $d$ diverges,
$$
\max_{1\le j\le d}|\langle\ba_j,\hat{\bu}\rangle|\to_p 0.
$$
In other words, $\hat{\bu}$ behaves like the worst possible estimate of $\ba_j$s as it is asymptotically orthogonal to all of them!	It is worth noting that this peculiar phenomenon is unique in a high dimensional situation -- although $\{\ba_1, \ldots, \ba_d\}$ is a complete basis for $\RR^d$, the vector $\hat{\bu}$ can still be nearly perpendicular to all of them.

Theorem \ref{th:samp-lbd} indicates that any ICA algorithm aiming to optimize the sample kurtosis, if performing as desired, cannot be consistent when $n\ll d^2$, and thus is not minimax optimal at least in the regime $d\ll n\ll d^2$. This limitation is due to the inefficiencies of sample kurtosis and thankfully can be overcome with an improved moment estimate. As a result, we shall argue, the lower bound in Theorem \ref{th:ICAlower} is indeed attainable.

\subsection{Minimax Optimal ICA}
The main idea is to first construct a more reliable estimate of $\theta_\bu:=\EE(\bu^\top \bX)^4$ separately for a carefully chosen set of $\bu$s, and then use them as anchors to derive an improved estimate of $\theta_\bu$ for all $\bu\in\SS^{d-1}$. We now describe the procedure in detail.

\paragraph{Estimating $\theta_{\bu}$ for a fixed $\bu$.} We first discuss how to estimate $\theta_{\bu}$ for a fixed $\bu\in\SS^{d-1}$. Note that this amounts to estimating the mean of $(\bu^\top\bX)^4$ and we shall appeal to a general strategy developed by \cite{catoni2012challenging}. More specifically, with a slight abuse of notation, let $\psi: \RR\mapsto\RR$ be a non-decreasing influence function such that
$$
-\log(1-x+x^2/2)\le \psi(x)\le\log(1+x+x^2/2).
$$
Denote by $\psi_\alpha(x)=\psi(\alpha x)$. See \cite{catoni2012challenging} for more concrete examples of the influence function $\psi$. For any $\bu\in \SS^{d-1}$, denote by $\hat{\theta}_{\bu}$ the solution to
\begin{equation}\label{eq:def-hat-theta-u}
	\sum_{i=1}^n \psi_\alpha \left\{(\bu^\top \bX_i)^4-{\theta}_{\bu}\right\}=0.
\end{equation}
Under the moment assumptions for the sources, the results from \cite{catoni2012challenging} imply that, for any $t>\exp(-n/2)$,
\begin{equation}\label{eq:catoni}
	\left|\hat{\theta}_{\bu}-\EE(\bu^\top \bX_i)^4\right|\le \sqrt{2M_2^{8/(8+\epsilon)}\log(t^{-1})\over n\left(1-2\log(t^{-1})/n\right)},
\end{equation}
with probability at least $1-2t$. In other words, for any $\bu\in \SS^{d-1}$, $\hat{\theta}_{\bu}$ has subgaussian tails.

\paragraph{Estimating $\theta_{\bu}$ for all $\bu$s.} 

Now let $\calN$ be a $1/4$ covering set of $\SS^{d-1}$ with $|\calN|\le 9^d$. We first estimate $\theta_\bu$ for every $\bu\in \calN$ as above and then use them as anchors to construct estimates for $\theta_{\bu}$ for any $\bu\in \SS^{d-1}$. Recall that $\theta_{\bu}=\langle\scrM_4(\bX),\bu\circ\cdots\circ\bu\rangle$. Estimating $\theta_{\bu}$, therefore, amounts to estimating the moment tensor $\scrM_4(\bX)$. To this end, we shall take advantage of the key property that we noticed in the previous subsection: $\scrM_4(\bX)-\scrM_0$ is ODECO. Specifically, we shall estimate $\scrM_4(\bX)$ by
\begin{equation}
	\label{eq:defAhatica1}
	\hat{\scrM}_4(\bX):=\argmin_{\scrA: \scrA-\scrM_0\text{ is ODECO}} \max_{\bu\in \calN}\left|\hat{\theta}_{\bu}-\langle \scrA,\bu\circ\bu\circ\bu\circ\bu\rangle\right|.
\end{equation}
%
%

\paragraph{Estimating the mixing matrix.} By definition, $\hat{\scrM}_4(\bX)-\scrM_0$ is an ODECO tensor and we can then estimate $\ba_j$s by its singular vectors. Denote by
\begin{equation}\label{eq:defuhat}
	\hat{\scrM}_4(\bX)-\scrM_0=\sum_{j=1}^d \hat{\lambda}_j \hat{\bu}_j\circ\hat{\bu}_j\circ\hat{\bu}_j\circ\hat{\bu}_j.
\end{equation}
where the singular vectors $\hat{\bu}_j$s are also the stationary points of
\begin{equation}\label{eq:khat}
\hat{\kappa}_4(\bu^\top \bX)=\langle \hat{\scrM}_4(\bX)-\scrM_0,\bu\circ\bu\circ\bu\circ\bu\rangle.
\end{equation}

\medskip
The following theorem shows that $\hat{\bu}_j$s are indeed minimax optimal estimates of the column vectors of $\bA$.

\begin{theorem}
\label{th:ICA-infth}
Suppose that $\bX_1,\ldots, \bX_n$ are $n$ independent copies of a random vector $\bX$ such that $\calL(\bX)\in\calP_{\rm ICA}(\bA;\epsilon, M_1, M_2)$ for some $\bA\in \calO(d)$ and $\epsilon, M_1,M_2>0$. There exist constants $C_1,C_2>0$ such that if $n>C_1d$, then with probability at least $1-\exp(-d)$,
$$
\sup_{\bu\in \SS^{d-1}}|\hat{\kappa}_4(\bu^\top \bX)-\kappa_4(\bu^\top \bX)|\le C_2\sqrt{d\over n},
$$
where $\hat{\kappa}_4$ is defined by \eqref{eq:khat}. And consequently, with probability at least $1-\exp(-d)$,
$$
\ell(\hat{\bU}, \bA)\le C_2\sqrt{d\over n}.
$$
where the loss function $\ell$ is either $\ell_M$ or $\ell_A$, $\hat{\bU}=[\hat{\bu}_1,\ldots, \hat{\bu}_d]$, and $\hat{\bu}_j$s are defined by \eqref{eq:defuhat}.
\end{theorem}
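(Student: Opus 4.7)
The plan is to first establish the uniform control $\sup_{\bu \in \SS^{d-1}} |\hat{\kappa}_4(\bu^\top \bX) - \kappa_4(\bu^\top \bX)| \le C_2 \sqrt{d/n}$ with high probability, and then to deduce $\ell(\hat{\bU}, \bA) \le C_2 \sqrt{d/n}$ from it via a perturbation argument for ODECO tensors. Writing $\Delta := \hat{\scrM}_4(\bX) - \scrM_4(\bX)$, the former quantity equals $P(\bu) := \langle \Delta, \bu \circ \bu \circ \bu \circ \bu \rangle$, so the task reduces to controlling this degree-$4$ polynomial uniformly on $\SS^{d-1}$.

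The first step is anchor-point control. Applying the Catoni-type bound \eqref{eq:catoni} at each fixed $\bu \in \calN$ with confidence parameter $t = \tfrac{1}{2}\exp(-(1+\log 9)d)$, so that $\log(1/t) \asymp d$, a union bound over the $|\calN|\le 9^d$ anchors gives $\max_{\bu \in \calN} |\hat{\theta}_\bu - \theta_\bu| \le C \sqrt{d/n}$ with probability at least $1-e^{-d}$, where $\theta_\bu := \EE(\bu^\top \bX)^4$. Since $\scrM_4(\bX) - \scrM_0$ is ODECO by \eqref{eq:odec}, the truth $\scrM_4(\bX)$ lies in the feasible set of \eqref{eq:defAhatica1}, so by the defining optimality of $\hat{\scrM}_4$,
\[
\max_{\bu \in \calN} \left|\hat{\theta}_\bu - \langle \hat{\scrM}_4, \bu\circ\bu\circ\bu\circ\bu\rangle\right| \le \max_{\bu \in \calN} |\hat{\theta}_\bu - \theta_\bu| \le C \sqrt{d/n}.
\]
A triangle inequality then delivers $\max_{\bu \in \calN} |P(\bu)| \le 2C\sqrt{d/n}$ on the same event.

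The main obstacle is extending this bound from $\calN$ to all of $\SS^{d-1}$. For any symmetric $4$-tensor $\Delta$, multilinearity gives $|P(\bu)-P(\bv)| \le 4\|\bu-\bv\|\,\|\Delta\|_{\mathrm{inj}}$, and for symmetric tensors $\|\Delta\|_{\mathrm{inj}}$ is controlled up to an absolute constant by $\sup_{\bu \in \SS^{d-1}} |P(\bu)|$ (a classical Banach-type comparison for symmetric multilinear forms). The $1/4$ resolution sits exactly at the borderline of what this Lipschitz pass can absorb for degree-$4$ forms, so I would introduce in the analysis an auxiliary $\epsilon$-net $\calN'$ of $\SS^{d-1}$ with $\epsilon$ equal to a sufficiently small absolute constant, making the Lipschitz slack a fixed fraction of $\sup_\bu |P(\bu)|$. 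Since $|\calN'|\le(3/\epsilon)^d$ remains exponential only in $d$, the Catoni union bound replays at the same rate; the optimality control available on $\calN$ transfers to $\calN'$ by one more Lipschitz pass, using an a priori bound $\|\hat{\scrM}_4-\scrM_0\|_{\mathrm{inj}}\le M_1+o(1)$ implied by feasibility. Closing the loop yields $\sup_{\bu \in \SS^{d-1}} |P(\bu)| \le C'\sqrt{d/n}$.

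With the uniform tensor bound in hand, the eigenvector claim follows from ODECO perturbation. Set $\hat{T} := \hat{\scrM}_4 - \scrM_0$ and $T := \scrM_4 - \scrM_0$, and consider the orthogonal alignment matrix with entries $c_{jk} := \hat{\bu}_j^\top \ba_k$. Evaluating $\langle \hat{T} - T, \bu\circ\bu\circ\bu\circ\bu\rangle$ at $\bu = \ba_k$ gives $\left|\sum_j \hat{\lambda}_j c_{jk}^4 - \kappa_4(S_k)\right| \le C'\sqrt{d/n}$, with a symmetric counterpart at $\bu = \hat{\bu}_\ell$. Combined with the eigenvalue separation $M_1^{-1}\le|\kappa_4(S_j)|\le M_1$ and the orthogonality relations $\sum_j c_{jk}^2 = \sum_k c_{jk}^2 = 1$, these inequalities force the alignment matrix to lie close to a signed permutation: for each $k$ there exists $j=\pi(k)$ with $c_{jk}^2 \ge 1 - C'' d/n$, whence $\sin\angle(\ba_k, \hat{\bu}_{\pi(k)}) \le C_2\sqrt{d/n}$. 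Taking the maximum over $k$ yields the $\ell_M$ bound; summing over $k$ before taking the square root yields $\ell_A$ with the same rate.
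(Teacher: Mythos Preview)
Your overall strategy matches the paper's: Catoni at each anchor plus a union bound, optimality of $\hat{\scrM}_4$ in \eqref{eq:defAhatica1} together with feasibility of $\scrM_4$, extension from $\calN$ to $\SS^{d-1}$ via a net argument, and finally an ODECO perturbation bound. For the last step the paper simply invokes the perturbation theorems of \cite{auddy2020perturbation} rather than arguing directly with the alignment coefficients $c_{jk}$; your sketch is in the same spirit but would need those results (or their proof) to be made rigorous.

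The substantive issue is the net--to--sphere step. You are right that for a degree--$4$ symmetric form the Lipschitz slack from a $1/4$--net is $4\cdot(1/4)\cdot M=M$ (or $((1+1/4)^4-1)M>M$ with the binomial expansion), so the usual self--referential inequality $M\le\max_{\calN}|P|+cM$ does not close; indeed the paper's displayed factor $1/4$ at this step is not justified as written. However, your proposed fix does not close either. Introducing a finer auxiliary net $\calN'$ helps only if you can control $|P(\bu')|$ for $\bu'\in\calN'$, but the optimality inequality in \eqref{eq:defAhatica1} holds \emph{only} on the fixed net $\calN$ used to construct the estimator. Transferring it to $\calN'$ by a ``Lipschitz pass'' with the a~priori bound $\|\hat{\scrM}_4-\scrM_0\|\lesssim M_1$ incurs an $O(M_1)$ error per step, not $O(\sqrt{d/n})$, so the argument is circular. (Feasibility alone does not even give the a~priori bound; it requires the ODECO structure of $\hat{\scrM}_4-\scrM_0$ together with the anchor control, and even then the resulting bound is order one.) The clean repair is at the level of the construction: take $\calN$ to be an $\epsilon$--net with $\epsilon$ small enough that $(1+\epsilon)^4-1<1$ (e.g.\ $\epsilon=1/8$); since $|\calN|\le(3/\epsilon)^d$ remains $e^{O(d)}$, the Catoni union bound is unchanged and the standard inequality $M\le\max_{\calN}|P|+((1+\epsilon)^4-1)M$ now yields $M\lesssim\sqrt{d/n}$ directly.
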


Although the estimate we introduced above is minimax optimal, it also incurs prohibitive computational costs when $d$ is large. It not only requires estimating the kurtosis for an exponential (in $d$) number of $\bu$s, but also involves a highly nonconvex optimization problem of \eqref{eq:defuhat}. This naturally brings about the question: are there alternative approaches that are both minimax optimal and computationally tractable? The answer turns out to be mixed: we can but only with a more stringent sample size requirement. Specifically, we shall first argue that if we restrict our attention to a large class of computationally efficient algorithms, then the minimum sample size required for consistent estimation is at least of the order $d^2$, up to a logarithmic factor, rather than linear in $d$ as Theorem \ref{th:ICAlower} suggests. 

\subsection{Computational Limits for ICA}\label{sec:comp-low}

We now turn our attention to estimates that can be computed using a general class of polynomial-time (in $d$) algorithms. In particular, we make use of the recently developed low-degree polynomial bounds. The low-degree polynomial method originated from the sum-of-squares literature in theoretical computer science and has since been developed into a general framework to study the average-case computational complexity. See, e.g., \cite{hopkins2018statistical} or \cite{kunisky2022notes} for further details. Many popular algorithms such as power iteration or approximate message passing can be cast within this framework. It has been shown to provide a unified explanation of information-computation gaps in a number of important statistical problems including sparse PCA, tensor PCA, and planted submatrix among many others in the sense that the best-known polynomial-time computable algorithms are low-degree polynomials and conversely low-degree polynomial algorithms fail in the ``hard'' regime.

In the setting of ICA, a low-degree polynomial algorithm takes as input $\bX_1,\ldots, \bX_n$, and produces an output, an estimate of the mixing matrix, that can be expressed as a polynomial of the input of degree $O(\log d)$. Note that the dimension of a space of polynomials up to a certain degree grows exponentially with the degree so that the space of polynomials of degree $O(\log d)$ has dimension $d^{O(1)}$.
Our next result shows that if we restrict our attention to this class of algorithms, sample size $n\gg d^2/{\rm polylog}(d)$ is required to ensure consistency of the estimated mixing matrix.

\begin{theorem}\label{th:comp-lower}
Let $\RR_{\le D}[\bX_1,\ldots, \bX_n]$ be the set of all real polynomials of degree up to $D$ over independent copies $\bX_1,\ldots, \bX_n$ of $\bX$. Suppose that $D\le c_0\log d$ for some constant $c_0>0$. Then, for any $\epsilon, M_1, M_2>0$, there exist constants $c_1,c_2$ such that
$$
\inf_{\tilde{\bA}\in \RR_{\le D}[\bX_1,\ldots, \bX_n]}\sup_{\substack{\calL(\bX)\in \calP_{\rm ICA}(\bA; \epsilon, M_1, M_2)\\ \bA\in \calO(d)}} 
\EE[\ell_M(\tilde{\bA}, \bA)]\ge c_1,
$$
whenever $n\le c_2d^2(\log d)^{-10}$.
\end{theorem}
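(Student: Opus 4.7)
My plan is to apply the low-degree polynomial estimation lower bound of \cite{schramm2022computational} under a Haar prior on the mixing matrix. Fix a bounded source distribution $\pi_S$ whose moments match those of $N(0,1)$ through order seven, except that $M_1^{-1}\le|\kappa_4(\pi_S)|\le M_1$; such a $\pi_S$ exists as a finite discrete distribution via a moment-matching construction, and it satisfies $\EE|S|^{8+\epsilon}\le M_2$ trivially. Draw $\bA$ from the Haar measure on $\calO(d)$ and set $\bX_\ell=\bA\bS_\ell$ for $\ell=1,\ldots,n$, where $\bS_\ell$ has iid $\pi_S$ coordinates, so $\calL(\bX)\in\calP_{\rm ICA}(\bA;\epsilon,M_1,M_2)$ almost surely. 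Because both the prior and $\ell_M$ are invariant under signed column permutations, I would reduce the $\ell_M$ lower bound to a per-coordinate MMSE lower bound for the scalar functional $a_{11}$.

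The Schramm--Wein identity expresses the minimum degree-$D$ mean squared error as $\Var(a_{11})-\sum_{|\alpha|\le D}\hat f_\alpha^2$, where $\hat f_\alpha:=\EE[a_{11}\cdot H_\alpha(\bX_1,\ldots,\bX_n)]$ and $\{H_\alpha\}$ is the Hermite basis of $L^2(P_0)$ with $P_0=N(0,\bI_d)^{\otimes n}$. The moment matching of $\pi_S$ forces $\hat f_\alpha=0$ unless each sample $\bX_\ell$ contributes a positive Hermite degree that is a multiple of four. Evaluating the Haar expectation over $\bA$ via the orthogonal Weingarten calculus collapses each surviving $\hat f_\alpha$ into a sum over pair matchings of source indices, with Weingarten weights $\mathrm{Wg}_O(\sigma;d)=O(d^{-|\sigma|})$. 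Grouping the resulting multi-hypergraph diagrams by connected components, the dominant contribution comes from disjoint unions of a primitive ``4-loop'' motif, each carrying a factor $\Theta(n/d^2)$ up to polylogarithmic counting overhead, leading to a bound of the form
$$
\sum_{|\alpha|\le D}\hat f_\alpha^2 \;\le\; \frac{C}{d}\sum_{k=0}^{D/4} C^k(\log d)^{ck}\left(\frac{n}{d^2}\right)^k,
$$
which stays strictly below $\tfrac{1}{2}\Var(a_{11})=\Theta(1/d)$ whenever $n\le c_2 d^2(\log d)^{-10}$ and $D\le c_0\log d$.

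This per-coordinate bound shows that no degree-$D$ polynomial predicts $a_{11}$, or by symmetry any given entry of $\bA$, with mean squared error below $c/d$. Aggregating across coordinates using the signed-permutation symmetry of the Haar prior yields $\EE\ell_A(\tilde\bA,\bA)\ge c_1$, and since $\ell_M\ge\ell_A$ pointwise, the stated lower bound follows. The main technical obstacle is the combinatorial bookkeeping in the Weingarten diagrammatic expansion: pinning the polylog exponent to $10$ requires a cluster-style expansion that shows non-primitive connected diagrams are strictly subdominant, and the moment matching of $\pi_S$ through order seven is precisely what eliminates the mixed-cumulant diagrams that would otherwise inflate this exponent. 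A secondary subtlety is the aggregation step itself: converting the coordinate-wise MMSE bound into a lower bound on the matrix loss $\ell_M$ requires that the per-coordinate MMSE bound hold uniformly across the signed-permutation equivalence class, which it does because the space of degree-$D$ polynomials is itself invariant under that group action.
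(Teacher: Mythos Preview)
Your route is genuinely different from the paper's. The paper does \emph{not} use a Haar prior or the Schramm--Wein MMSE framework; instead it reduces to a two-point testing problem $H_0:\bA=\bP$ vs.\ $H_1:\bA=\bQ$ with $\ell_M(\bP,\bQ)=1/\sqrt{2}$, bounds the low-degree likelihood ratio $\Lambda_{\le D}$ directly (Lemma~\ref{lem:comp-low}) using a \emph{random} Hermite-type basis $h_m(\bZ_j^\top\bX_i/\sqrt{d})$ with auxiliary Gaussian $\bZ_j$, and then derives a contradiction: from any good low-degree estimator $\hat\ba$ one builds the explicit test polynomial $g=(\hat\ba^\top\bX_n)^4-(\|\hat\ba\|\bp_1^\top\bX_n)^4$, which would force $\Lambda_{\le D}\ge 4$ (Lemma~\ref{lem:comp-low2}). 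The two-point construction sidesteps any averaging over $\calO(d)$, and the contradiction step converts the LDLR bound into an estimation lower bound without needing to aggregate coordinate-wise MMSE.

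Your proposal has two concrete gaps. First, the identity $\mathrm{MMSE}_{\le D}=\Var(a_{11})-\sum_{|\alpha|\le D}\hat f_\alpha^2$ requires $\{H_\alpha\}$ to be orthonormal in $L^2$ of the \emph{marginal} data law, not of $P_0=N(0,\II_d)^{\otimes n}$. Under your Haar prior the marginal of $\bX=\bA\bS$ is rotationally invariant but not Gaussian (because $\kappa_4(\pi_S)\neq 0$), so the Gaussian Hermite basis is not orthonormal there; your moment matching only goes to order seven, while orthonormality of degree-$D$ polynomials needs moments through order $2D\asymp\log d$. The Schramm--Wein degree-$D$ correlation with a null reference does not in general yield a valid estimation lower bound under the planted marginal without further argument. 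Second, the aggregation step is not justified: a per-entry bound $\EE(\tilde a_{11}-a_{11})^2\ge c/d$ does not by itself imply $\EE\ell_M(\tilde\bA,\bA)\ge c_1$, because $\ell_M$ involves a minimum over permutations and $\sin\angle$ rather than Euclidean distance, and summing marginal lower bounds across entries does not control the error of a single $\tilde\bA$ jointly over a column. The paper avoids both issues by working with a fixed pair $(\bP,\bQ)$ and the explicit test polynomial.
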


We outline here the main idea behind the proof of Theorem \ref{th:comp-lower}. As often done in establishing minimax lower bounds, the proof of Theorem \ref{th:comp-lower} proceeds by first reducing the problem of estimating $\bA$ to a related hypothesis testing problem about $\bA$. Specifically, we consider testing
$$
H_0:\bA=\bP\quad
\text{vs.}\quad
H_1:\bA=\bQ
$$
based on observing $n$ independent observations from the ICA model $\bX=\bA\bS$ where $\bP\neq\bQ\in \calO(d)$ are two fixed orthonormal matrices. The idea is that if we can not differentiate between $\bP$ and $\bQ$, then we cannot estimate $\bA$ up to an error of $\ell(\bP,\bQ)/2$.

Without the computational constraint, we can consistently differentiate between $\bP$ and $\bQ$ if the likelihood ratio diverges. Intuitively, if only low-degree polynomial algorithms are allowed, then the low-degree polynomial projection of the likelihood ratio must diverge. The projection of the likelihood ratio onto the space of low-degree polynomials is given by
\begin{equation}\label{eq:lrcs-low-deg}
	\Lambda_{\le D}:=\max_{f\in \RR_{\le D}[\bX_1,\ldots, \bX_n]}\dfrac{\EE_{\bA=\bQ} f(\bX_1,\ldots, \bX_n)}{\sqrt{\EE_{\bA=\bP} f^2(\bX_1,\ldots, \bX_n)}}.
\end{equation}
See, e.g., \cite{hopkins2018statistical}. The above optimization over all possible polynomials has a solution in terms of the ratio of the likelihood functions of $\bX$ under $H_0$ and $H_1$. Let $\mu$ and $\nu$ be the laws of $\bX=\bP\bS$ and $\bX=\bQ\bS$ respectively. We assume that both $\mu$ and $\nu$ are absolutely continuous with respect to the Lebesgue measure, and write the likelihood ratio under these two measures as
$$
L(\bX_1,\dots,\bX_n)=
\dfrac{L_{\nu}(\bX_1,\dots,\bX_n)}{L_{\mu}(\bX_1,\dots,\bX_n)}
=\prod_{i=1}^n
\prod_{k=1}^d
\dfrac{f_k(\bq_k^{\top}\bX_i)}{f_k(\bp_k^{\top}\bX_i)}
$$
where $f_k$ is the density of $S_k$. With $\underline{\bX}=(\bX_1,\dots,\bX_n)$, it can be shown that
\begin{equation}\label{eq:def-low-Lamb-0}
	\Lambda^2_{\le D}= \sum_{t=1}^N\langle L,b_t\rangle^2
	=\sum_{t=1}^N
	\left(\EE_{\mu^{\otimes n}}\left(L(\underline{\bX})b_t(\underline{\bX})\right)
	\right)^2
	=\sum_{t=1}^N
	\left(\EE_{\nu^{\otimes n}}\left(b_t(\underline{\bX})\right)
	\right)^2,
\end{equation}
where $\{b_t\}_{t=1}^N$ is a basis for polynomials of degree at most $D$, and the last equality follows by the definition of the likelihood ratio. By choosing an appropriate basis $\{b_t\}_{t=1}^N$, we can show that
\begin{lemma}\label{lem:comp-low} 
Let $\bX_1,\ldots, \bX_n$ be independent copies of $\bX=\bA\bS$. Suppose $D\le c_0\log d$ for some constant $c_0>0$. Then there is a constant $C>0$ such that for any $\bP\in\calO(d)$, there exists a $\bQ\in\calO(d)$ such that
\begin{equation}\label{eq:comp-low-2}
		\ell_M(\bP,\bQ)=\dfrac{1}{\sqrt{2}}
		\quad
		{\rm and}
		\quad 
		\EE\Lambda_{\le D} \le 2,
	\end{equation}
	whenever $n\le d^2/(CD^2(\log d)^{10})$. 
\end{lemma}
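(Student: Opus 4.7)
The plan is to exhibit $\bQ$ as $\bP$ post-multiplied by a random $\pi/4$-rotation in a uniformly chosen $2$-plane of coordinates, and to bound the low-degree projection of the resulting mixture likelihood via an orthonormal polynomial expansion in which most multi-indices vanish by coordinate-wise independence and moment matching. Concretely, draw $(i^*,j^*)$ uniformly from $\{(i,j):1\le i<j\le d\}$, let $\bR^{(i^*,j^*)}\in\calO(d)$ denote the $\pi/4$-rotation in the $(\be_{i^*},\be_{j^*})$-plane and identity elsewhere, and set $\bQ:=\bP\bR^{(i^*,j^*)}$. Every realization satisfies $\ell_M(\bP,\bQ)=\sin(\pi/4)=1/\sqrt{2}$ (the identity permutation is optimal), so the first conclusion holds almost surely; I interpret $\EE\Lambda_{\le D}$ as the low-degree projection of the likelihood ratio of $\mu^{\otimes n}$ against the mixture $\bar\nu^{\otimes n}$ with $\bar\nu:=\EE_{(i^*,j^*)}\nu_{(i^*,j^*)}$. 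For the lower bound one may specialize the sources to iid draws from a bounded symmetric distribution with $|\kappa_4|\in[M_1^{-1},M_1]$, so that all polynomial moments are uniformly bounded.

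Next, I would pick the basis adapted to the null: let $\{h_{k,m}\}_{m\ge 0}\subset L^2(\calL(S_k))$ be orthonormal polynomials with $h_{k,0}\equiv 1$, and for $\bm=(m_{i,k})_{i\in[n],k\in[d]}$ with $|\bm|:=\sum m_{i,k}\le D$ set $b_\bm(\underline{\bX}):=\prod_{i,k}h_{k,m_{i,k}}(\bp_k^\top\bX_i)$. Under $\mu^{\otimes n}$ the variables $\bp_k^\top\bX_i$ are iid copies of $S_k$, so $\{b_\bm\}$ is orthonormal. Writing $\bm_i:=(m_{i,k})_{k\in[d]}$ and $\beta^{(i^*,j^*)}(\bm_i):=\EE_\bS[\prod_k h_{k,m_{i,k}}(\sum_j R^{(i^*,j^*)}_{kj}S_j)]$, formula \eqref{eq:def-low-Lamb-0} specializes to
$$\Lambda_{\le D}^2=\sum_{|\bm|\le D}\Bigl(\EE_{(i^*,j^*)}\prod_{i=1}^n\beta^{(i^*,j^*)}(\bm_i)\Bigr)^2.$$
Since $R^{(i^*,j^*)}_{k,\cdot}=\be_k^\top$ for $k\notin\{i^*,j^*\}$, the corresponding factor reduces to $\EE[h_{k,m_{i,k}}(S_k)]=\delta_{m_{i,k},0}$, forcing $\supp(\bm_i)\subseteq\{i^*,j^*\}$ whenever $\beta^{(i^*,j^*)}(\bm_i)\ne 0$. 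Setting $\sigma(\bm):=\bigcup_i\supp(\bm_i)$, only indices with $|\sigma(\bm)|\le 2$ contribute, and for $|\sigma|=2$ only a single $(i^*,j^*)$ matches $\sigma(\bm)$, yielding an averaging factor $1/\binom{d}{2}=O(d^{-2})$.

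The surviving coefficients are then controlled by moment matching. The joint moments of $(\bp_{i^*}^\top\bX,\bp_{j^*}^\top\bX)$ under $\mu$ and $\nu_{(i^*,j^*)}$ agree through order $3$ (zero means, identity covariance, vanishing third joint moments by symmetry of the sources), so expanding $h_{i^*,a}h_{j^*,b}$ in monomials forces $\beta^{(i^*,j^*)}(\bm_i)=0$ whenever $1\le|\bm_i|\le 3$ on both single- and two-coordinate supports. For $|\bm_i|\ge 4$, a hypercontractive bound on orthonormal polynomials of the bounded source gives $|\beta^{(i^*,j^*)}(\bm_i)|\le(C|\bm_i|)^{O(1)}$, so $\bigl(\prod_i|\beta|\bigr)^2\le(CD)^{O(s)}$ with $s:=|\{i:\bm_i\ne 0\}|\le D/4$. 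Counting admissible $\bm$ with fixed $\sigma$ by $\binom{n}{s}(CD)^{O(s)}$ and summing over the $\binom{d}{2}$ choices of $\sigma$ gives
$$\Lambda_{\le D}^2-1\ \le\ \frac{2}{d^2}\sum_{s=1}^{D/4}\binom{n}{s}(CD)^{O(s)}\ \le\ \sum_{s=1}^{D/4}\Bigl(\frac{Cn(CD)^{C_1}}{sd^2}\Bigr)^s,$$
with an analogous bound for the $|\sigma|=1$ branch (source symmetry again kills degrees $\le 3$ on single-coordinate support, which pairs with the weaker $1/d$ averaging to deliver the same geometric decay). Substituting $n\le d^2/(CD^2(\log d)^{10})$ with $D\le c_0\log d$ makes each summand's ratio at most $1/2$; hence $\Lambda_{\le D}^2\le 2$, so $\EE\Lambda_{\le D}\le\sqrt{2}\le 2$.

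The principal obstacle is establishing the sharp per-sample bound $|\beta^{(i^*,j^*)}(\bm_i)|\le(C|\bm_i|)^{O(1)}$: controlling orthonormal polynomial moments of rotated combinations of the sources under only the $(8+\epsilon)$-moment hypothesis requires either a hypercontractive inequality or a reduction to a convenient bounded symmetric source as above. A secondary subtlety is ensuring that the single-coordinate branch $|\sigma|=1$ does not dominate, which relies on third-moment matching being preserved under single-coordinate rotations (again via source symmetry). Finally, tracking the precise polylogarithmic exponent $(\log d)^{10}$ requires careful bookkeeping of the absolute constant $C_1$ arising from both the per-sample moment bound and the combinatorial count of degree assignments consistent with $|\bm|\le D$.
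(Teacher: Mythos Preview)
Your construction does not deliver what the lemma asks for: a \emph{single} $\bQ$ with $\EE\Lambda_{\le D}\le 2$. Your $d^{-2}$ gain comes entirely from averaging over the random pair $(i^*,j^*)$, i.e.\ from replacing $\nu_\bQ$ by the mixture $\EE_{(i^*,j^*)}\nu_{(i^*,j^*)}$. For any fixed $\bQ=\bP\bR^{(i^*,j^*)}$ with your deterministic basis, the per-sample coefficient $\beta^{(i^*,j^*)}(\bm_i)$ at $m_{i,i^*}=4$ (all other entries zero) equals $\EE[h_{i^*,4}((S_{i^*}+S_{j^*})/\sqrt2)]$, which is a fixed nonzero multiple of $\kappa_4(S_{i^*})$ by your own fourth-moment mismatch. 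Thus already the $s=1$ contribution to $\Lambda_{\le D}^2(\bQ)$ is of order $n$, and $\Lambda_{\le D}(\bQ)\gtrsim\sqrt n$ in the target regime $n\asymp d^2$. The probabilistic extraction route fails too: in $\EE_{(i^*,j^*)}\Lambda_{\le D}^2(\bQ_{(i^*,j^*)})$ the $1/\binom{d}{2}$ from averaging is exactly cancelled by the $\binom{d}{2}$ choices of two-coordinate support $\sigma(\bm)$, so no $d^{-2}$ remains. Your interpretation of $\EE\Lambda_{\le D}$ as ``against the mixture $\bar\nu^{\otimes n}$'' is therefore not a proof of the lemma as stated, and it is also incompatible with how the lemma is consumed downstream (Lemma~\ref{lem:comp-low2} needs a fixed $\bQ$).

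The paper's device is dual to yours: it takes $\bQ$ deterministic (the $\pi/4$ rotation in coordinates $1,2$, so $\|\bP-\bQ\|_{\rm F}^2=2(\sqrt2-1)$) and puts the randomness in the \emph{basis}: Hermite polynomials of Gaussian projections $h_\alpha(\bZ_j^\top\bX_i/\sqrt d)$ with $\bZ_j\sim N(0,I_d)$. The expectation in $\EE\Lambda_{\le D}$ is over $\bZ$. The $d^{-2}$ then arises not from coordinate-support sparsity but from the $1/\sqrt d$ scaling combined with the Hermite identity $\EE_Z h_\alpha(\rho X+\sqrt{1-\rho^2}Z)=\rho^\alpha h_\alpha(X)$: after averaging over $\bZ$ and $\bS$, each factor
$h_\alpha(\bZ_j^\top\bP\bS_i/\sqrt d)-h_\alpha(\bZ_j^\top\bQ\bS_i/\sqrt d)$ contributes $\|\bP-\bQ\|_{\rm F}^2(\log d)^{O(1)}/d$ together with $((\log d)^{O(1)}/d)^{\alpha/2}$, summing to $\sum_s(CnD^2(\log d)^{10}/d^2)^{s/4}$. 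Your moment-matching and counting instincts are fine, but the mechanism that produces the $d^{-2}$ must live in the basis, not in a randomized alternative.
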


We can then leverage Lemma \ref{lem:comp-low} to prove Theorem \ref{th:comp-lower}. To this end, suppose we have an estimator $\hat{\ba}_1$ of the first column of $\bA$ that can be expressed as a polynomial of $\underline{\bX}$ with order up to $D$. For $\bP\in\calO(d)$, consider the low-degree polynomial
$$
g(\underline{\bX})=(\hat{\ba}_1^{\top}\bX_n)^4-(\bp_1^{\top}\bX_n)^4.
$$
Intuitively, if $\hat{\ba}_1$ is a good estimate, say, $\PP_{\bA}\left(|\langle\hat{\ba},\ba_1\rangle|>0.9\right)\ge 0.9$, then we shall be able to tell $\bP$ and $\bQ$ apart based on the value of $g(\underline{\bX})$, and consequently, ensure that the ratio ${\EE_{\bA=\bQ}g(\underline{\bX})}/{\sqrt{\EE_{\bA=\bP}g^2(\underline{\bX})}}$ is sufficiently large. This, however, contradicts Lemma \ref{lem:comp-low} which states that it cannot be larger than 2.

\section{Computationally Tractable ICA}\label{sec:tract}

In light of the discussion from the previous section, we shall focus on the case when $n\gtrsim d^2$ and develop a computationally tractable ICA approach that is minimax optimal in this regime. The main challenge for ICA when $d$ is large is the fact that $\hat{f}$ is highly nonconvex and can have many more than $d$ local optima most of which are not close to any of the $\ba_j$s. There are two general strategies to overcome this challenge. One is to incorporate occasional jumps to the iterations to make it possible to escape a bad local optimum. See, e.g., \cite{belkin2018eigenvectors}. The other is to ensure the initialization is sufficiently close to a good local optimum. In particular, our approach is closely related to and inspired by a careful inspection of the popular FastICA algorithm. To this end, we shall first briefly review the FastICA procedure and highlight its perils and challenges when $d$ is large.

\subsection{FastICA and Its Pitfalls when $d$ is Large}

As noted, FastICA is a fixed-point algorithm to optimize the sample kurtosis:
$$
\hat{f}(\bu)=\hat{\kappa}_4^{\rm sample}(\bu^\top \bX)={1\over n}\sum_{i=1}^n (\bu^\top\bX_i)^4-3,
$$
over $\bu\in {\mathbb S}^{d-1}$. The algorithm consists of three parts: initialization which is typically sampled uniformly from the sphere ${\mathbb S}^{d-1}$, fixed-point iterations either with a prespecified number of iterations or until a certain convergence criterion is met, and deflation where an estimated independent component is removed by projecting the observations onto its orthogonal complement. These steps are repeated until all independent components are recovered. See \cite{hyvarinen2000independent} for details.
	
	\begin{algorithm}[htbp]
		\caption{FastICA}\label{alg:alg-ica}
		\hspace*{\algorithmicindent} \textbf{Input:} $\bX_1,\ldots, \bX_n$
		\begin{algorithmic}[1]
			\For {$j=1$ to $d$}
			\State $\hat{\ba}_j^{[0]}\leftarrow$ 
			initialize 
			\COMMENT{Initialization}
			\For{$t=1$ to $T$} \COMMENT{Fixed-point iteration}
			\State $\hat{\ba}_j\leftarrow \hat{\ba}_j^{[t-1]}-{1\over 3n}
			\displaystyle
			\sum_{i=1}^n[\bX_i((\hat{\ba}_j^{[t-1]})^\top\bX_i)^3]$
			\State $\hat{\ba}_j^{[t]}\leftarrow {\hat{\ba}_j\over \|\hat{\ba}_j\|}$
			\EndFor
			\State $\bX_i\leftarrow \bX_i-((\hat{\ba}_j^{[t]})^\top\bX_i)\hat{\ba}_j^{[t]}, \quad i=1,\ldots, n$ \COMMENT{Deflation}
			\EndFor\\
			\Return $\{\hat{\ba}_j^{[T]}:1\le j\le d\}$.
		\end{algorithmic}
	\end{algorithm}

A good initialization is arguably the most critical to the success of the algorithm because $\hat{f}(\cdot)$ may have exponentially many local optima most of which are not necessarily close to any of the $\ba_j$s. Indeed, the fact that initialization plays an important role in FastICA is widely known in practice. When $d$ is small, this issue can be resolved by running the FastICA algorithm with multiple random initializations to ensure that we start from somewhere close to a mixing direction at least for one try. However, the difficulty is exacerbated with a larger $d$, and this simple strategy may no longer be practical. To see this, note that a uniformly sampled direction $\bu\in \SS^{d-1}$ satisfies
$$
\max_{1\le k\le d}|\langle \bu,\ba_k\rangle|=O_p(\sqrt{d^{-1}\log d}).
$$
For small $d$, this means that there exists a direction $\ba_k$ such that  $|\langle \bu,\ba_k\rangle|$ is bounded away from 0. But, when $d$ is large, this is no longer the case and $\bu$ will be nearly orthogonal to all $\ba_k$s! 

This problem, unfortunately, cannot be easily resolved with multiple rounds of random initialization. Consider repeatedly sampling $L$ directions $\bu_1,\ldots, \bu_L$. An application of the union bound yields
$$
\max_{1\le l\le L}\max_{1\le k\le d}|\langle \bu_l,\ba_k\rangle|=O_p(\sqrt{d^{-1}(\log d+\log L)}).
$$
Thus, it is going to take exponentially many, i.e., $\exp(\Omega(d))$, random initializations to ensure that at least one $\bu_l$ has a nonvanishing inner product with one of the mixing directions, which makes it computationally infeasible. 

Another common strategy of initialization for ICA is via suitable matricization of the cumulant tensor. See, e.g., \cite{hyvarinen2000independent}. Specifically, one collapses the first two modes and the last two modes of $\hat{\scrM}^{\rm sample}_4(\bX)-\scrM_0$ to form a $d^2\times d^2$ matrix  $\calM_{(1,2)(3,4)}(\hat{\scrM}^{\rm sample}_4(\bX)-\scrM_0)$. Let $\calM_{(1,2)(3,4)}(\hat{\scrM}^{\rm sample}_4(\bX)-\scrM_0)=UDV^\top$ be its singular value decomposition and denote by $\bu_1$ the first column vector of $U$. Next $\bu_1$ is reshaped into a $d\times d$ matrix, denoted by $U_1$. We set the initial value $\ba_1^{[0]}$ to be the leading left singular vector of $U_1$.  The rationale behind this approach is that $\scrM^{\rm sample}_4(\bX)-\scrM_0$ estimates
$$
\scrM_4(\bX)-\scrM_0=\sum_{k=1}^d \kappa_4(S_k) \ba_k\circ \ba_k\circ \ba_k\circ \ba_k.
$$
Note that $\ba_k\otimes\ba_k$s are the eigenvectors of $\calM_{(1,2)(3,4)}(\scrM_4(\bX)-\scrM_0)$:
\begin{equation}\label{eq:mat-cuml4}
	\calM_{(1,2)(3,4)}(\scrM_4(\bX)-\scrM_0)=\sum_{k=1}^d \kappa_4(S_k)(\ba_k\otimes \ba_k)\circ (\ba_k\otimes \ba_k),
\end{equation}
where $\otimes$ represents the Kronecker product. However, the eigenvectors of $\calM_{(1,2)(3,4)}(\scrM_4(\bX)-\scrM_0)$ are uniquely defined only if $\kappa_4(S_k)$s are distinct. Even if this is true, the validity of this approach will depend on the differences among $\kappa_4(S_k)$s relative to the estimation error $\hat{\scrM}^{\rm sample}_4(\bX)-\scrM_4(\bX)$. All of these impose unnecessary and sometimes unrealistic assumptions on the sources.

In addition to the initialization, when $d$ is more than a handful, one also needs to be concerned with the computational complexity of any ICA algorithm: how does it scale with an increasing dimensionality and how many iterations are required to ensure a certain precision even if we start with a ``good'' initial value? Of course, the situation gets even more involved as we sequentially recover the independent components. As each unmixing direction is estimated and removed through deflation, how their estimation error accumulates and affects the estimation of remaining directions is essential to understand if the algorithm works for moderate or large $d$. Also pertinent is the statistical performance of the final estimates: perhaps the most important distinction from earlier analysis of this type of iterative algorithm is that we prove that the unmixing directions can be estimated at the minimax optimal rate of convergence that is far superior to the estimated kurtosis.

We shall discuss how all these challenges can be addressed through careful analyses and propose an improved estimating procedure that is both computationally tractable and statistically efficient.

\subsection{Initialization}

The goal of initialization is to start with a nontrivial estimate $\hat{\ba}_k^{[0]}$. More specifically, we want to make sure that $\sin\angle (\hat{\ba}_k^{[0]},\ba_k)$ is bounded away from $1$. Two main ideas behind our proposal is random slicing and improved moment estimation which we shall now describe in detail. 

\subsubsection{Random Slicing}
We begin with random slicing. The idea of random slicing is commonly used for tensor decomposition. See, e.g., \cite{anand2014sample}. We shall take a closer look at its operating characteristics which, interestingly, suggest existing implementations of the random slicing are suboptimal, before discussing ways to improve them.

Let $\tilde{\scrM}_4(\bX)$ be an estimate of $\scrM_4(\bX)$ and write $\tilde{\scrM}_4(\bX)=\scrM_4(\bX)+\scrE$ where $\scrE$ is its estimation error. Suppose that $G$ is a $d\times d$ random matrix whose entries are independently drawn from the standard normal distribution. Then $(\tilde{\scrM}_4(\bX)-\scrM_0)\times_{3,4}G$ is a random combination of the $(1,2)$ slices of the estimated cumulant tenor $\tilde{\scrM}_4(\bX)-\scrM_0$:
\begin{equation}\label{eq:slice}
	(\tilde{\scrM}_4(\bX)-\scrM_0)\times_{3,4}G=\sum_{k=1}^d \kappa_4(S_k)(\ba_k^\top G\ba_k)\ba_k\circ\ba_k+\scrE\times_{3,4}G.
\end{equation}

\paragraph{Eigengap in the signal.} The first part on the right-hand side of \eqref{eq:slice} represents the signal that we wish to recover. It has eigenvalues $\{\kappa_4(S_k)(\ba_k^\top G\ba_k): 1\le k\le d\}$. Because of the randomness in $G$, even if all $\kappa_4(S_k)$s are equal, these eigenvalues are necessarily different. Moreover, we can create a large eigengap between the top two singular values of the signal through multiple rounds of random slicing. More specifically, suppose that we repeat the random slicing $L$ times with independently generated Gaussian random matrix $G_1,\ldots, G_L$. Assume, without loss of generality, that $|\kappa_4(S_1)|\ge |\kappa_4(S_k)|$ for all $k>1$. The ``eigenvalues'' corresponding to $\ba_1$ are $\{\kappa_4(S_1)(\ba_1^\top G_l\ba_1): 1\le l\le L\}$. These are independent normal random variables so that
\begin{equation}\label{eq:defLstar}
	\max_{1\le l\le L}|\kappa_4(S_1)(\ba_1^\top G_l\ba_1)|=|\kappa_4(S_1)|\sqrt{2\log L}(1+o_p(1)).
\end{equation}
Denote by $L^\ast$ be the maximizing index. Note that for any $k>1$, $\ba_k^\top G_{L^\ast}\ba_k$ is independent of $\ba_1^\top G_{L^\ast}\ba_1$. Therefore
$$
\max_{k>1}|\kappa_4(S_k)(\ba_k^\top G_{L^\ast}\ba_k)|=\max_{k>1}|\kappa_4(S_k)|\sqrt{2\log d}(1+o_p(1)).
$$
If we take $L\ge d^2$, then for the $L^\ast$ round of random slicing, the leading singular value of the first term on the rightmost-hand side of \eqref{eq:slice} is $2|\kappa_4(S_1)|\sqrt{\log d}(1+o_p(1))$ whereas its second largest singular value is no bigger than $|\kappa_4(S_1)|\sqrt{2\log d}(1+o_p(1))$, and therefore creating an eigengap $(2-\sqrt{2})|\kappa_4(S_1)|(\log d)^{1/2}(1+o_p(1))$.

\paragraph{Error bounds for the noise.} The second term on the right-hand side of \eqref{eq:slice} is the ``noise'' that we wish to be able to control. Write $\scrE=\scrE_1+\scrE_2$ where
$$
\scrE_1=(\scrE\times_3 \ba_1\times_4\ba_1)\circ \ba_1\circ \ba_1.
$$
It is easy to see that
$$
\|\scrE_1\times_{3,4}G_{L^\ast}\|=|\ba_1^\top G_{L^\ast}\ba_1|\|\scrE\times_3 \ba_1\times_4\ba_1\|\le |\ba_1^\top G_{L^\ast}\ba_1|\|\scrE\|,
$$
where, as before, $L^\ast$ is the slicing that maximizes the left-hand side of \eqref{eq:defLstar}. Therefore we can make sure it is dominated by $|\ba_1^\top G_{L^\ast}\ba_1|$ if $\|\scrE\|$ is made sufficiently small. 

On the other hand, conditional on the observations $\bX_1,\ldots, \bX_n$,
$$
\scrE_2\times_{3,4}G_{L^\ast}=\scrE_2\times_{3,4} (\II_d-\ba_1\circ \ba_1)G_{L^\ast}
$$
is again independent of $\ba_1^\top G_{L^\ast}\ba_1$. An application of concentration inequality for matrix Gaussian sequence from \cite{tropp2015introduction} yields, with probability at least $1-e^{-t^2/2}$,
$$
\|\scrE_2\times_{3,4}G_{L^\ast}\|\le \|\calM_{(1,3,4)(2)}(\scrE_2)\|(\sqrt{2\log d}+t)\le \|\calM_{(1,3,4)(2)}(\scrE)\|(\sqrt{2\log d}+t),
$$
assuming that $\tilde{\scrM}_4$ is symmetric. Here, similar to before, $\calM_{(1,3,4)(2)}(\cdot)$ denotes the reshaping of fourth-order tensor into a matrix by collapsing its first, third, and fourth indices.

Together, we have
\begin{eqnarray*}
\|\scrE\times_{3,4}G_{L^\ast}\|&\le& \|\scrE_1\times_{3,4}G_{L^\ast}\|+\|\scrE_2\times_{3,4}G_{L^\ast}\|\\
&\le& \sqrt{2\log L}\|\scrE\|(1+o_p(1))+\|\calM_{(1,3,4)(2)}(\scrE)\|(\sqrt{2\log d}+O_p(1)).
\end{eqnarray*}

\paragraph{Conditions for moment estimation.} If we can make sure that
\begin{equation}\label{eq:estcond}
\|\tilde{\scrM}_4-\scrM_4\|\le c|\kappa_4(S_1)|, \qquad{\rm and}\qquad \|\calM_{(1,3,4)(2)}(\tilde{\scrM}_4-\scrM_4)\|\le c|\kappa_4(S_1)|
\end{equation}
for a sufficiently small constant $c>0$, then an application of the Davis-Kahn-Wedin Theorem immediately yields that $\bu_{L^\ast}$ is a nontrivial estimate of $\ba_1$ in that $\sin\angle(\bu_{L^\ast}, \ba_1)$ can be bounded away from 1, where $\bu_{L^\ast}$ is the leading singular vector of $(\tilde{\scrM}_4(\bX)-\scrM_0)\times_{3,4}G_{L^\ast}$. This brings our attention back to moment estimation.

\subsubsection{Moment Estimation Revisited}

An obvious choice for $\tilde{\scrM}_4$ in our previous discussion is the sample moment tensor $\hat{\scrM}_4^{\rm sample}(\bX)$. Indeed it can be shown to obey the first inequality in \eqref{eq:estcond} when $n\gtrsim d^2$. More specifically,

\begin{lemma}\label{lem:tens-conc}
Let $\bX_1,\ldots,\bX_n$ be $n$ independent copies of $\bX$ such that $\calL(\bX)\in\calP_{\rm ICA}(\bA;\epsilon, M_1, M_2)$ for some $\bA\in \calO(d)$ and constants $\epsilon, M_1,M_2>0$.Then there exist constants $C_1,C_2>0$ such that, for any $t_1,t_2>0$,
$$
\norm*{\hat{\scrM}_4^{\rm sample}(\bX)-\scrM_4(\bX)}
\ge C_1(t_1+t_2)\sqrt{\dfrac{d^2}{n}}
$$
with probability at most $\exp(-C_2n^{\epsi/8})+\exp(-C_2t_1)+
(C_2n^{\epsi/8}t_2^{2+\epsi/4})^{-1}$.
\end{lemma}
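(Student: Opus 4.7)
The plan is to combine a standard epsilon-net reduction on $\SS^{d-1}$ with a truncation of the heavy-tailed summands, analyzing the bounded part by a Bernstein-type inequality and the tail part by moment bounds. Write $\scrE := \hat{\scrM}_4^{\rm sample}(\bX) - \scrM_4(\bX)$, so that $\|\scrE\|=\sup_{\bu\in\SS^{d-1}}|\scrE(\bu,\bu,\bu,\bu)|$ up to a universal constant (since $\scrE$ is symmetric). A covering argument for degree-four homogeneous forms gives a $(1/8)$-net $\calN\subset\SS^{d-1}$ with $|\calN|\le 17^d$ such that
$$\|\scrE\|\le c_0\sup_{\bu\in\calN}|\scrE(\bu,\bu,\bu,\bu)|.$$
For every fixed $\bu$, the variable $\bu^\top\bX=\sum_j\langle\bu,\ba_j\rangle S_j$ is a sum of independent mean-zero unit-variance variables with bounded $(8+\eps)$-th moment, so Rosenthal's inequality produces $\EE(\bu^\top\bX)^{8+\eps}\le M'$ uniformly in $\bu$. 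The problem reduces to controlling $\hat\theta_\bu-\theta_\bu$ uniformly on $\calN$, where $\theta_\bu=\EE(\bu^\top\bX)^4$.

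Second, for a threshold $R_n$ to be calibrated, decompose $\bX_i^{\otimes 4}=\bX_i^{\otimes 4}\mathbbm{1}\{\|\bX_i\|\le R_n\}+\bX_i^{\otimes 4}\mathbbm{1}\{\|\bX_i\|>R_n\}$ and write $\scrE=\scrE_{\rm bdd}+\scrE_{\rm tail}$. For each $\bu\in\calN$ the summands of $\scrE_{\rm bdd}(\bu,\bu,\bu,\bu)$ are bounded in absolute value by $R_n^4$ and have variance at most $\EE(\bu^\top\bX)^8\le M'$ uniformly. Applying Bernstein's inequality and union-bounding over $\calN$ then yields $\|\scrE_{\rm bdd}\|\le C_1 t_1\sqrt{d^2/n}$ except on an event of probability at most $\exp(-C_2 t_1)$: the $\exp(O(d))$ cost of the net is absorbed by the $t_1^2 d^2$ term in the Bernstein exponent once $t_1,d\ge 1$.

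For the tail piece a uniform bound is available without any net: for every $\bu\in\SS^{d-1}$,
$$|\scrE_{\rm tail}(\bu,\bu,\bu,\bu)|\le \frac{1}{n}\sum_{i=1}^n\|\bX_i\|^4\mathbbm{1}\{\|\bX_i\|>R_n\}+\EE\|\bX\|^4\mathbbm{1}\{\|\bX\|>R_n\}.$$
The deterministic term is controlled by H\"older: $\EE\|\bX\|^4\mathbbm{1}\{\|\bX\|>R_n\}\le \EE\|\bX\|^{8+\eps}/R_n^{4+\eps}\lesssim d^{(8+\eps)/2}/R_n^{4+\eps}$, using the moment estimate $\EE\|\bX\|^{8+\eps}\le M_2 d^{(8+\eps)/2}$ from Minkowski. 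The random sum is handled via Markov's inequality applied at the $(2+\eps/4)$-th moment combined with a Rosenthal-type bound for sums of independent non-negative summands; the identity $(2+\eps/4)/2-1=\eps/8$ is what causes the factor $n^{\eps/8}$ to appear. A separate Bernstein-for-indicators argument on the number of outliers $\#\{i:\|\bX_i\|>R_n\}$ yields the $\exp(-C_2 n^{\eps/8})$ contribution, while the Markov step at level $C_1 t_2\sqrt{d^2/n}$ yields the residual $(C_2 n^{\eps/8}t_2^{2+\eps/4})^{-1}$.

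The main technical obstacle is the joint calibration of $R_n$. It must be large enough that the deterministic expectation $\EE\|\bX\|^4\mathbbm{1}\{\|\bX\|>R_n\}$ fits within $t_2\sqrt{d^2/n}$ and simultaneously small enough that the Bernstein bound for $\scrE_{\rm bdd}$ stays in its favorable regime after paying the $\exp(O(d))$ net cost. The natural balance is $R_n\asymp d^{1/2}n^{1/(8+\eps)}$ up to constants; with this choice one must verify that Rosenthal's bound for the truncated sum produces the polynomial decay with the exact exponent $\eps/8$ stated in the lemma across all regimes of $t_1,t_2>0$, which constitutes the bulk of the bookkeeping. Once this calibration is established, the three probability terms emerge by a simple union bound over the bounded-part event, the outlier-count event, and the Markov event for the tail sum.
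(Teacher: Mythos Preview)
Your net-plus-scalar-Bernstein argument has a real gap at the bounded part. You assert that ``the $\exp(O(d))$ cost of the net is absorbed by the $t_1^2 d^2$ term in the Bernstein exponent,'' but this exponent only arises in the \emph{Gaussian} regime of Bernstein, i.e.\ when the variance term $M'$ dominates $R_n^4 t/3$. With $t=C_1 t_1 d/\sqrt n$ and your calibration $R_n^4\asymp d^2 n^{4/(8+\eps)}$ (indeed with any $R_n$ large enough to make the tail expectation small), one has $R_n^4 t\asymp t_1 d^3/n^{\eps/(2(8+\eps))}$, which exceeds the constant $M'$ for all $t_1$ of interest once $n$ is only polynomial in $d$. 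In the resulting sub-exponential regime the Bernstein exponent is $nt/R_n^4\asymp t_1 n^{\eps/(2(8+\eps))}/d$, and for $n\asymp d^2$ this is $t_1/d^{8/(8+\eps)}\ll d$; after multiplying by $17^d$ the union bound blows up. No choice of $R_n$ resolves this: the bound $|(\bu^\top\bX_i)^4|\le R_n^4$ is intrinsically loose by a factor of order $d^2$ (since $(\bu^\top\bX_i)^4$ is $O(1)$ in expectation but $\|\bX_i\|^4$ is order $d^2$), and this loss is exactly what prevents the sub-exponential term from competing with the exponential net.

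The paper avoids the net altogether by passing to the $d^2\times d^2$ matricization. Writing $\bY_i=\bS_i\otimes\bS_i$ and $\boldsymbol\alpha=\mathrm{vec}(\II_d)$, one has $\|\scrE\|\le \sup_{\bv}|(\bv\otimes\bv)^\top(\widehat M-M)(\bv\otimes\bv)|$, and then decomposes into a centered covariance term $\|\tfrac1n\sum(\bY_l-\boldsymbol\alpha)(\bY_l-\boldsymbol\alpha)^\top-\EE(\cdot)\|$ plus a cross term that reduces to the sample covariance of $\bS$. Each piece is now a matrix operator norm, so the truncated \emph{matrix} Bernstein inequality applies with only a $O(\log d^2)$ cost in the exponent rather than $O(d)$; the variance parameter $\sigma^2\asymp d^2$ and the range $L\asymp (nd^2)^{1/2}$ balance precisely to give the stated bound. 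This matricization is the missing structural idea in your outline.
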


However, a more careful inspection reveals that the second condition of \eqref{eq:estcond} does not hold for the sample moment tensor unless $n\gtrsim d^3$. This is because direct matricization of the sample moment tensor can induce significant bias. 

To illustrate this point, let us consider estimating $\calM_{(12)(3,4)}(\scrM_4(\bX))$. Note that it is also the second moment matrix of $\bY:=\bX\otimes \bX$, i.e.,
$$
\calM_{(12)(3,4)}(\scrM_4(\bX))=\texttt{cov}(\bY)+\EE(\bY)\EE(\bY)^\top=\texttt{cov}(\bY)+\texttt{vec}(\II_d)\texttt{vec}(\II_d)^\top.
$$
Likewise, $\calM_{(12)(3,4)}(\hat{\scrM}^{\rm sample}_4(\bX))$ is also the sample moment of $\bY$ and amounts to estimating both the covariance and expectation of $\bY$ by their sample counterpart. In particular, estimating $\texttt{vec}(\II_d)\texttt{vec}(\II_d)^\top$ by $\bar{\bY}\bar{\bY}^\top$ incurs a dominating and unnecessary estimation error of the order $\sqrt{d^3/n}$. Instead, a better estimate of $\calM_{(12)(3,4)}(\scrM_4(\bX))$ is
\begin{equation}\label{eq:defhatM}
\hat{\bM}:={1\over n}\sum_{i=1}^n (\bY_i-\bar{\bY})(\bY_i-\bar{\bY})^\top + \texttt{vec}(\II_d)\texttt{vec}(\II_d)^\top.
\end{equation}

\begin{lemma}\label{lem:mat-init-conc}
Let $\bX_1,\dots,\bX_n$ be $n$ independent copies of a random vector $\bX$ such that $\calL(\bX)\in\calP_{\rm ICA}(\bA;\epsi,M_1,M_2)$ for some $\bA\in \calO(d)$ and constants $\epsi,M_1,M_2>0$.Then there exist constants $C_1,C_2>0$ such that, for any $t_1,t_2>0$,
$$
\norm*{
		\hat{\bM}-\calM_{(12)(3,4)}(\scrM_4(\bX))
	}
	\ge C_1(t_1+t_2)\sqrt{\dfrac{d^2}{n}}
$$
with probability at most $\exp(-C_2n^{\epsi/8})+\exp(-C_2t_1)+(C_2n^{\epsi/8}t_2^{2+\epsi/4})^{-1}$.
\end{lemma}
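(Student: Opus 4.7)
The plan is to reduce the claim to concentration of the sample covariance matrix of the $d^2$-dimensional vector $\bY:=\bX\otimes\bX$, and then run the same truncation-plus-matrix-Bernstein scheme used to prove Lemma~\ref{lem:tens-conc}. The key structural observation is that the target matrix equals $\calM_{(12)(3,4)}(\scrM_4(\bX))=\EE[\bY\bY^\top]=\mathrm{cov}(\bY)+\bmu\bmu^\top$ with $\bmu:=\EE\bY=\mathrm{vec}(\II_d)$; subtracting this $\bmu\bmu^\top$ on both sides of the definition \eqref{eq:defhatM} and expanding $\bY_i-\bar\bY=(\bY_i-\bmu)-(\bar\bY-\bmu)$ gives
$$
\hat\bM-\calM_{(12)(3,4)}(\scrM_4(\bX))=\Bigl[\tfrac{1}{n}\sum_{i=1}^n(\bY_i-\bmu)(\bY_i-\bmu)^\top-\mathrm{cov}(\bY)\Bigr]-(\bar\bY-\bmu)(\bar\bY-\bmu)^\top.
$$

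The rank-one remainder is negligible. Because $\mathrm{tr}(\mathrm{cov}(\bY))=\sum_{j_1,j_2}\mathrm{Var}(X_{j_1}X_{j_2})\le\EE\|\bX\|^4=\EE\|\bS\|^4=O(d^2)$ by orthogonality of $\bA$, we have $\EE\|\bar\bY-\bmu\|^2=O(d^2/n)$, so $(\bar\bY-\bmu)(\bar\bY-\bmu)^\top$ has operator norm $O_p(d^2/n)$, which is absorbed into the target rate $\sqrt{d^2/n}$ in the relevant regime $n\gtrsim d^2$. This is precisely where the centring in the definition of $\hat\bM$ pays off: if we had instead estimated $\EE[\bY\bY^\top]$ by the direct second-moment matrix $n^{-1}\sum_i\bY_i\bY_i^\top$, we would have needed to estimate $\bmu\bmu^\top$ by $\bar\bY\bar\bY^\top$, and since $\|\bmu\|=\sqrt{d}$ this would have introduced the much larger $O(\sqrt{d^3/n})$ bias flagged in the discussion preceding the lemma.

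It remains to concentrate the first bracket around zero. Since $\EE\|\bY\|^{4+\epsi/2}=\EE\|\bS\|^{8+\epsi}\lesssim d^{4+\epsi/2}$, the summands have only $(4+\epsi/2)$ bounded moments and a direct matrix-Bernstein bound does not apply. The remedy is truncation at a level $R\asymp d\,n^{1/(8+\epsi)}$, decomposing the error into three pieces matching the three tail terms in the statement: (a) the probability that $\max_i\|\bY_i-\bmu\|>R$ is controlled by a union bound plus Markov with the $(4+\epsi/2)$-th moment and contributes $\exp(-C_2 n^{\epsi/8})$; (b) matrix Bernstein applied to the truncated, re-centred summands delivers the sub-exponential tail $\exp(-C_2 t_1)\sqrt{d^2/n}$ once the variance proxy $\norm{\EE[\|\bY-\bmu\|^2(\bY-\bmu)(\bY-\bmu)^\top]}$ is shown to be $O(d^2)$; (c) the residual bias from truncation is controlled by a Markov bound on the $(2+\epsi/4)$-th moment of the discarded portion, giving the polynomial tail $(C_2 n^{\epsi/8}t_2^{2+\epsi/4})^{-1}$. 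Balancing the three contributions mirrors the corresponding calibration in the proof of Lemma~\ref{lem:tens-conc}.

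The main obstacle is the variance-proxy estimate in step (b). A naive trace bound only yields $\mathrm{tr}\,\EE[\|\bY-\bmu\|^2(\bY-\bmu)(\bY-\bmu)^\top]=\EE\|\bY-\bmu\|^4=O(d^4)$, which is too weak. To extract the sharper operator-norm bound $O(d^2)$, I would write $\bY-\bmu=(\bA\otimes\bA)(\bS\otimes\bS-\mathrm{vec}(\II_d))$, test against an arbitrary unit vector $\bv$ (identified with a $d\times d$ matrix $V$ via reshaping), and exploit the independence of the $S_j$'s together with their finite $(8+\epsi)$-th moments to reduce the resulting eighth-order polynomial in $\bS$ to a sum over $O(d^2)$ non-vanishing index patterns. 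This combinatorial bookkeeping, entirely parallel to the source-coordinate expansion used in the proof of Lemma~\ref{lem:tens-conc}, is the delicate step; once it is carried out, the three tail contributions assemble into the stated bound by routine calculation.
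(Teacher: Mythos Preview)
Your plan is correct and matches the paper's approach: the same algebraic decomposition
\[
\hat{\bM}-\calM_{(12)(3,4)}(\scrM_4(\bX))=\Bigl[\tfrac{1}{n}\sum_i(\bY_i-\bmu)(\bY_i-\bmu)^\top-\mathrm{cov}(\bY)\Bigr]-(\bar\bY-\bmu)(\bar\bY-\bmu)^\top,
\]
the same truncation-plus-matrix-Bernstein argument for the bracketed term (packaged in the paper as Lemma~\ref{lem:varY-conc}), and the same variance-proxy bound $\norm{\EE[(\bY-\bmu)(\bY-\bmu)^\top]^2}=O(d^2)$ obtained by an entrywise computation in $\bS$-coordinates.

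One small correction to your step (a): a union bound plus Markov from a polynomial $(4+\epsi/2)$-th moment cannot yield the exponential tail $\exp(-C_2 n^{\epsi/8})$. The paper instead allows a \emph{bounded number} of observations to exceed the threshold: with $R\asymp (nd^2)^{1/4}$ one has $p:=\PP(\|\bY\|>R)\le n^{-1-\epsi/8}$, so the count $|M|:=|\{i:\|\bY_i\|>R\}|$ has mean $np\le n^{-\epsi/8}$, and a Chernoff bound on this Binomial gives $\PP(|M|>C)\le \exp(-C n^{\epsi/8})$. The polynomial tail in $t_2$ then comes from bounding $\max_i\|\bY_i\|$ on the $O(1)$ exceedances, and the truncation bias (your (c)) is handled deterministically by Cauchy--Schwarz rather than probabilistically. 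Your rank-one remainder argument via $\EE\|\bar\bY-\bmu\|^2=O(d^2/n)$ is also slightly looser than the paper's, which reuses the same concentration machinery to get $\|\bar\bY-\bmu\|\le C(t_1+t_2)\sqrt{d^2/n}$ with the matching tail; either suffices since $d^2/n\le\sqrt{d^2/n}$ in the regime $n\gtrsim d^2$.
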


In principle, we can view $\calM_{(12)(3,4)}^{-1}(\hat{\bM})$ as an estimate of the moment tensor $\scrM_4(\bX)$. However, it is designed for estimating $\calM_{(12)(3,4)}(\scrM_4(\bX))$ and can be further improved for estimating other matricizations of $\scrM_4(\bX)$. The main idea is the observation that $\calM_{(12)(3,4)}(\scrM_4(\bX)-\scrM_0)$ in \eqref{eq:mat-cuml4} is a $d^2\times d^2$ matrix but has rank $d$. In the following, we use a sample splitting idea to utilize this additional information.%

To this end, we divide the samples into $\calS_1$ and $\calS_2$ of size $n/2$. We then compute $\hat{\bM}_1$ and $\hat{\bM}_2$ as above using the samples in $\calS_1$ and $\calS_2$ respectively. Let $\hat{\bP}$ be the projection matrix onto the top-$d$ singular space of $\hat{\bM}_2-\calM_{(12)(3,4)}(\scrM_0)$. We shall proceed to estimate $\scrM_4(\bX)$ by
\begin{equation}\label{eq:def-scrM-init}
	\hat{\scrM}=\calM_{(12)(3,4)}^{-1}(\hat{\bP}(\hat{\bM}_1-\calM_{(12)(3,4)}(\scrM_0))\hat{\bP})+\scrM_0.
\end{equation}

The following lemma shows that the moment estimate $\hat{\scrM}$ satisfies the condition \eqref{eq:estcond}.
\begin{lemma}\label{lem:mat-init-conc2}
Let $\bX_1,\dots,\bX_n$ be $n$ independent copies of a random vector $\bX$ such that $\calL(\bX)\in\calP_{\rm ICA}(\bA;\epsi,M_1,M_2)$ for some $\bA\in \calO(d)$ and constants $\epsi,M_1,M_2>0$. Then there exist constants $C_1,C_2>0$ such that, for any $t_1,t_2>0$,
$$
\max\left\{\left\|\hat{\scrM}-\scrM_4(\bX)\right\|, \left\|\calM_{(123)(4)}(\hat{\scrM}-\scrM_4(\bX))\right\|\right\}\ge C_1(t_1+t_2)\sqrt{\dfrac{d^2}{n}}
$$
with probability at most $\exp(-C_2n^{\epsi/8})+\exp(-C_2t_1)+(C_2n^{\epsi/8}t_2^{2+\epsi/4})^{-1}$.
\end{lemma}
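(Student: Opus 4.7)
The plan is to reduce the tensor-level bounds to matrix-level bounds by working with the $(12)(3,4)$ matricization, exploiting the ODECO structure of $\scrM_4(\bX) - \scrM_0$ together with the sample splitting in the construction of $\hat\scrM$. Set $\bZ := \calM_{(12)(3,4)}(\scrM_4(\bX) - \scrM_0) = \sum_{k=1}^d \kappa_4(S_k)(\ba_k\otimes\ba_k)(\ba_k\otimes\ba_k)^\top$, $\hat\bZ_j := \hat\bM_j - \calM_{(12)(3,4)}(\scrM_0)$ for $j=1,2$, and let $\bP^*$ denote the orthogonal projection onto $\mathrm{range}(\bZ) = \mathrm{span}\{\ba_k\otimes\ba_k : 1 \le k \le d\}$. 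Applying Lemma \ref{lem:mat-init-conc} independently to each half gives $\|\hat\bZ_j - \bZ\| \le C(t_1+t_2)\sqrt{d^2/n}$ with the stated failure probability. Since the $d$ nonzero singular values of $\bZ$ all lie in $[M_1^{-1}, M_1]$, Wedin's theorem applied to $\hat\bZ_2$ yields $\|\Delta\| \lesssim M_1\sqrt{d^2/n}$ for $\Delta := \hat\bP - \bP^*$; crucially, $\hat\bP$ is independent of $\hat\bZ_1$ by sample splitting.

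For the tensor operator norm, since $\|\tilde\bT\|_{\mathrm{tensor}} \le \|\calM_{(12)(3,4)}(\tilde\bT)\|$ for any fourth-order tensor, it suffices to control the matrix operator norm of $\bE := \hat\bP\hat\bZ_1\hat\bP - \bZ$. Using $\bZ = \bP^*\bZ\bP^*$, I decompose
$$
\bE \,=\, \hat\bP(\hat\bZ_1 - \bZ)\hat\bP \,+\, \Delta\bZ \,+\, \bZ\Delta \,+\, \Delta\bZ\Delta,
$$
and each term has operator norm $\lesssim \sqrt{d^2/n}$ using $\|\bZ\| \le M_1$ together with the bounds above. This produces $\|\hat\scrM - \scrM_4(\bX)\| \lesssim \sqrt{d^2/n}$ in the regime $n \gtrsim d^2$.

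For the $(123)(4)$-matricization I rely on the identity
$$
\|\calM_{(123)(4)}(\tilde\bT)\|^2 \,=\, \sup_{\bv\in\SS^{d-1}} \|\calM_{(12)(3,4)}(\tilde\bT)(\II_d\otimes\bv)\|_F^2,
$$
where $\II_d\otimes\bv \in \RR^{d^2\times d}$ has $k$-th column $\be_k\otimes\bv$, together with the structural identity $\|\bP^*(\II_d\otimes\bv)\|_F = \|\bv\|$, which follows from orthonormality of $\{\ba_k\otimes\ba_k\}$. The principal stochastic contribution is $\bP^*(\hat\bZ_1 - \bZ)\bP^*$; its matrix entries in the ODECO basis are $\alpha_{kk'} := (\ba_k\otimes\ba_k)^\top(\hat\bZ_1-\bZ)(\ba_{k'}\otimes\ba_{k'})$, which are empirical cross-moments of $S_k^2, S_{k'}^2$ with vanishing mean (since $\mathrm{cov}(S_k^2,S_{k'}^2) = 0$ for $k\ne k'$ by independence, and the population term for $k = k'$ is cancelled by the $\scrM_0$ subtraction) and $O(n^{-1/2})$ fluctuations under the $(8+\epsilon)$-moment assumption. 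A direct calculation in the ODECO basis shows the $(123)(4)$-matricization operator norm of this term equals $\max_{k'}\sqrt{\sum_k \alpha_{kk'}^2} = O(\sqrt{d/n})$, comfortably within the target $\sqrt{d^2/n}$. The correction terms are handled by combining the operator-norm bound on $\Delta$ with the unit-Frobenius identity above and the rank-$d$ structure of $\bZ$.

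The main obstacle is controlling the $(123)(4)$-matricization norm of the asymmetric cross terms such as $\bZ\Delta$ at the rate $\sqrt{d^2/n}$, rather than the $\sqrt{d^3/n}$ rate obtained from elementary Cauchy--Schwarz that simply multiplies $\|\bZ\|$ by $\sqrt{d}\,\|\Delta\|$. Achieving the sharper bound requires exploiting the fact that $\Delta$ is the difference of two rank-$d$ orthogonal projections with small principal angles and, equivalently, that the ``off-diagonal block'' $\bP^{*\perp}(\hat\bZ_2 - \bZ)\bP^*$ driving Wedin's perturbation consists of empirical cross-moments $\widehat\EE[S_iS_jS_k^2]$ ($i\ne j$) with vanishing population mean; a careful accounting of these entries in the ODECO basis, rather than a blanket Frobenius-norm Wedin bound, is what preserves the $\sqrt{d^2/n}$ rate.
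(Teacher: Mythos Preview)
Your treatment of the tensor operator norm is fine and essentially matches the paper: both reduce to operator-norm control of the $(12)(3,4)$ matricization, invoke Lemma~\ref{lem:mat-init-conc} on each half, and use Davis--Kahan/Wedin for $\Delta=\hat\bP-\bP^\ast$. Your bound on the principal term $\bP^\ast(\hat\bZ_1-\bZ)\bP^\ast$ in the $(123)(4)$ norm is also correct (indeed $\max_{k'}\sum_k\alpha_{kk'}^2\le\|(\alpha_{kk'})\|^2\le\|\hat\bZ_1-\bZ\|^2$ already suffices).

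The gap is in the cross terms $\bZ\Delta$ and $\Delta\bZ$ for the $(123)(4)$ norm, and your last paragraph does not close it. Your own identity gives $\|\calM_{(123)(4)}(\bZ\Delta)\|^2=\sup_{\bv}\sum_m\kappa_m^2\|\bW_m\bv\|^2$ with $\bW_m=\mathrm{unvec}\bigl(\Delta(\ba_m\otimes\ba_m)\bigr)$, and controlling this at $d^2/n$ amounts to a Frobenius-type bound $\sum_m\|\bW_m\|^2=\|\Delta\bP^\ast\|_F^2\lesssim d^2/n$. But the off-diagonal block $\bP^{\ast\perp}(\hat\bZ_2-\bZ)\bP^\ast$ you point to has $d(d^2-d)$ entries, each an empirical cross-moment of size $n^{-1/2}$, so its Frobenius norm is generically of order $\sqrt{d^3/n}$, which via Wedin gives $\|\Delta\|_F\asymp\sqrt{d^3/n}$---precisely the obstruction you flagged. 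There is no cancellation to exploit here: the entries $\widehat{\EE}[S_iS_jS_k^2]$ are essentially independent mean-zero fluctuations, and ``careful accounting in the ODECO basis'' cannot reduce their aggregate Frobenius contribution. The same Frobenius leak appears in the cross pieces $\Delta(\hat\bZ_1-\bZ)\bP^\ast$ hidden inside $\hat\bP(\hat\bZ_1-\bZ)\hat\bP$.

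The paper avoids this entirely by \emph{not} expanding $\hat\bP=\bP^\ast+\Delta$ inside the noise term. It keeps $\scrE\times_{(3,4)}\hat\bP$ intact and bounds $\|\calM_{(1)(234)}(\scrE\times_{(3,4)}\hat\bP)\|$ by a conditional (on $\hat\bP$, which is independent of $\scrE$ by sample splitting) truncated matrix Bernstein inequality, after a decoupling step that replaces the quadratic $(\bY_m-\mathrm{vec}\,\II_d)(\bY_m-\mathrm{vec}\,\II_d)^\top$ by a bilinear version. The decisive quantitative input is $\mathrm{trace}\bigl(\hat\bP\,\mathrm{Var}(\bY)\bigr)\le Cd$ rather than $Cd^2$: since $\hat\bP$ has rank $d$ and $\|\mathrm{Var}(\bY)\|\le C$, the effective variance in Bernstein drops from $d^2$ to $d$, yielding the $\sqrt{d^2/n}$ rate. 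Your ODECO-basis analysis of the principal term captures one slice of this saving, but the algebraic decomposition you chose reintroduces the lost factor through the $\Delta$-terms.
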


\subsubsection{Initialization}

The following algorithm summarizes our proposed initialization scheme.

\begin{algorithm}[htbp]
	\caption{Intialization}\label{alg:init}
	\hspace*{\algorithmicindent} \textbf{Input:} $\bX_1,\ldots, \bX_n$
	\begin{algorithmic}[1]
		\State Divide the samples into disjoint sets $\calS_1$ and $\calS_2$ of size $n/2$.
		\State Compute $\hat{\bM}_1$ and $\hat{\bM}_2$ as defined by \eqref{eq:defhatM} on $\calS_1$ and $\calS_2$ respectively. 
		\State Compute the top rank-$d$ projection $\hat{\bP}$ of $\hat{\bM}_2-\mathcal{M}_{(12)(3,4)}(\scrM_0)$.
		\State Compute the cumulant tensor estimate $\hat{\scrM}$ as defined by \eqref{eq:def-scrM-init} 
		\For {$l=1$ to $L$}
		\State Generate a $d\times d$ Gaussian random matrix $G_l$
		\State Compute the leading singular value and left singular vector of $(\hat{\scrM}-\mathcal{M}_{(12)(3,4)}(\scrM_0))\times_{3,4} G_l$, denoted by $\sigma_l$ and $\bu_l$
		\EndFor
		\State Compute $L^\ast=\argmax_{1\le l\le L}\sigma_l$.\\
		\Return $\bu_{L^\ast}$.
	\end{algorithmic}
\end{algorithm}

The following theorem shows that Algorithm \ref{alg:init} indeed yields a nontrivial estimate of one of the column vectors of $\bA$.

\begin{theorem}\label{th:init}
Let $\bX_1,\dots,\bX_n$ be $n$ independent copies of a random vector $\bX$ such that $\calL(\bX)\in\calP_{\rm ICA}(\bA;\epsi,M_1,M_2)$ for some $\bA\in \calO(d)$ and constants $\epsi,M_1,M_2>0$, and $\bu$ the output from Algorithm~\ref{alg:init} with $\bX_1,\dots,\bX_n$ as input. Then there exist constants $C_1,C_2>0$ such that 
$$
\min_{1\le k\le d}
\sin\angle
\left(
\bu,\ba_k
\right) \le \dfrac{1}{4}
$$
with probability at least $1-C_1d^{-3}$, provided that $n>C_2d^2$ and $L>C_2d^2$.
\end{theorem}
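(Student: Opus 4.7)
The plan is to show, via the random-slicing analysis outlined in Section~3, that the leading left singular vector at $L^*$ must be close to one of the $\ba_k$'s. The argument combines the moment-tensor concentration of Lemma~\ref{lem:mat-init-conc2}, Gaussian extreme-value bounds for the entries of $G_l$, and the Davis-Kahan-Wedin $\sin\theta$ theorem. First, applying Lemma~\ref{lem:mat-init-conc2} with suitable $t_1, t_2$ yields, with probability at least $1 - C_1 d^{-3}$, the bound $\max\{\|\scrE\|, \|\calM_{(1,3,4)(2)}(\scrE)\|\} \le c_0 |\kappa_4(S_{k^*})|$, where $\scrE := \hat{\scrM} - \scrM_4(\bX)$, $k^* := \argmax_k |\kappa_4(S_k)|$, and $c_0$ is any prescribed small constant, provided $C_2$ in $n \ge C_2 d^2$ is taken large enough. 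By symmetry we may relabel so that $k^* = 1$. Writing $\hat{T}_l := (\hat{\scrM} - \scrM_0) \times_{3,4} G_l = S_l + N_l$ with $S_l := \sum_k \kappa_4(S_k) Z_{k,l}\, \ba_k \ba_k^\top$ and $N_l := \scrE \times_{3,4} G_l$, the quantities $Z_{k,l} := \ba_k^\top G_l \ba_k$ and $\ba_j^\top G_l \ba_k$ are iid $N(0,1)$ in the basis $\{\ba_k\}$.

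Next I would identify the ``good'' slice via extreme-value analysis: let $l^\dagger := \argmax_l |Z_{1, l}|$. Standard Gaussian tail bounds give $|Z_{1, l^\dagger}| \ge \sqrt{2 \log L}(1 - o_p(1)) \ge 2\sqrt{\log d}(1 - o_p(1))$, while $l^\dagger$ is measurable with respect to $(Z_{1, l})_l$ alone, so $(Z_{k, l^\dagger})_{k>1}$ remain iid $N(0,1)$ and satisfy $\max_{k>1} |Z_{k, l^\dagger}| \le \sqrt{2\log d}(1 + o_p(1))$. Using $|\kappa_4(S_k)| \le |\kappa_4(S_1)|$, the signal $S_{l^\dagger}$ has a leading eigenvalue at least $2|\kappa_4(S_1)|\sqrt{\log d}(1-o_p(1))$ aligned with $\ba_1$ and second largest at most $|\kappa_4(S_1)|\sqrt{2 \log d}(1 + o_p(1))$, producing an eigengap of order $(2 - \sqrt{2})|\kappa_4(S_1)|\sqrt{\log d}$.

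For the noise I would decompose $\scrE = \scrE_1 + \scrE_2$ with $\scrE_1 := (\scrE \times_3 \ba_1 \times_4 \ba_1) \circ \ba_1 \circ \ba_1$, so that $\scrE_2 \times_3 \ba_1 \times_4 \ba_1 = 0$. The first piece gives $\|\scrE_1 \times_{3,4} G_{l^\dagger}\| \le |Z_{1,l^\dagger}|\,\|\scrE\| \le c_0 |\kappa_4(S_1)| \sqrt{2 \log L}$, while expanding $G_l$ in the orthonormal basis $\{\ba_j \ba_k^\top\}$ shows that $\scrE_2 \times_{3,4} G_l$ does not involve the entry $Z_{1,l}$ and is therefore independent of $l^\dagger$; matrix-Gaussian concentration from \cite{tropp2015introduction} then yields $\|\scrE_2 \times_{3,4} G_{l^\dagger}\| \le c_0 |\kappa_4(S_1)|(\sqrt{2 \log d} + O_p(1))$. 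The Davis-Kahan-Wedin $\sin\theta$ theorem then gives $\sin \angle(\bu_{l^\dagger}, \ba_1) \lesssim c_0 \sqrt{\log L / \log d}/(2 - \sqrt 2) \le 1/4$ provided $c_0$ is small enough, which holds by choosing $C_2$ large enough. To transfer the conclusion from $l^\dagger$ to the algorithm's $L^* := \argmax_l \sigma_l$, a union bound over $l$ controls $\|N_l\|$ uniformly, so that $\sigma_l$ tracks the leading eigenvalue of $S_l$ up to a small error uniformly; the same independence argument then applies with the overall maximizer $(k_0, L^*) := \argmax_{k, l}|\kappa_4(S_k) Z_{k,l}|$ in place of $(1, l^\dagger)$, yielding $\min_k \sin\angle(\bu_{L^*}, \ba_k) \le 1/4$.

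The main obstacle is reconciling the data-dependent index $L^*$ chosen by the algorithm with the independence structure that drives the eigengap: the argument crucially relies on the winning slice index being measurable with respect to only a strict subset of the Gaussian entries of $G_l$, so that the remaining entries are fresh iid standard normals conditional on that index. The projection decomposition $\scrE = \scrE_1 + \scrE_2$ and the uniform slice-wise noise bound are the key technical tools for formalizing this step; without them, the $\scrE_1$ term would contaminate the leading singular direction with an arbitrary matrix of norm $\|\scrE\|$ times $\sqrt{2\log L}$, and uniform control across $l$ would fail.
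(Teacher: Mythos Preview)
Your approach is essentially the paper's: invoke Lemma~\ref{lem:mat-init-conc2} for the tensor and matricization bounds on $\scrE=\hat{\scrM}-\scrM_4(\bX)$, create the eigengap at an oracle slice via Gaussian extreme-value statistics together with the independence of $Z_{1,l}=\ba_1^\top G_l\ba_1$ from the remaining entries of $G_l$, control the sliced noise by matrix-Gaussian concentration, and close with Davis--Kahan. Your $\scrE_1/\scrE_2$ split is exactly the heuristic of Section~3.2.1; the appendix proof instead expands the projection $\hat{\bP}$ explicitly and bounds the $(\hat{\bP}-\bP)$ contribution separately via Lemma~\ref{lem:mat-init-conc}, but that is only a bookkeeping difference.

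The one substantive issue is your final transfer from $l^\dagger=\argmax_l|Z_{1,l}|$ to the algorithm's $L^*=\argmax_l\sigma_l$. Your claim that ``the same independence argument then applies with the overall maximizer $(k_0,L^*):=\argmax_{k,l}|\kappa_4(S_k)Z_{k,l}|$'' does not work as stated: this global argmax is measurable with respect to \emph{all} the $Z_{k,l}$'s, so conditioning on it does not leave the remaining $(Z_{k,L^*})_{k\neq k_0}$ as fresh standard normals, and the second-largest-eigenvalue bound at slice $L^*$ is therefore not available by that route. (The algorithm's $L^*$ also need not equal the slice coordinate of this global argmax, since $\sigma_l$ tracks $\max_k|\kappa_4(S_k)Z_{k,l}|$ only up to the uniform noise level.) You are right to flag this as ``the main obstacle''; the paper's own proof does not explicitly close it either --- it establishes the Davis--Kahan conclusion at the oracle slice $s_*=\argmax_s(\ba_1\otimes\ba_1)^\top\bG_s$ and stops there --- so your treatment is no looser than the paper's, but the gap is real and your sketch does not fill it.
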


It is worth noting that thus far our discussion has centered around initializing the first direction. For the remaining directions, we need to apply the algorithm to deflated observations. Because of the error associated with the already estimated directions, the input to Algorithm \ref{alg:init} does not necessarily follow an ICA model. Nonetheless, as we shall see later that with a sufficient number of fixed-point iterations and appropriate deflation, Algorithm \ref{alg:init} can be used successively to yield good initialization for all columns vectors of $\bA$.

\subsection{Convergence Analysis}

Now that we have a nontrivial initialization, we turn our attention to the complexity of fixed-point iteration and the effect of deflation. A careful inspection of the dynamics of these steps reveal that we can estimate $\ba_k$s at the minimax optimal rate of $\sqrt{d/n}$ which is much faster than rate $\sqrt{d^2/n}$ for estimating the moment tensor under the operator norm.

\subsubsection{Fixed-point iteration}

We begin with the fixed-point iterations. We shall focus on recovering the first mixing direction to illustrate the main ideas. Following the discussion in the previous subsection, we shall from now on assume that we have an initial value $\hat{\ba}^{[0]}$ obeying
\begin{equation}\label{eq:ica-init}
\min_{1\le k\le d}\sin\angle\left(\hat{\ba}^{[0]},\ba_k\right)\le 1-\eta,
\end{equation}
for some $\eta>0$. Assume, without loss of generality, that the minimizer of left-hand side is $k=1$ for brevity. We now investigate how the fixed-point iterations could lead to a rate optimal estimate of $\ba_1$. Particular attention needs to be paid to how different errors of the sample moment tensor may affect the dynamics of the iteration.

Recall that
\begin{eqnarray*}
\hat{\ba}^{[t]}&\propto& (\hat{\scrM}^{\rm sample}_4(\bX)-\scrM_0)\times_2\hat{\ba}^{[t-1]}\times_3\hat{\ba}^{[t-1]}\times_4\hat{\ba}^{[t-1]}\\
&=&\sum_{j=1}^d \kappa_4(S_j)\langle \ba_j,\hat{\ba}^{[t-1]}\rangle^3\ba_j+(\hat{\scrM}^{\rm sample}_4(\bX)-\scrM_4(\bX))\times_2\hat{\ba}^{[t-1]}\times_3\hat{\ba}^{[t-1]}\times_4\hat{\ba}^{[t-1]}.
\end{eqnarray*}
Denote by $\eta_t=\sin\angle(\hat{\ba}^{[t]},\ba_j)$. Then
$$
\left|\kappa_4(S_j)\langle \ba_1,\hat{\ba}^{[t-1]}\rangle^3\right|=|\kappa_4(S_j)|(1-\eta_{t-1}^2)^{3/2}
$$
and
$$
\left\|\sum_{l\neq j} \kappa_4(S_l)\langle \ba_l,\hat{\ba}^{[t-1]}\rangle^3\ba_l\right\|=\left(\sum_{l\neq j} \kappa_4^2(S_l)\langle \ba_l,\hat{\ba}^{[t-1]}\rangle^6\right)^{1/2}\le M_1\eta_{t-1}^3.
$$
On the other hand, by triangular inequality,
\begin{eqnarray*}
&&\|(\hat{\scrM}^{\rm sample}_4(\bX)-\scrM_4(\bX))\times_2\hat{\ba}^{[t-1]}\times_3\hat{\ba}^{[t-1]}\times_4\hat{\ba}^{[t-1]}\|\\
&\le& \|(\hat{\scrM}^{\rm sample}_4(\bX)-\scrM_4(\bX))\times_2\ba_j\times_3\ba_j\times_4\ba_j\|+3\eta_{t-1}\|(\hat{\scrM}^{\rm sample}_4(\bX)-\scrM_4(\bX))\times_3\ba_j\times_4\ba_j\|\\
&&+3\eta_{t-1}^2\|(\hat{\scrM}^{\rm sample}_4(\bX)-\scrM_4(\bX))\times_4\ba_j\|+\eta_{t-1}^3\|(\hat{\scrM}^{\rm sample}_4(\bX)-\scrM_4(\bX))\|\\
&=&\Delta_3+3\eta_{t-1}\Delta_2+3\eta_{t-1}^2\Delta_1+\eta_{t-1}^3\Delta,
\end{eqnarray*}
where
$$
\Delta=\|\hat{\scrM}^{\rm sample}_4(\bX)-\scrM_4(\bX)\|, \qquad \Delta_1=\max_{1\le j\le d}\|(\hat{\scrM}^{\rm sample}_4(\bX)-\scrM_4(\bX))\times_4\ba_j\|,
$$
$$
\Delta_2=\max_{1\le j\le d}\|(\hat{\scrM}^{\rm sample}_4(\bX)-\scrM_4(\bX))\times_3\ba_j\times_4\ba_j\|,
$$
and
$$
\Delta_3=\max_{1\le j\le d}\|(\hat{\scrM}^{\rm sample}_4(\bX)-\scrM_4(\bX))\times_2\ba_j\times_3\ba_j\times_4\ba_j\|.
$$

The key observation here is that the final bound depends on the estimation error of the sample moment tensors measured in various matricized norms. We shall leverage the fact that these errors are of different magnitudes to better characterize the dynamics of the fix-point iteration.

\begin{lemma}\label{lem:sample-conc} 
Let $\bX_1,\dots,\bX_n$ be $n$ independent copies of a random vector $\bX$ such that $\calL(\bX)\in\calP_{\rm ICA}(\bA;\epsi,M_1,M_2)$ for some $\bA\in \calO(d)$ and constants $\epsi,M_1,M_2>0$. There exist constants $C_1,C_2>0$ such that
$$
\Delta_1\le \Delta\le C_1(t_1+t_2)\sqrt{\dfrac{d^2}{n}},$$
and
$$
\Delta_3\le\Delta_2\le C_1(t_1+t_2)\sqrt{\dfrac{d}{n}}, 
$$
with probability at least $1-\exp(-C_2t_1)-
(C_2n^{\epsi/8}t_2^{2+\epsi/4})^{-1}$.
\end{lemma}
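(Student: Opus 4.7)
My plan is to split the lemma into four assertions: the two trivial inequalities, the bound on $\Delta$, and the bound on $\Delta_2$. The inequalities $\Delta_1\le \Delta$ and $\Delta_3\le \Delta_2$ are immediate from the definition of the tensor operator norm as a supremum over unit vectors in each mode, since fixing a mode to a particular unit vector only restricts that supremum. The bound $\Delta\le C_1(t_1+t_2)\sqrt{d^2/n}$ is exactly Lemma~\ref{lem:tens-conc}. Therefore the substantive work is the improved rate $\Delta_2\le C_1(t_1+t_2)\sqrt{d/n}$.

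For $\Delta_2$ I would exploit the fact that $\bA\in\calO(d)$ gives $\ba_j^\top\bX_i = S_{j,i}$, so
$$
(\hat{\scrM}_4^{\rm sample}(\bX)-\scrM_4(\bX))\times_3\ba_j\times_4\ba_j
= \frac{1}{n}\sum_{i=1}^n\Bigl\{S_{j,i}^2\,\bX_i\bX_i^\top - \EE[S_j^2\,\bX\bX^\top]\Bigr\},
$$
a symmetric $d\times d$ random matrix. Discretizing $\SS^{d-1}$ by a $1/4$-net $\calN$ of cardinality at most $9^d$, standard arguments give
$$
\Delta_2 \le 2\max_{1\le j\le d}\sup_{\bu,\bv\in\calN}\left|\frac{1}{n}\sum_{i=1}^n Z_{i,j,\bu,\bv}\right|,
$$
where $Z_{i,j,\bu,\bv} := S_{j,i}^2(\bu^\top\bX_i)(\bv^\top\bX_i)-\EE[S_j^2(\bu^\top\bX)(\bv^\top\bX)]$. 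Rewriting $\bu^\top\bX=(\bA^\top\bu)^\top\bS$ and using independence of the $S_k$'s together with $|\kappa_4(S_k)|\le M_1$ and $\EE|S_k|^{8+\epsi}\le M_2$, a short moment computation shows $\Var(Z_{i,j,\bu,\bv})\le C$ uniformly in $(\bu,\bv,j)$.

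From here I would follow the truncation strategy used in the proof of Lemma~\ref{lem:tens-conc}. Decompose $Z_{i,j,\bu,\bv}=Z^{\rm tr}_{i,j,\bu,\bv}+Z^{\rm tail}_{i,j,\bu,\bv}$ at a truncation level $\tau$ chosen as an appropriate polynomial power of $n$. Bernstein's inequality on the bounded-variance truncated part yields sub-exponential deviations at the scale $\sqrt{d/n}\cdot t_1$, and a union bound over $9^{2d}\cdot d$ triples $(\bu,\bv,j)$ contributes the factor $\exp(-C_2t_1)$; the $\log|\calN|^2\asymp d$ inflation is precisely what converts the single-term rate $n^{-1/2}$ into $\sqrt{d/n}$. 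Markov's inequality applied to the $(2+\epsi/4)$-th moment of $\sum_i Z^{\rm tail}_{i,j,\bu,\bv}$, combined with the same union bound, contributes the polynomial tail $(C_2n^{\epsi/8}t_2^{2+\epsi/4})^{-1}$.

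The main obstacle is tuning the truncation threshold $\tau$ so that (i) Bernstein still delivers the exponential tail at the $\sqrt{d/n}$ scale after the union bound over a net of cardinality $9^{2d}d$ and (ii) the Markov step on the $(2+\epsi/4)$-th moment gives the advertised polynomial rate after the same union bound. This is the same balancing act that determines the $n^{\epsi/8}$ factor in Lemmas~\ref{lem:tens-conc} and~\ref{lem:mat-init-conc}; the qualitative difference here is that the effective ``dimension'' of the random matrix $\frac{1}{n}\sum_i Z_{i,j,\bu,\bv}$ accessed by the net is $d$ rather than $d^2$, and this is the source of the improved $\sqrt{d/n}$ rate over the $\sqrt{d^2/n}$ rate for the full tensor norm.
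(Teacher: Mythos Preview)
Your reductions $\Delta_1\le\Delta$, $\Delta_3\le\Delta_2$, and the appeal to Lemma~\ref{lem:tens-conc} for $\Delta$ are fine, and writing $(\hat{\scrM}_4^{\rm sample}-\scrM_4)\times_3\ba_j\times_4\ba_j=\frac{1}{n}\sum_i\{S_{j,i}^2\bX_i\bX_i^\top-\EE[S_j^2\bX\bX^\top]\}$ is the right starting point. The gap is in the tail step. You propose to control $\sum_i Z^{\rm tail}_{i,j,\bu,\bv}$ by Markov on its $(2+\epsi/4)$-th moment ``combined with the same union bound'' over the net. But that union bound is over $9^{2d}\cdot d$ triples: if Markov gives $\PP(|\frac{1}{n}\sum_i Z^{\rm tail}_{i,j,\bu,\bv}|\ge t_2\sqrt{d/n})\le c\,n^{-\epsi/8}t_2^{-(2+\epsi/4)}$ for a single triple, the union bound inflates this to $9^{2d}d\cdot c\,n^{-\epsi/8}t_2^{-(2+\epsi/4)}$, which is useless unless $n$ is exponential in $d$. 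The sub-exponential Bernstein tail on the truncated part can absorb a $9^{2d}$ factor; a polynomial Markov tail on the heavy part cannot.

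The paper sidesteps this by never introducing a net. It applies a truncated \emph{matrix} Bernstein inequality directly to $\frac{1}{n}\sum_l S_{lj}^2\bS_l\bS_l^\top$ (this is Lemma~\ref{lem:eps2-bd}, which Lemma~\ref{lem:sample-conc} simply cites together with Lemmas~\ref{lem:tens-conc} and~\ref{lem:eps1-bd}). The truncation event there depends only on $\|S_{lj}\bS_l\|$, not on any direction $\bu,\bv$; the heavy part is handled by showing that at most $O(1)$ samples exceed the threshold (Chernoff), then bounding $\max_{l,j}\|S_{lj}\bS_l\|$ via a union bound over the $nd$ pairs $(l,j)$---a polynomial set. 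If you want to rescue the scalar-net route, you must make the truncation event independent of $(\bu,\bv)$ and bound the tail contribution \emph{uniformly} in $(\bu,\bv)$ without a union bound over the net; doing so essentially reproduces the paper's argument.
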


Note that
\begin{eqnarray*}
\eta_t&\le& \left|\kappa_4(S_j)\langle \ba_j,\hat{\ba}^{[t-1]}\rangle^3\right|^{-1}\left\|\sum_{l\neq j} \kappa_4(S_l)\langle \ba_l,\hat{\ba}^{[t-1]}\rangle^3\ba_l\right\|\\
&&+\left|\kappa_4(S_j)\langle \ba_j,\hat{\ba}^{[t-1]}\rangle^3\right|^{-1}\left\|(\hat{\scrM}^{\rm sample}_4(\bX)-\scrM_4(\bX))\times_2\hat{\ba}^{[t-1]}\times_3\hat{\ba}^{[t-1]}\times_4\hat{\ba}^{[t-1]}\right\|\\
&\le& 2|\kappa_4(S_j)|^{-1}\Delta_3+6|\kappa_4(S_j)|^{-1}\Delta_2\eta_{t-1}
+6|\kappa_4(S_j)|^{-1}\Delta_1\eta_{t-1}^2+2|\kappa_4(S_j)|^{-1}\Delta\eta_{t-1}^3,
\end{eqnarray*}
provided that $\eta_{t-1}<1/2$, say. Now Lemma \ref{lem:sample-conc} and Theorem~\ref{th:init} yield

\begin{theorem}\label{th:ica-piter}
Suppose that $\hat{\ba}^{[0]}\in \SS^{d-1}$ satisfies \eqref{eq:ica-init} with a sufficiently large $0<\eta<1$. Then if $T\ge C\log d$, for any fixed  $\delta\in (0,1)$, the estimator obtained after $T$ fixed-point iterations satisfies
$$
\PP
\left\{\min_{1\le j\le d}\sin\angle\left(\hat{\ba}^{[T]},\ba_j\right)\le C\sqrt{\dfrac{d(\log (1/\delta))}{n}}\right\}\ge 1-\delta-d^{-3}-n^{-\epsi/8}
$$ 
for a constant $C>0$, provided $n\ge Cd^2(\log \delta)^2$.
\end{theorem}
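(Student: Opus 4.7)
The plan is to combine the initialization guarantee of Theorem~\ref{th:init} with the matricized-norm concentration of Lemma~\ref{lem:sample-conc}, then iterate the one-step bound derived immediately before the theorem statement. Condition first on the event of Theorem~\ref{th:init} that $\eta_0 := \min_j \sin\angle(\hat{\ba}^{[0]},\ba_j)\le 1/4$ (probability $\ge 1-C_1 d^{-3}$); without loss of generality the minimizing index is $j=1$, and we track $\eta_t := \sin\angle(\hat{\ba}^{[t]},\ba_1)$. Apply Lemma~\ref{lem:sample-conc} with $t_1 \asymp \log(1/\delta)$ and $t_2$ a constant, obtaining with probability at least $1-\delta - n^{-\epsilon/8}$ the bounds
\begin{equation*}
	\Delta,\,\Delta_1 \lesssim \sqrt{d^2 \log^2(1/\delta)/n}, \qquad \Delta_2,\,\Delta_3 \lesssim \sqrt{d\log^2(1/\delta)/n}.
\end{equation*}
Under the sample-size hypothesis $n \ge Cd^2(\log(1/\delta))^2$ with $C$ sufficiently large, and using $|\kappa_4(S_1)|^{-1} \le M_1$, the first pair and $\Delta_2/|\kappa_4(S_1)|$ are all small constants, so the displayed recursion reduces to
\begin{equation*}
	\eta_t \le A + r\,\eta_{t-1} + c_1\eta_{t-1}^2 + c_2\eta_{t-1}^3,
\end{equation*}
with a driving term $A \lesssim \sqrt{d\log(1/\delta)/n}$ (matching the target rate up to the logarithmic factor absorbed in $t_1$), linear coefficient $r < 1/2$, and absolute constants $c_1, c_2$ that can be made as small as desired by enlarging $C$.

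The analytic core is then a standard contraction-with-noise induction. Starting from $\eta_0 \le 1/4$ and using $c_1\eta_{t-1}^2+c_2\eta_{t-1}^3\le (c_1+c_2)\eta_{t-1}^2\le \eta_{t-1}/4$ whenever $\eta_{t-1}\le 1/4$, the recursion collapses to the linear inequality $\eta_t \le A + \tilde r\,\eta_{t-1}$ with some $\tilde r\in(1/2, 3/4)$, so by induction $\eta_t \le 1/4$ for all $t$ (this also confirms that the minimizing index $j=1$ does not drift, since $\eta_t < 1/\sqrt 2$ forces $\ba_1$ to remain the uniquely closest column). Unrolling gives $\eta_T \le (1-\tilde r)^{-1}A + \tilde r^T \eta_0 \le 4A + \tilde r^T/4$, and choosing $T\ge C\log d$ makes $\tilde r^T \le \sqrt{d/n}$ (since $n$ is at most polynomial in $d$ in the relevant regime), yielding $\eta_T\lesssim A$. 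A union bound over the initialization and concentration events produces the stated failure probability $\delta + d^{-3} + n^{-\epsilon/8}$.

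The main obstacle, and the reason Lemma~\ref{lem:sample-conc} separates $\Delta,\Delta_1$ from $\Delta_2,\Delta_3$, is that any analysis using only the operator norm $\Delta \sim \sqrt{d^2/n}$ would produce a noise floor of the same order, destroying the optimal rate. The crucial point is that $\Delta$ and $\Delta_1$ enter the recursion multiplied by $\eta_{t-1}^3$ and $\eta_{t-1}^2$ respectively, so their effect shrinks once $\eta_{t-1}$ is a small constant, while the directional error $\Delta_3 \sim \sqrt{d/n}$ is the only term that persists independently of $\eta_{t-1}$. The sample-size threshold $n\gtrsim d^2(\log(1/\delta))^2$ is precisely what is needed to render the quadratic and cubic coefficients contractive, so that $\eta_t$ decreases all the way to the $\sqrt{d/n}$ floor rather than stalling at the larger cumulant-tensor error rate.
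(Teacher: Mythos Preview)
Your approach is essentially the same as the paper's: concentrate the matricized errors $\Delta,\Delta_1,\Delta_2,\Delta_3$, feed them into the one-step recursion displayed just before the theorem, and iterate the resulting linear contraction $\eta_t\le A+\tilde r\,\eta_{t-1}$ down to the $\Delta_3$ noise floor. Two small bookkeeping points are worth correcting. First, the initialization condition \eqref{eq:ica-init} is a hypothesis of the theorem, so invoking Theorem~\ref{th:init} is unnecessary; in the paper the $d^{-3}$ term in the failure probability comes instead from concentrating $\Delta_2$ (and $\Delta,\Delta_1$) via Lemmas~\ref{lem:tens-conc} and~\ref{lem:eps2-bd} with $t\asymp\log d$, which is why \eqref{eq:tens-conc-bds} in the proof reads $\Delta_2\le\sqrt{Cd(\log d)/n}$ rather than $\sqrt{Cd\log(1/\delta)/n}$. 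Second, applying Lemma~\ref{lem:sample-conc} with a single $t_1\asymp\log(1/\delta)$ yields $\Delta_3\lesssim\log(1/\delta)\sqrt{d/n}$, which is a factor $\sqrt{\log(1/\delta)}$ off from the stated rate $C\sqrt{d\log(1/\delta)/n}$; the paper obtains the sharper dependence by invoking Lemma~\ref{lem:eps1-bd} directly for $\Delta_3$. Neither issue affects the structure of the argument.
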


In other words, the number of fixed-point iterations required scales with the dimension $d$ in a logarithmic fashion. This suggests that, as long as a good initialization is provided, the computational complexity can be well managed. Of course, as before, we are only concerned with recovering one unmixing direction at this point. The situation beyond the first direction is more involved as estimation error of previously recovered directions may accumulate. To address this issue, we now turn to the deflation step.

\subsubsection{Deflation}
The deflation step is essential to ensure that all mixing directions can be identified sequentially. In particular, suppose that we already have estimates, $\{\hat{\ba}_k:1\le k< j\}$, for the $j-1$ unmixing directions by running Algorithm \ref{alg:alg-ica} with an initialization using Algorithm \ref{alg:init}. Without loss of generality, assume that for $\hat{\ba}_k$ is a $\sqrt{d/n}$-consistent estimate of $\ba_k$ for $k=1,\ldots, j-1$. We now consider the effect of deflation and how well we can estimate the next unmixing direction.

It is not hard to see that the deflation step of the FastICA amounts to the subtraction of the estimated components from $\hat{\scrM}^{\rm sample}_4(\bX)$:
\begin{equation}
\hat{\scrM}_4^{(j),{\rm sample}}(\bX):=\hat{\scrM}^{\rm sample}_4(\bX) -\sum_{k=1}^{j-1}\hat{f}(\hat{\ba}_k)
	\hat{\ba}_k\circ \hat{\ba}_k\circ\hat{\ba}_k\circ\hat{\ba}_k
\end{equation}
Note that $\hat{\scrM}_4^{(j),{\rm sample}}(\bX)$ can be viewed as an estimate of
$$
\scrM_4^{(j)}(\bX)=\scrM_4(\bX)-\sum_{k=1}^{j-1}\kappa_4(S_k)
\ba_k\circ \ba_k\circ\ba_k\circ\ba_k=\sum_{k=j}^{d}\kappa_4(S_k)
\ba_k\circ \ba_k\circ\ba_k\circ\ba_k
$$
The same argument for initialization and fixed-point iterations remains valid if we can bound the estimation error of $\hat{\scrM}_4^{(j),{\rm sample}}(\bX)$ in the same way as the original sample moment $\hat{\scrM}_4^{{\rm sample}}(\bX)$, which is indeed the case.

For example, in light of the discussion from previous subsections, we can show by induction that
$$
\left|\hat{f}(\hat{\ba}_k)
\hat{\ba}_k\circ \hat{\ba}_k\circ\hat{\ba}_k\circ\hat{\ba}_k-\kappa_4(S_k)
\ba_k\circ \ba_k\circ\ba_k\circ\ba_k\right|=O_p\left(\sqrt{d\over n}\right).
$$
By triangular inequality,
\begin{eqnarray*}
&&\|\hat{\scrM}_4^{(j),{\rm sample}}(\bX)-\scrM_4^{(j)}(\bX)\|\\
&\le& \|\hat{\scrM}^{\rm sample}_4(\bX)-\scrM_4(\bX)\|
+\left\|\sum_{k=1}^{j-1}\left[\hat{f}(\hat{\ba}_k)
\hat{\ba}_k\circ \hat{\ba}_k\circ\hat{\ba}_k\circ\hat{\ba}_k-\kappa_4(S_k)
\ba_k\circ \ba_k\circ\ba_k\circ\ba_k\right]\right\|\\
&=&O_p\left(\sqrt{d^2\over n}\right).
\end{eqnarray*}
Other norms of the error can also be bounded similarly, thus leading to the following conclusion.
\begin{theorem}\label{th:comp-rate}
Let $\bX_1,\ldots,\bX_n$ be $n$ independent copies of $\bX$ s.t.  $\calL(\bX)\in\calP_{\rm ICA}(\bA;\epsilon, M_1, M_2)$ for some $\bA\in \calO(d)$ and $\epsilon, M_1,M_2>0$. Suppose we run Algorithm \ref{alg:alg-ica} with the initialization step via Algorithm \ref{alg:init}, and with parameters $L\ge C_1d^2$ for initialization and $T\ge C_2\log d$ for fixed-point iteration for some constants $C_1,C_2>0$. Denote by $\hat{\bA}=[\hat{\ba}_1,\ldots,\hat{\ba}_d]$ the output. There exist constants $C_3,C_4>0$ such that if $n\ge C_3d^2(\log \delta)^2$, then
$$
\ell(\hat{\bA},\bA)\le C_4\sqrt{{d\log(1/\delta)}\over n},
$$
with probability at least $1-\delta-d^{-3}-n^{-\epsi/8}$ with the loss function $\ell$ being either $\ell_M$ or $\ell_A$.
\end{theorem}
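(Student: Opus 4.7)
The plan is to prove Theorem \ref{th:comp-rate} by induction on the index $j$ of the column being recovered, coupling the guarantee for Algorithm \ref{alg:init} (Theorem \ref{th:init}) with the contraction guarantee for fixed-point iteration (Theorem \ref{th:ica-piter}) at every stage, and controlling how errors propagate through deflation. I would organize the argument so that at the end of stage $j$ the output $\hat{\ba}_j$ satisfies $\sin\angle(\hat{\ba}_j,\ba_{\pi(j)})\le C\sqrt{d\log(1/\delta)/n}$ for some index $\pi(j)$ not in $\{\pi(1),\ldots,\pi(j-1)\}$.

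For the base case $j=1$, I would simply invoke Theorem \ref{th:init} (applicable since $n\ge C_3 d^2$ and $L\ge C_1 d^2$) to get a nontrivial initial iterate $\hat\ba_1^{[0]}$ with $\min_k\sin\angle(\hat\ba_1^{[0]},\ba_k)\le 1/4$ on an event of probability at least $1-O(d^{-3})$, and then apply Theorem \ref{th:ica-piter} with $T\ge C\log d$ fixed-point steps, which upgrades the initialization to the optimal $\sqrt{d\log(1/\delta)/n}$ rate on an additional event of probability at least $1-\delta-d^{-3}-n^{-\epsilon/8}$.

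For the inductive step, I would assume $\hat\ba_1,\ldots,\hat\ba_{j-1}$ already achieve the target rate, and analyze the deflated sample cumulant tensor
\[
\hat{\scrM}_4^{(j),\mathrm{sample}}(\bX)=\hat{\scrM}_4^{\mathrm{sample}}(\bX)-\sum_{k=1}^{j-1}\hat f(\hat\ba_k)\,\hat\ba_k^{\circ 4},
\]
viewed as an estimator of $\scrM_4^{(j)}(\bX)=\sum_{k\notin\pi([j-1])}\kappa_4(S_k)\ba_k^{\circ 4}$. I would verify that the four matricized error norms $\Delta,\Delta_1,\Delta_2,\Delta_3$ analogous to those in Lemma \ref{lem:sample-conc}, and the two matrix error bounds needed to drive Algorithm \ref{alg:init} (analogues of Lemmas \ref{lem:mat-init-conc}--\ref{lem:mat-init-conc2}), are all of the same order as for $\hat{\scrM}_4^{\mathrm{sample}}$ itself. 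Given such bounds, the proofs of Theorems \ref{th:init} and \ref{th:ica-piter} depend only on those norm bounds, not on the ICA data-generating mechanism per se, so they apply verbatim to $\hat{\scrM}_4^{(j),\mathrm{sample}}$ and yield an estimator $\hat\ba_j$ at the optimal rate. After the last component, the loss $\ell_M$ (and likewise $\ell_A$) is automatically bounded since $\pi$ turns out to be a permutation of $[d]$.

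The main obstacle is the deflation bookkeeping: each summand $\hat f(\hat\ba_k)\hat\ba_k^{\circ 4}-\kappa_4(S_k)\ba_{\pi(k)}^{\circ 4}$ has operator norm $O_p(\sqrt{d/n})$, so a naive triangle inequality across $j-1\le d$ terms would lose a factor of $d$ and give only $O_p(\sqrt{d^3/n})$ in the tensor operator norm, which is too weak for the slice-norm bound \eqref{eq:estcond} that powers the initialization. I would handle this by decomposing each error tensor along the orthonormal basis $\{\ba_k\}$: the leading-order perturbation of $\hat\ba_k^{\circ 4}-\ba_{\pi(k)}^{\circ 4}$ lives in modes where some copy of $\ba_{\pi(k)}$ is replaced by a vector in $\ba_{\pi(k)}^\perp$, and because the $\ba_{\pi(k)}$'s are orthogonal, the sum of these rank-$O(1)$ pieces across $k$ assembles into a tensor whose operator/slice norms are bounded by $\max_k$ rather than $\sum_k$ of the individual norms, together with a genuine $\sqrt{d/n}$ bound on $|\hat f(\hat\ba_k)-\kappa_4(S_k)|$. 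After verifying those reductions, a final union bound across $j=1,\ldots,d$ (adjusting $\delta$ by a factor of $d$, absorbed into the constants) completes the argument, yielding the stated high-probability rate for $\ell_M$ and $\ell_A$.
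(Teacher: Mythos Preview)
Your proposal is essentially the paper's own argument: induction on the stage index $j$, with Theorem~\ref{th:init} supplying a nontrivial start and Theorem~\ref{th:ica-piter} upgrading it to the $\sqrt{d\log(1/\delta)/n}$ rate, and the deflation handled by showing the perturbed cumulant tensor $\hat{\scrM}_4^{(j),\mathrm{sample}}$ satisfies the same matricized error bounds $\Delta,\Delta_1,\Delta_2,\Delta_3$ as the undeflated one. Your diagnosis of the ``main obstacle'' and its resolution via the orthonormal structure of $\{\ba_k\}$ is exactly what the paper relies on (cf.\ Lemma~\ref{lem:ica-op-norm} and the addendum after the proof of Theorem~\ref{th:init}).

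Two small points where the paper is sharper than your sketch. First, you assert that each stage recovers a \emph{new} direction (so that $\pi$ is a permutation) but do not say why; the paper closes this with a short selection argument, showing that for the slice $s_*$ maximizing $|(\hat{\scrM}_4^{(j),\mathrm{sample}}-\scrM_0)\times_{1,2,3,4}\hat\ba_{[T]}^{(s)}|$ the value is at least $|\kappa_4(S_j)|-C\sqrt{d^2\log(1/\delta)/n}$, which by contradiction forces $\max_k|\langle\hat\ba_{[T]}^{(s_*)},\ba_k\rangle|$ to exceed the initialization threshold for some unrecovered $k$. Second, your final union bound over $j$ (with a factor-of-$d$ adjustment to $\delta$) is unnecessary: the concentration events for $\Delta,\Delta_1,\Delta_2,\Delta_3$ on the \emph{original} sample tensor are fixed once, and conditional on them the induction over $j$ is deterministic; only the random-slicing events in Algorithm~\ref{alg:init} recur across stages, and those already carry probability $1-O(d^{-3})$ each, so $d$ of them cost only $O(d^{-2})$.
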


The rate of convergence given by Theorem \ref{th:comp-rate} matches the information-theoretical lower bound of Theorem \ref{th:ICAlower}, indicating that our estimate is minimax optimal when the sample size $n\gtrsim d^2$. In addition, in light of Theorem \ref{th:comp-lower}, the sample complexity required by our estimate is also optimal, up to a logarithmic factor, among all estimates that can be computed using low-order polynomial algorithms.

\subsection{Pre-whitening and Unknown Covariance Matrix}
\label{sec:prewhiten}

Throughout our discussion so far, we have made the simplifying assumption that the data are pre-whitened so that the mixing matrix $\bA$ is orthonormal. As noted before, this is always possible if the covariance matrix of $\bX$ is known apriori. In practice, however, $\texttt{cov}(\bX)$ needs to be estimated. Prewhitening with an estimated covariance matrix could adversely affect the estimate for ICA. We shall now describe a sample splitting scheme to overcome this challenge so that all the properties we presented when $\bA\in \calO(d)$ remain valid even if $\texttt{cov}(\bX)$ is not known in advance.

\begin{algorithm}[h]
	\caption{Prewhitening}\label{alg:alg-ica-2}
	\hspace*{\algorithmicindent} \textbf{Input:} $\bX_1,\ldots, \bX_n$
	\begin{algorithmic}[1]
		\State Partition the samples into two halves with index sets $S_1$ and $S_2$.
		\State Compute the sample covariance matrix $\hat{\bSigma}$ for $\{\bX_i: i\in S_1\}$.
		\State Prewhiten data from $S_2$: $\tilde{\bX}_i\leftarrow \hat{\bSigma}^{-1/2}\bX_i$ for $i\in S_2$.
		\State Run Algorithm \ref{alg:alg-ica} with input $\{\tilde{\bX}_i: i\in S_2\}$ and initialized with Algorithm \ref{alg:init}. Denote by $\{\hat{\ba}_j:1\le j\le d\}$ the output.
		\State Update $\hat{\ba}_j\leftarrow \hat{\Sigma}^{1/2}\hat{\ba}_j$\\
		\Return $\{\hat{\ba}_j:1\le j\le d\}$.
	\end{algorithmic}
\end{algorithm}

The validity of this algorithm is justified by the following result.

\begin{theorem}\label{th:ica-gen-rate}
Let $\bX_1,\ldots,\bX_n$ be $n$ independent copies of $\bX$ s.t.  $\calL(\bX)\in\calP_{\rm ICA}(\bA;\epsilon, M_1, M_2)$ for some$\epsilon, M_1,M_2>0$ and $\bA\in \RR^{d\times d}$ with bounded condition number. Suppose we run Algorithm \ref{alg:alg-ica-2} that calls Algorithm \ref{alg:alg-ica} and Algorithm \ref{alg:init} with parameters $L\ge C_1d^2$ for initialization and $T\ge C_2\log d$ for fixed-point iteration for some constants $C_1,C_2>0$. Denote by $\hat{\bA}=[\hat{\ba}_1,\ldots,\hat{\ba}_d]$ the output. There exist constants $C_3,C_4>0$ such that if $n\ge C_3d^2(\log \delta)^2$, then
$$
\ell(\hat{\bA},\bA)\le C_4\sqrt{{d\log(1/\delta)}\over n},
$$
with probability at least $1-\delta-d^{-3}-n^{-\epsi/8}$ with the loss function $\ell$ being either $\ell_M$ or $\ell_A$.
\end{theorem}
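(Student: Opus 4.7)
The proof proceeds by reducing Theorem~\ref{th:ica-gen-rate} to Theorem~\ref{th:comp-rate} through the sample splitting in Algorithm~\ref{alg:alg-ica-2}. Thanks to the split, the whitening matrix $\hat{\bSigma}^{-1/2}$ computed from $\{\bX_i\}_{i\in S_1}$ is independent of the data $\{\bX_i\}_{i\in S_2}$ fed into Algorithm~\ref{alg:alg-ica}. Under the $(8+\epsi)$th moment assumption on $\bS$ and the bounded condition number of $\bA$, a standard sample covariance concentration argument (e.g., truncation plus matrix Bernstein) yields
$$
\|\hat{\bSigma}-\bSigma\|\le C\sqrt{d/n}
$$
on an event $\calE$ of probability at least $1-\delta/2-n^{-\epsi/8}$. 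On $\calE$, setting $\tilde{\bB}=\hat{\bSigma}^{-1/2}\bA$, one has $\tilde{\bB}\tilde{\bB}^\top=\II_d+\bDelta$ with $\|\bDelta\|\lesssim\sqrt{d/n}$, so $\tilde{\bB}$ lies at operator-norm distance $\lesssim\sqrt{d/n}$ from its orthogonal polar factor.

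Conditioning on $S_1$ and on $\calE$, the prewhitened data $\tilde{\bX}_i=\tilde{\bB}\bS_i$ for $i\in S_2$ are i.i.d.\ from a distribution that is ``almost ICA'' in the sense that $\tilde{\bB}$ is only approximately orthogonal. The plan is then to retrace the argument of Theorem~\ref{th:comp-rate} with $\tilde{\bB}$ playing the role of $\bA$. The moment concentration lemmas (Lemmas~\ref{lem:tens-conc}, \ref{lem:mat-init-conc}, \ref{lem:mat-init-conc2}, \ref{lem:sample-conc}) continue to hold with identical rates because their proofs only use the independence of the sources and their moment bounds, and are not sensitive to exact orthogonality of the mixing. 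The only additional bookkeeping concerns the deterministic discrepancy between $\scrM_4(\tilde{\bX})-\scrM_0$ and the would-be ODECO tensor $\sum_j\kappa_4(S_j)\tilde{\bb}_j\circ\tilde{\bb}_j\circ\tilde{\bb}_j\circ\tilde{\bb}_j$: this discrepancy arises purely because $\tilde{\bX}$ has covariance $\II_d+\bDelta$ rather than $\II_d$, it is a linear function of $\bDelta$, hence bounded by $\lesssim\sqrt{d/n}$ in operator norm and in every matricized/contracted norm appearing in the proof of Theorem~\ref{th:comp-rate}. Since these biases are of the same order as the fluctuations already controlled, they are absorbed into the existing error budget, and Algorithm~\ref{alg:alg-ica} with Algorithm~\ref{alg:init} initialization returns unit vectors $\hat{\bb}_1,\ldots,\hat{\bb}_d$ satisfying
$$
\min_{\pi}\max_{1\le k\le d}\sin\angle(\hat{\bb}_{\pi(k)},\tilde{\bb}_k)\lesssim \sqrt{d\log(1/\delta)/n}
$$
with probability at least $1-\delta/2-d^{-3}-n^{-\epsi/8}$.

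The final step unwhitens: the output of Algorithm~\ref{alg:alg-ica-2} is $\hat{\ba}_j=\hat{\bSigma}^{1/2}\hat{\bb}_j$, and we use the exact identity $\hat{\bSigma}^{1/2}\tilde{\bb}_k=\ba_k$. Because $\hat{\bSigma}^{1/2}$ has condition number bounded by a constant on $\calE$ (again using that $\bA$ has bounded condition number and $\hat{\bSigma}\approx\bSigma$), the elementary bound $\sin\angle(T\bu,T\bv)\le\kappa(T)\sin\angle(\bu,\bv)$ for any invertible $T$ preserves the rate, giving
$$
\ell_M(\hat{\bA},\bA)\le \kappa(\hat{\bSigma}^{1/2})\cdot \min_\pi\max_k\sin\angle(\hat{\bb}_{\pi(k)},\tilde{\bb}_k)\lesssim \sqrt{d\log(1/\delta)/n},
$$
and an analogous bound for $\ell_A$ by applying the angle inequality componentwise and then averaging. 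The hardest step is the second paragraph: carefully verifying that the detailed, recursive analysis of initialization via random slicing, of the fixed-point iteration, and of deflation is all robust to a mixing matrix that is only approximately orthogonal, and that the extra deterministic bias arising from $\mathrm{cov}(\tilde{\bX})\neq \II_d$ can be controlled in every tensor norm entering Lemmas~\ref{lem:tens-conc}--\ref{lem:sample-conc}.
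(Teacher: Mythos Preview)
Your proposal is correct and follows essentially the same architecture as the paper's proof: sample-split, control $\|\hat{\bSigma}-\bSigma\|$, rerun the Theorem~\ref{th:comp-rate} analysis on the whitened data from $S_2$, then unwhiten. The one substantive bookkeeping difference is the choice of target in the middle step. You take as target the \emph{exact} mixing matrix $\tilde{\bB}=\hat{\bSigma}^{-1/2}\bA$ of the whitened data, which is only approximately orthogonal, and then argue that the whole machinery of Theorem~\ref{th:comp-rate} (random-slicing initialization, fixed-point dynamics, deflation) is robust to an $O(\sqrt{d/n})$ departure from orthogonality. The paper instead takes as target the \emph{exactly} orthogonal $\tilde{\bA}=\bSigma^{-1/2}\bA$; since the whitened data are $\tilde{\bX}_i=(\hat{\bSigma}^{-1/2}\bSigma^{1/2})\tilde{\bA}\bS_i$, the discrepancy $\hat{\bSigma}^{-1/2}\bSigma^{1/2}-\II_d$ appears as an extra additive term when re-verifying Lemmas~\ref{lem:tens-conc}--\ref{lem:sample-conc}, after which Theorem~\ref{th:comp-rate} applies verbatim because the target is orthonormal. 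The unwhitening step is then handled by the triangle inequality with a $\|\hat{\bSigma}^{1/2}-\bSigma^{1/2}\|$ correction.

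The trade-off: the paper's route is lighter in the middle, since the fixed-point and deflation analyses in Section~\ref{sec:tract} explicitly use orthonormality of the columns (e.g., the bound $\|\sum_{l\ne j}\kappa_4(S_l)\langle\ba_l,\cdot\rangle^3\ba_l\|\le M_1\eta_{t-1}^3$), and you avoid having to re-argue each of these under near-orthogonality. Your route is cleaner at the end, since $\hat{\bSigma}^{1/2}\tilde{\bb}_k=\ba_k$ exactly and a single condition-number inequality finishes. Both reach the same rate; the paper's choice simply minimizes the amount of Theorem~\ref{th:comp-rate} that has to be reopened.
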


Theorem \ref{th:ica-gen-rate} shows that with the prewhitening via sample splitting as described by Algorithm \ref{alg:alg-ica-2}, we can ensure that the estimated mixing matrix enjoys the same asymptotic properties as if the true mixing matrix is orthonormal. In fact, in addition to the convergence rates, the asymptotic distributional properties we present in the next section continue to hold beyond orthonormal mixing matrices as well.

\section{Asymptotic Normality}
\label{sec:asy-dist}
The computationally tractable estimators from the previous section are minimax rate optimal when $n\gtrsim d^2$. We now show that it is in fact possible to derive a finer bound, by an explicit characterization of the leading error term, enabling us to derive the asymptotic distribution of the estimators $\hat{\ba}_j$. For brevity, we shall assume, as before, that the mixing matrix $\bA\in \calO(d)$, and the estimated mixing directions are appropriately permuted and signed so that $\pi(j)=j$ and $\langle\hat{\ba}_j,\ba_j\rangle>0$. Specifically, this can be done by defining
\begin{equation}\label{eq:def-A-hat}
	\hat{\bA}
	:=
	\left[
	{\rm sign}(\langle\hat{\ba}_{\pi(1)},\ba_1\rangle)\hat{\ba}_{\pi(1)}\,\,
	\dots\,\,
	{\rm sign}(\langle\hat{\ba}_{\pi(d)},\ba_d\rangle)
	\hat{\ba}_{\pi(d)}
	\right]
\end{equation}
where $\hat{\ba}_j$s are the output from Algorithm \ref{alg:alg-ica} with initialization step given by Algorithm \ref{alg:init}. Specifically, we shall show that
$$
{\rm sign}
\left(
\langle\hat{\ba}_j,\ba_j\rangle
\right)
\hat{\ba}_{\pi(j)}-\ba_j 
=\dfrac{1}{n}\sum_{k=1}^n(S_{kj})^3\bX_k+O_p\left(\dfrac{d(\log d)}{n}\right).
$$
This allows us to derive asymptotic distributions for both linear and bilinear forms of $\bA$.

\paragraph{Marginal Distribution of Linear Forms:}

We begin with linear forms of unmixing directions. Consider, for example, estimating $\bu^\top \ba_j$ for some fixed vector $\bu$. A natural and consistent estimate is $\bu^\top\hat{\ba}_j$. The following theorem shows that it is also an asymptotically normal estimate.

\begin{theorem}\label{th:ica-perp}
Let $\bX_1,\ldots,\bX_n$ be $n$ independent copies of $\bX$ s.t.  $\calL(\bX)\in\calP_{\rm ICA}(\bA;\epsilon, M_1, M_2)$ for some $\bA\in \calO(d)$, $M_0,M_2>0$, and $\epsilon\ge 4$. Then for any $j\in [d]$, and any $\bu\in \RR^d$ such that $\liminf_{d\to\infty}\|\bP_{\ba_j,\perp}\bu\|>0$ we have
for sufficiently large $d$ that
	$$ 
	\sup_{x\in \RR}
	\abs*{
		\PP\left\{\dfrac{\sqrt{n}}{\sigma_{\bu}}
		\bu^{\top}(\hat{\ba}_j-\ba_j)
		\le x\right\} 
		-\Phi(x)
	}
	\le~  \dfrac{Cd(\log d)}{\sqrt{n}}
	+\dfrac{C(\log d)^{3/2}}{\sqrt{d}},
	$$
	where $\sigma_{\bu}^2=\bu^{\top}(\II_d-\ba_j\ba_j^{\top})\bu\cdot \Var(S_{1j}^3)/\kappa_4(S_j)^2$, provided $n\ge Cd^2(\log d)^2$.
\end{theorem}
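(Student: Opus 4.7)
The proof combines a Bahadur-type linear representation of $\hat{\ba}_j - \ba_j$ with a quantitative CLT for the resulting linear statistic. The key step is to establish the refined linearization
$$
\hat{\ba}_j - \ba_j = \frac{1}{\kappa_4(S_j)} \cdot \frac{1}{n} \sum_{k=1}^{n} S_{kj}^3 \bP_{\ba_j,\perp} \bX_k + R_n,
\qquad \|R_n\| = O_p\!\left(\frac{d \log d}{n}\right),
$$
which sharpens the expansion stated just above the theorem by isolating the non-degenerate perpendicular component. To derive it, one starts from the stationarity condition $\tfrac{1}{n} \sum_i (\hat{\ba}_j^\top \bX_i)^3 \bX_i = \mu_j \hat{\ba}_j$ satisfied at convergence of the FastICA iterations. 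Writing $\hat{\ba}_j = \alpha \ba_j + \bv$ with $\bv \perp \ba_j$, using Theorem~\ref{th:comp-rate} to bound $\|\bv\| \lesssim \sqrt{d/n}$, and projecting the stationarity equation onto $\ba_l$ for each $l\ne j$, a Taylor expansion of $(\hat{\ba}_j^\top \bX_i)^3$ in $\bv$ together with $\mu_j \approx 3 + \kappa_4(S_j)$ isolates the displayed linear statistic as the dominant piece. The error terms fall into three categories: cubic-in-$\bv$ terms, quadratic-in-$\bv$ terms coupled with first-order matricized moment noise, and cross terms carrying deflation errors from $\{\hat{\ba}_k : k < j\}$. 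Each is controlled at the $d \log d / n$ rate via Lemma~\ref{lem:sample-conc} together with source independence, which prevents spurious $O(d)$ factors from accumulating in the residuals.

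The second step treats the normalized leading term
$$
T_n := \frac{1}{\sigma_\bu \kappa_4(S_j) \sqrt{n}} \sum_{k=1}^{n} S_{kj}^3 \bu^\top \bP_{\ba_j,\perp} \bX_k,
$$
which, after verifying mean zero (using independence of $S_{kj}$ and $\bP_{\ba_j,\perp} \bX_k = \sum_{l\ne j} S_{kl} \ba_l$) and unit variance under the stated $\sigma_\bu^2$, is an iid average amenable to Berry--Esseen. The assumption $\epsilon \ge 4$ ensures ample finite moments for the summand, so a direct Berry--Esseen estimate bounds the Kolmogorov distance by $O(1/\sqrt{n})$. The refined rate $C(\log d)^{3/2}/\sqrt{d}$ stated in the theorem arises by further exploiting that the inner factor $W_k = \sum_{l\ne j}(\bu^\top \ba_l) S_{kl}$ is itself a sum of $d-1$ independent mean-zero sources, so a quantitative CLT for this $d$-term sum enhances the Gaussian approximation to $T_n$. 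To transfer this to $\sqrt{n} \bu^\top (\hat{\ba}_j - \ba_j)/\sigma_\bu = T_n + \tilde R_n$ with $\tilde R_n = O_p(d \log d / \sqrt{n})$, one applies the standard smoothing inequality
$$
|\PP(T_n + \tilde R_n \le x) - \Phi(x)| \le \sup_y |\PP(T_n \le y) - \Phi(y)| + \frac{\epsilon}{\sqrt{2\pi}} + \PP(|\tilde R_n| > \epsilon)
$$
and chooses $\epsilon \asymp d \log d/\sqrt{n}$ to match the high-probability bound on $|\tilde R_n|$, producing the $C d(\log d)/\sqrt{n}$ contribution.

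The main obstacle is the first step: obtaining the Bahadur expansion with the sharp $d \log d / n$ remainder. Although $\|\hat{\ba}_j - \ba_j\|^2 \lesssim d/n$ already matches this rate in magnitude, controlling the quadratic-in-$\bv$ terms coupled with sample-noise tensors requires bounding not the operator norm of the cumulant error but its several matricized variants covered by Lemma~\ref{lem:sample-conc}, and the propagation of deflation errors from previously estimated $\hat{\ba}_k$ must be kept at the same order. Both points rely crucially on the independence of the sources $S_1,\ldots,S_d$ so that the $d$-sized sums appearing in the residuals benefit from compensating cancellations rather than accumulating as $O(d)$; this is precisely where the $(8+\epsilon)$th moment assumption enters, permitting uniform control of all relevant higher-moment quantities needed in the concentration arguments.
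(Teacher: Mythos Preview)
Your overall architecture---linearization of $\hat{\ba}_j-\ba_j$ with remainder $O_p(d\log d/n)$, Berry--Esseen for the leading linear statistic, then a smoothing inequality to absorb the remainder---matches the paper's proof. The paper obtains the linearization not from the fixed-point stationarity equation but via Lemma~\ref{lem:asy}, which uses a resolvent-type expansion of the projection $\hat{\ba}_j\hat{\ba}_j^\top-\ba_j\ba_j^\top$ (in the spirit of Koltchinskii--Lounici/Xia) applied to the one-step power-iteration matrix. Your stationarity-plus-Taylor route is a legitimate alternative and would ultimately produce the same leading term $\kappa_4(S_j)^{-1}\bP_{\ba_j,\perp}\scrE\times_{2,3,4}\ba_j$, though the paper's resolvent approach handles the bookkeeping of the second-order remainder more systematically.

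There is, however, a genuine gap in your accounting for the term $C(\log d)^{3/2}/\sqrt{d}$. It does \emph{not} come from any refined CLT exploiting the $d$-term structure of $W_k$; the ordinary Berry--Esseen bound on $T_n$ is already $O(1/\sqrt{n})$, which under $n\ge Cd^2(\log d)^2$ is dominated by $Cd(\log d)/\sqrt{n}$. In the paper this second term is simply $\PP(\calA^c)$, the probability that the concentration event $\calA$ (on which the linearization and the remainder bound are valid) fails. In your smoothing inequality, the contribution $\PP(|\tilde R_n|>\epsilon)$ is not zero: the bound $|\tilde R_n|\lesssim d\log d/\sqrt{n}$ holds only on $\calA$, so $\PP(|\tilde R_n|>\epsilon)\le \PP(\calA^c)$, and it is this failure probability---controlled via the moment assumptions through Lemmas~\ref{lem:tens-conc}--\ref{lem:eps1-bd}---that supplies the $(\log d)^{3/2}/\sqrt{d}$ contribution. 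Your proposed mechanism (a nested CLT in $d$) would not produce any such term and should be replaced by this event-conditioning argument.
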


It is clear from Theorem \ref{th:ica-perp} that
$$\sqrt{n}\bu^{\top}(\hat{\ba}_j-\ba_j)\to_d N(0,\sigma_{\bu}^2),$$
when $d\to\infty$ and $n\gg d^2(\log d)^2$. Note that,  in Theorem \ref{th:ica-perp}, we restrict ourselves to vector $\bu$ such that $\|\bP_{\ba_j,\perp}\bu\|>0$ for large $d$. This amounts to assuming that $\sin\angle\left(\bu,\ba_j\right)>0$. On the other hand when $\sin\angle\left(\bu,\ba_j\right)=0$, the estimated linear form converges at a faster rate and its asymptotic behavior can be more precisely characterized as follows:
\begin{corollary}\label{cor:ica-par}
	Under the assumptions of Theorem \ref{th:ica-perp}, we have for any $j\in \{1,\dots,d\}$ that
	\begin{itemize}
		\item[i)] for any $0<\delta<1$,
		$$\PP\left(\left(1-\langle\hat{\ba}_j,\ba_j\rangle^2
		\right)\ge \dfrac{Cd\log (1/\delta)}{n}
		+\dfrac{C(d\log d)^{3/2}}{n^{3/2}}
		\right)
		\le \delta+d^{-3}+n^{-\epsi/8}.
		$$
		\item[ii)] Moreover, if $n\ge Cd^3(\log d)^2$, then
	\end{itemize}
	$$
	\sup_{x>0}\,
	\abs*{
		\PP\left(
		\dfrac{n\kappa_4(S_j)^2}{\EE(S_{1j}^6)^2}
		\left(1-\langle\hat{\ba}_j,\ba_j\rangle^2
		\right)
		\le x
		\right)
		-\PP(\chi^2_d\le x)
	}
	\le 		
	\dfrac{C(d\log d)^{3/2}}{\sqrt{n}}.
	$$
\end{corollary}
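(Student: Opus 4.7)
Both parts of the corollary are statements about the squared orthogonal distance $\|(\II_d-\ba_j\ba_j^{\top})\hat{\ba}_j\|^2$: since $\hat{\ba}_j$ is a unit vector and $(\II_d-\ba_j\ba_j^{\top})\ba_j=0$,
$$
1-\langle \hat{\ba}_j,\ba_j\rangle^2
=\|(\II_d-\ba_j\ba_j^{\top})\hat{\ba}_j\|^2
=\|(\II_d-\ba_j\ba_j^{\top})(\hat{\ba}_j-\ba_j)\|^2.
$$
The plan is to invoke the linearization underlying the proof of Theorem~\ref{th:ica-perp}, which, after the sign correction used in \eqref{eq:def-A-hat}, reads
$$
(\II_d-\ba_j\ba_j^{\top})(\hat{\ba}_j-\ba_j)
=\bT_n+\bR_n,\qquad
\bT_n:=\dfrac{1}{\kappa_4(S_j)\,n}\sum_{k=1}^{n}S_{kj}^{3}(\II_d-\ba_j\ba_j^{\top})\bX_k,
$$
with the remainder obeying $\|\bR_n\|=O_p(d(\log d)/n)$. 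Expanding $\bX_k=\sum_l S_{kl}\ba_l$ and using source independence, $\bT_n=\kappa_4(S_j)^{-1}\sum_{l\neq j}\bigl(n^{-1}\sum_k S_{kj}^{3}S_{kl}\bigr)\ba_l$ is an orthogonal expansion in the basis $\{\ba_l\}_{l\neq j}$ of $\ba_j^{\perp}$, each coordinate being an i.i.d.\ centred average with variance $\EE(S_{1j}^{6})/(n\kappa_4(S_j)^{2})$ and coordinates mutually uncorrelated.

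For part~(i), I would start from $\|\bT_n+\bR_n\|^2\le\|\bT_n\|^2+2\|\bT_n\|\|\bR_n\|+\|\bR_n\|^2$. Concentration of the quadratic form $\|\bT_n\|^2=\kappa_4(S_j)^{-2}\sum_{l\neq j}(n^{-1}\sum_k S_{kj}^{3}S_{kl})^2$ via a Hanson--Wright/Bernstein argument, leveraging the $(8+\epsilon)$-moment bound, yields $\|\bT_n\|^2\le Cd\log(1/\delta)/n$ with probability at least $1-\delta$. Combined with the remainder bound inherited from Theorem~\ref{th:comp-rate}, the cross term satisfies $2\|\bT_n\|\|\bR_n\|=O_p((d\log d)^{3/2}/n^{3/2})$, which dominates $\|\bR_n\|^2$ and reproduces the second term in (i); the failure probabilities $d^{-3}+n^{-\epsilon/8}$ track those already incurred by Theorem~\ref{th:comp-rate}.

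For part~(ii), rescale to
$$
\bW_n:=\dfrac{\sqrt{n}\,\kappa_4(S_j)}{\sqrt{\EE(S_{1j}^{6})}}\,\bT_n
=\dfrac{1}{\sqrt{n\,\EE(S_{1j}^{6})}}\sum_{k=1}^{n}S_{kj}^{3}(\II_d-\ba_j\ba_j^{\top})\bX_k.
$$
A direct calculation using $\EE(S_{kl})=0$ and the independence of the sources shows that each summand is mean zero with covariance $(\II_d-\ba_j\ba_j^{\top})$, so $\bW_n$ is an i.i.d.\ sum on the $(d-1)$-dimensional subspace $\ba_j^{\perp}$ with identity covariance. I would then invoke a quantitative multivariate CLT for probabilities of Euclidean balls (e.g.\ Bentkus's bound for sums of i.i.d.\ vectors, or a Stein-method chi-squared approximation) to show that the law of $\|\bW_n\|^2$ is within Kolmogorov distance $O((d\log d)^{3/2}/\sqrt{n})$ of $\chi^2_{d-1}$; the further comparison between $\chi^2_{d-1}$ and $\chi^2_d$ costs only $O(1/\sqrt{d})$ in Kolmogorov distance and is absorbed into the stated rate. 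Finally, letting $\bU_n:=\sqrt{n}\kappa_4(S_j)\bR_n/\sqrt{\EE(S_{1j}^{6})}$, the expansion $\|\bW_n+\bU_n\|^2-\|\bW_n\|^2=2\langle\bW_n,\bU_n\rangle+\|\bU_n\|^2$ together with $\|\bW_n\|=O_p(\sqrt{d})$ and $\|\bU_n\|=O_p(d(\log d)/\sqrt{n})$ shows that the remainder alters the Kolmogorov-scale comparison by $O_p((d\log d)^{3/2}/\sqrt{n})$ as soon as $n\gtrsim d^{3}(\log d)^{2}$.

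The genuinely delicate step is part~(ii): the Kolmogorov bound requires a high-dimensional CLT with explicit polynomial dependence on $d$, and the linearization remainder $\bR_n$ --- arising from many fixed-point iterations and sequential deflation, hence neither independent across samples nor obviously symmetric --- must be shown to contribute at most $(d\log d)^{3/2}/\sqrt{n}$ on the $\chi^2_d$ scale. The sample-size condition $n\gtrsim d^{3}(\log d)^{2}$ is tuned precisely so that both $\|\bU_n\|^2$ and the cross term $\langle\bW_n,\bU_n\rangle$ remain small enough to be absorbed into the stated Berry--Esseen rate.
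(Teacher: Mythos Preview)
Your proposal is correct and follows the same skeleton as the paper: linearize $(\II_d-\ba_j\ba_j^\top)\hat{\ba}_j$, identify the leading term as $\kappa_4(S_j)^{-1}\sum_{l\neq j}\bigl(n^{-1}\sum_k S_{kj}^3S_{kl}\bigr)\ba_l$ with i.i.d.\ coordinates of variance $\EE(S_{1j}^6)/n$, bound the remainder at order $d(\log d)/n$, then handle (i) by vector concentration and (ii) by a high-dimensional CLT. The paper packages the quadratic-form expansion you build by hand as part~(i) of Lemma~\ref{lem:asy}, which directly gives $1-\langle\hat{\ba}_j,\ba_j\rangle^2=\kappa_j^{-2}\|\bA_{j,\perp}^\top\scrE\times_{2,3,4}\ba_j\|^2+O((d\log d/n)^{3/2})$; your cross-term analysis $2\|\bT_n\|\|\bR_n\|$ is exactly what underlies that lemma, so this is only a packaging difference.

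The substantive divergence is in part~(ii). The paper invokes the Chernozhukov--Chetverikov--Kato hyperrectangle CLT on the coordinates $\sqrt{n}Y_{jl}/\sqrt{\EE S_{1j}^6}$ and then passes from hyperrectangle probabilities to the ball $\{\sum_l y_l^2\le x\}$ by an integration argument. Your proposal to use a CLT for Euclidean balls (Bentkus) or a direct Stein-method $\chi^2$ approximation is more natural for the target statistic, since the passage from hyperrectangles to balls is not a free step. On the other hand, Bentkus's convex-set bound scales like $d^{1/4}\EE\|X_1\|^3/\sqrt{n}$, and here $\EE\|X_1\|^3\asymp d^{3/2}$, giving $d^{7/4}/\sqrt{n}$ rather than the stated $(d\log d)^{3/2}/\sqrt{n}$; you would need a sharper quadratic-form result (e.g., G\"otze-type bounds for $\|\cdot\|^2$) to match the paper's rate exactly. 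Either way, the threshold $n\gtrsim d^3(\log d)^2$ is what makes both the CLT error and your remainder contribution $\langle\bW_n,\bU_n\rangle=O_p(d^{3/2}(\log d)/\sqrt{n})$ fit under the stated bound, which you identify correctly. Your observation that the limit should be $\chi^2_{d-1}$ rather than $\chi^2_d$, absorbed at cost $O(1/\sqrt{d})$, is also accurate.
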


Note in particular that when $\|\bP_{\ba_j,\perp}\bu\|\to 0$, then 
$$
\sqrt{n}\bu^{\top}(\hat{\ba}_j-\ba_j)\to_p 0.
$$
as $d,n\to\infty$ while $n\gg d^2(\log d)^2$.

It is helpful to draw a comparison with the usual PCA, where an additional debiasing step may be required to obtain asymptotic normality for linear forms of the principal components. See, e.g., \cite{koltchinskii2016asymptotics,koltchinskii2017concentration,koltchinskii2020efficient,xia2021normal}. In the case of ICA, the sample size $n\gg d^2$ required by computational consideration is sufficient to ensure that bias becomes negligible. A similar phenomenon was observed by \cite{xia2022inference} in the case of tensor denoising.

It is oftentimes of interest to derive the asymptotic distribution for individual entries of $\bA$. This follows immediately from Theorem \ref{th:ica-perp} by taking $\bu$ as the canonical basis.

\begin{corollary}\label{cor:aij-dist}
Let $\bX_1,\ldots,\bX_n$ be $n$ independent copies of $\bX$ s.t.  $\calL(\bX)\in\calP_{\rm ICA}(\bA;\epsilon, M_1, M_2)$ for some $\bA\in \calO(d)$, $M_0,M_2>0$, and $\epsilon\ge 4$. If $|a_{ij}|=|\be_i^{\top}\ba_j|< 1-c$ for some constant $c>0$, then under the assumptions of Theorem~\ref{th:ica-perp} there exists a constant $C>0$ such that
	$$
	\sup_{x\in \RR}
	~
	\abs*{
		\PP\left(
		\dfrac{\sqrt{n}}{\sigma_{ij}}
		(\hat{a}_{ij}-a_{ij})
		\le x\right) 
		-\Phi(x)
	}
	\le~  \dfrac{Cd(\log d)}{\sqrt{n}}
	+\dfrac{C(\log d)^{3/2}}{\sqrt{d}}
	$$
	where $\hat{a}_{ij}$ and $a_{ij}$ are the $(i,j)$ entry of $\hat{\bA}$ and $\bA$ respectively, and
	$\sigma_{ij}^2=(1-a_{ij}^2)\Var(S_j^3)/\kappa_4(S_j)^2$.
\end{corollary}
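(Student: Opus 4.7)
The plan is to obtain Corollary~\ref{cor:aij-dist} as a direct specialization of Theorem~\ref{th:ica-perp} with the test vector $\bu=\be_i$, the $i$-th canonical basis vector of $\RR^d$. Writing $\hat{a}_{ij}=\be_i^\top \hat{\ba}_j$ and $a_{ij}=\be_i^\top \ba_j$, we immediately have $\hat{a}_{ij}-a_{ij}=\be_i^\top(\hat{\ba}_j-\ba_j)$, so the left-hand side of the Berry--Esseen-type bound in Theorem~\ref{th:ica-perp} coincides (up to the appropriate normalization) with the left-hand side of the statement to be proved.

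The first substantive step is to verify that the hypothesis $\liminf_{d\to\infty}\|\bP_{\ba_j,\perp}\bu\|>0$ required by Theorem~\ref{th:ica-perp} is met when $\bu=\be_i$. Since
\begin{equation*}
\|\bP_{\ba_j,\perp}\be_i\|^2=\be_i^\top(\II_d-\ba_j\ba_j^\top)\be_i=1-a_{ij}^2,
\end{equation*}
the assumption $|a_{ij}|\le 1-c$ in the corollary forces $\|\bP_{\ba_j,\perp}\be_i\|^2\ge 1-(1-c)^2\ge c>0$ for all $d$, which is precisely what is needed. Consequently, the hypothesis of Theorem~\ref{th:ica-perp} is satisfied uniformly in $d$, with a lower bound depending only on the constant $c$.

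Once the hypothesis is validated, I would simply read off the conclusion: Theorem~\ref{th:ica-perp} gives
\begin{equation*}
\sup_{x\in\RR}\left|\PP\left(\frac{\sqrt{n}}{\sigma_\bu}\bu^\top(\hat{\ba}_j-\ba_j)\le x\right)-\Phi(x)\right|\le \frac{Cd(\log d)}{\sqrt{n}}+\frac{C(\log d)^{3/2}}{\sqrt{d}},
\end{equation*}
with $\sigma_\bu^2=\bu^\top(\II_d-\ba_j\ba_j^\top)\bu\cdot\Var(S_{1j}^3)/\kappa_4(S_j)^2$. Substituting $\bu=\be_i$ produces $\sigma_{\be_i}^2=(1-a_{ij}^2)\Var(S_j^3)/\kappa_4(S_j)^2=\sigma_{ij}^2$, exactly the variance formula claimed in the corollary. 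The numerator $\bu^\top(\hat{\ba}_j-\ba_j)$ becomes $\hat{a}_{ij}-a_{ij}$, and the rate of normal approximation is inherited verbatim.

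There is essentially no additional obstacle: the bulk of the technical work (the leading order expansion of $\hat{\ba}_j-\ba_j$ and its Gaussian approximation) is already absorbed into Theorem~\ref{th:ica-perp}. The only ``care'' point is making sure the constant $C$ in the final bound does not implicitly depend on the direction $\bu$ through the lower bound on $\|\bP_{\ba_j,\perp}\bu\|$; because this lower bound is controlled by the single constant $c$ from the hypothesis $|a_{ij}|\le 1-c$, the constant in the Berry--Esseen bound is genuinely uniform in $i,j$, and the corollary follows immediately.
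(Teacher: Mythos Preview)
Your proof is correct and follows essentially the same strategy as the paper: specialize Theorem~\ref{th:ica-perp} to the canonical direction and read off the variance. The paper's one-line proof phrases it as applying Theorem~\ref{th:ica-perp} to the projected vector $\bP_{\ba_j,\perp}\be_i$ and then invoking part~(i) of Corollary~\ref{cor:ica-par} to control the parallel component $a_{ij}(\langle\hat{\ba}_j,\ba_j\rangle-1)$, whereas you apply the theorem directly to $\bu=\be_i$; since the proof of Theorem~\ref{th:ica-perp} already absorbs the parallel component internally (see the bound preceding \eqref{eq:clt-rem-0}), your direct route is equally valid and slightly cleaner.
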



\paragraph{Joint Distribution of Linear Forms:}
We can also derive the joint distribution of estimates of linear forms of multiple columns of $\bA$. To this end, for $\bu_1,\bu_2,\dots,\bu_d\in \RR^{d}$, denote by $\bSigma_{\bu_1,\ldots,\bu_d}$ a $d\times d$ matrix whose $(i,j)$ entry is given by
$$
\left(\bSigma_{\bu_1,\ldots,\bu_d}\right)_{ij}
=\begin{cases}
	\bu^{\top}_j(\II_d-\ba_j\ba_j^{\top})\bu_j\cdot\ \dfrac{\Var(S_j^3)}{\kappa_4(S_j)^2}
	\quad &\text{if }i=j\\
	\bu_i^{\top}\ba_j\bu_j^{\top}\ba_i\cdot\
	\dfrac{\EE(S_i^4)\EE(S_j^4)}{\kappa_4(S_i)\kappa_4(S_j)}
	\quad &\text{if }i\neq j.
\end{cases}
$$
Let us define the vectors $\bv,\hat{\bv}\in \RR^d$ with elements
$
v_j:=\bu_j^{\top}\ba_j$
 and 
$\hat{v}_j=\bu_j^{\top}\hat{\ba}_j$. We then have
\begin{theorem}\label{th:joint}
Let $\bX_1,\ldots,\bX_n$ be $n$ independent copies of $\bX$ s.t.  $\calL(\bX)\in\calP_{\rm ICA}(\bA;\epsilon, M_1, M_2)$ for some $\bA\in \calO(d)$, $M_1,M_2>0$, and $\epsilon\ge 4$. Then for any $\bu_1,\dots,\bu_d\in \RR^d$ such that $\liminf_{d\to \infty}\min_{1\le j\le d}\|\bP_{\ba_j,\perp}\bu_j\|>0$  there exists a constant $C>0$ such that
	$$
	\sup_{\bx \in \RR^d}
	\abs*{
		\PP\left(
		\sqrt{n}
		\bSigma_{\bu_1,\ldots,\bu_d}^{-1/2}(\hat{\bv}-\bv)
		\in \prod_{j=1}^d(-\infty, x_j]
		\right)
		-\prod_{j=1}^d\Phi(x_j)
	}
	\le \dfrac{Cd(\log d)}{\sqrt{n}}+\dfrac{C(\log d)^{3/2}}{\sqrt{d}}.
	$$
for sufficiently large $d$, provided $n\ge Cd^2(\log d)^2$.	
\end{theorem}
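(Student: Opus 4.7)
The strategy is to reduce Theorem~\ref{th:joint} to a multivariate central limit theorem for a sum of i.i.d.\ vectors, by means of the same asymptotic linearization of $\hat{\ba}_j$ that underlies Theorem~\ref{th:ica-perp}, and then to read off the limiting covariance by a direct moment computation.

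\textbf{Step 1 (Linearization, uniform in $j$).} First I would establish the linearization
$$
\hat{\ba}_j-\ba_j
=\frac{1}{n\kappa_4(S_j)}\sum_{k=1}^n S_{kj}^3\,\bP_{\ba_j,\perp}\bX_k+R_j,
\qquad \max_{1\le j\le d}\|R_j\|_2=O_p\!\left(\frac{d\log d}{n}\right).
$$
This is exactly the expansion advertised just before Theorem~\ref{th:ica-perp}, and it follows from a careful unrolling of one fixed-point step about $\ba_j$: the dominant term comes from $(\hat\scrM^{\rm sample}_4-\scrM_0)\times_2\ba_j\times_3\ba_j\times_4\ba_j$, while the other contributions are bounded through the matricized tensor concentration of Lemma~\ref{lem:sample-conc} together with the rate $\sqrt{d/n}$ already obtained in Theorem~\ref{th:ica-piter}. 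To make it uniform in $j$, I would proceed by induction over the deflation index, verifying that at each step the accumulated rank-one subtractions change the relevant matricized norms only by lower-order terms, so that the remainder stays $O_p(d\log d/n)$ for all $j$ simultaneously.

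\textbf{Step 2 (Negligibility of the remainder).} Taking inner products with $\bu_j$ and multiplying by $\sqrt n$,
$$
\sqrt n(\hat v_j-v_j)=\frac{1}{\sqrt n\,\kappa_4(S_j)}\sum_{k=1}^n S_{kj}^3\,\bu_j^\top\bP_{\ba_j,\perp}\bX_k+\sqrt n\,\bu_j^\top R_j,
$$
and $\max_j|\sqrt n\,\bu_j^\top R_j|=O_p(d\log d/\sqrt n)$ because $\|\bu_j\|\lesssim 1$. Combined with the standard anti-concentration for the Gaussian limit over half-spaces, this perturbation contributes the first term $Cd\log d/\sqrt n$ in the stated bound.

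\textbf{Step 3 (Identification of $\bSigma_{\bu_1,\ldots,\bu_d}$).} Set $W_{kj}=\kappa_4(S_j)^{-1}S_{kj}^3\,\bu_j^\top\bP_{\ba_j,\perp}\bX_k=\kappa_4(S_j)^{-1}S_{kj}^3\sum_{l\ne j}(\bu_j^\top\ba_l)S_{kl}$, and let $\bW_k=(W_{k1},\dots,W_{kd})^\top$. Using independence of the $S_{kl}$'s and the fact that odd source moments drop out (as in the definition of $\Var(S_j^3)$), a direct computation gives $\EE[\bW_k]=\bzero$ and $\Cov(\bW_k)=\bSigma_{\bu_1,\ldots,\bu_d}$: the diagonal is immediate from $\bu_j^\top\bP_{\ba_j,\perp}\bX_k$ being a weighted sum of sources other than $S_j$; the $i\ne j$ entries come from the unique surviving cross-term in which the index $l$ in the $i$-th coordinate equals $j$ and the index $m$ in the $j$-th coordinate equals $i$, producing the factor $(\bu_i^\top\ba_j)(\bu_j^\top\ba_i)\EE(S_i^4)\EE(S_j^4)$.

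\textbf{Step 4 (Multivariate Berry--Esseen over hyperrectangles).} Under $\epsilon\ge 4$ the summands $\bW_k$ have bounded moments well beyond the third, so a high-dimensional CLT for i.i.d.\ sums over hyperrectangles applies to $\sqrt n\,\bSigma_{\bu_1,\ldots,\bu_d}^{-1/2}\bar\bW$. Because $\bSigma^{-1/2}\bar\bW$ has identity covariance, its Gaussian comparison is a standard $d$-dimensional normal whose rectangle probability factors as $\prod_j\Phi(x_j)$, matching the right-hand side of the theorem. This step produces the second error term $C(\log d)^{3/2}/\sqrt d$.

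\textbf{Main obstacle.} The most delicate part is Step~1: securing the linearization with remainder of order $d\log d/n$ simultaneously over all $j=1,\ldots,d$ despite the sequential deflation. One has to track, by induction on $j$, how the errors from $\hat{\ba}_1,\ldots,\hat{\ba}_{j-1}$ enter through $\hat\scrM^{(j),{\rm sample}}_4$, making sure that in each of the four matricized tensor norms used in Lemma~\ref{lem:sample-conc} the deflation correction is dominated by the un-deflated error. Once this uniform linearization is in hand, Step~4 reduces to a black-box application of a modern hyperrectangle CLT, while Steps~2--3 are routine moment bookkeeping.
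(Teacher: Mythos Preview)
Your plan is essentially the paper's: linearize each $\hat{\ba}_j$ via the expansion behind Theorem~\ref{th:ica-perp} (packaged there as Lemma~\ref{lem:asy}, already uniform in $j$), compute the covariance of the resulting i.i.d.\ summands to identify $\bSigma_{\bu_1,\ldots,\bu_d}$, and then invoke the Chernozhukov--Chetverikov--Kato hyperrectangle CLT. One small correction to your bookkeeping: the term $C(\log d)^{3/2}/\sqrt d$ in the final bound does not come from the Berry--Esseen step but from the probability of the complement of the high-probability event $\calA$ on which the tensor concentration estimates hold (as in the proof of Theorem~\ref{th:ica-perp}); the CLT contribution is of a different order and gets absorbed elsewhere.
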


\paragraph{Bilinear Forms:}

In addition to the linear forms described above, one may also be interested in bilinear forms $\bu^{\top}\bA\bv$ for two vectors $\bu$ and $\bv$. The asymptotic distribution of its estimate can also be derived. 

\begin{theorem}\label{th:ica-clt}
Let $\bX_1,\ldots,\bX_n$ be $n$ independent copies of $\bX$ s.t.  $\calL(\bX)\in\calP_{\rm ICA}(\bA;\epsilon, M_1, M_2)$ for some $\bA\in \calO(d)$, $M_0,M_2>0$, and $\epsilon\ge 4$. Then for any $\bu,\bv\in \RR^d$ such that $\liminf_{d\to \infty}\sigma_{\bu,\bv}>0$, there exists a constant $C>0$ such that
	$$
	\sup_{x\in \RR}
	~\abs*{
		\PP\left(\dfrac{\sqrt{n}}{\sigma_{\bu,\bv}}
		\bu^{\top}(\hat{\bA}-\bA)\bv
		\le x\right) 
		-\Phi(x)
	}
	\le~  \dfrac{Cd^{3/2}(\log d)}{\sqrt{n}}
	+\dfrac{C(\log d)^{3/2}}{\sqrt{d}},
	$$
	for sufficiently large $d$, provided $n\ge Cd^3(\log d)^2$. Here $
	\sigma^2_{\bu,\bv}:=\bu^{\top}\bA\bD_{\bv}\bA^{\top}\bu $
	and 
	$$\bD_{\bv}:={\rm diag}
	\left(
	\displaystyle\sum_{j\neq 1}\dfrac{v_j^2{\rm Var}(S_{1j}^3)}{\kappa_4(S_j)^2}\,\,
	\sum_{j\neq 2}\dfrac{v_j^2{\rm Var}(S_{1j}^3)}{\kappa_4(S_j)^2}\,\,
	\dots\,\,
	\sum_{j\neq d}\dfrac{v_j^2{\rm Var}(S_{1j}^3)}{\kappa_4(S_j)^2}
	\right).
	$$

\end{theorem}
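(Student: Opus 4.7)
The plan is to obtain an explicit asymptotic linearization of $\hat{\ba}_j - \ba_j$ for each $j$, substitute into the bilinear form, and apply a Berry--Esseen bound to the resulting sum. Starting from the fixed-point characterization that $\hat{\ba}_j$ is (after sign alignment) a stationary point of $\bu \mapsto \bu - \tfrac{1}{3n}\sum_k \bX_k(\bu^\top\bX_k)^3$, I perform a one-step linearization around $\ba_j$. Using the rate $\|\hat{\ba}_j-\ba_j\| = O_p(\sqrt{d\log d/n})$ from Theorem~\ref{th:comp-rate} to control Taylor remainders, this produces
$$
\hat{\ba}_j - \ba_j = \frac{1}{n\kappa_4(S_j)}\sum_{k=1}^n S_{kj}^3\,\bP_{\ba_j,\perp}\bX_k \;-\; \tfrac{1}{2}\|\bP_{\ba_j,\perp}\hat{\ba}_j\|^2\,\ba_j \;+\; R_j,
$$
with $\|R_j\|=O_p(d(\log d)/n)$ uniformly in $j$. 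The first term is the stochastic fluctuation orthogonal to $\ba_j$, the second enforces $\|\hat{\ba}_j\|=1$ and is quadratic in the orthogonal error, and $R_j$ absorbs higher-order Taylor remainders together with deflation-propagated errors, the latter controlled inductively as in the proof of Theorem~\ref{th:comp-rate}.

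Summing against weights $v_j$ and testing against $\bu$, I decompose
$$
\sqrt{n}\,\bu^\top(\hat{\bA}-\bA)\bv \;=\; \frac{1}{\sqrt{n}}\sum_{k=1}^n W_k \;+\; \mathrm{Bias} \;+\; \mathrm{Rem},
$$
where $W_k = \sum_j (v_j/\kappa_4(S_j)) S_{kj}^3\,\bu^\top\bP_{\ba_j,\perp}\bX_k$ are i.i.d.\ mean-zero, while $\mathrm{Bias}$ and $\mathrm{Rem}$ are the $\sqrt n$-scaled sums of the second and third terms above. A direct variance calculation, exploiting the independence of the $S_{kj}$'s and the centering $\EE[S_j^3\bP_{\ba_j,\perp}\bX]=0$, confirms $\Var(W_k)=\sigma_{\bu,\bv}^2$; any apparent cross terms in $j\ne j'$ either vanish under the moment structure or are of lower order and absorbed into $\mathrm{Rem}$.

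To control the bias, I use Corollary~\ref{cor:ica-par}-type bounds $\max_j\|\bP_{\ba_j,\perp}\hat{\ba}_j\|^2 = O_p(d(\log d)/n)$ together with Cauchy--Schwarz:
$$
|\mathrm{Bias}| \;\le\; \tfrac{\sqrt{n}}{2}\max_j\|\bP_{\ba_j,\perp}\hat{\ba}_j\|^2 \cdot \sqrt{d}\,\|\bv\|\,\|\bA^\top\bu\| \;\lesssim\; \frac{d^{3/2}\log d}{\sqrt{n}},
$$
using the orthonormality of $\bA$ so $\|\bA^\top\bu\|=\|\bu\|$; a parallel Cauchy--Schwarz argument gives $|\mathrm{Rem}|\lesssim d^{3/2}(\log d)/\sqrt n$. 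For the leading average, I rewrite $W_k = \bu^\top\bZ_k$ with the $d$-dimensional influence function $\bZ_k=\sum_j(v_j/\kappa_4(S_j))S_{kj}^3\bP_{\ba_j,\perp}\bX_k$ and apply a multivariate Berry--Esseen inequality (e.g.\ Bentkus' bound) for the Gaussian with covariance $\bA\bD_\bv\bA^\top$, specialized to the half-space $\{\bx:\bu^\top\bx\le t\}$. The $(8+\epsilon)$-th moment assumption with $\epsilon\ge 4$ supplies the required third moments, producing a Kolmogorov-distance bound of order $(\log d)^{3/2}/\sqrt{d}$. Combining yields the stated rate.

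The main obstacle is avoiding an extra $\sqrt{d}$ factor in the bias. A naive bound on $\sum_j v_j(\bu^\top\ba_j)\|\bP_{\ba_j,\perp}\hat{\ba}_j\|^2$ could easily produce $d^2/\sqrt{n}$ rather than $d^{3/2}/\sqrt{n}$, which would demand $n\gg d^4$; only by carefully exploiting the orthonormality identity $\sum_j(\bu^\top\ba_j)^2=\|\bu\|^2$ via Cauchy--Schwarz does one recover the sharper rate matching the stated condition $n\ge Cd^3(\log d)^2$. A related subtle point is that the remainder $R_j$ must not accumulate the deflation errors from all prior indices; keeping its total contribution at $O_p(d(\log d)/n)$ uniformly in $j$ requires an inductive argument paralleling the one underlying Theorem~\ref{th:comp-rate} but tightened to the sharper pointwise rate needed here.
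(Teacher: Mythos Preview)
Your approach is essentially the paper's: linearize $\hat{\ba}_j-\ba_j$ to extract the leading term $\kappa_4(S_j)^{-1}\bP_{\ba_j,\perp}\scrE\times_{2,3,4}\ba_j$, control the parallel part and the remainder on a high-probability event, sum against $(v_j)$, bound the aggregated bias/remainder via Cauchy--Schwarz to get the $d^{3/2}$ (not $d^2$) scaling, and finish with Berry--Esseen. The paper obtains the linearization through a resolvent-based spectral perturbation expansion (its Lemma~\ref{lem:asy}) rather than your Taylor expansion of the fixed-point map, but the outputs agree.

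Two points need correction. First, the detour through multivariate Berry--Esseen is unnecessary and your attribution of the $(\log d)^{3/2}/\sqrt{d}$ term is wrong. Since $W_k$ is already a scalar, the \emph{univariate} Berry--Esseen theorem applies directly and contributes only $O(1/\sqrt n)$; the paper does exactly this. The $(\log d)^{3/2}/\sqrt{d}$ term instead arises as (a loose bound on) $\PP(\calA^c)$, the failure probability of the event $\calA$ on which the concentration bounds $\Delta,\Delta_2,\Delta_3$ hold. Your linearization is only valid on $\calA$, so when you pass from conditional to unconditional Kolmogorov distance you must add $\PP(\calA^c)$; you have omitted this step entirely. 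Second, the cross terms $j\ne j'$ in $\Var(W_k)$ do not merely get ``absorbed into $\mathrm{Rem}$''; the paper computes them explicitly and shows they vanish exactly because the relevant covariance matrix $\mathrm{Cov}(S_{j_1}^3\bS,S_{j_2}^3\bS)$ is proportional to $\be_{j_2}\be_{j_1}^\top$, which is annihilated by the projections $\bP_{\ba_{j_1},\perp},\bP_{\ba_{j_2},\perp}$ after conjugation by $\bA$.
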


Theorem \ref{th:ica-clt} states the asymptotic normality of all bilinear forms $\bu^{\top}\bA\bv$ for which the variance $\sigma_{\bu,\bv}$ is positive. Notice that we require a larger sample size in Theorem \ref{th:ica-clt} than for the linear forms. We suspect that the weaker bound in the previous theorem is an artifact of our proof and not a fundamental barrier.

\section{Numerical Experiments}\label{sec:simul}

To complement our theoretical analysis and demonstrate the practical merits of our approach,
we carried out several sets of numerical experiments.

\subsection{Finite Sample Properties}
Our first set of numerical experiments is a number of simulation studies to explore the finite sample properties of the proposed method and its comparison with several popular ICA techniques. 

\paragraph{Comparison with Other Popular Techniques.} In these experiments, we fixed the dimension $d=25$, generated the mixing matrix $\bA$ from the uniform distribution over $\calO(d)$, and $S_k$s from the ${\rm Laplace}$ distribution with mean zero and variance one. We varied the sample size $n=500$, $1500$, and $2000$. For each simulated data, we computed the mixing matrix using six different methods:  FastICA using neg-entropy as the contrast function \citep{hyvarinen2000independent}, JADE \citep{cardoso1993blind, cardoso1999high}, FastICA initialized by naive matricization, FastICA with random initialization, FastICA with random slicing (\cite{anand2014tensor}), and the proposed method. In each run, we measure the performance of both estimates using $\ell_M$ and $\ell_A$. The two error measures yield qualitatively very similar results and we only report the result with $\ell_A$, for brevity. The results, summarized from 200 runs for each sample size, are presented in Figure \ref{fig:lo-dim}.

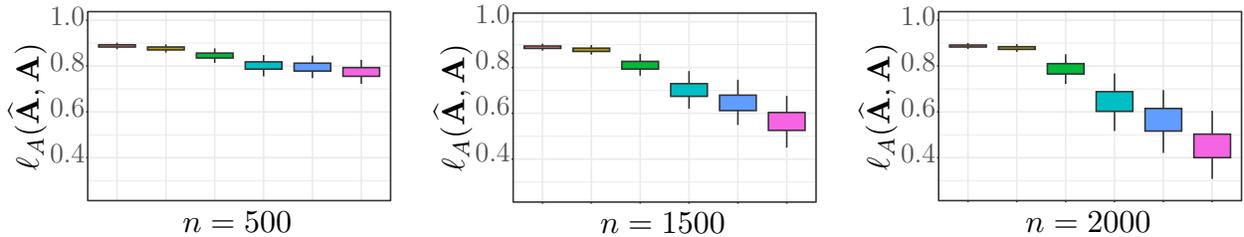
\begin{figure}[htb]
	\centering
	\begin{subfigure}{0.25\textwidth}
		\begin{tikzpicture}[x=.22pt,y=.22pt]
	\hspace{-1cm}
\definecolor{fillColor}{RGB}{255,255,255}
\begin{scope}
\definecolor{drawColor}{RGB}{255,255,255}
\definecolor{fillColor}{RGB}{255,255,255}

\end{scope}
\begin{scope}
\definecolor{fillColor}{RGB}{255,255,255}

\path[fill=fillColor] ( 34.16, 30.69) rectangle (554.90,355.85);
\definecolor{drawColor}{gray}{0.92}

\path[draw=drawColor,line width= 0.3pt,line join=round] ( 34.16, 65.17) --
	(554.90, 65.17);

\path[draw=drawColor,line width= 0.3pt,line join=round] ( 34.16,144.00) --
	(554.90,144.00);

\path[draw=drawColor,line width= 0.3pt,line join=round] ( 34.16,222.83) --
	(554.90,222.83);

\path[draw=drawColor,line width= 0.3pt,line join=round] ( 34.16,301.66) --
	(554.90,301.66);

\path[draw=drawColor,line width= 0.6pt,line join=round] ( 34.16,104.59) --
	(554.90,104.59);

\path[draw=drawColor,line width= 0.6pt,line join=round] ( 34.16,183.41) --
	(554.90,183.41);

\path[draw=drawColor,line width= 0.6pt,line join=round] ( 34.16,262.24) --
	(554.90,262.24);

\path[draw=drawColor,line width= 0.6pt,line join=round] ( 34.16,341.07) --
	(554.90,341.07);

\path[draw=drawColor,line width= 0.6pt,line join=round] ( 84.55, 30.69) --
	( 84.55,355.85);

\path[draw=drawColor,line width= 0.6pt,line join=round] (168.54, 30.69) --
	(168.54,355.85);

\path[draw=drawColor,line width= 0.6pt,line join=round] (252.53, 30.69) --
	(252.53,355.85);

\path[draw=drawColor,line width= 0.6pt,line join=round] (336.52, 30.69) --
	(336.52,355.85);

\path[draw=drawColor,line width= 0.6pt,line join=round] (420.52, 30.69) --
	(420.52,355.85);

\path[draw=drawColor,line width= 0.6pt,line join=round] (504.51, 30.69) --
	(504.51,355.85);
\definecolor{drawColor}{gray}{0.20}

\path[draw=drawColor,line width= 0.6pt,line join=round] ( 84.55,298.80) -- ( 84.55,302.44);

\path[draw=drawColor,line width= 0.6pt,line join=round] ( 84.55,294.95) -- ( 84.55,291.13);
\definecolor{fillColor}{RGB}{248,118,109}

\path[draw=drawColor,line width= 0.6pt,fill=fillColor] ( 53.05,298.80) --
	( 53.05,294.95) --
	(116.05,294.95) --
	(116.05,298.80) --
	( 53.05,298.80) --
	cycle;

\path[draw=drawColor,line width= 0.6pt,line join=round] (168.54,295.03) -- (168.54,299.63);

\path[draw=drawColor,line width= 0.6pt,line join=round] (168.54,289.95) -- (168.54,285.36);
\definecolor{fillColor}{RGB}{183,159,0}

\path[draw=drawColor,line width= 0.6pt,fill=fillColor] (137.05,295.03) --
	(137.05,289.95) --
	(200.04,289.95) --
	(200.04,295.03) --
	(137.05,295.03) --
	cycle;

\path[draw=drawColor,line width= 0.6pt,line join=round] (252.53,284.55) -- (252.53,292.72);

\path[draw=drawColor,line width= 0.6pt,line join=round] (252.53,276.08) -- (252.53,267.62);
\definecolor{fillColor}{RGB}{0,186,56}

\path[draw=drawColor,line width= 0.6pt,fill=fillColor] (221.04,284.55) --
	(221.04,276.08) --
	(284.03,276.08) --
	(284.03,284.55) --
	(221.04,284.55) --
	cycle;

\path[draw=drawColor,line width= 0.6pt,line join=round] (336.52,269.49) -- (336.52,281.07);

\path[draw=drawColor,line width= 0.6pt,line join=round] (336.52,256.94) -- (336.52,244.45);
\definecolor{fillColor}{RGB}{0,191,196}

\path[draw=drawColor,line width= 0.6pt,fill=fillColor] (305.03,269.49) --
	(305.03,256.94) --
	(368.02,256.94) --
	(368.02,269.49) --
	(305.03,269.49) --
	cycle;

\path[draw=drawColor,line width= 0.6pt,line join=round] (420.52,267.20) -- (420.52,280.25);

\path[draw=drawColor,line width= 0.6pt,line join=round] (420.52,253.63) -- (420.52,241.48);
\definecolor{fillColor}{RGB}{97,156,255}

\path[draw=drawColor,line width= 0.6pt,fill=fillColor] (389.02,267.20) --
	(389.02,253.63) --
	(452.01,253.63) --
	(452.01,267.20) --
	(389.02,267.20) --
	cycle;

\path[draw=drawColor,line width= 0.6pt,line join=round] (504.51,259.54) -- (504.51,272.93);

\path[draw=drawColor,line width= 0.6pt,line join=round] (504.51,244.60) -- (504.51,231.73);
\definecolor{fillColor}{RGB}{245,100,227}

\path[draw=drawColor,line width= 0.6pt,fill=fillColor] (473.01,259.54) --
	(473.01,244.60) --
	(536.00,244.60) --
	(536.00,259.54) --
	(473.01,259.54) --
	cycle;

\path[draw=drawColor,line width= 0.6pt,line join=round,line cap=round] ( 34.16, 30.69) rectangle (554.90,355.85);
\end{scope}
\begin{scope}
\definecolor{drawColor}{gray}{0.30}

\node[text=drawColor,anchor=base east,inner sep=0pt, outer sep=0pt, scale=  0.88] at ( 29.21,96.56) {0.4};

\node[text=drawColor,anchor=base east,inner sep=0pt, outer sep=0pt, scale=  0.88] at ( 29.21,175.38) {0.6};

\node[text=drawColor,anchor=base east,inner sep=0pt, outer sep=0pt, scale=  0.88] at ( 29.21,254.21) {0.8};

\node[text=drawColor,anchor=base east,inner sep=0pt, outer sep=0pt, scale=  0.88] at ( 29.21,333.04) {1.0};
\end{scope}
\begin{scope}
\definecolor{drawColor}{gray}{0.20}

\path[draw=drawColor,line width= 0.6pt,line join=round] ( 31.41,104.59) --
	( 34.16,104.59);

\path[draw=drawColor,line width= 0.6pt,line join=round] ( 31.41,183.41) --
	( 34.16,183.41);

\path[draw=drawColor,line width= 0.6pt,line join=round] ( 31.41,262.24) --
	( 34.16,262.24);

\path[draw=drawColor,line width= 0.6pt,line join=round] ( 31.41,341.07) --
	( 34.16,341.07);
\end{scope}
%
%
%
%
%
%
%
%
%
%
%
%
\begin{scope}
\definecolor{drawColor}{RGB}{0,0,0}

\node[text=drawColor,rotate= 90.00,anchor=base,inner sep=0pt, outer sep=0pt, scale=  1.10] at ( -50.08,193.27) {\small $\ell_A(\hat{\bA},\bA)$};
\end{scope}
\begin{scope}
\definecolor{fillColor}{RGB}{255,255,255}

\path[fill=fillColor] (565.90,136.80) rectangle (623.25,249.74);
\end{scope}
\begin{scope}
	\definecolor{drawColor}{RGB}{0,0,0}
	
	\node[text=drawColor,anchor=base,inner sep=0pt, outer sep=0pt, scale=  1.10] at (291.36,  -22.64) {\small $n=500$};
\end{scope}
\end{tikzpicture}
	\end{subfigure}\hfil
	\begin{subfigure}{0.25\textwidth}
		\begin{tikzpicture}[x=.22pt,y=.22pt]
		\hspace{-.5cm}
\definecolor{fillColor}{RGB}{255,255,255}
\begin{scope}
\definecolor{drawColor}{RGB}{255,255,255}
\definecolor{fillColor}{RGB}{255,255,255}

\end{scope}
\begin{scope}
\definecolor{fillColor}{RGB}{255,255,255}

\path[fill=fillColor] ( 34.16, 30.69) rectangle (554.90,355.85);
\definecolor{drawColor}{gray}{0.92}

\path[draw=drawColor,line width= 0.3pt,line join=round] ( 34.16, 65.17) --
	(554.90, 65.17);

\path[draw=drawColor,line width= 0.3pt,line join=round] ( 34.16,144.00) --
	(554.90,144.00);

\path[draw=drawColor,line width= 0.3pt,line join=round] ( 34.16,222.83) --
	(554.90,222.83);

\path[draw=drawColor,line width= 0.3pt,line join=round] ( 34.16,301.66) --
	(554.90,301.66);

\path[draw=drawColor,line width= 0.6pt,line join=round] ( 34.16,104.59) --
	(554.90,104.59);

\path[draw=drawColor,line width= 0.6pt,line join=round] ( 34.16,183.41) --
	(554.90,183.41);

\path[draw=drawColor,line width= 0.6pt,line join=round] ( 34.16,262.24) --
	(554.90,262.24);

\path[draw=drawColor,line width= 0.6pt,line join=round] ( 34.16,341.07) --
	(554.90,341.07);

\path[draw=drawColor,line width= 0.6pt,line join=round] ( 84.55, 30.69) --
	( 84.55,355.85);

\path[draw=drawColor,line width= 0.6pt,line join=round] (168.54, 30.69) --
	(168.54,355.85);

\path[draw=drawColor,line width= 0.6pt,line join=round] (252.53, 30.69) --
	(252.53,355.85);

\path[draw=drawColor,line width= 0.6pt,line join=round] (336.52, 30.69) --
	(336.52,355.85);

\path[draw=drawColor,line width= 0.6pt,line join=round] (420.52, 30.69) --
	(420.52,355.85);

\path[draw=drawColor,line width= 0.6pt,line join=round] (504.51, 30.69) --
	(504.51,355.85);
\definecolor{drawColor}{gray}{0.20}

\path[draw=drawColor,line width= 0.6pt,line join=round] ( 84.55,299.19) -- ( 84.55,303.15);

\path[draw=drawColor,line width= 0.6pt,line join=round] ( 84.55,295.08) -- ( 84.55,291.32);
\definecolor{fillColor}{RGB}{248,118,109}

\path[draw=drawColor,line width= 0.6pt,fill=fillColor] ( 53.05,299.19) --
	( 53.05,295.08) --
	(116.05,295.08) --
	(116.05,299.19) --
	( 53.05,299.19) --
	cycle;

\path[draw=drawColor,line width= 0.6pt,line join=round] (168.54,295.51) -- (168.54,300.81);

\path[draw=drawColor,line width= 0.6pt,line join=round] (168.54,289.91) -- (168.54,284.49);
\definecolor{fillColor}{RGB}{183,159,0}

\path[draw=drawColor,line width= 0.6pt,fill=fillColor] (137.05,295.51) --
	(137.05,289.91) --
	(200.04,289.91) --
	(200.04,295.51) --
	(137.05,295.51) --
	cycle;

\path[draw=drawColor,line width= 0.6pt,line join=round] (252.53,272.76) -- (252.53,285.46);

\path[draw=drawColor,line width= 0.6pt,line join=round] (252.53,259.55) -- (252.53,248.17);
\definecolor{fillColor}{RGB}{0,186,56}

\path[draw=drawColor,line width= 0.6pt,fill=fillColor] (221.04,272.76) --
	(221.04,259.55) --
	(284.03,259.55) --
	(284.03,272.76) --
	(221.04,272.76) --
	cycle;

\path[draw=drawColor,line width= 0.6pt,line join=round] (336.52,234.45) -- (336.52,256.07);

\path[draw=drawColor,line width= 0.6pt,line join=round] (336.52,212.62) -- (336.52,191.35);
\definecolor{fillColor}{RGB}{0,191,196}

\path[draw=drawColor,line width= 0.6pt,fill=fillColor] (305.03,234.45) --
	(305.03,212.62) --
	(368.02,212.62) --
	(368.02,234.45) --
	(305.03,234.45) --
	cycle;

\path[draw=drawColor,line width= 0.6pt,line join=round] (420.52,214.68) -- (420.52,241.22);

\path[draw=drawColor,line width= 0.6pt,line join=round] (420.52,188.13) -- (420.52,163.42);
\definecolor{fillColor}{RGB}{97,156,255}

\path[draw=drawColor,line width= 0.6pt,fill=fillColor] (389.02,214.68) --
	(389.02,188.13) --
	(452.01,188.13) --
	(452.01,214.68) --
	(389.02,214.68) --
	cycle;

\path[draw=drawColor,line width= 0.6pt,line join=round] (504.51,184.83) -- (504.51,213.48);

\path[draw=drawColor,line width= 0.6pt,line join=round] (504.51,154.06) -- (504.51,124.46);
\definecolor{fillColor}{RGB}{245,100,227}

\path[draw=drawColor,line width= 0.6pt,fill=fillColor] (473.01,184.83) --
	(473.01,154.06) --
	(536.00,154.06) --
	(536.00,184.83) --
	(473.01,184.83) --
	cycle;

\path[draw=drawColor,line width= 0.6pt,line join=round,line cap=round] ( 34.16, 30.69) rectangle (554.90,355.85);
\end{scope}
\begin{scope}
\definecolor{drawColor}{gray}{0.30}

\node[text=drawColor,anchor=base east,inner sep=0pt, outer sep=0pt, scale=  0.88] at ( 29.21,101.56) {0.4};

\node[text=drawColor,anchor=base east,inner sep=0pt, outer sep=0pt, scale=  0.88] at ( 29.21,180.38) {0.6};

\node[text=drawColor,anchor=base east,inner sep=0pt, outer sep=0pt, scale=  0.88] at ( 29.21,259.21) {0.8};

\node[text=drawColor,anchor=base east,inner sep=0pt, outer sep=0pt, scale=  0.88] at ( 29.21,338.04) {1.0};
\end{scope}
\begin{scope}
\definecolor{drawColor}{gray}{0.20}

\path[draw=drawColor,line width= 0.6pt,line join=round] ( 31.41,104.59) --
	( 34.16,104.59);

\path[draw=drawColor,line width= 0.6pt,line join=round] ( 31.41,183.41) --
	( 34.16,183.41);

\path[draw=drawColor,line width= 0.6pt,line join=round] ( 31.41,262.24) --
	( 34.16,262.24);

\path[draw=drawColor,line width= 0.6pt,line join=round] ( 31.41,341.07) --
	( 34.16,341.07);
\end{scope}
\begin{scope}
\definecolor{drawColor}{gray}{0.20}

\path[draw=drawColor,line width= 0.6pt,line join=round] ( 84.55, 27.94) --
	( 84.55, 30.69);

\path[draw=drawColor,line width= 0.6pt,line join=round] (168.54, 27.94) --
	(168.54, 30.69);

\path[draw=drawColor,line width= 0.6pt,line join=round] (252.53, 27.94) --
	(252.53, 30.69);

\path[draw=drawColor,line width= 0.6pt,line join=round] (336.52, 27.94) --
	(336.52, 30.69);

\path[draw=drawColor,line width= 0.6pt,line join=round] (420.52, 27.94) --
	(420.52, 30.69);

\path[draw=drawColor,line width= 0.6pt,line join=round] (504.51, 27.94) --
	(504.51, 30.69);
\end{scope}
\begin{scope}
	\definecolor{drawColor}{RGB}{0,0,0}
	
	\node[text=drawColor,rotate= 90.00,anchor=base,inner sep=0pt, outer sep=0pt, scale=  1.10] at ( -50.08,193.27) {\small $\ell_A(\hat{\bA},\bA)$};
\end{scope}
\begin{scope}
	\definecolor{fillColor}{RGB}{255,255,255}
	
	\path[fill=fillColor] (565.90,136.80) rectangle (623.25,249.74);
\end{scope}
\begin{scope}
	\definecolor{drawColor}{RGB}{0,0,0}
	
	\node[text=drawColor,anchor=base,inner sep=0pt, outer sep=0pt, scale=  1.10] at (291.36,  -22.64) {\small $n=1500$};
\end{scope}
\end{tikzpicture}
	\end{subfigure}\hfil
	\begin{subfigure}{0.25\textwidth}
		\begin{tikzpicture}[x=.22pt,y=.22pt]
\definecolor{fillColor}{RGB}{255,255,255}
\begin{scope}
\definecolor{drawColor}{RGB}{255,255,255}
\definecolor{fillColor}{RGB}{255,255,255}

\end{scope}
\begin{scope}
\definecolor{fillColor}{RGB}{255,255,255}

\path[fill=fillColor] ( 34.16, 30.69) rectangle (554.90,355.85);
\definecolor{drawColor}{gray}{0.92}

\path[draw=drawColor,line width= 0.3pt,line join=round] ( 34.16, 65.17) --
	(554.90, 65.17);

\path[draw=drawColor,line width= 0.3pt,line join=round] ( 34.16,144.00) --
	(554.90,144.00);

\path[draw=drawColor,line width= 0.3pt,line join=round] ( 34.16,222.83) --
	(554.90,222.83);

\path[draw=drawColor,line width= 0.3pt,line join=round] ( 34.16,301.66) --
	(554.90,301.66);

\path[draw=drawColor,line width= 0.6pt,line join=round] ( 34.16,104.59) --
	(554.90,104.59);

\path[draw=drawColor,line width= 0.6pt,line join=round] ( 34.16,183.41) --
	(554.90,183.41);

\path[draw=drawColor,line width= 0.6pt,line join=round] ( 34.16,262.24) --
	(554.90,262.24);

\path[draw=drawColor,line width= 0.6pt,line join=round] ( 34.16,341.07) --
	(554.90,341.07);

\path[draw=drawColor,line width= 0.6pt,line join=round] ( 84.55, 30.69) --
	( 84.55,355.85);

\path[draw=drawColor,line width= 0.6pt,line join=round] (168.54, 30.69) --
	(168.54,355.85);

\path[draw=drawColor,line width= 0.6pt,line join=round] (252.53, 30.69) --
	(252.53,355.85);

\path[draw=drawColor,line width= 0.6pt,line join=round] (336.52, 30.69) --
	(336.52,355.85);

\path[draw=drawColor,line width= 0.6pt,line join=round] (420.52, 30.69) --
	(420.52,355.85);

\path[draw=drawColor,line width= 0.6pt,line join=round] (504.51, 30.69) --
	(504.51,355.85);
\definecolor{drawColor}{gray}{0.20}

\path[draw=drawColor,line width= 0.6pt,line join=round] ( 84.55,298.38) -- ( 84.55,301.78);

\path[draw=drawColor,line width= 0.6pt,line join=round] ( 84.55,294.97) -- ( 84.55,291.58);
\definecolor{fillColor}{RGB}{248,118,109}

\path[draw=drawColor,line width= 0.6pt,fill=fillColor] ( 53.05,298.38) --
	( 53.05,294.97) --
	(116.05,294.97) --
	(116.05,298.38) --
	( 53.05,298.38) --
	cycle;

\path[draw=drawColor,line width= 0.6pt,line join=round] (168.54,295.55) -- (168.54,299.94);

\path[draw=drawColor,line width= 0.6pt,line join=round] (168.54,290.93) -- (168.54,286.57);
\definecolor{fillColor}{RGB}{183,159,0}

\path[draw=drawColor,line width= 0.6pt,fill=fillColor] (137.05,295.55) --
	(137.05,290.93) --
	(200.04,290.93) --
	(200.04,295.55) --
	(137.05,295.55) --
	cycle;

\path[draw=drawColor,line width= 0.6pt,line join=round] (252.53,266.41) -- (252.53,282.56);

\path[draw=drawColor,line width= 0.6pt,line join=round] (252.53,248.57) -- (252.53,231.58);
\definecolor{fillColor}{RGB}{0,186,56}

\path[draw=drawColor,line width= 0.6pt,fill=fillColor] (221.04,266.41) --
	(221.04,248.57) --
	(284.03,248.57) --
	(284.03,266.41) --
	(221.04,266.41) --
	cycle;

\path[draw=drawColor,line width= 0.6pt,line join=round] (336.52,218.19) -- (336.52,249.55);

\path[draw=drawColor,line width= 0.6pt,line join=round] (336.52,184.31) -- (336.52,150.58);
\definecolor{fillColor}{RGB}{0,191,196}

\path[draw=drawColor,line width= 0.6pt,fill=fillColor] (305.03,218.19) --
	(305.03,184.31) --
	(368.02,184.31) --
	(368.02,218.19) --
	(305.03,218.19) --
	cycle;

\path[draw=drawColor,line width= 0.6pt,line join=round] (420.52,189.28) -- (420.52,220.91);

\path[draw=drawColor,line width= 0.6pt,line join=round] (420.52,150.35) -- (420.52,112.85);
\definecolor{fillColor}{RGB}{97,156,255}

\path[draw=drawColor,line width= 0.6pt,fill=fillColor] (389.02,189.28) --
	(389.02,150.35) --
	(452.01,150.35) --
	(452.01,189.28) --
	(389.02,189.28) --
	cycle;

\path[draw=drawColor,line width= 0.6pt,line join=round] (504.51,145.09) -- (504.51,185.27);

\path[draw=drawColor,line width= 0.6pt,line join=round] (504.51,104.66) -- (504.51, 68.10);
\definecolor{fillColor}{RGB}{245,100,227}

\path[draw=drawColor,line width= 0.6pt,fill=fillColor] (473.01,145.09) --
	(473.01,104.66) --
	(536.00,104.66) --
	(536.00,145.09) --
	(473.01,145.09) --
	cycle;

\path[draw=drawColor,line width= 0.6pt,line join=round,line cap=round] ( 34.16, 30.69) rectangle (554.90,355.85);
\end{scope}
\begin{scope}
\definecolor{drawColor}{gray}{0.30}

\node[text=drawColor,anchor=base east,inner sep=0pt, outer sep=0pt, scale=  0.88] at ( 29.21,96.56) {0.4};

\node[text=drawColor,anchor=base east,inner sep=0pt, outer sep=0pt, scale=  0.88] at ( 29.21,175.38) {0.6};

\node[text=drawColor,anchor=base east,inner sep=0pt, outer sep=0pt, scale=  0.88] at ( 29.21,254.21) {0.8};

\node[text=drawColor,anchor=base east,inner sep=0pt, outer sep=0pt, scale=  0.88] at ( 29.21,333.04) {1.0};
\end{scope}
\begin{scope}
\definecolor{drawColor}{gray}{0.20}

\path[draw=drawColor,line width= 0.6pt,line join=round] ( 31.41,104.59) --
	( 34.16,104.59);

\path[draw=drawColor,line width= 0.6pt,line join=round] ( 31.41,183.41) --
	( 34.16,183.41);

\path[draw=drawColor,line width= 0.6pt,line join=round] ( 31.41,262.24) --
	( 34.16,262.24);

\path[draw=drawColor,line width= 0.6pt,line join=round] ( 31.41,341.07) --
	( 34.16,341.07);
\end{scope}
\begin{scope}
\definecolor{drawColor}{gray}{0.20}

\path[draw=drawColor,line width= 0.6pt,line join=round] ( 84.55, 27.94) --
	( 84.55, 30.69);

\path[draw=drawColor,line width= 0.6pt,line join=round] (168.54, 27.94) --
	(168.54, 30.69);

\path[draw=drawColor,line width= 0.6pt,line join=round] (252.53, 27.94) --
	(252.53, 30.69);

\path[draw=drawColor,line width= 0.6pt,line join=round] (336.52, 27.94) --
	(336.52, 30.69);

\path[draw=drawColor,line width= 0.6pt,line join=round] (420.52, 27.94) --
	(420.52, 30.69);

\path[draw=drawColor,line width= 0.6pt,line join=round] (504.51, 27.94) --
	(504.51, 30.69);
\end{scope}
\begin{scope}
	\definecolor{drawColor}{RGB}{0,0,0}
	
	\node[text=drawColor,rotate= 90.00,anchor=base,inner sep=0pt, outer sep=0pt, scale=  1.10] at ( -50.08,193.27) {\small $\ell_A(\hat{\bA},\bA)$};
\end{scope}
\begin{scope}
	\definecolor{fillColor}{RGB}{255,255,255}
	
	\path[fill=fillColor] (565.90,136.80) rectangle (623.25,249.74);
\end{scope}
\begin{scope}
	\definecolor{drawColor}{RGB}{0,0,0}
	
	\node[text=drawColor,anchor=base,inner sep=0pt, outer sep=0pt, scale=  1.10] at (291.36,  -22.64) {\small $n=2000$};
\end{scope}
\end{tikzpicture}
	\end{subfigure}
	\caption{Comparison of ICA methods for $d=25$. In each panel, from left to right, we respectively plot the results using FastICA-neg-entropy and  JADE, followed by FastICA initialized respectively by naive matricization, random unit vectors, randomly sliced matricization, and Algorithm~\ref{alg:init}.}
	\label{fig:lo-dim}
\end{figure}

Figure~\ref{fig:lo-dim} shows the benefit of using randomization to initialize iteration algorithms for ICA. The standard algorithms of FastICA with negative entropy and JADE, when used with the deflation variant (as provided in \texttt{R} packages {\sf FastICA} and {\sf JADE}) do not show desirable performance even when the sample size is high. Similarly, the naive matricization method, which applies SVD to the $(1,2)$-matricization of the fourth-order cumulant tensor, does not perform well. As we point out in Section~\ref{sec:inf-th}, this is because there is no eigengap in this matrix, and consequently SVD is not unique. Algorithmically, the method ran into convergence issues for the same reason.

In contrast, the randomization-based methods perform increasingly well for moderately large sample sizes. We compared three such initialization schemes: a random unit vector, SVD of a random linear combination of the $(1,2)$ slices of the fourth-order moment tensor (as done by \cite{anand2014tensor}), as well as our method based on the improved fourth-order moment estimate (Algorithm \ref{alg:init}). Note that the improved moment estimates perform better than the other two initialization algorithms. This trend is more pronounced in the high dimensional regime, given in Figures~\ref{fig:ica-0}, \ref{fig:ica-1}, and \ref{fig:trunc}.


\paragraph{Distributional Properties.} Next, we assess the accuracy of the normal approximation to the distribution of the estimated mixing matrix. To fix ideas, we focus on the distribution of $\langle \hat{\ba}_1,\ba_2\rangle$. Figure~\ref{fig:lo-dim-clt} shows the histogram constructed from 200 simulation runs each for sample size $n=400$, $800$, and $1200$. Since the proposed initialized method (Algorithm~\ref{alg:init}) is one of the better-performing methods in Figure~\ref{fig:lo-dim}, we plot the histograms of only that estimator.  We overlay the histograms with the normal distribution centered with the sampled mean and variance given by Theorem \ref{th:ica-clt}.

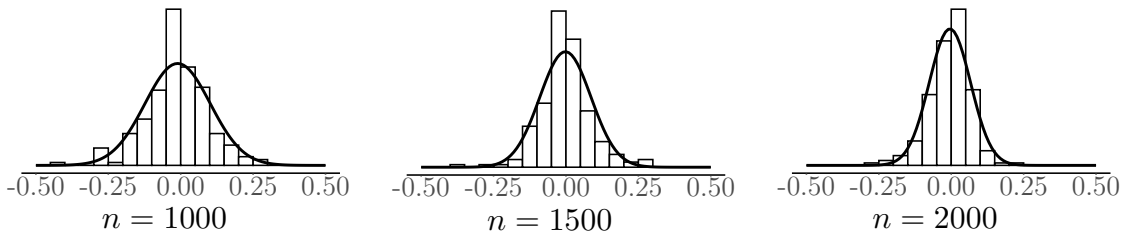
\begin{figure}[htb]
	\centering
	\begin{subfigure}{0.25\textwidth}
		\begin{tikzpicture}[x=.2pt,y=.2pt]
\definecolor{fillColor}{RGB}{255,255,255}
\begin{scope}
\definecolor{drawColor}{RGB}{255,255,255}
\definecolor{fillColor}{RGB}{255,255,255}

\end{scope}
\begin{scope}
\definecolor{fillColor}{RGB}{255,255,255}

\path[fill=fillColor] ( 20.71, 30.69) rectangle (623.25,355.85);
\definecolor{drawColor}{RGB}{0,0,0}

\path[draw=drawColor,line width= 0.6pt,fill=fillColor] ( 48.10, 45.47) rectangle ( 75.49, 45.47);

\path[draw=drawColor,line width= 0.6pt,fill=fillColor] ( 75.49, 45.47) rectangle (102.88, 50.94);

\path[draw=drawColor,line width= 0.6pt,fill=fillColor] (102.88, 45.47) rectangle (130.27, 45.47);

\path[draw=drawColor,line width= 0.6pt,fill=fillColor] (130.27, 45.47) rectangle (157.65, 45.47);

\path[draw=drawColor,line width= 0.6pt,fill=fillColor] (157.65, 45.47) rectangle (185.04, 78.31);

\path[draw=drawColor,line width= 0.6pt,fill=fillColor] (185.04, 45.47) rectangle (212.43, 50.94);

\path[draw=drawColor,line width= 0.6pt,fill=fillColor] (212.43, 45.47) rectangle (239.82,105.68);

\path[draw=drawColor,line width= 0.6pt,fill=fillColor] (239.82, 45.47) rectangle (267.21,133.05);

\path[draw=drawColor,line width= 0.6pt,fill=fillColor] (267.21, 45.47) rectangle (294.59,187.79);

\path[draw=drawColor,line width= 0.6pt,fill=fillColor] (294.59, 45.47) rectangle (321.98,341.07);

\path[draw=drawColor,line width= 0.6pt,fill=fillColor] (321.98, 45.47) rectangle (349.37,231.59);

\path[draw=drawColor,line width= 0.6pt,fill=fillColor] (349.37, 45.47) rectangle (376.76,193.27);

\path[draw=drawColor,line width= 0.6pt,fill=fillColor] (376.76, 45.47) rectangle (404.15,105.68);

\path[draw=drawColor,line width= 0.6pt,fill=fillColor] (404.15, 45.47) rectangle (431.53, 83.79);

\path[draw=drawColor,line width= 0.6pt,fill=fillColor] (431.53, 45.47) rectangle (458.92, 61.89);

\path[draw=drawColor,line width= 0.6pt,fill=fillColor] (458.92, 45.47) rectangle (486.31, 56.41);

\path[draw=drawColor,line width= 0.6pt,fill=fillColor] (486.31, 45.47) rectangle (513.70, 45.47);

\path[draw=drawColor,line width= 0.6pt,fill=fillColor] (513.70, 45.47) rectangle (541.09, 45.47);

\path[draw=drawColor,line width= 0.6pt,fill=fillColor] (541.09, 45.47) rectangle (568.47, 45.47);

\path[draw=drawColor,line width= 0.6pt,fill=fillColor] (568.47, 45.47) rectangle (595.86, 45.47);

\path[draw=drawColor,line width= 1.1pt,line join=round] ( 48.10, 45.48) --
	( 53.58, 45.49) --
	( 59.06, 45.50) --
	( 64.54, 45.51) --
	( 70.01, 45.53) --
	( 75.49, 45.56) --
	( 80.97, 45.60) --
	( 86.45, 45.66) --
	( 91.92, 45.73) --
	( 97.40, 45.83) --
	(102.88, 45.96) --
	(108.36, 46.13) --
	(113.83, 46.36) --
	(119.31, 46.66) --
	(124.79, 47.04) --
	(130.27, 47.53) --
	(135.74, 48.15) --
	(141.22, 48.94) --
	(146.70, 49.91) --
	(152.18, 51.12) --
	(157.65, 52.59) --
	(163.13, 54.38) --
	(168.61, 56.53) --
	(174.09, 59.09) --
	(179.56, 62.11) --
	(185.04, 65.65) --
	(190.52, 69.74) --
	(196.00, 74.43) --
	(201.47, 79.76) --
	(206.95, 85.75) --
	(212.43, 92.42) --
	(217.91, 99.76) --
	(223.39,107.75) --
	(228.86,116.37) --
	(234.34,125.53) --
	(239.82,135.18) --
	(245.30,145.20) --
	(250.77,155.47) --
	(256.25,165.84) --
	(261.73,176.16) --
	(267.21,186.24) --
	(272.68,195.92) --
	(278.16,205.00) --
	(283.64,213.30) --
	(289.12,220.65) --
	(294.59,226.89) --
	(300.07,231.87) --
	(305.55,235.48) --
	(311.03,237.65) --
	(316.50,238.32) --
	(321.98,237.47) --
	(327.46,235.12) --
	(332.94,231.34) --
	(338.41,226.20) --
	(343.89,219.82) --
	(349.37,212.35) --
	(354.85,203.94) --
	(360.32,194.78) --
	(365.80,185.04) --
	(371.28,174.92) --
	(376.76,164.59) --
	(382.24,154.22) --
	(387.71,143.97) --
	(393.19,133.99) --
	(398.67,124.40) --
	(404.15,115.30) --
	(409.62,106.76) --
	(415.10, 98.84) --
	(420.58, 91.58) --
	(426.06, 84.99) --
	(431.53, 79.08) --
	(437.01, 73.83) --
	(442.49, 69.21) --
	(447.97, 65.19) --
	(453.44, 61.72) --
	(458.92, 58.76) --
	(464.40, 56.25) --
	(469.88, 54.14) --
	(475.35, 52.40) --
	(480.83, 50.96) --
	(486.31, 49.78) --
	(491.79, 48.83) --
	(497.26, 48.07) --
	(502.74, 47.47) --
	(508.22, 46.99) --
	(513.70, 46.62) --
	(519.17, 46.33) --
	(524.65, 46.11) --
	(530.13, 45.94) --
	(535.61, 45.81) --
	(541.09, 45.72) --
	(546.56, 45.65) --
	(552.04, 45.60) --
	(557.52, 45.56) --
	(563.00, 45.53) --
	(568.47, 45.51) --
	(573.95, 45.50) --
	(579.43, 45.49) --
	(584.91, 45.48) --
	(590.38, 45.48) --
	(595.86, 45.47);
\end{scope}
\begin{scope}
\definecolor{drawColor}{RGB}{0,0,0}

\end{scope}
\begin{scope}
\definecolor{drawColor}{RGB}{0,0,0}

\path[draw=drawColor,line width= 0.6pt,line join=round] ( 20.71, 30.69) --
	(623.25, 30.69);
\end{scope}
\begin{scope}
\definecolor{drawColor}{gray}{0.20}

\path[draw=drawColor,line width= 0.6pt,line join=round] ( 48.10, 27.94) --
	( 48.10, 30.69);

\path[draw=drawColor,line width= 0.6pt,line join=round] (185.04, 27.94) --
	(185.04, 30.69);

\path[draw=drawColor,line width= 0.6pt,line join=round] (321.98, 27.94) --
	(321.98, 30.69);

\path[draw=drawColor,line width= 0.6pt,line join=round] (458.92, 27.94) --
	(458.92, 30.69);

\path[draw=drawColor,line width= 0.6pt,line join=round] (595.86, 27.94) --
	(595.86, 30.69);
\end{scope}
\begin{scope}
\definecolor{drawColor}{gray}{0.30}

\node[text=drawColor,anchor=base,inner sep=0pt, outer sep=0pt, scale=  0.88] at ( 48.10, -9.68) {-0.50};

\node[text=drawColor,anchor=base,inner sep=0pt, outer sep=0pt, scale=  0.88] at (185.04, -9.68) {-0.25};

\node[text=drawColor,anchor=base,inner sep=0pt, outer sep=0pt, scale=  0.88] at (321.98, -9.68) {0.00};

\node[text=drawColor,anchor=base,inner sep=0pt, outer sep=0pt, scale=  0.88] at (458.92, -9.68) {0.25};

\node[text=drawColor,anchor=base,inner sep=0pt, outer sep=0pt, scale=  0.88] at (595.86, -9.68) {0.50};
\end{scope}
\begin{scope}
	\definecolor{drawColor}{RGB}{0,0,0}
	
	\node[text=drawColor,anchor=base,inner sep=0pt, outer sep=0pt, scale=  1.10] at (291.36,  -72.64) {\small $n=1000$};
\end{scope}
\end{tikzpicture}
	\end{subfigure}\hfil
	\begin{subfigure}{0.25\textwidth}
		\begin{tikzpicture}[x=.2pt,y=.2pt]
\definecolor{fillColor}{RGB}{255,255,255}
\begin{scope}
\definecolor{drawColor}{RGB}{255,255,255}
\definecolor{fillColor}{RGB}{255,255,255}

\path[draw=drawColor,line width= 0.6pt,line join=round,line cap=round,fill=fillColor] (  0.00,  0.00) rectangle (628.75,361.35);
\end{scope}
\begin{scope}
\definecolor{fillColor}{RGB}{255,255,255}

\path[fill=fillColor] ( 20.71, 30.69) rectangle (623.25,355.85);
\definecolor{drawColor}{RGB}{0,0,0}

\path[draw=drawColor,line width= 0.6pt,fill=fillColor] ( 48.10, 45.47) rectangle ( 75.49, 45.47);

\path[draw=drawColor,line width= 0.6pt,fill=fillColor] ( 75.49, 45.47) rectangle (102.88, 45.47);

\path[draw=drawColor,line width= 0.6pt,fill=fillColor] (102.88, 45.47) rectangle (130.27, 50.31);

\path[draw=drawColor,line width= 0.6pt,fill=fillColor] (130.27, 45.47) rectangle (157.65, 45.47);

\path[draw=drawColor,line width= 0.6pt,fill=fillColor] (157.65, 45.47) rectangle (185.04, 50.31);

\path[draw=drawColor,line width= 0.6pt,fill=fillColor] (185.04, 45.47) rectangle (212.43, 50.31);

\path[draw=drawColor,line width= 0.6pt,fill=fillColor] (212.43, 45.47) rectangle (239.82, 60.00);

\path[draw=drawColor,line width= 0.6pt,fill=fillColor] (239.82, 45.47) rectangle (267.21,123.00);

\path[draw=drawColor,line width= 0.6pt,fill=fillColor] (267.21, 45.47) rectangle (294.59,166.62);

\path[draw=drawColor,line width= 0.6pt,fill=fillColor] (294.59, 45.47) rectangle (321.98,341.07);

\path[draw=drawColor,line width= 0.6pt,fill=fillColor] (321.98, 45.47) rectangle (349.37,287.76);

\path[draw=drawColor,line width= 0.6pt,fill=fillColor] (349.37, 45.47) rectangle (376.76,152.08);

\path[draw=drawColor,line width= 0.6pt,fill=fillColor] (376.76, 45.47) rectangle (404.15, 93.93);

\path[draw=drawColor,line width= 0.6pt,fill=fillColor] (404.15, 45.47) rectangle (431.53, 69.70);

\path[draw=drawColor,line width= 0.6pt,fill=fillColor] (431.53, 45.47) rectangle (458.92, 55.16);

\path[draw=drawColor,line width= 0.6pt,fill=fillColor] (458.92, 45.47) rectangle (486.31, 60.00);

\path[draw=drawColor,line width= 0.6pt,fill=fillColor] (486.31, 45.47) rectangle (513.70, 45.47);

\path[draw=drawColor,line width= 0.6pt,fill=fillColor] (513.70, 45.47) rectangle (541.09, 45.47);

\path[draw=drawColor,line width= 0.6pt,fill=fillColor] (541.09, 45.47) rectangle (568.47, 45.47);

\path[draw=drawColor,line width= 0.6pt,fill=fillColor] (568.47, 45.47) rectangle (595.86, 45.47);

\path[draw=drawColor,line width= 1.1pt,line join=round] ( 48.10, 45.47) --
	( 53.58, 45.47) --
	( 59.06, 45.47) --
	( 64.54, 45.47) --
	( 70.01, 45.47) --
	( 75.49, 45.47) --
	( 80.97, 45.47) --
	( 86.45, 45.47) --
	( 91.92, 45.47) --
	( 97.40, 45.47) --
	(102.88, 45.47) --
	(108.36, 45.48) --
	(113.83, 45.49) --
	(119.31, 45.50) --
	(124.79, 45.53) --
	(130.27, 45.56) --
	(135.74, 45.61) --
	(141.22, 45.69) --
	(146.70, 45.80) --
	(152.18, 45.97) --
	(157.65, 46.21) --
	(163.13, 46.55) --
	(168.61, 47.02) --
	(174.09, 47.67) --
	(179.56, 48.55) --
	(185.04, 49.73) --
	(190.52, 51.29) --
	(196.00, 53.31) --
	(201.47, 55.91) --
	(206.95, 59.18) --
	(212.43, 63.25) --
	(217.91, 68.23) --
	(223.39, 74.24) --
	(228.86, 81.37) --
	(234.34, 89.69) --
	(239.82, 99.26) --
	(245.30,110.07) --
	(250.77,122.06) --
	(256.25,135.12) --
	(261.73,149.07) --
	(267.21,163.67) --
	(272.68,178.62) --
	(278.16,193.55) --
	(283.64,208.06) --
	(289.12,221.72) --
	(294.59,234.10) --
	(300.07,244.79) --
	(305.55,253.40) --
	(311.03,259.62) --
	(316.50,263.23) --
	(321.98,264.09) --
	(327.46,262.16) --
	(332.94,257.51) --
	(338.41,250.33) --
	(343.89,240.87) --
	(349.37,229.48) --
	(354.85,216.55) --
	(360.32,202.51) --
	(365.80,187.79) --
	(371.28,172.81) --
	(376.76,157.95) --
	(382.24,143.57) --
	(387.71,129.94) --
	(393.19,117.27) --
	(398.67,105.73) --
	(404.15, 95.40) --
	(409.62, 86.32) --
	(415.10, 78.46) --
	(420.58, 71.78) --
	(426.06, 66.18) --
	(431.53, 61.57) --
	(437.01, 57.82) --
	(442.49, 54.83) --
	(447.97, 52.47) --
	(453.44, 50.64) --
	(458.92, 49.24) --
	(464.40, 48.18) --
	(469.88, 47.39) --
	(475.35, 46.82) --
	(480.83, 46.40) --
	(486.31, 46.11) --
	(491.79, 45.90) --
	(497.26, 45.75) --
	(502.74, 45.66) --
	(508.22, 45.59) --
	(513.70, 45.55) --
	(519.17, 45.52) --
	(524.65, 45.50) --
	(530.13, 45.49) --
	(535.61, 45.48) --
	(541.09, 45.47) --
	(546.56, 45.47) --
	(552.04, 45.47) --
	(557.52, 45.47) --
	(563.00, 45.47) --
	(568.47, 45.47) --
	(573.95, 45.47) --
	(579.43, 45.47) --
	(584.91, 45.47) --
	(590.38, 45.47) --
	(595.86, 45.47);
\end{scope}
\begin{scope}
\definecolor{drawColor}{RGB}{0,0,0}

\end{scope}
\begin{scope}
\definecolor{drawColor}{RGB}{0,0,0}

\path[draw=drawColor,line width= 0.6pt,line join=round] ( 20.71, 30.69) --
	(623.25, 30.69);
\end{scope}
\begin{scope}
\definecolor{drawColor}{gray}{0.20}

\path[draw=drawColor,line width= 0.6pt,line join=round] ( 48.10, 27.94) --
	( 48.10, 30.69);

\path[draw=drawColor,line width= 0.6pt,line join=round] (185.04, 27.94) --
	(185.04, 30.69);

\path[draw=drawColor,line width= 0.6pt,line join=round] (321.98, 27.94) --
	(321.98, 30.69);

\path[draw=drawColor,line width= 0.6pt,line join=round] (458.92, 27.94) --
	(458.92, 30.69);

\path[draw=drawColor,line width= 0.6pt,line join=round] (595.86, 27.94) --
	(595.86, 30.69);
\end{scope}
\begin{scope}
\definecolor{drawColor}{gray}{0.30}

\node[text=drawColor,anchor=base,inner sep=0pt, outer sep=0pt, scale=  0.88] at ( 48.10, -9.68) {-0.50};

\node[text=drawColor,anchor=base,inner sep=0pt, outer sep=0pt, scale=  0.88] at (185.04, -9.68) {-0.25};

\node[text=drawColor,anchor=base,inner sep=0pt, outer sep=0pt, scale=  0.88] at (321.98, -9.68) {0.00};

\node[text=drawColor,anchor=base,inner sep=0pt, outer sep=0pt, scale=  0.88] at (458.92, -9.68) {0.25};

\node[text=drawColor,anchor=base,inner sep=0pt, outer sep=0pt, scale=  0.88] at (595.86, -9.68) {0.50};
\end{scope}
\begin{scope}
	\definecolor{drawColor}{RGB}{0,0,0}
	
	\node[text=drawColor,anchor=base,inner sep=0pt, outer sep=0pt, scale=  1.10] at (291.36,  -72.64) {\small $n=1500$};
\end{scope}
\end{tikzpicture}
	\end{subfigure}\hfil 
	\begin{subfigure}{0.25\textwidth}
		\begin{tikzpicture}[x=.2pt,y=.2pt]
\definecolor{fillColor}{RGB}{255,255,255}
\begin{scope}
\definecolor{drawColor}{RGB}{255,255,255}
\definecolor{fillColor}{RGB}{255,255,255}

\end{scope}
\begin{scope}
\definecolor{fillColor}{RGB}{255,255,255}

\path[fill=fillColor] ( 20.71, 30.69) rectangle (623.25,355.85);
\definecolor{drawColor}{RGB}{0,0,0}

\path[draw=drawColor,line width= 0.6pt,fill=fillColor] ( 48.10, 45.47) rectangle ( 75.49, 45.47);

\path[draw=drawColor,line width= 0.6pt,fill=fillColor] ( 75.49, 45.47) rectangle (102.88, 45.47);

\path[draw=drawColor,line width= 0.6pt,fill=fillColor] (102.88, 45.47) rectangle (130.27, 45.47);

\path[draw=drawColor,line width= 0.6pt,fill=fillColor] (130.27, 45.47) rectangle (157.65, 45.47);

\path[draw=drawColor,line width= 0.6pt,fill=fillColor] (157.65, 45.47) rectangle (185.04, 50.08);

\path[draw=drawColor,line width= 0.6pt,fill=fillColor] (185.04, 45.47) rectangle (212.43, 54.70);

\path[draw=drawColor,line width= 0.6pt,fill=fillColor] (212.43, 45.47) rectangle (239.82, 63.94);

\path[draw=drawColor,line width= 0.6pt,fill=fillColor] (239.82, 45.47) rectangle (267.21, 91.65);

\path[draw=drawColor,line width= 0.6pt,fill=fillColor] (267.21, 45.47) rectangle (294.59,179.41);

\path[draw=drawColor,line width= 0.6pt,fill=fillColor] (294.59, 45.47) rectangle (321.98,281.03);

\path[draw=drawColor,line width= 0.6pt,fill=fillColor] (321.98, 45.47) rectangle (349.37,341.07);

\path[draw=drawColor,line width= 0.6pt,fill=fillColor] (349.37, 45.47) rectangle (376.76,188.65);

\path[draw=drawColor,line width= 0.6pt,fill=fillColor] (376.76, 45.47) rectangle (404.15, 73.18);

\path[draw=drawColor,line width= 0.6pt,fill=fillColor] (404.15, 45.47) rectangle (431.53, 50.08);

\path[draw=drawColor,line width= 0.6pt,fill=fillColor] (431.53, 45.47) rectangle (458.92, 50.08);

\path[draw=drawColor,line width= 0.6pt,fill=fillColor] (458.92, 45.47) rectangle (486.31, 45.47);

\path[draw=drawColor,line width= 0.6pt,fill=fillColor] (486.31, 45.47) rectangle (513.70, 45.47);

\path[draw=drawColor,line width= 0.6pt,fill=fillColor] (513.70, 45.47) rectangle (541.09, 45.47);

\path[draw=drawColor,line width= 0.6pt,fill=fillColor] (541.09, 45.47) rectangle (568.47, 45.47);

\path[draw=drawColor,line width= 0.6pt,fill=fillColor] (568.47, 45.47) rectangle (595.86, 45.47);

\path[draw=drawColor,line width= 1.1pt,line join=round] ( 48.10, 45.47) --
	( 53.58, 45.47) --
	( 59.06, 45.47) --
	( 64.54, 45.47) --
	( 70.01, 45.47) --
	( 75.49, 45.47) --
	( 80.97, 45.47) --
	( 86.45, 45.47) --
	( 91.92, 45.47) --
	( 97.40, 45.47) --
	(102.88, 45.47) --
	(108.36, 45.47) --
	(113.83, 45.47) --
	(119.31, 45.47) --
	(124.79, 45.47) --
	(130.27, 45.47) --
	(135.74, 45.47) --
	(141.22, 45.47) --
	(146.70, 45.48) --
	(152.18, 45.49) --
	(157.65, 45.51) --
	(163.13, 45.55) --
	(168.61, 45.61) --
	(174.09, 45.72) --
	(179.56, 45.89) --
	(185.04, 46.16) --
	(190.52, 46.58) --
	(196.00, 47.21) --
	(201.47, 48.16) --
	(206.95, 49.54) --
	(212.43, 51.51) --
	(217.91, 54.26) --
	(223.39, 58.00) --
	(228.86, 62.99) --
	(234.34, 69.49) --
	(239.82, 77.76) --
	(245.30, 88.03) --
	(250.77,100.48) --
	(256.25,115.17) --
	(261.73,132.08) --
	(267.21,150.99) --
	(272.68,171.53) --
	(278.16,193.13) --
	(283.64,215.06) --
	(289.12,236.46) --
	(294.59,256.37) --
	(300.07,273.83) --
	(305.55,287.91) --
	(311.03,297.85) --
	(316.50,303.09) --
	(321.98,303.31) --
	(327.46,298.51) --
	(332.94,288.96) --
	(338.41,275.22) --
	(343.89,258.02) --
	(349.37,238.29) --
	(354.85,216.98) --
	(360.32,195.06) --
	(365.80,173.40) --
	(371.28,152.74) --
	(376.76,133.67) --
	(382.24,116.58) --
	(387.71,101.68) --
	(393.19, 89.04) --
	(398.67, 78.58) --
	(404.15, 70.15) --
	(409.62, 63.50) --
	(415.10, 58.39) --
	(420.58, 54.54) --
	(426.06, 51.72) --
	(431.53, 49.69) --
	(437.01, 48.26) --
	(442.49, 47.28) --
	(447.97, 46.62) --
	(453.44, 46.19) --
	(458.92, 45.91) --
	(464.40, 45.73) --
	(469.88, 45.62) --
	(475.35, 45.56) --
	(480.83, 45.52) --
	(486.31, 45.49) --
	(491.79, 45.48) --
	(497.26, 45.47) --
	(502.74, 45.47) --
	(508.22, 45.47) --
	(513.70, 45.47) --
	(519.17, 45.47) --
	(524.65, 45.47) --
	(530.13, 45.47) --
	(535.61, 45.47) --
	(541.09, 45.47) --
	(546.56, 45.47) --
	(552.04, 45.47) --
	(557.52, 45.47) --
	(563.00, 45.47) --
	(568.47, 45.47) --
	(573.95, 45.47) --
	(579.43, 45.47) --
	(584.91, 45.47) --
	(590.38, 45.47) --
	(595.86, 45.47);
\end{scope}
\begin{scope}
\definecolor{drawColor}{RGB}{0,0,0}

\end{scope}
\begin{scope}
\definecolor{drawColor}{RGB}{0,0,0}

\path[draw=drawColor,line width= 0.6pt,line join=round] ( 20.71, 30.69) --
	(623.25, 30.69);
\end{scope}
\begin{scope}
\definecolor{drawColor}{gray}{0.20}

\path[draw=drawColor,line width= 0.6pt,line join=round] ( 48.10, 27.94) --
	( 48.10, 30.69);

\path[draw=drawColor,line width= 0.6pt,line join=round] (185.04, 27.94) --
	(185.04, 30.69);

\path[draw=drawColor,line width= 0.6pt,line join=round] (321.98, 27.94) --
	(321.98, 30.69);

\path[draw=drawColor,line width= 0.6pt,line join=round] (458.92, 27.94) --
	(458.92, 30.69);

\path[draw=drawColor,line width= 0.6pt,line join=round] (595.86, 27.94) --
	(595.86, 30.69);
\end{scope}
\begin{scope}
\definecolor{drawColor}{gray}{0.30}

\node[text=drawColor,anchor=base,inner sep=0pt, outer sep=0pt, scale=  0.88] at ( 48.10, -9.68) {-0.50};

\node[text=drawColor,anchor=base,inner sep=0pt, outer sep=0pt, scale=  0.88] at (185.04, -9.68) {-0.25};

\node[text=drawColor,anchor=base,inner sep=0pt, outer sep=0pt, scale=  0.88] at (321.98, -9.68) {0.00};

\node[text=drawColor,anchor=base,inner sep=0pt, outer sep=0pt, scale=  0.88] at (458.92, -9.68) {0.25};

\node[text=drawColor,anchor=base,inner sep=0pt, outer sep=0pt, scale=  0.88] at (595.86, -9.68) {0.50};
\end{scope}
\begin{scope}
	\definecolor{drawColor}{RGB}{0,0,0}
	
	\node[text=drawColor,anchor=base,inner sep=0pt, outer sep=0pt, scale=  1.10] at (291.36,  -72.64) {\small $n=2000$};
\end{scope}
\end{tikzpicture}
	\end{subfigure}
	\vspace{-5mm}
	\caption{Sampling distributions for $\langle\hat{\ba}_1,\ba_2\rangle$.
	}
	\label{fig:lo-dim-clt}
\end{figure}
The results match well with our theoretical development and as expected, the agreement between the histograms and the normal distribution improves with the increasing sample size.


\subsection{Asymptotic Properties}

In the next set of numerical experiments, we increase the sample size and dimensionality and provide further numerical evidence corroborating our theoretical developments.

\paragraph{Sample Complexity.} To fix ideas, we focus on the effect of initialization and compare our estimator with FastICA with random slicing as suggested by \cite{anand2014sample}. The only difference between the two approaches is in the initialization step and our method uses an improved moment estimate. To evaluate the effect of this on the quality of estimated unmixing directions, we vary $d$ from $d=90$ to $d=150$ at an interval of 10 and the sample size from 10000 to 24000 at intervals of 2000. The results, averaged over 200 runs, are given in Figure~\ref{fig:ica-0} below.

\begin{figure}[htbp]
	\centering
	\begin{subfigure}{.5\textwidth}
		\centering
		\input{fig-ica-mean.tex}
	\end{subfigure}%
	\begin{subfigure}{.5\textwidth}
		\centering
		\begin{tikzpicture}[x=.4pt,y=.4pt]
			\definecolor{fillColor}{RGB}{255,255,255}
			\begin{scope}
				\definecolor{drawColor}{RGB}{255,255,255}
				\definecolor{fillColor}{RGB}{255,255,255}
				
			\end{scope}
			\begin{scope}
				\definecolor{fillColor}{gray}{0.92}
				
				\definecolor{drawColor}{RGB}{255,255,255}
				
				\path[draw=drawColor,line width= 0.6pt,line join=round] (139.77, 54.48) --
				(464.93, 54.48);
				
				\path[draw=drawColor,line width= 0.6pt,line join=round] (139.77, 94.13) --
				(464.93, 94.13);
				
				\path[draw=drawColor,line width= 0.6pt,line join=round] (139.77,133.79) --
				(464.93,133.79);
				
				\path[draw=drawColor,line width= 0.6pt,line join=round] (139.77,173.44) --
				(464.93,173.44);
				
				\path[draw=drawColor,line width= 0.6pt,line join=round] (139.77,213.09) --
				(464.93,213.09);
				
				\path[draw=drawColor,line width= 0.6pt,line join=round] (139.77,252.75) --
				(464.93,252.75);
				
				\path[draw=drawColor,line width= 0.6pt,line join=round] (139.77,292.40) --
				(464.93,292.40);
				
				\path[draw=drawColor,line width= 0.6pt,line join=round] (139.77,332.06) --
				(464.93,332.06);
				
				\path[draw=drawColor,line width= 0.6pt,line join=round] (163.56, 30.69) --
				(163.56,355.85);
				
				\path[draw=drawColor,line width= 0.6pt,line join=round] (203.22, 30.69) --
				(203.22,355.85);
				
				\path[draw=drawColor,line width= 0.6pt,line join=round] (242.87, 30.69) --
				(242.87,355.85);
				
				\path[draw=drawColor,line width= 0.6pt,line join=round] (282.52, 30.69) --
				(282.52,355.85);
				
				\path[draw=drawColor,line width= 0.6pt,line join=round] (322.18, 30.69) --
				(322.18,355.85);
				
				\path[draw=drawColor,line width= 0.6pt,line join=round] (361.83, 30.69) --
				(361.83,355.85);
				
				\path[draw=drawColor,line width= 0.6pt,line join=round] (401.49, 30.69) --
				(401.49,355.85);
				
				\path[draw=drawColor,line width= 0.6pt,line join=round] (441.14, 30.69) --
				(441.14,355.85);
				\definecolor{fillColor}{RGB}{163,219,202}
				
				\path[fill=fillColor] (143.74, 34.65) rectangle (183.39, 74.31);
				\definecolor{fillColor}{RGB}{183,206,183}
				
				\path[fill=fillColor] (183.39, 34.65) rectangle (223.04, 74.31);
				\definecolor{fillColor}{RGB}{205,186,156}
				
				\path[fill=fillColor] (223.04, 34.65) rectangle (262.70, 74.31);
				\definecolor{fillColor}{RGB}{231,146,107}
				
				\path[fill=fillColor] (262.70, 34.65) rectangle (302.35, 74.31);
				\definecolor{fillColor}{RGB}{245,107,66}
				
				\path[fill=fillColor] (302.35, 34.65) rectangle (342.01, 74.31);
				\definecolor{fillColor}{RGB}{251,76,39}
				
				\path[fill=fillColor] (342.01, 34.65) rectangle (381.66, 74.31);
				\definecolor{fillColor}{RGB}{253,48,20}
				
				\path[fill=fillColor] (381.66, 34.65) rectangle (421.31, 74.31);
				\definecolor{fillColor}{RGB}{255,0,0}
				
				\path[fill=fillColor] (421.31, 34.65) rectangle (460.97, 74.31);
				\definecolor{fillColor}{RGB}{139,231,219}
				
				\path[fill=fillColor] (143.74, 74.31) rectangle (183.39,113.96);
				\definecolor{fillColor}{RGB}{155,224,208}
				
				\path[fill=fillColor] (183.39, 74.31) rectangle (223.04,113.96);
				\definecolor{fillColor}{RGB}{172,213,193}
				
				\path[fill=fillColor] (223.04, 74.31) rectangle (262.70,113.96);
				\definecolor{fillColor}{RGB}{194,197,170}
				
				\path[fill=fillColor] (262.70, 74.31) rectangle (302.35,113.96);
				\definecolor{fillColor}{RGB}{226,157,119}
				
				\path[fill=fillColor] (302.35, 74.31) rectangle (342.01,113.96);
				\definecolor{fillColor}{RGB}{242,119,77}
				
				\path[fill=fillColor] (342.01, 74.31) rectangle (381.66,113.96);
				\definecolor{fillColor}{RGB}{249,90,50}
				
				\path[fill=fillColor] (381.66, 74.31) rectangle (421.31,113.96);
				\definecolor{fillColor}{RGB}{252,65,31}
				
				\path[fill=fillColor] (421.31, 74.31) rectangle (460.97,113.96);
				\definecolor{fillColor}{RGB}{119,238,230}
				
				\path[fill=fillColor] (143.74,113.96) rectangle (183.39,153.61);
				\definecolor{fillColor}{RGB}{134,233,222}
				
				\path[fill=fillColor] (183.39,113.96) rectangle (223.04,153.61);
				\definecolor{fillColor}{RGB}{149,226,212}
				
				\path[fill=fillColor] (223.04,113.96) rectangle (262.70,153.61);
				\definecolor{fillColor}{RGB}{165,218,200}
				
				\path[fill=fillColor] (262.70,113.96) rectangle (302.35,153.61);
				\definecolor{fillColor}{RGB}{186,204,180}
				
				\path[fill=fillColor] (302.35,113.96) rectangle (342.01,153.61);
				\definecolor{fillColor}{RGB}{222,164,128}
				
				\path[fill=fillColor] (342.01,113.96) rectangle (381.66,153.61);
				\definecolor{fillColor}{RGB}{240,126,84}
				
				\path[fill=fillColor] (381.66,113.96) rectangle (421.31,153.61);
				\definecolor{fillColor}{RGB}{247,96,56}
				
				\path[fill=fillColor] (421.31,113.96) rectangle (460.97,153.61);
				\definecolor{fillColor}{RGB}{101,243,237}
				
				\path[fill=fillColor] (143.74,153.61) rectangle (183.39,193.27);
				\definecolor{fillColor}{RGB}{116,239,231}
				
				\path[fill=fillColor] (183.39,153.61) rectangle (223.04,193.27);
				\definecolor{fillColor}{RGB}{132,234,223}
				
				\path[fill=fillColor] (223.04,153.61) rectangle (262.70,193.27);
				\definecolor{fillColor}{RGB}{144,228,215}
				
				\path[fill=fillColor] (262.70,153.61) rectangle (302.35,193.27);
				\definecolor{fillColor}{RGB}{158,222,205}
				
				\path[fill=fillColor] (302.35,153.61) rectangle (342.01,193.27);
				\definecolor{fillColor}{RGB}{179,209,187}
				
				\path[fill=fillColor] (342.01,153.61) rectangle (381.66,193.27);
				\definecolor{fillColor}{RGB}{221,165,130}
				
				\path[fill=fillColor] (381.66,153.61) rectangle (421.31,193.27);
				\definecolor{fillColor}{RGB}{240,125,83}
				
				\path[fill=fillColor] (421.31,153.61) rectangle (460.97,193.27);
				\definecolor{fillColor}{RGB}{84,247,243}
				
				\path[fill=fillColor] (143.74,193.27) rectangle (183.39,232.92);
				\definecolor{fillColor}{RGB}{100,244,238}
				
				\path[fill=fillColor] (183.39,193.27) rectangle (223.04,232.92);
				\definecolor{fillColor}{RGB}{116,239,231}
				
				\path[fill=fillColor] (223.04,193.27) rectangle (262.70,232.92);
				\definecolor{fillColor}{RGB}{128,235,225}
				
				\path[fill=fillColor] (262.70,193.27) rectangle (302.35,232.92);
				\definecolor{fillColor}{RGB}{142,230,217}
				
				\path[fill=fillColor] (302.35,193.27) rectangle (342.01,232.92);
				\definecolor{fillColor}{RGB}{154,224,209}
				
				\path[fill=fillColor] (342.01,193.27) rectangle (381.66,232.92);
				\definecolor{fillColor}{RGB}{182,207,184}
				
				\path[fill=fillColor] (381.66,193.27) rectangle (421.31,232.92);
				\definecolor{fillColor}{RGB}{223,162,125}
				
				\path[fill=fillColor] (421.31,193.27) rectangle (460.97,232.92);
				\definecolor{fillColor}{RGB}{65,250,248}
				
				\path[fill=fillColor] (143.74,232.92) rectangle (183.39,272.58);
				\definecolor{fillColor}{RGB}{87,247,242}
				
				\path[fill=fillColor] (183.39,232.92) rectangle (223.04,272.58);
				\definecolor{fillColor}{RGB}{102,243,237}
				
				\path[fill=fillColor] (223.04,232.92) rectangle (262.70,272.58);
				\definecolor{fillColor}{RGB}{114,240,232}
				
				\path[fill=fillColor] (262.70,232.92) rectangle (302.35,272.58);
				\definecolor{fillColor}{RGB}{126,236,226}
				
				\path[fill=fillColor] (302.35,232.92) rectangle (342.01,272.58);
				\definecolor{fillColor}{RGB}{138,231,219}
				
				\path[fill=fillColor] (342.01,232.92) rectangle (381.66,272.58);
				\definecolor{fillColor}{RGB}{152,225,210}
				
				\path[fill=fillColor] (381.66,232.92) rectangle (421.31,272.58);
				\definecolor{fillColor}{RGB}{190,200,176}
				
				\path[fill=fillColor] (421.31,232.92) rectangle (460.97,272.58);
				\definecolor{fillColor}{RGB}{43,253,252}
				
				\path[fill=fillColor] (143.74,272.58) rectangle (183.39,312.23);
				\definecolor{fillColor}{RGB}{71,250,247}
				
				\path[fill=fillColor] (183.39,272.58) rectangle (223.04,312.23);
				\definecolor{fillColor}{RGB}{88,246,242}
				
				\path[fill=fillColor] (223.04,272.58) rectangle (262.70,312.23);
				\definecolor{fillColor}{RGB}{102,243,237}
				
				\path[fill=fillColor] (262.70,272.58) rectangle (302.35,312.23);
				\definecolor{fillColor}{RGB}{115,240,232}
				
				\path[fill=fillColor] (302.35,272.58) rectangle (342.01,312.23);
				\definecolor{fillColor}{RGB}{125,236,227}
				
				\path[fill=fillColor] (342.01,272.58) rectangle (381.66,312.23);
				\definecolor{fillColor}{RGB}{136,232,221}
				
				\path[fill=fillColor] (381.66,272.58) rectangle (421.31,312.23);
				\definecolor{fillColor}{RGB}{151,225,211}
				
				\path[fill=fillColor] (421.31,272.58) rectangle (460.97,312.23);
				\definecolor{fillColor}{RGB}{0,255,255}
				
				\path[fill=fillColor] (143.74,312.23) rectangle (183.39,351.88);
				\definecolor{fillColor}{RGB}{53,252,250}
				
				\path[fill=fillColor] (183.39,312.23) rectangle (223.04,351.88);
				\definecolor{fillColor}{RGB}{74,249,246}
				
				\path[fill=fillColor] (223.04,312.23) rectangle (262.70,351.88);
				\definecolor{fillColor}{RGB}{89,246,242}
				
				\path[fill=fillColor] (262.70,312.23) rectangle (302.35,351.88);
				\definecolor{fillColor}{RGB}{102,243,237}
				
				\path[fill=fillColor] (302.35,312.23) rectangle (342.01,351.88);
				\definecolor{fillColor}{RGB}{114,240,232}
				
				\path[fill=fillColor] (342.01,312.23) rectangle (381.66,351.88);
				\definecolor{fillColor}{RGB}{108,241,235}
				
				\path[fill=fillColor] (381.66,312.23) rectangle (421.31,351.88);
				\definecolor{fillColor}{RGB}{136,232,221}
				
				\path[fill=fillColor] (421.31,312.23) rectangle (460.97,351.88);
			\end{scope}
			\begin{scope}
				\definecolor{drawColor}{gray}{0.30}
				
				\node[text=drawColor,anchor=base east,inner sep=0pt, outer sep=0pt, scale=  0.88] at (134.82, 51.45) {10000};
				
				\node[text=drawColor,anchor=base east,inner sep=0pt, outer sep=0pt, scale=  0.88] at (134.82, 91.10) {12000};
				
				\node[text=drawColor,anchor=base east,inner sep=0pt, outer sep=0pt, scale=  0.88] at (134.82,130.76) {14000};
				
				\node[text=drawColor,anchor=base east,inner sep=0pt, outer sep=0pt, scale=  0.88] at (134.82,170.41) {16000};
				
				\node[text=drawColor,anchor=base east,inner sep=0pt, outer sep=0pt, scale=  0.88] at (134.82,210.06) {18000};
				
				\node[text=drawColor,anchor=base east,inner sep=0pt, outer sep=0pt, scale=  0.88] at (134.82,249.72) {20000};
				
				\node[text=drawColor,anchor=base east,inner sep=0pt, outer sep=0pt, scale=  0.88] at (134.82,289.37) {22000};
				
				\node[text=drawColor,anchor=base east,inner sep=0pt, outer sep=0pt, scale=  0.88] at (134.82,329.03) {24000};
			\end{scope}
			\begin{scope}
				\definecolor{drawColor}{gray}{0.20}
				
				\path[draw=drawColor,line width= 0.6pt,line join=round] (137.02, 54.48) --
				(139.77, 54.48);
				
				\path[draw=drawColor,line width= 0.6pt,line join=round] (137.02, 94.13) --
				(139.77, 94.13);
				
				\path[draw=drawColor,line width= 0.6pt,line join=round] (137.02,133.79) --
				(139.77,133.79);
				
				\path[draw=drawColor,line width= 0.6pt,line join=round] (137.02,173.44) --
				(139.77,173.44);
				
				\path[draw=drawColor,line width= 0.6pt,line join=round] (137.02,213.09) --
				(139.77,213.09);
				
				\path[draw=drawColor,line width= 0.6pt,line join=round] (137.02,252.75) --
				(139.77,252.75);
				
				\path[draw=drawColor,line width= 0.6pt,line join=round] (137.02,292.40) --
				(139.77,292.40);
				
				\path[draw=drawColor,line width= 0.6pt,line join=round] (137.02,332.06) --
				(139.77,332.06);
			\end{scope}
			\begin{scope}
				\definecolor{drawColor}{gray}{0.20}
				
				\path[draw=drawColor,line width= 0.6pt,line join=round] (163.56, 27.94) --
				(163.56, 30.69);
				
				\path[draw=drawColor,line width= 0.6pt,line join=round] (203.22, 27.94) --
				(203.22, 30.69);
				
				\path[draw=drawColor,line width= 0.6pt,line join=round] (242.87, 27.94) --
				(242.87, 30.69);
				
				\path[draw=drawColor,line width= 0.6pt,line join=round] (282.52, 27.94) --
				(282.52, 30.69);
				
				\path[draw=drawColor,line width= 0.6pt,line join=round] (322.18, 27.94) --
				(322.18, 30.69);
				
				\path[draw=drawColor,line width= 0.6pt,line join=round] (361.83, 27.94) --
				(361.83, 30.69);
				
				\path[draw=drawColor,line width= 0.6pt,line join=round] (401.49, 27.94) --
				(401.49, 30.69);
				
				\path[draw=drawColor,line width= 0.6pt,line join=round] (441.14, 27.94) --
				(441.14, 30.69);
			\end{scope}
			\begin{scope}
				\definecolor{drawColor}{gray}{0.30}
				
				\node[text=drawColor,anchor=base,inner sep=0pt, outer sep=0pt, scale=  0.88] at (163.56, 7.68) {80};
				
				\node[text=drawColor,anchor=base,inner sep=0pt, outer sep=0pt, scale=  0.88] at (203.22, 7.68) {90};
				
				\node[text=drawColor,anchor=base,inner sep=0pt, outer sep=0pt, scale=  0.88] at (242.87, 7.68) {100};
				
				\node[text=drawColor,anchor=base,inner sep=0pt, outer sep=0pt, scale=  0.88] at (282.52, 7.68) {110};
				
				\node[text=drawColor,anchor=base,inner sep=0pt, outer sep=0pt, scale=  0.88] at (322.18, 7.68) {120};
				
				\node[text=drawColor,anchor=base,inner sep=0pt, outer sep=0pt, scale=  0.88] at (361.83, 7.68) {130};
				
				\node[text=drawColor,anchor=base,inner sep=0pt, outer sep=0pt, scale=  0.88] at (401.49, 7.68) {140};
				
				\node[text=drawColor,anchor=base,inner sep=0pt, outer sep=0pt, scale=  0.88] at (441.14, 7.68) {150};
			\end{scope}
			\begin{scope}
				\definecolor{drawColor}{RGB}{0,0,0}
				
				\node[text=drawColor,anchor=base,inner sep=0pt, outer sep=0pt, scale=  1.10] at (302.35,  -20.64) {Dimension};
			\end{scope}
			\begin{scope}
				\definecolor{drawColor}{RGB}{0,0,0}
				
			\end{scope}
			\begin{scope}
				\definecolor{fillColor}{RGB}{255,255,255}
				
				\path[fill=fillColor] (475.93,144.03) rectangle (528.39,242.51);
			\end{scope}
			\begin{scope}
				\node[inner sep=0pt,outer sep=0pt,anchor=south west,rotate=  0.00] at (481.43, 149.53) {
					\includegraphics[width= 14.45pt,height= 28.91pt,interpolate=true]{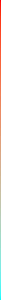}};
			\end{scope}
			\begin{scope}
				\definecolor{drawColor}{RGB}{0,0,0}
				
				\node[text=drawColor,anchor=base west,inner sep=0pt, outer sep=0pt, scale=  0.88] at (531.39,147.75) {0.2};
				
				\node[text=drawColor,anchor=base west,inner sep=0pt, outer sep=0pt, scale=  0.88] at (531.39,167.58) {0.4};
				
				\node[text=drawColor,anchor=base west,inner sep=0pt, outer sep=0pt, scale=  0.88] at (531.39,187.40) {0.6};
				
				\node[text=drawColor,anchor=base west,inner sep=0pt, outer sep=0pt, scale=  0.88] at (531.39,207.23) {0.8};
			\end{scope}
			\begin{scope}
				\definecolor{drawColor}{RGB}{0,0,0}
				
				\node[text=drawColor,anchor=base west,inner sep=0pt, outer sep=0pt, scale=  1.10] at (481.43,228.36) {\small Distance};
			\end{scope}
			\begin{scope}
				\definecolor{drawColor}{RGB}{255,255,255}
				
				\path[draw=drawColor,line width= 0.2pt,line join=round] (481.43,150.78) -- (484.32,150.78);
				
				\path[draw=drawColor,line width= 0.2pt,line join=round] (481.43,170.61) -- (484.32,170.61);
				
				\path[draw=drawColor,line width= 0.2pt,line join=round] (481.43,190.43) -- (484.32,190.43);
				
				\path[draw=drawColor,line width= 0.2pt,line join=round] (481.43,210.26) -- (484.32,210.26);
				
				\path[draw=drawColor,line width= 0.2pt,line join=round] (493.00,150.78) -- (495.89,150.78);
				
				\path[draw=drawColor,line width= 0.2pt,line join=round] (493.00,170.61) -- (495.89,170.61);
				
				\path[draw=drawColor,line width= 0.2pt,line join=round] (493.00,190.43) -- (495.89,190.43);
				
				\path[draw=drawColor,line width= 0.2pt,line join=round] (493.00,210.26) -- (495.89,210.26);
			\end{scope}
		\end{tikzpicture}
	\end{subfigure}
	\vspace{-5mm}
	\caption{
		Comparison of random slicing (left) and projection-based  (right) based initializations.}
	\label{fig:ica-0}
\end{figure}

These results show different sample complexities between the two methods with the proposed method achieving consistency with much smaller sample sizes. This confirms our theoretical findings that existing approaches, because of the way they use sample moments, have a sample complexity of $n\asymp d^3$ whereas the proposed method has a sample complexity of $n\asymp d^2$.

To gain further insights into the effect of dimensionality, in the second set of simulations, we fixed the sample size at $n=24000$ and vary the dimension from $d=90$ to $d=150$ at an interval of 10. For each value of $d$, we report the average estimation error, measured by both $\ell_M$ and $\ell_A$, over 200 replications of the experiment. The results are in Figure \ref{fig:ica-1} below.
\begin{figure}[htbp]
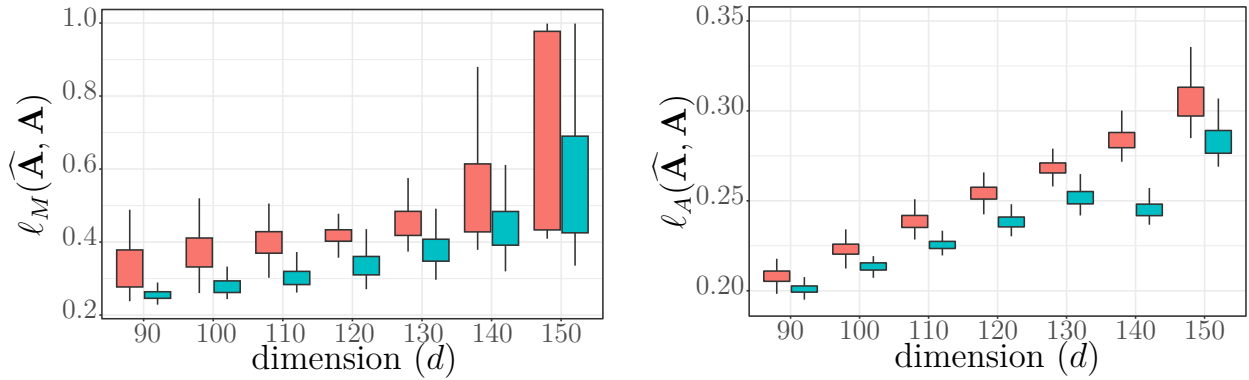

	\centering
	\begin{subfigure}{.49\textwidth}
		\input{ica-max-dims.tex}
	\end{subfigure}\hfil
	\begin{subfigure}{.49\textwidth}
		\input{ica-mean-dims.tex}
	\end{subfigure}
	\vspace{-5mm}
	\caption{
		Comparison of random slicing (red) and projection-based   (green) initializations at $n=24000$.}
	\label{fig:ica-1}
\end{figure}

Again the comparison between the two methods is rather similar when using either error measure. As expected from our theoretical results, the performance of both methods worsens when the dimension grows while the sample size is fixed. However, the green boxplots, corresponding to Algorithm~\ref{alg:alg-ica} show strictly smaller errors with respect to either metric, when compared with the prevalent random slicing initialization used in FastICA.

The third set of numerical experiments focuses on the effect of the sample size. As earlier, we consider $S_k$ to be i.i.d. Laplace random variables. We fix $d=150$ and vary the sample size from 24000 to 30000 at an interval of 1000 each. The results are summarized in Figure \ref{fig:trunc}. As before, each boxplot is constructed with results from 200 replications. Again they clearly demonstrate the superiority of the proposed approach.

\begin{figure}[htbp]
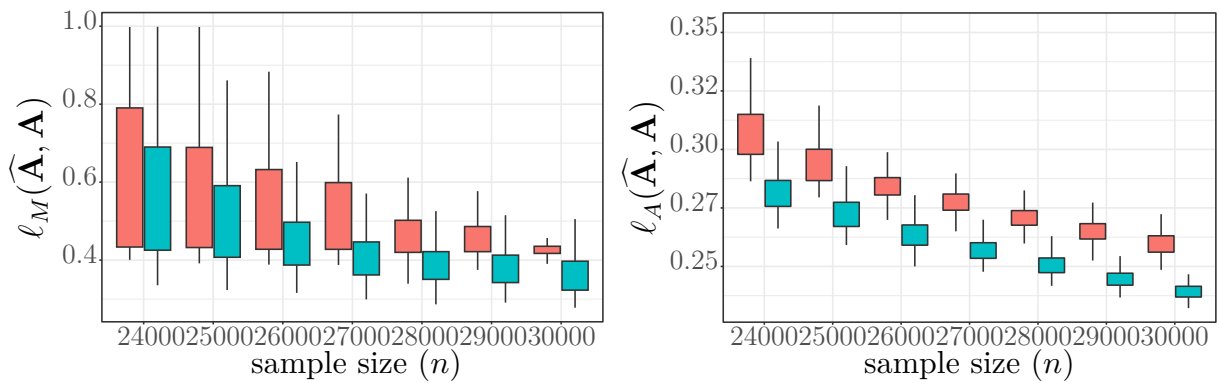

	\centering
	\begin{subfigure}{.49\textwidth}
		\input{ica-samps-max.tex}
	\end{subfigure}\hfil
	\begin{subfigure}{.49\textwidth}
		\vspace{-7mm}
		\input{ica-trunc.tex}
	\end{subfigure}
	\vspace{-5mm}
	\caption{Comparison of random slicing (red) and projection-based  (green) initializations at $d=150$.}
	\label{fig:trunc}
\end{figure}

\paragraph{Asymptotic Normality.} The next set of simulations aims to show the asymptotic normality of our estimator. To this end, we fixed $d=50$ and $n=6000$. We used random mixing matrices $\bA\in\calO(d)$, and report the histogram of different linear combinations based on 500 simulation runs in Figure \ref{fig:ica-clt}. Note that the true values are at $\langle\ba_1,\ba_2\rangle=\langle\ba_2,\ba_3\rangle=\langle\ba_1,\ba_4\rangle=0$ for the top row, and $\langle\ba_1,(\ba_1+\ba_2)/\sqrt{2}\rangle=1/\sqrt{2}$ and $\langle\ba_1,(\ba_1+\ba_2+\ba_3)/\sqrt{3}\rangle=1/\sqrt{3}$ for the bottom row. We overlaid the histogram with the normal density curve fitted with the mean and standard deviation of the data. The numerical results support our theoretical findings from Section~\ref{sec:asy-dist}.

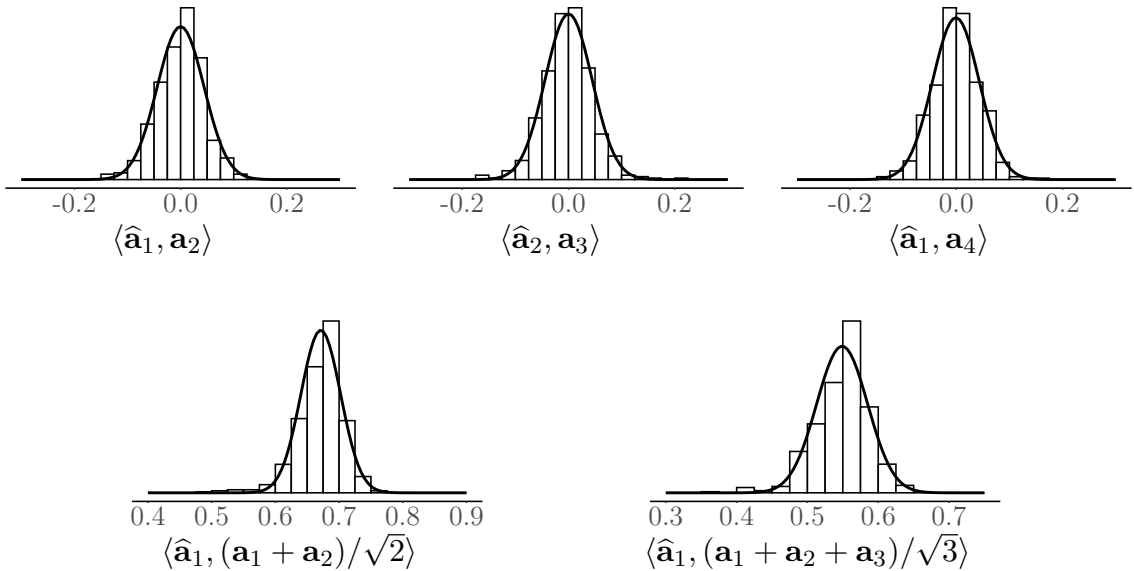
\begin{figure}[htb]
	\centering
	\begin{subfigure}{0.25\textwidth}
		\begin{tikzpicture}[x=.22pt,y=.22pt]
\definecolor{fillColor}{RGB}{255,255,255}
\begin{scope}
\definecolor{drawColor}{RGB}{255,255,255}
\definecolor{fillColor}{RGB}{255,255,255}

\end{scope}
\begin{scope}
\definecolor{fillColor}{RGB}{255,255,255}

\path[fill=fillColor] ( 20.71, 30.69) rectangle (623.25,355.85);
\definecolor{drawColor}{RGB}{0,0,0}

\path[draw=drawColor,line width= 0.6pt,fill=fillColor] ( 48.10, 45.47) rectangle ( 70.93, 45.47);

\path[draw=drawColor,line width= 0.6pt,fill=fillColor] ( 70.93, 45.47) rectangle ( 93.75, 45.47);

\path[draw=drawColor,line width= 0.6pt,fill=fillColor] ( 93.75, 45.47) rectangle (116.57, 45.47);

\path[draw=drawColor,line width= 0.6pt,fill=fillColor] (116.57, 45.47) rectangle (139.40, 45.47);

\path[draw=drawColor,line width= 0.6pt,fill=fillColor] (139.40, 45.47) rectangle (162.22, 45.47);

\path[draw=drawColor,line width= 0.6pt,fill=fillColor] (162.22, 45.47) rectangle (185.04, 45.47);

\path[draw=drawColor,line width= 0.6pt,fill=fillColor] (185.04, 45.47) rectangle (207.87, 54.78);

\path[draw=drawColor,line width= 0.6pt,fill=fillColor] (207.87, 45.47) rectangle (230.69, 57.10);

\path[draw=drawColor,line width= 0.6pt,fill=fillColor] (230.69, 45.47) rectangle (253.51, 78.05);

\path[draw=drawColor,line width= 0.6pt,fill=fillColor] (253.51, 45.47) rectangle (276.34,140.90);

\path[draw=drawColor,line width= 0.6pt,fill=fillColor] (276.34, 45.47) rectangle (299.16,213.05);

\path[draw=drawColor,line width= 0.6pt,fill=fillColor] (299.16, 45.47) rectangle (321.98,273.57);

\path[draw=drawColor,line width= 0.6pt,fill=fillColor] (321.98, 45.47) rectangle (344.80,341.07);

\path[draw=drawColor,line width= 0.6pt,fill=fillColor] (344.80, 45.47) rectangle (367.63,254.95);

\path[draw=drawColor,line width= 0.6pt,fill=fillColor] (367.63, 45.47) rectangle (390.45,112.97);

\path[draw=drawColor,line width= 0.6pt,fill=fillColor] (390.45, 45.47) rectangle (413.27, 82.71);

\path[draw=drawColor,line width= 0.6pt,fill=fillColor] (413.27, 45.47) rectangle (436.10, 54.78);

\path[draw=drawColor,line width= 0.6pt,fill=fillColor] (436.10, 45.47) rectangle (458.92, 45.47);

\path[draw=drawColor,line width= 0.6pt,fill=fillColor] (458.92, 45.47) rectangle (481.74, 45.47);

\path[draw=drawColor,line width= 0.6pt,fill=fillColor] (481.74, 45.47) rectangle (504.57, 45.47);

\path[draw=drawColor,line width= 0.6pt,fill=fillColor] (504.57, 45.47) rectangle (527.39, 45.47);

\path[draw=drawColor,line width= 0.6pt,fill=fillColor] (527.39, 45.47) rectangle (550.21, 45.47);

\path[draw=drawColor,line width= 0.6pt,fill=fillColor] (550.21, 45.47) rectangle (573.04, 45.47);

\path[draw=drawColor,line width= 0.6pt,fill=fillColor] (573.04, 45.47) rectangle (595.86, 45.47);

\path[draw=drawColor,line width= 1.1pt,line join=round] ( 48.10, 45.47) --
	( 53.58, 45.47) --
	( 59.06, 45.47) --
	( 64.54, 45.47) --
	( 70.01, 45.47) --
	( 75.49, 45.47) --
	( 80.97, 45.47) --
	( 86.45, 45.47) --
	( 91.92, 45.47) --
	( 97.40, 45.47) --
	(102.88, 45.47) --
	(108.36, 45.47) --
	(113.83, 45.47) --
	(119.31, 45.47) --
	(124.79, 45.47) --
	(130.27, 45.47) --
	(135.74, 45.47) --
	(141.22, 45.48) --
	(146.70, 45.49) --
	(152.18, 45.50) --
	(157.65, 45.53) --
	(163.13, 45.58) --
	(168.61, 45.65) --
	(174.09, 45.78) --
	(179.56, 45.97) --
	(185.04, 46.28) --
	(190.52, 46.74) --
	(196.00, 47.44) --
	(201.47, 48.46) --
	(206.95, 49.92) --
	(212.43, 51.97) --
	(217.91, 54.80) --
	(223.39, 58.62) --
	(228.86, 63.64) --
	(234.34, 70.14) --
	(239.82, 78.33) --
	(245.30, 88.45) --
	(250.77,100.64) --
	(256.25,115.00) --
	(261.73,131.48) --
	(267.21,149.93) --
	(272.68,170.00) --
	(278.16,191.20) --
	(283.64,212.88) --
	(289.12,234.26) --
	(294.59,254.47) --
	(300.07,272.59) --
	(305.55,287.76) --
	(311.03,299.20) --
	(316.50,306.31) --
	(321.98,308.69) --
	(327.46,306.23) --
	(332.94,299.05) --
	(338.41,287.54) --
	(343.89,272.32) --
	(349.37,254.15) --
	(354.85,233.92) --
	(360.32,212.53) --
	(365.80,190.85) --
	(371.28,169.66) --
	(376.76,149.61) --
	(382.24,131.20) --
	(387.71,114.75) --
	(393.19,100.43) --
	(398.67, 88.26) --
	(404.15, 78.18) --
	(409.62, 70.02) --
	(415.10, 63.55) --
	(420.58, 58.54) --
	(426.06, 54.75) --
	(431.53, 51.94) --
	(437.01, 49.89) --
	(442.49, 48.44) --
	(447.97, 47.43) --
	(453.44, 46.73) --
	(458.92, 46.27) --
	(464.40, 45.97) --
	(469.88, 45.77) --
	(475.35, 45.65) --
	(480.83, 45.57) --
	(486.31, 45.53) --
	(491.79, 45.50) --
	(497.26, 45.49) --
	(502.74, 45.48) --
	(508.22, 45.47) --
	(513.70, 45.47) --
	(519.17, 45.47) --
	(524.65, 45.47) --
	(530.13, 45.47) --
	(535.61, 45.47) --
	(541.09, 45.47) --
	(546.56, 45.47) --
	(552.04, 45.47) --
	(557.52, 45.47) --
	(563.00, 45.47) --
	(568.47, 45.47) --
	(573.95, 45.47) --
	(579.43, 45.47) --
	(584.91, 45.47) --
	(590.38, 45.47) --
	(595.86, 45.47);
\end{scope}
\begin{scope}
\definecolor{drawColor}{RGB}{0,0,0}

\end{scope}
\begin{scope}
\definecolor{drawColor}{RGB}{0,0,0}

\path[draw=drawColor,line width= 0.6pt,line join=round] ( 20.71, 30.69) --
	(623.25, 30.69);
\end{scope}
\begin{scope}
\definecolor{drawColor}{gray}{0.20}

\path[draw=drawColor,line width= 0.6pt,line join=round] (139.40, 27.94) --
	(139.40, 30.69);

\path[draw=drawColor,line width= 0.6pt,line join=round] (321.98, 27.94) --
	(321.98, 30.69);

\path[draw=drawColor,line width= 0.6pt,line join=round] (504.57, 27.94) --
	(504.57, 30.69);
\end{scope}
\begin{scope}
\definecolor{drawColor}{gray}{0.30}

\node[text=drawColor,anchor=base,inner sep=0pt, outer sep=0pt, scale=  0.88] at (139.40, -9.68) {-0.2};

\node[text=drawColor,anchor=base,inner sep=0pt, outer sep=0pt, scale=  0.88] at (321.98, -9.68) {0.0};

\node[text=drawColor,anchor=base,inner sep=0pt, outer sep=0pt, scale=  0.88] at (504.57, -9.68) {0.2};
\end{scope}
\begin{scope}
	\definecolor{drawColor}{RGB}{0,0,0}
	
	\node[text=drawColor,anchor=base,inner sep=0pt, outer sep=0pt, scale=  1.10] at (291.36,  -72.64) {\small $\langle\hat{\ba}_1,\ba_2\rangle$};
\end{scope}
\end{tikzpicture}
	\end{subfigure}\hfil
	\begin{subfigure}{0.25\textwidth}
		\begin{tikzpicture}[x=.22pt,y=.22pt]
\definecolor{fillColor}{RGB}{255,255,255}
\begin{scope}
\definecolor{drawColor}{RGB}{255,255,255}
\definecolor{fillColor}{RGB}{255,255,255}

\end{scope}
\begin{scope}
\definecolor{fillColor}{RGB}{255,255,255}

\path[fill=fillColor] ( 20.71, 30.69) rectangle (623.25,355.85);
\definecolor{drawColor}{RGB}{0,0,0}

\path[draw=drawColor,line width= 0.6pt,fill=fillColor] ( 48.10, 45.47) rectangle ( 70.93, 45.47);

\path[draw=drawColor,line width= 0.6pt,fill=fillColor] ( 70.93, 45.47) rectangle ( 93.75, 45.47);

\path[draw=drawColor,line width= 0.6pt,fill=fillColor] ( 93.75, 45.47) rectangle (116.57, 45.47);

\path[draw=drawColor,line width= 0.6pt,fill=fillColor] (116.57, 45.47) rectangle (139.40, 45.47);

\path[draw=drawColor,line width= 0.6pt,fill=fillColor] (139.40, 45.47) rectangle (162.22, 45.47);

\path[draw=drawColor,line width= 0.6pt,fill=fillColor] (162.22, 45.47) rectangle (185.04, 53.05);

\path[draw=drawColor,line width= 0.6pt,fill=fillColor] (185.04, 45.47) rectangle (207.87, 47.99);

\path[draw=drawColor,line width= 0.6pt,fill=fillColor] (207.87, 45.47) rectangle (230.69, 60.63);

\path[draw=drawColor,line width= 0.6pt,fill=fillColor] (230.69, 45.47) rectangle (253.51, 78.31);

\path[draw=drawColor,line width= 0.6pt,fill=fillColor] (253.51, 45.47) rectangle (276.34,151.58);

\path[draw=drawColor,line width= 0.6pt,fill=fillColor] (276.34, 45.47) rectangle (299.16,232.43);

\path[draw=drawColor,line width= 0.6pt,fill=fillColor] (299.16, 45.47) rectangle (321.98,330.96);

\path[draw=drawColor,line width= 0.6pt,fill=fillColor] (321.98, 45.47) rectangle (344.80,341.07);

\path[draw=drawColor,line width= 0.6pt,fill=fillColor] (344.80, 45.47) rectangle (367.63,237.48);

\path[draw=drawColor,line width= 0.6pt,fill=fillColor] (367.63, 45.47) rectangle (390.45,123.79);

\path[draw=drawColor,line width= 0.6pt,fill=fillColor] (390.45, 45.47) rectangle (413.27, 85.89);

\path[draw=drawColor,line width= 0.6pt,fill=fillColor] (413.27, 45.47) rectangle (436.10, 53.05);

\path[draw=drawColor,line width= 0.6pt,fill=fillColor] (436.10, 45.47) rectangle (458.92, 50.52);

\path[draw=drawColor,line width= 0.6pt,fill=fillColor] (458.92, 45.47) rectangle (481.74, 47.99);

\path[draw=drawColor,line width= 0.6pt,fill=fillColor] (481.74, 45.47) rectangle (504.57, 45.47);

\path[draw=drawColor,line width= 0.6pt,fill=fillColor] (504.57, 45.47) rectangle (527.39, 47.99);

\path[draw=drawColor,line width= 0.6pt,fill=fillColor] (527.39, 45.47) rectangle (550.21, 45.47);

\path[draw=drawColor,line width= 0.6pt,fill=fillColor] (550.21, 45.47) rectangle (573.04, 45.47);

\path[draw=drawColor,line width= 0.6pt,fill=fillColor] (573.04, 45.47) rectangle (595.86, 45.47);

\path[draw=drawColor,line width= 1.1pt,line join=round] ( 48.10, 45.47) --
	( 53.58, 45.47) --
	( 59.06, 45.47) --
	( 64.54, 45.47) --
	( 70.01, 45.47) --
	( 75.49, 45.47) --
	( 80.97, 45.47) --
	( 86.45, 45.47) --
	( 91.92, 45.47) --
	( 97.40, 45.47) --
	(102.88, 45.47) --
	(108.36, 45.47) --
	(113.83, 45.47) --
	(119.31, 45.47) --
	(124.79, 45.47) --
	(130.27, 45.47) --
	(135.74, 45.47) --
	(141.22, 45.48) --
	(146.70, 45.49) --
	(152.18, 45.50) --
	(157.65, 45.53) --
	(163.13, 45.58) --
	(168.61, 45.66) --
	(174.09, 45.80) --
	(179.56, 46.00) --
	(185.04, 46.33) --
	(190.52, 46.83) --
	(196.00, 47.57) --
	(201.47, 48.66) --
	(206.95, 50.22) --
	(212.43, 52.42) --
	(217.91, 55.44) --
	(223.39, 59.53) --
	(228.86, 64.92) --
	(234.34, 71.88) --
	(239.82, 80.69) --
	(245.30, 91.56) --
	(250.77,104.68) --
	(256.25,120.14) --
	(261.73,137.91) --
	(267.21,157.81) --
	(272.68,179.49) --
	(278.16,202.43) --
	(283.64,225.91) --
	(289.12,249.10) --
	(294.59,271.06) --
	(300.07,290.80) --
	(305.55,307.37) --
	(311.03,319.94) --
	(316.50,327.83) --
	(321.98,330.62) --
	(327.46,328.15) --
	(332.94,320.56) --
	(338.41,308.27) --
	(343.89,291.92) --
	(349.37,272.35) --
	(354.85,250.50) --
	(360.32,227.36) --
	(365.80,203.87) --
	(371.28,180.88) --
	(376.76,159.10) --
	(382.24,139.08) --
	(387.71,121.17) --
	(393.19,105.56) --
	(398.67, 92.30) --
	(404.15, 81.29) --
	(409.62, 72.37) --
	(415.10, 65.30) --
	(420.58, 59.82) --
	(426.06, 55.66) --
	(431.53, 52.58) --
	(437.01, 50.33) --
	(442.49, 48.74) --
	(447.97, 47.62) --
	(453.44, 46.86) --
	(458.92, 46.35) --
	(464.40, 46.02) --
	(469.88, 45.81) --
	(475.35, 45.67) --
	(480.83, 45.59) --
	(486.31, 45.54) --
	(491.79, 45.51) --
	(497.26, 45.49) --
	(502.74, 45.48) --
	(508.22, 45.47) --
	(513.70, 45.47) --
	(519.17, 45.47) --
	(524.65, 45.47) --
	(530.13, 45.47) --
	(535.61, 45.47) --
	(541.09, 45.47) --
	(546.56, 45.47) --
	(552.04, 45.47) --
	(557.52, 45.47) --
	(563.00, 45.47) --
	(568.47, 45.47) --
	(573.95, 45.47) --
	(579.43, 45.47) --
	(584.91, 45.47) --
	(590.38, 45.47) --
	(595.86, 45.47);
\end{scope}
\begin{scope}
\definecolor{drawColor}{RGB}{0,0,0}

\end{scope}
\begin{scope}
\definecolor{drawColor}{RGB}{0,0,0}

\path[draw=drawColor,line width= 0.6pt,line join=round] ( 20.71, 30.69) --
	(623.25, 30.69);
\end{scope}
\begin{scope}
\definecolor{drawColor}{gray}{0.20}

\path[draw=drawColor,line width= 0.6pt,line join=round] (139.40, 27.94) --
	(139.40, 30.69);

\path[draw=drawColor,line width= 0.6pt,line join=round] (321.98, 27.94) --
	(321.98, 30.69);

\path[draw=drawColor,line width= 0.6pt,line join=round] (504.57, 27.94) --
	(504.57, 30.69);
\end{scope}
\begin{scope}
\definecolor{drawColor}{gray}{0.30}

\node[text=drawColor,anchor=base,inner sep=0pt, outer sep=0pt, scale=  0.88] at (139.40, -9.68) {-0.2};

\node[text=drawColor,anchor=base,inner sep=0pt, outer sep=0pt, scale=  0.88] at (321.98, -9.68) {0.0};

\node[text=drawColor,anchor=base,inner sep=0pt, outer sep=0pt, scale=  0.88] at (504.57, -9.68) {0.2};
\end{scope}
\begin{scope}
	\definecolor{drawColor}{RGB}{0,0,0}
	
	\node[text=drawColor,anchor=base,inner sep=0pt, outer sep=0pt, scale=  1.10] at (291.36,  -72.64) {\small $\langle \hat{\ba}_{2},\ba_3\rangle$};
\end{scope}
\end{tikzpicture}
	\end{subfigure}\hfil 
	\begin{subfigure}{0.25\textwidth}
		\begin{tikzpicture}[x=.22pt,y=.22pt]
\definecolor{fillColor}{RGB}{255,255,255}
\begin{scope}
\definecolor{drawColor}{RGB}{255,255,255}
\definecolor{fillColor}{RGB}{255,255,255}

\end{scope}
\begin{scope}
\definecolor{fillColor}{RGB}{255,255,255}

\path[fill=fillColor] ( 20.71, 30.69) rectangle (623.25,355.85);
\definecolor{drawColor}{RGB}{0,0,0}

\path[draw=drawColor,line width= 0.6pt,fill=fillColor] ( 48.10, 45.47) rectangle ( 70.93, 45.47);

\path[draw=drawColor,line width= 0.6pt,fill=fillColor] ( 70.93, 45.47) rectangle ( 93.75, 45.47);

\path[draw=drawColor,line width= 0.6pt,fill=fillColor] ( 93.75, 45.47) rectangle (116.57, 45.47);

\path[draw=drawColor,line width= 0.6pt,fill=fillColor] (116.57, 45.47) rectangle (139.40, 45.47);

\path[draw=drawColor,line width= 0.6pt,fill=fillColor] (139.40, 45.47) rectangle (162.22, 45.47);

\path[draw=drawColor,line width= 0.6pt,fill=fillColor] (162.22, 45.47) rectangle (185.04, 45.47);

\path[draw=drawColor,line width= 0.6pt,fill=fillColor] (185.04, 45.47) rectangle (207.87, 50.39);

\path[draw=drawColor,line width= 0.6pt,fill=fillColor] (207.87, 45.47) rectangle (230.69, 60.25);

\path[draw=drawColor,line width= 0.6pt,fill=fillColor] (230.69, 45.47) rectangle (253.51, 77.49);

\path[draw=drawColor,line width= 0.6pt,fill=fillColor] (253.51, 45.47) rectangle (276.34,156.32);

\path[draw=drawColor,line width= 0.6pt,fill=fillColor] (276.34, 45.47) rectangle (299.16,208.05);

\path[draw=drawColor,line width= 0.6pt,fill=fillColor] (299.16, 45.47) rectangle (321.98,341.07);

\path[draw=drawColor,line width= 0.6pt,fill=fillColor] (321.98, 45.47) rectangle (344.80,331.22);

\path[draw=drawColor,line width= 0.6pt,fill=fillColor] (344.80, 45.47) rectangle (367.63,212.97);

\path[draw=drawColor,line width= 0.6pt,fill=fillColor] (367.63, 45.47) rectangle (390.45,163.71);

\path[draw=drawColor,line width= 0.6pt,fill=fillColor] (390.45, 45.47) rectangle (413.27, 75.03);

\path[draw=drawColor,line width= 0.6pt,fill=fillColor] (413.27, 45.47) rectangle (436.10, 50.39);

\path[draw=drawColor,line width= 0.6pt,fill=fillColor] (436.10, 45.47) rectangle (458.92, 47.93);

\path[draw=drawColor,line width= 0.6pt,fill=fillColor] (458.92, 45.47) rectangle (481.74, 47.93);

\path[draw=drawColor,line width= 0.6pt,fill=fillColor] (481.74, 45.47) rectangle (504.57, 45.47);

\path[draw=drawColor,line width= 0.6pt,fill=fillColor] (504.57, 45.47) rectangle (527.39, 45.47);

\path[draw=drawColor,line width= 0.6pt,fill=fillColor] (527.39, 45.47) rectangle (550.21, 45.47);

\path[draw=drawColor,line width= 0.6pt,fill=fillColor] (550.21, 45.47) rectangle (573.04, 45.47);

\path[draw=drawColor,line width= 0.6pt,fill=fillColor] (573.04, 45.47) rectangle (595.86, 45.47);

\path[draw=drawColor,line width= 1.1pt,line join=round] ( 48.10, 45.47) --
	( 53.58, 45.47) --
	( 59.06, 45.47) --
	( 64.54, 45.47) --
	( 70.01, 45.47) --
	( 75.49, 45.47) --
	( 80.97, 45.47) --
	( 86.45, 45.47) --
	( 91.92, 45.47) --
	( 97.40, 45.47) --
	(102.88, 45.47) --
	(108.36, 45.47) --
	(113.83, 45.47) --
	(119.31, 45.47) --
	(124.79, 45.47) --
	(130.27, 45.47) --
	(135.74, 45.47) --
	(141.22, 45.48) --
	(146.70, 45.49) --
	(152.18, 45.51) --
	(157.65, 45.54) --
	(163.13, 45.59) --
	(168.61, 45.68) --
	(174.09, 45.82) --
	(179.56, 46.04) --
	(185.04, 46.38) --
	(190.52, 46.90) --
	(196.00, 47.68) --
	(201.47, 48.81) --
	(206.95, 50.43) --
	(212.43, 52.70) --
	(217.91, 55.82) --
	(223.39, 60.01) --
	(228.86, 65.52) --
	(234.34, 72.62) --
	(239.82, 81.55) --
	(245.30, 92.54) --
	(250.77,105.75) --
	(256.25,121.25) --
	(261.73,138.99) --
	(267.21,158.76) --
	(272.68,180.19) --
	(278.16,202.74) --
	(283.64,225.70) --
	(289.12,248.21) --
	(294.59,269.36) --
	(300.07,288.18) --
	(305.55,303.75) --
	(311.03,315.28) --
	(316.50,322.15) --
	(321.98,324.00) --
	(327.46,320.71) --
	(332.94,312.47) --
	(338.41,299.73) --
	(343.89,283.15) --
	(349.37,263.58) --
	(354.85,241.95) --
	(360.32,219.22) --
	(365.80,196.29) --
	(371.28,174.00) --
	(376.76,152.98) --
	(382.24,133.76) --
	(387.71,116.64) --
	(393.19,101.79) --
	(398.67, 89.22) --
	(404.15, 78.83) --
	(409.62, 70.44) --
	(415.10, 63.82) --
	(420.58, 58.70) --
	(426.06, 54.84) --
	(431.53, 51.98) --
	(437.01, 49.91) --
	(442.49, 48.44) --
	(447.97, 47.42) --
	(453.44, 46.73) --
	(458.92, 46.27) --
	(464.40, 45.96) --
	(469.88, 45.77) --
	(475.35, 45.65) --
	(480.83, 45.57) --
	(486.31, 45.53) --
	(491.79, 45.50) --
	(497.26, 45.49) --
	(502.74, 45.48) --
	(508.22, 45.47) --
	(513.70, 45.47) --
	(519.17, 45.47) --
	(524.65, 45.47) --
	(530.13, 45.47) --
	(535.61, 45.47) --
	(541.09, 45.47) --
	(546.56, 45.47) --
	(552.04, 45.47) --
	(557.52, 45.47) --
	(563.00, 45.47) --
	(568.47, 45.47) --
	(573.95, 45.47) --
	(579.43, 45.47) --
	(584.91, 45.47) --
	(590.38, 45.47) --
	(595.86, 45.47);
\end{scope}
\begin{scope}
\definecolor{drawColor}{RGB}{0,0,0}

\end{scope}
\begin{scope}
\definecolor{drawColor}{RGB}{0,0,0}

\path[draw=drawColor,line width= 0.6pt,line join=round] ( 20.71, 30.69) --
	(623.25, 30.69);
\end{scope}
\begin{scope}
\definecolor{drawColor}{gray}{0.20}

\path[draw=drawColor,line width= 0.6pt,line join=round] (139.40, 27.94) --
	(139.40, 30.69);

\path[draw=drawColor,line width= 0.6pt,line join=round] (321.98, 27.94) --
	(321.98, 30.69);

\path[draw=drawColor,line width= 0.6pt,line join=round] (504.57, 27.94) --
	(504.57, 30.69);
\end{scope}
\begin{scope}
\definecolor{drawColor}{gray}{0.30}

\node[text=drawColor,anchor=base,inner sep=0pt, outer sep=0pt, scale=  0.88] at (139.40, -9.68) {-0.2};

\node[text=drawColor,anchor=base,inner sep=0pt, outer sep=0pt, scale=  0.88] at (321.98, -9.68) {0.0};

\node[text=drawColor,anchor=base,inner sep=0pt, outer sep=0pt, scale=  0.88] at (504.57, -9.68) {0.2};
\end{scope}
\begin{scope}
	\definecolor{drawColor}{RGB}{0,0,0}
	
	\node[text=drawColor,anchor=base,inner sep=0pt, outer sep=0pt, scale=  1.10] at (291.36,  -72.64) {\small $\langle\hat{\ba}_1,\ba_4\rangle$};
\end{scope}
\end{tikzpicture}
	\end{subfigure}\\
	\begin{subfigure}{0.25\textwidth}
		\begin{tikzpicture}[x=.22pt,y=.22pt]
\definecolor{fillColor}{RGB}{255,255,255}
\begin{scope}
\definecolor{drawColor}{RGB}{255,255,255}
\definecolor{fillColor}{RGB}{255,255,255}

\end{scope}
\begin{scope}
\definecolor{fillColor}{RGB}{255,255,255}

\path[fill=fillColor] ( 20.71, 30.69) rectangle (623.25,355.85);
\definecolor{drawColor}{RGB}{0,0,0}

\path[draw=drawColor,line width= 0.6pt,fill=fillColor] ( 48.10, 45.47) rectangle ( 75.49, 45.47);

\path[draw=drawColor,line width= 0.6pt,fill=fillColor] ( 75.49, 45.47) rectangle (102.88, 45.47);

\path[draw=drawColor,line width= 0.6pt,fill=fillColor] (102.88, 45.47) rectangle (130.27, 45.47);

\path[draw=drawColor,line width= 0.6pt,fill=fillColor] (130.27, 45.47) rectangle (157.65, 47.22);

\path[draw=drawColor,line width= 0.6pt,fill=fillColor] (157.65, 45.47) rectangle (185.04, 48.96);

\path[draw=drawColor,line width= 0.6pt,fill=fillColor] (185.04, 45.47) rectangle (212.43, 50.71);

\path[draw=drawColor,line width= 0.6pt,fill=fillColor] (212.43, 45.47) rectangle (239.82, 50.71);

\path[draw=drawColor,line width= 0.6pt,fill=fillColor] (239.82, 45.47) rectangle (267.21, 59.46);

\path[draw=drawColor,line width= 0.6pt,fill=fillColor] (267.21, 45.47) rectangle (294.59, 94.44);

\path[draw=drawColor,line width= 0.6pt,fill=fillColor] (294.59, 45.47) rectangle (321.98,173.15);

\path[draw=drawColor,line width= 0.6pt,fill=fillColor] (321.98, 45.47) rectangle (349.37,262.36);

\path[draw=drawColor,line width= 0.6pt,fill=fillColor] (349.37, 45.47) rectangle (376.76,341.07);

\path[draw=drawColor,line width= 0.6pt,fill=fillColor] (376.76, 45.47) rectangle (404.15,169.65);

\path[draw=drawColor,line width= 0.6pt,fill=fillColor] (404.15, 45.47) rectangle (431.53, 73.45);

\path[draw=drawColor,line width= 0.6pt,fill=fillColor] (431.53, 45.47) rectangle (458.92, 48.96);

\path[draw=drawColor,line width= 0.6pt,fill=fillColor] (458.92, 45.47) rectangle (486.31, 45.47);

\path[draw=drawColor,line width= 0.6pt,fill=fillColor] (486.31, 45.47) rectangle (513.70, 45.47);

\path[draw=drawColor,line width= 0.6pt,fill=fillColor] (513.70, 45.47) rectangle (541.09, 45.47);

\path[draw=drawColor,line width= 0.6pt,fill=fillColor] (541.09, 45.47) rectangle (568.47, 45.47);

\path[draw=drawColor,line width= 0.6pt,fill=fillColor] (568.47, 45.47) rectangle (595.86, 45.47);

\path[draw=drawColor,line width= 1.1pt,line join=round] ( 48.10, 45.47) --
	( 53.58, 45.47) --
	( 59.06, 45.47) --
	( 64.54, 45.47) --
	( 70.01, 45.47) --
	( 75.49, 45.47) --
	( 80.97, 45.47) --
	( 86.45, 45.47) --
	( 91.92, 45.47) --
	( 97.40, 45.47) --
	(102.88, 45.47) --
	(108.36, 45.47) --
	(113.83, 45.47) --
	(119.31, 45.47) --
	(124.79, 45.47) --
	(130.27, 45.47) --
	(135.74, 45.47) --
	(141.22, 45.47) --
	(146.70, 45.47) --
	(152.18, 45.47) --
	(157.65, 45.47) --
	(163.13, 45.47) --
	(168.61, 45.47) --
	(174.09, 45.47) --
	(179.56, 45.47) --
	(185.04, 45.47) --
	(190.52, 45.48) --
	(196.00, 45.49) --
	(201.47, 45.51) --
	(206.95, 45.54) --
	(212.43, 45.61) --
	(217.91, 45.73) --
	(223.39, 45.95) --
	(228.86, 46.31) --
	(234.34, 46.90) --
	(239.82, 47.86) --
	(245.30, 49.34) --
	(250.77, 51.57) --
	(256.25, 54.86) --
	(261.73, 59.55) --
	(267.21, 66.04) --
	(272.68, 74.77) --
	(278.16, 86.13) --
	(283.64,100.48) --
	(289.12,117.99) --
	(294.59,138.64) --
	(300.07,162.14) --
	(305.55,187.86) --
	(311.03,214.84) --
	(316.50,241.82) --
	(321.98,267.31) --
	(327.46,289.75) --
	(332.94,307.63) --
	(338.41,319.67) --
	(343.89,324.99) --
	(349.37,323.18) --
	(354.85,314.37) --
	(360.32,299.24) --
	(365.80,278.87) --
	(371.28,254.69) --
	(376.76,228.26) --
	(382.24,201.11) --
	(387.71,174.63) --
	(393.19,149.93) --
	(398.67,127.81) --
	(404.15,108.73) --
	(409.62, 92.83) --
	(415.10, 80.03) --
	(420.58, 70.05) --
	(426.06, 62.51) --
	(431.53, 56.98) --
	(437.01, 53.04) --
	(442.49, 50.33) --
	(447.97, 48.51) --
	(453.44, 47.32) --
	(458.92, 46.57) --
	(464.40, 46.10) --
	(469.88, 45.83) --
	(475.35, 45.66) --
	(480.83, 45.57) --
	(486.31, 45.52) --
	(491.79, 45.49) --
	(497.26, 45.48) --
	(502.74, 45.47) --
	(508.22, 45.47) --
	(513.70, 45.47) --
	(519.17, 45.47) --
	(524.65, 45.47) --
	(530.13, 45.47) --
	(535.61, 45.47) --
	(541.09, 45.47) --
	(546.56, 45.47) --
	(552.04, 45.47) --
	(557.52, 45.47) --
	(563.00, 45.47) --
	(568.47, 45.47) --
	(573.95, 45.47) --
	(579.43, 45.47) --
	(584.91, 45.47) --
	(590.38, 45.47) --
	(595.86, 45.47);
\end{scope}
\begin{scope}
\definecolor{drawColor}{RGB}{0,0,0}

\end{scope}
\begin{scope}
\definecolor{drawColor}{RGB}{0,0,0}

\path[draw=drawColor,line width= 0.6pt,line join=round] ( 20.71, 30.69) --
	(623.25, 30.69);
\end{scope}
\begin{scope}
\definecolor{drawColor}{gray}{0.20}

\path[draw=drawColor,line width= 0.6pt,line join=round] ( 48.10, 27.94) --
	( 48.10, 30.69);

\path[draw=drawColor,line width= 0.6pt,line join=round] (157.65, 27.94) --
	(157.65, 30.69);

\path[draw=drawColor,line width= 0.6pt,line join=round] (267.21, 27.94) --
	(267.21, 30.69);

\path[draw=drawColor,line width= 0.6pt,line join=round] (376.76, 27.94) --
	(376.76, 30.69);

\path[draw=drawColor,line width= 0.6pt,line join=round] (486.31, 27.94) --
	(486.31, 30.69);

\path[draw=drawColor,line width= 0.6pt,line join=round] (595.86, 27.94) --
	(595.86, 30.69);
\end{scope}
\begin{scope}
\definecolor{drawColor}{gray}{0.30}

\node[text=drawColor,anchor=base,inner sep=0pt, outer sep=0pt, scale=  0.88] at ( 48.10, -9.68) {0.4};

\node[text=drawColor,anchor=base,inner sep=0pt, outer sep=0pt, scale=  0.88] at (157.65, -9.68) {0.5};

\node[text=drawColor,anchor=base,inner sep=0pt, outer sep=0pt, scale=  0.88] at (267.21, -9.68) {0.6};

\node[text=drawColor,anchor=base,inner sep=0pt, outer sep=0pt, scale=  0.88] at (376.76, -9.68) {0.7};

\node[text=drawColor,anchor=base,inner sep=0pt, outer sep=0pt, scale=  0.88] at (486.31, -9.68) {0.8};

\node[text=drawColor,anchor=base,inner sep=0pt, outer sep=0pt, scale=  0.88] at (595.86, -9.68) {0.9};
\end{scope}
\begin{scope}
	\definecolor{drawColor}{RGB}{0,0,0}
	
	\node[text=drawColor,anchor=base,inner sep=0pt, outer sep=0pt, scale=  1.10] at (291.36,  -72.64) {\small $\langle \hat{\ba}_{1},(\ba_1+\ba_2)/\sqrt{2}\rangle$};
\end{scope}
\end{tikzpicture}
	\end{subfigure}\hfil
	\begin{subfigure}{0.25\textwidth}
		\begin{tikzpicture}[x=.22pt,y=.22pt]
\definecolor{fillColor}{RGB}{255,255,255}
\begin{scope}
\definecolor{drawColor}{RGB}{255,255,255}
\definecolor{fillColor}{RGB}{255,255,255}

\end{scope}
\begin{scope}
\definecolor{fillColor}{RGB}{255,255,255}

\path[fill=fillColor] ( 20.71, 30.69) rectangle (623.25,355.85);
\definecolor{drawColor}{RGB}{0,0,0}

\path[draw=drawColor,line width= 0.6pt,fill=fillColor] ( 48.10, 45.47) rectangle ( 78.53, 45.47);

\path[draw=drawColor,line width= 0.6pt,fill=fillColor] ( 78.53, 45.47) rectangle (108.96, 45.47);

\path[draw=drawColor,line width= 0.6pt,fill=fillColor] (108.96, 45.47) rectangle (139.40, 47.29);

\path[draw=drawColor,line width= 0.6pt,fill=fillColor] (139.40, 45.47) rectangle (169.83, 45.47);

\path[draw=drawColor,line width= 0.6pt,fill=fillColor] (169.83, 45.47) rectangle (200.26, 54.59);

\path[draw=drawColor,line width= 0.6pt,fill=fillColor] (200.26, 45.47) rectangle (230.69, 49.12);

\path[draw=drawColor,line width= 0.6pt,fill=fillColor] (230.69, 45.47) rectangle (261.12, 56.41);

\path[draw=drawColor,line width= 0.6pt,fill=fillColor] (261.12, 45.47) rectangle (291.55,116.63);

\path[draw=drawColor,line width= 0.6pt,fill=fillColor] (291.55, 45.47) rectangle (321.98,164.07);

\path[draw=drawColor,line width= 0.6pt,fill=fillColor] (321.98, 45.47) rectangle (352.41,235.24);

\path[draw=drawColor,line width= 0.6pt,fill=fillColor] (352.41, 45.47) rectangle (382.84,341.07);

\path[draw=drawColor,line width= 0.6pt,fill=fillColor] (382.84, 45.47) rectangle (413.27,193.27);

\path[draw=drawColor,line width= 0.6pt,fill=fillColor] (413.27, 45.47) rectangle (443.71, 94.73);

\path[draw=drawColor,line width= 0.6pt,fill=fillColor] (443.71, 45.47) rectangle (474.14, 58.24);

\path[draw=drawColor,line width= 0.6pt,fill=fillColor] (474.14, 45.47) rectangle (504.57, 45.47);

\path[draw=drawColor,line width= 0.6pt,fill=fillColor] (504.57, 45.47) rectangle (535.00, 45.47);

\path[draw=drawColor,line width= 0.6pt,fill=fillColor] (535.00, 45.47) rectangle (565.43, 45.47);

\path[draw=drawColor,line width= 0.6pt,fill=fillColor] (565.43, 45.47) rectangle (595.86, 45.47);

\path[draw=drawColor,line width= 1.1pt,line join=round] ( 48.10, 45.47) --
	( 53.58, 45.47) --
	( 59.06, 45.47) --
	( 64.54, 45.47) --
	( 70.01, 45.47) --
	( 75.49, 45.47) --
	( 80.97, 45.47) --
	( 86.45, 45.47) --
	( 91.92, 45.47) --
	( 97.40, 45.47) --
	(102.88, 45.47) --
	(108.36, 45.47) --
	(113.83, 45.47) --
	(119.31, 45.47) --
	(124.79, 45.47) --
	(130.27, 45.47) --
	(135.74, 45.47) --
	(141.22, 45.47) --
	(146.70, 45.47) --
	(152.18, 45.47) --
	(157.65, 45.48) --
	(163.13, 45.49) --
	(168.61, 45.51) --
	(174.09, 45.54) --
	(179.56, 45.58) --
	(185.04, 45.66) --
	(190.52, 45.77) --
	(196.00, 45.94) --
	(201.47, 46.20) --
	(206.95, 46.59) --
	(212.43, 47.14) --
	(217.91, 47.94) --
	(223.39, 49.05) --
	(228.86, 50.59) --
	(234.34, 52.67) --
	(239.82, 55.44) --
	(245.30, 59.07) --
	(250.77, 63.72) --
	(256.25, 69.59) --
	(261.73, 76.84) --
	(267.21, 85.65) --
	(272.68, 96.13) --
	(278.16,108.35) --
	(283.64,122.31) --
	(289.12,137.92) --
	(294.59,154.97) --
	(300.07,173.15) --
	(305.55,192.05) --
	(311.03,211.14) --
	(316.50,229.81) --
	(321.98,247.41) --
	(327.46,263.25) --
	(332.94,276.70) --
	(338.41,287.17) --
	(343.89,294.20) --
	(349.37,297.47) --
	(354.85,296.83) --
	(360.32,292.30) --
	(365.80,284.09) --
	(371.28,272.58) --
	(376.76,258.28) --
	(382.24,241.78) --
	(387.71,223.76) --
	(393.19,204.88) --
	(398.67,185.79) --
	(404.15,167.08) --
	(409.62,149.22) --
	(415.10,132.62) --
	(420.58,117.53) --
	(426.06,104.14) --
	(431.53, 92.49) --
	(437.01, 82.57) --
	(442.49, 74.29) --
	(447.97, 67.51) --
	(453.44, 62.07) --
	(458.92, 57.77) --
	(464.40, 54.45) --
	(469.88, 51.92) --
	(475.35, 50.03) --
	(480.83, 48.64) --
	(486.31, 47.64) --
	(491.79, 46.94) --
	(497.26, 46.44) --
	(502.74, 46.11) --
	(508.22, 45.88) --
	(513.70, 45.73) --
	(519.17, 45.63) --
	(524.65, 45.57) --
	(530.13, 45.53) --
	(535.61, 45.50) --
	(541.09, 45.49) --
	(546.56, 45.48) --
	(552.04, 45.47) --
	(557.52, 45.47) --
	(563.00, 45.47) --
	(568.47, 45.47) --
	(573.95, 45.47) --
	(579.43, 45.47) --
	(584.91, 45.47) --
	(590.38, 45.47) --
	(595.86, 45.47);
\end{scope}
\begin{scope}
\definecolor{drawColor}{RGB}{0,0,0}

\end{scope}
\begin{scope}
\definecolor{drawColor}{RGB}{0,0,0}

\path[draw=drawColor,line width= 0.6pt,line join=round] ( 20.71, 30.69) --
	(623.25, 30.69);
\end{scope}
\begin{scope}
\definecolor{drawColor}{gray}{0.20}

\path[draw=drawColor,line width= 0.6pt,line join=round] ( 48.10, 27.94) --
	( 48.10, 30.69);

\path[draw=drawColor,line width= 0.6pt,line join=round] (169.83, 27.94) --
	(169.83, 30.69);

\path[draw=drawColor,line width= 0.6pt,line join=round] (291.55, 27.94) --
	(291.55, 30.69);

\path[draw=drawColor,line width= 0.6pt,line join=round] (413.27, 27.94) --
	(413.27, 30.69);

\path[draw=drawColor,line width= 0.6pt,line join=round] (535.00, 27.94) --
	(535.00, 30.69);
\end{scope}
\begin{scope}
\definecolor{drawColor}{gray}{0.30}

\node[text=drawColor,anchor=base,inner sep=0pt, outer sep=0pt, scale=  0.88] at ( 48.10, -9.68) {0.3};

\node[text=drawColor,anchor=base,inner sep=0pt, outer sep=0pt, scale=  0.88] at (169.83, -9.68) {0.4};

\node[text=drawColor,anchor=base,inner sep=0pt, outer sep=0pt, scale=  0.88] at (291.55, -9.68) {0.5};

\node[text=drawColor,anchor=base,inner sep=0pt, outer sep=0pt, scale=  0.88] at (413.27, -9.68) {0.6};

\node[text=drawColor,anchor=base,inner sep=0pt, outer sep=0pt, scale=  0.88] at (535.00, -9.68) {0.7};
\end{scope}
\begin{scope}
	\definecolor{drawColor}{RGB}{0,0,0}
	
	\node[text=drawColor,anchor=base,inner sep=0pt, outer sep=0pt, scale=  1.10] at (291.36,  -72.64) {\small $\langle \hat{\ba}_{1},(\ba_1+\ba_2+\ba_3)/\sqrt{3}\rangle$};
\end{scope}
\end{tikzpicture}
	\end{subfigure}
	\vspace{-5mm}
	\caption{Asymptotic normality of linear forms of the mixing matrix $(d=50,\,n=6000)$.}
	\label{fig:ica-clt}
\end{figure}

\subsection{Real Data Example}

Finally, we consider a financial application of the proposed ICA method. Linear factor models are widely used to describe cross-sectional asset price movements and assess portfolio risks. See, e.g., \cite{connor2010portfolio}. PCA and asymptotic PCA are routinely used to extract latent factors. However, recent empirical evidence has suggested that they are inadequate in capturing higher-order risk and many have advocated the use of ICA. See, e.g., \cite{lassance2021portfolio, lassance2022optimal} and references therein. To illustrate the practical merits of our approach, we apply it to a benchmark dataset constructed by Ken French. The data contain monthly returns of 30 industrial portfolios from July 1972 to September 2022. Each portfolio takes equal weights for all companies from a certain industrial sector such as \texttt{Autos}, \texttt{Rtail}, and \texttt{Txtls}. The estimated factor loadings for the first four sources and their (asymptotic) 95\% confidence intervals are given in Figure \ref{fig:real-ci}.

\begin{figure}[htbp]
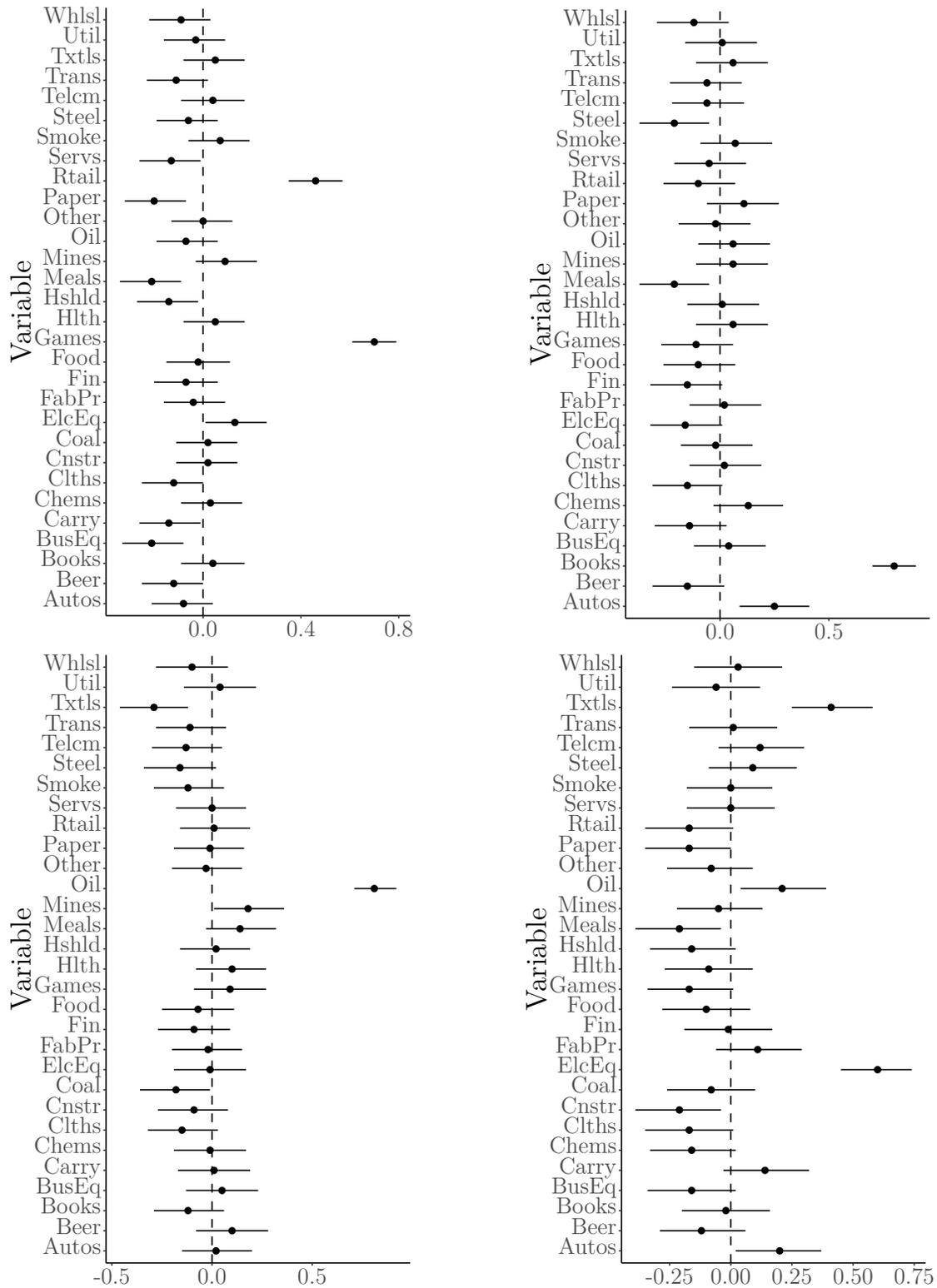

	\begin{subfigure}{.5\textwidth}
		\centering
		\input{real-conf-1.tex}
	\end{subfigure}\hfil
	\begin{subfigure}{.5\textwidth}
		\centering
		\input{real-conf-2.tex}
	\end{subfigure}
	\medskip
	\begin{subfigure}{.5\textwidth}
		\centering
		\input{real-conf-3.tex}
	\end{subfigure}\hfil
	\begin{subfigure}{.5\textwidth}
		\centering
		\input{real-conf-4.tex}
	\end{subfigure}
	\caption{Confidence intervals for the factor loadings of the first four sources.}
	\label{fig:real-ci}
\end{figure}



From Figure~\ref{fig:real-ci}, it is interesting to note that the independent component vectors (ICs) form a separation of the industry sectors. For example, the significant coefficients in the first IC correspond to the consumer sector: \texttt{Games}-which stands for sports and entertainment; \texttt{Rtail}- the retail sector, \texttt{Meals}- which includes restaurants, motels etc., \texttt{Paper}- for the paper and stationery products, \texttt{BusEq}- which stands for business equipment like computers, phones, radio, communication devices; and finally \texttt{Hshld} and \texttt{Beers}, which denote household items and alcoholic beverages respectively. The second IC has significant coefficients on \texttt{Books}-meaning the publishing industry of books, newspapers, periodicals etc; followed by \texttt{Autos}-standing for automobiles; \texttt{Steel}-referring to heavy industries involving steel; and finally \texttt{Meals}. We suspect that this IC is driven by the heavy industry sector but is somehow correlated with the publishing industry. The third IC has an energy component: as led by the significant coefficients for \texttt{Oil} (petroleum, natural gas etc.), \texttt{Coal}, \texttt{Mines} as well as \texttt{Txtls}-consisting of mills. Finally, the fourth IC once again has significant coefficients on \texttt{Txtls}, \texttt{Oil}, \texttt{Meals}, and \texttt{Autos} but is now accompanied by \texttt{ElcEq}-denoting electrical equipment in general, along with \texttt{Cnstr}-which stands for heavy construction materials.


\section{Concluding Remarks}\label{sec:conclude}

In this paper, we study the effect of dimensionality on statistical properties and computational complexity of ICA. Our work contributes in three different directions. First, we derive the information-theoretic lower bound on the sample complexity required to perform ICA in high dimensions. To accompany this lower bound we investigate why commonly used ICA techniques are necessarily suboptimal and propose a truncated tensor decomposition method that is indeed optimal. The proposed minimax optimal method is however computationally intractable which prompts us we study the optimal sample complexity under computational constraints. More specifically, under the low-degree polynomial framework of \cite{hopkins2018statistical,kunisky2022notes}, we show that the computational sample complexity threshold is at $n\gg d^2$, up to a logarithmic factor and thus demonstrate a gap in the information-theoretic and computational thresholds.

Secondly, in Section \ref{sec:tract}, we show how popular approaches such as FastICA may be suboptimal and identify initialization as the culprit in deriving computationally feasible and statistically efficient estimators in high dimensions. We demonstrate the shortcomings of initialization schemes prevalent in the literature and develop new initialization schemes that result in a minimax rate optimal estimator for ICA whenever $n\gg d^2$, matching the computational lower bound up to a logarithmic factor. Finally, in Section~\ref{sec:asy-dist}, we derive the asymptotic distributions of our estimators which can be readily used for statistical inferences.

We now point out a few related questions that should be investigated further. Firstly, note that for simplicity we have assumed that the observations are pre-whitened. In other words, we assume that the data are isotropic and the mixing matrices are orthonormal. In general, we need to estimate the covariance matrix first, before applying our proposed method. In Section 3.4, we also show that these two steps can be performed with sample splitting. It is possible that this data splitting is not essential, and we leave this question as a possible topic for future research.

More importantly, it is imperative to suggest methods of performing ICA for problems of even higher dimensions. We show that without further assumptions $n\asymp d^2$ is the correct computational threshold. However, it is possible that this sample complexity result can be improved greatly if one assumes more structure on the mixing matrices. For example, sparsity is a popular assumption of this nature. It would be interesting to devise an ICA algorithm that depends optimally on the sparsity level while having minimal dependence on the dimension $d$. Along this direction, it would be interesting to consider a noisy version of the ICA problem, where instead of $\bX=\bA\bS$ one observes $\bA\bS+\bE$ for some noise vector $\bE$ from some other distribution. This problem has been considered several times in the literature \citep[see, e.g.,][]{belkin2013blind} but an optimal dependence on the noise variance still remains elusive.

Finally, we point out the dependence of our methods on the moments of the independent sources. Following FastICA, we use an algorithm that measures departure from Gaussianity in terms of the fourth cumulants, i.e., the kurtosis. In principle, the same approach should provide a method of estimation whenever some higher-order cumulant is sufficiently different from zero (since all corresponding cumulants for the Gaussian distribution are zero). However, this is tied to the question of estimating an even higher-order moment tensor, which is presumed to be a computationally more difficult task. It would be interesting to determine this tradeoff between managing the computational burden vs making the moment assumptions less restrictive. For our analysis, we also assume finite $(8+\eps)$-th moments for rate optimal estimation and finite 12th moments for asymptotic distributions. It might be possible to weaken these assumptions further by using some suitably truncated estimators, for example. We leave this question for future research.

\bibliographystyle{plainnat}
\bibliography{references}


\appendix

\section{Proofs}

We now present the proofs of our main results. To facilitate the presentation, we defer the proofs of preliminary results and technical tools to Section \ref{sec:prooflemma}.

\subsection{Proofs of Results in Section~\ref{sec:inf-th}}
\begin{proof}[Proof of Theorem~\ref{th:ICAlower}]
	For $1\le k\le d,$ we take independent random variables 
	$$S_k=\alpha S_{k,1}+\sqrt{1-\alpha^2}S_{k,2}$$ 
	where $S_{k,1}\sim N(0,1)$ and $S_{k,2}$ is a Rademacher random variable. It is not hard to see that
	$$
	|\kappa_4(S_k)|=2(1-\alpha^2)^2.
	$$
	We consider the ICA model as $\bX=\bA\bS,$ where $\bA$ is a $d\times d$ orthogonal matrix, while $\bS$ is a $d$-dimensional vector consisting of i.i.d. random variables $S_k$ as defined above. 
	
	By standard results on packing numbers of orthonormal matrices, \citep[see e.g.,][]{szarek1997metric}, it is possible to construct a set $\calA$ of orthonormal matrices such that, for a sufficiently small $\delta>0$, we have
	\begin{equation}\label{eq:Aij-F-lbd}
		\sqrt{d\delta/2}\le \|\bA^{(i)}-\bA^{(j)}\|_{\rm F}\le
		\sqrt{d\delta} 
	\end{equation}
	for $i\neq j$ and $\bA^{(i)},\bA^{(j)}\in \calA$ and 
	\begin{equation}\label{eq:calA-card}
		|\calA|\ge 3^{d^2}
	\end{equation}		
	
	\noindent Note that the density function of each $S_k$ is
	\begin{align*}
		f(s)
		=&~
		\dfrac{1}{2\sqrt{2\pi}\alpha}
		\exp(-(s-\sqrt{1-\alpha^2})^2/2\alpha^2)
		+\dfrac{1}{2\sqrt{2\pi}\alpha}
		\exp(-(s+\sqrt{1-\alpha^2})^2/2\alpha^2)\\
		=&~
		\dfrac{1}{\sqrt{2\pi}\alpha}
		\exp\left(-\dfrac{s^2}{2\alpha^2}\right)
		\cdot
		\cosh\left(
		\dfrac{s\sqrt{1-\alpha^2}}{\alpha^2}\right)
		\cdot
		\exp\left(-\dfrac{(1-\alpha^2)}{2\alpha^2}\right).
	\end{align*}
	It follows that for two mixing matrices $\bP\neq\bQ\in \calO(d)$, we have the joint pdf's for $\bX$ are given by
	$$
	g_{\bP}(\bx)=\prod_{k=1}^df(\bp_k^{\top}\bx)
	\,\,
	\text{ and }
	g_{\bQ}(\bx)=\prod_{k=1}^df(\bq_k^{\top}\bx)
	$$
	when $\bA=\bP$ and $\bA=\bQ$ respectively.
	We then have
	\begin{align*}\label{eq:ratio-ubd}
		\dfrac{g_{\bP}(\bx)}{g_{\bQ}(\bx)}
		=&~\prod_{k=1}^d
		\left(
		\exp\left(
		-\dfrac{(\bp_k^{\top}\bx)^2
			-(\bq_k^{\top}\bx)^2}
		{2\alpha^2}
		\right)
		\cdot
		\dfrac{
			\cosh(\bx^{\top}\bp_k\sqrt{1-\alpha^2}/\alpha^2)		
		}{\cosh(\bx^{\top}\bq_k\sqrt{1-\alpha^2}/\alpha^2)}
		\right)\\
		=&~
		\prod_{k=1}^d
		\dfrac{
			\cosh(\bx^{\top}\bp_k\sqrt{1-\alpha^2}/\alpha^2)		
		}{\cosh(\bx^{\top}\bq_k\sqrt{1-\alpha^2}/\alpha^2)}.
		\numberthis
	\end{align*}
	The second equality uses the fact that $\bP,\bQ$ are orthonormal matrices, and thus
	$
	\|\bP\bx\|^2
	=\|\bQ\bx\|^2
	$. When $\bX=\bA\bS$ and we consider $\bA=\bA_i$ and $\bA=\bA_j$ for some $\bA^{(i)},\bA^{(j)}\in\calA$ and $i\neq j$ , it follows that  
	$$
	\ba^{(i)\top}_k\bX=\ba_k^{(i)\top}\bA^{(j)}\bS
	=\ba_k^{(i)\top}\ba_k^{(j)}S_k
	+\sum_{l\neq k}\ba_l^{(i)\top}\ba_l^{(j)}\bS_l.
	$$
	Consequently, the Kullback-Leibler divergence, when we observe $\bX_1,\dots,\bX_n$, can be bounded as
	\begin{align*}\label{eq:kldiv}
		&~{\rm KL}(g_{\bA^{(i)}};g_{\bA^{(j)}})\\
		=&~n\EE_{g_{\bA^{(i)}}}
		\left(
		\log\left(
		\dfrac{g_{\bA^{(i)}}(\bX)}{g_{\bA^{(j)}}(\bX)}
		\right)
		\right)\\
		=&~n
		\sum_{k=1}^d
		\EE
		\left(
		\log(\cosh(S_k\sqrt{1-\alpha^2}/\alpha^2))
		-\log
		\left(\cosh
		\left(
		\dfrac{\sqrt{1-\alpha^2}}{\alpha^2}
		\cdot
		\sum_{l=1}^d
		\langle
		\ba_k^{(i)},
		\ba_l^{(j)}
		\rangle
		S_l
		\right)
		\right)
		\right)\\
		=&~
		n\sum_{k=1}^d
		\sum_{m=0}^{\infty}
		\dfrac{{\rm cum}_{2m}(R)}{(2m)!}
		\left(
		\EE(S_k\sqrt{1-\alpha^2}/\alpha^2)^{2m}
		-\EE
		\left(
		\dfrac{\sqrt{1-\alpha^2}}{\alpha^2}
		\cdot
		\sum_{l=1}^d
		\langle
		\ba_k^{(i)},
		\ba_l^{(j)}
		\rangle
		S_l
		\right)^{2m}
		\right)\\
		=&~
		n\sum_{k=1}^d
		\sum_{m=2}^{\infty}
		\dfrac{{\rm cum}_{2m}(R)(1-\alpha^2)^m}{(2m)!\alpha^{4m}}
		\left(
		\EE\left(S_k^{2m}\right)
		-\EE
		\left(
		\sum_{l=1}^d
		\langle
		\ba_k^{(i)},
		\ba_l^{(j)}
		\rangle
		S_l
		\right)^{2m}
		\right)\\
		=&~
		n\sum_{k=1}^d
		\sum_{m=2}^{\infty}
		\dfrac{{\rm cum}_{2m}(R)(1-\alpha^2)^m}{(2m)!\alpha^{4m}}
		\left(
		\EE\left(\alpha Z+\sqrt{1-\alpha^2}R_k\right)^{2m}
		-\EE
		\left(
		\alpha Z
		+\sqrt{1-\alpha^2}
		\sum_{l=1}^d
		\langle
		\ba_k^{(i)},
		\ba_l^{(j)}
		\rangle
		R_l
		\right)^{2m}
		\right)\\
		=&~
		n\sum_{k=1}^d
		\sum_{m=2}^{\infty}
		\dfrac{{\rm cum}_{2m}(R)(1-\alpha^2)^m}{(2m)!\alpha^{4m}}
		\left(
		\EE X_k^{2m}
		-\EE
		Y_k^{2m}
		\right)\numberthis
	\end{align*}
	where we define the random variables within the summation as $X_k$ and $Y_k$ respectively. The third equality uses the fact that $\log(\cosh(x))$ is the cumulant generating function of Rademacher random variables. We also use the notation $Z$ and $R_k$ to mean independent  $N(0,1)$ and Rademacher random variables respectively. Note that
	$$
	X_k-Y_k
	=\sqrt{1-\alpha^2}
	\left(
	(\langle
	\ba_k^{(i)},
	\ba_k^{(j)}
	\rangle
	-1)R_k
	+
	\sum_{l\neq k}
	\langle
	\ba_k^{(i)},
	\ba_l^{(j)}
	\rangle
	R_l\right).
	$$
	Note that $\EE (X_k-Y_k)^{2m+1}=0$ for any $m\in \NN$ and any $k\in [d]$, and 
	$
	\EE(X_k-Y_k)^2
	={\rm Var}(X_k-Y_k)
	=(1-\alpha^2)
	\|\ba_k^{(i)}
	-\ba_k^{(j)}\|^2
	$. 
	
	Note that $Z,R_1,\dots, R_k$ are independent random variables which are symmetric around zero. Thus $\EE(\alpha Z+\sqrt{1-\alpha^2}R_k)^{2m-1}R_j=0$ for all $j\neq k$, and moreover
	\begin{align*}
		\EE(\alpha Z+\sqrt{1-\alpha^2}R_k)^{2m-1}R_k
		=&~\sum_{l=0}^{2m-1}
		{{2m-1}\choose{l}}
		(\alpha)^{2m-1-l}
		(1-\alpha^2)^{l/2}
		\EE Z^{2m-1-l}\EE R_k^{l+1}\\
		=&~\sum_{s=1}^{m}
		{{2m-1}\choose{2s-1}}
		(\alpha)^{2m-2s}
		(1-\alpha^2)^{(2s-1)/2}
		\EE Z^{2m-2s}\\
		\le&~\sqrt{1-\alpha^2}(\EE(1+\alpha Z)^{2m-1}-1).
	\end{align*}
	Then by binomial series expansion of $(Y_k-X_k+X_k)^{2m}$, it can be checked that for any $m\ge 3$, 
	\begin{align*}
		&~|\EE X_k^{2m}-\EE Y_k^{2m}|\\
		=&~
		\sqrt{1-\alpha^2}
		\cdot
		2m
		\cdot 
		(1-\langle\ba_k^{(i)},\ba_k^{(j)}\rangle)
		\EE 
		\left((\alpha Z+\sqrt{1-\alpha^2}R_k)^{2m-1}R_k\right)\\
		&~+
		\EE\left(
		\sum_{l=2}^{2m}
		{{2m}\choose{l}}
		(1-\alpha^2)^{l/2}
		\|\ba_k^{(i)}-\ba_k^{(j)}\|^{l}
		(\alpha Z+\sqrt{1-\alpha^2}R_k)^{2m-l}
		\left(R_k
		+
		\dfrac{1}{\|\ba^{(i)}_k-\ba^{(j)}_k\|}
		\sum_{l\neq k}
		\langle
		\ba_k^{(i)},
		\ba_l^{(j)}
		\rangle
		R_l
		\right)^l
		\right)
		\\
		\le&~ (1-\alpha^2)
		\|\ba_k^{(i)}
		-\ba_k^{(j)}\|^2
		\cdot
		\EE\left(
		1+
		\alpha Z
		+\sqrt{1-\alpha^2}
		\left(2R_k
		+
		\dfrac{1}{\|\ba^{(i)}_k-\ba^{(j)}_k\|}
		\sum_{l\neq k}
		\langle
		\ba_k^{(i)},
		\ba_l^{(j)}
		\rangle
		R_l
		\right)\right)^{2m}.
	\end{align*}
	Plugging this back into \eqref{eq:kldiv}, we have
	\begin{align*}
		&~{\rm KL}(g_{\bA^{(i)}};g_{\bA^{(j)}})\\
		=&~
		n\sum_{k=1}^d
		\sum_{m=2}^{\infty}
		\dfrac{{\rm cum}_{2m}(R)(1-\alpha^2)^m}{(2m)!\alpha^{4m}}
		\left(
		\EE X_k^{2m}
		-\EE
		Y_k^{2m}
		\right)\\
		=&~
		n\sum_{k=1}^d
		\dfrac{-2(1-\alpha^2)^2}{4!\alpha^8}
		\cdot
		(1-\alpha^2)^2
		\left(1-
		\sum_{l=1}^d
		\langle
		\ba_k^{(i)},\ba_l^{(j)}
		\rangle^4
		\right)\\
		&\hspace{1cm}~+
		n\sum_{k=1}^d
		\sum_{m=3}^{\infty}
		\dfrac{{\rm cum}_{2m}(R)(1-\alpha^2)^m}{(2m)!\alpha^{4m}}
		\left(
		\EE X_k^{2m}
		-\EE
		Y_k^{2m}
		\right)\\
		\le&~
		n\sum_{k=1}^d
		\dfrac{4(1-\alpha^2)^2}{4!\alpha^8}
		\cdot
		(1-\alpha^2)^2
		(1-\langle\ba^{(i)}_k,\ba^{(j)}_k\rangle^2)
		+
		n\sum_{k=1}^d
		\sum_{m=3}^{\infty}
		\dfrac{{\rm cum}_{2m}(R)(1-\alpha^2)^{m+1}}{(2m)!\alpha^{4m}}
		\|\ba_k^{(i)}
		-\ba_k^{(j)}\|^2\times\\
		&\hspace{1cm}\times
		\EE\left(
		1+
		\alpha Z
		+\sqrt{1-\alpha^2}
		\left(2R_k
		+
		\sum_{l\neq k}
		\dfrac{
			\langle
			\ba_k^{(i)},
			\ba_l^{(j)}
			\rangle}{\|\ba^{(i)}_k-\ba^{(j)}_k\|}
		R_l
		\right)\right)^{2m}
	\end{align*}
\begin{align*}
		\le &~ 
		4n(1-\alpha^2)^4\sum_{k=1}^d\|\ba^{(i)}_k-\ba^{(j)}_k\|^2
		\EE 
		\log\cosh
		\left(
		1/\alpha^2
		Z/\alpha
		+\dfrac{\sqrt{1-\alpha^2}}{\alpha^2}
		\left(2R_k
		+
		\sum_{l\neq k}
		\dfrac{
			\langle
			\ba_k^{(i)},
			\ba_l^{(j)}
			\rangle}{\|\ba^{(i)}_k-\ba^{(j)}_k\|}
		R_l
		\right)\right)\\
		\le&~
		4n(1-\alpha^2)^4\sum_{k=1}^d\|\ba^{(i)}_k-\ba^{(j)}_k\|^2
		\EE 
		\abs*{
			1/\alpha^2+
			Z/\alpha
			+\dfrac{\sqrt{1-\alpha^2}}{\alpha^2}
			\left(2R_k
			+
			\sum_{l\neq k}
			\dfrac{
				\langle
				\ba_k^{(i)},
				\ba_l^{(j)}
				\rangle}{\|\ba^{(i)}_k-\ba^{(j)}_k\|}
			R_l
			\right)}\\
		\le&~
		4n(1-\alpha^2)^4\sum_{k=1}^d\|\ba^{(i)}_k-\ba^{(j)}_k\|^2
		\cdot
		\left(
		\dfrac{1}{\alpha^2}
		+
		\dfrac{3(1-\alpha^2)+1}{\alpha^4}
		\right)^{1/2}\\
		\le&~
		8n(1-\alpha^2)^4
		\|\bA^{(i)}-\bA^{(j)}\|_{\rm F}^2/\alpha 
		\le 8n(1-\alpha^2)^4(d\delta)/\alpha.
	\end{align*}
	The first inequality follows by the upper bound we derived earlier. The next step uses the power series expansion of $\log(\cosh(x))$. The third inequality uses the fact that $\cosh(x)\le \exp(|x|)$. The fourth inequality is an application of Cauchy-Schwarz, while the last two steps involve the assumption $\alpha^2>2/3$.
	
	By the generalized Fano inequality \citep[see Lemma 3 of ][]{yu1997assouad}, we then have
	$$
	\underset{\hat{\bA}}{\inf}
	\underset{\bA\in \calA}{\sup}
	\EE\norm*{\hat{\bA}-\bA}_{\rm F}
	\ge 
	\sqrt{\dfrac{d\delta}{2}}
	\left(1-\dfrac{8n(1-\alpha^2)^4d\delta/\alpha+\log 2}{d^2\log 3}\right).
	$$
	We now take 
	$$
	\delta=\dfrac{d\alpha}{10(1-\alpha^2)^4n}
	$$
	to get that 
	\begin{equation}\label{eq:fro-inf-lb}
		\underset{\hat{\bA}}{\inf}\underset{\bA\in \calA}
		{\sup}
		\EE
		\norm*{\hat{\bA}-\bA}_{\rm F}
		\ge 
		c\sqrt{\dfrac{d^2\alpha^4}{(1-\alpha^2)^4n}}
		\ge 
		\dfrac{c}{\min_{1\le k\le d}|\kappa_4(S_k)|}\sqrt{\dfrac{d^2}{n}}.
	\end{equation}
	This finishes the proof for the $\ell_A$ error. The lower bound on the maximum columnwise error follows immediately by noting that
	$$
	\max_{1\le j\le d}
	\|\hat{\ba}_j-\ba_j\|
	\ge \left(
	\dfrac{1}{d}
	\sum_{j=1}^d
	\|\hat{\ba}_j-\ba_j\|^2
	\right)^{1/2}
	=\dfrac{1}{\sqrt{d}}
	\cdot \|\hat{\bA}-\bA\|_{\rm F}.
	$$
\end{proof}

\medskip

\begin{proof}[Proof of Theorem~\ref{th:samp-lbd}]
	From the independent samples $\bX_1,\dots,\bX_n$, let us define
	$$
	\bv=\bX_1/\|\bX_1\|.
	$$
	Note that $\bv\in \SS^{d-1}$. Hence
	\begin{align*}\label{eq:m4-low-bd}
		\|\hat{\scrM}_4^{\rm sample}-\scrM_4\|
		=&~\sup_{\bu\in \SS^{d-1}}
		\abs*{
			\dfrac{1}{n}
			\sum_{i=1}^n\langle\bX_i,\bu\rangle^4
			-\EE\langle\bX,\bu\rangle^4
		}
		\ge \abs*{
			\dfrac{1}{n}
			\sum_{i=1}^n\langle\bX_i,\bv\rangle^4
			-\EE\langle\bX,\bv\rangle^4
		}\\
		\ge&~
		\dfrac{1}{n}\cdot \langle\bX_1,\bv\rangle^4
		-\sup_{\bu\in \SS^{d-1}}
		\EE\langle\bX,\bu\rangle^4
		=
		\dfrac{\|\bX_1\|^4}{n}
		-\sup_{\bu\in \SS^{d-1}}
		\EE\langle\bX,\bu\rangle^4\\
		=&~\dfrac{\|\bS_1\|^4}{n}
		-\sup_{\bu\in \SS^{d-1}}
		\EE\langle\bS,\bu\rangle^4. \numberthis
	\end{align*}
	In the first inequality, we use a particular unit vector $\bv$ as defined earlier. The third inequality uses the fact that $\langle\bX_1,\bv\rangle^4\ge 0$. The last two equalities follow since $\bX_1=\bA\bS_1$ and $\bX=\bA\bS$, where $\bA\in \calO(d)$ is an orthonormal matrix. 
	
	First, by Lemma \ref{lem:Sk-tail-bds}, we have
	$$
	\PP(\|\bS_1\|^4\ge d^2/4)
	\ge \PP\left(\abs*{\|\bS_1\|^2-d}\le d/2\right)
	\ge 1-Cd^{-(4+\epsi/2)}.
	$$
	On the other hand, for any $\bu\in \SS^{d-1}$, we have from our assumptions that
	\begin{align*}
		\EE\langle\bS,\bu\rangle^4
		=&\sum_{j=1}^du_j^4\EE S_j^4+\sum_{j\neq k}u_j^2u_k^2\EE(S_j^2)\EE(S_k^2)
		\le \max\EE S_j^4+1.
	\end{align*}
	Plugging in these bounds into \eqref{eq:m4-low-bd} implies that
	\begin{equation}\label{eq:Enorm-lbd}
		\PP
		\left(
		\|\hat{\scrM}_4^{\rm sample}-\scrM_4\|
		\ge \dfrac{d^2}{4n}-(\max\EE S_j^4+1)
		\right)
		\ge 1-Cd^{-(4+\epsi/2)}.
	\end{equation}
	
	Let us define
	$$
	\scrX:=\hat{\scrM}_4^{\rm sample}(\bX)-\scrM_0;
	\,\,
	\scrT:=\sum_{k=1}^d\kappa_4(S_k)\ba_k^{\circ 4};
	\,\,
	\scrE:=\hat{\scrM}_4^{\rm sample}(\bX)-\scrM_4(\bX).
	$$
	Note that
	$
	\scrX=\scrT+\scrE
	$. By equation \eqref{eq:Enorm-lbd} we have that
	\begin{equation}\label{eq:Enorm-lbd2}
		\|\scrE\|
		=\|\hat{\scrM}_4^{\rm sample}(\bX)-\scrM_4(\bX)\|
		\ge \dfrac{d^2}{8n}\ge 4
	\end{equation}
	with probability at least $1-d^{-4-\epsi/2}$, whenever $n\le d^2/32$. By the definition of $\hat{\kappa}_4(\bX)$, this proves the first part of the theorem. Let
	$$
	\bu_*=\underset{\bv\in \SS^{d-1}}{\rm argmax}
	\,|\scrE\times_{1,2,3,4}\bv|.
	$$
	By definition of $\bu_*$, it then follows that
	\begin{align*}\label{eq:lamb-lbd}
		\lambda=|\scrX\times_{1,2,3,4}\bu_*|
		=&~\sup_{\bv\in \SS^{d-1}}|\scrX\times_{1,2,3,4}\bv|
		\ge|\scrX\times_{1,2,3,4}\bv_*|\\
		\ge&~|\scrE\times_{1,2,3,4}\bv_*|
		-|\scrT\times_{1,2,3,4}\bv_*|\\
		\ge&~
		\|\scrE\|-
		\abs*{
			\sum_{k=1}^d\kappa_4(S_k)\langle\ba_k,\bv_*\rangle^4
		}.
		\numberthis
	\end{align*}
	Let $j=\underset{1\le k\le d}{\rm argmax}|\langle\ba_k,\bu_*\rangle|$. We write
	$$
	\bu_*=\sqrt{1-\rho^2}\ba_j+\rho\bv 
	$$
	where $\bv\in\SS^{d-1}$ and $\bv\perp\ba_j$, while $0<\rho<1$. We consider only the case where $\langle\ba_j,\bu_*\rangle>0$, since the sign of $\bu_*$ is not determined here. Let us define the event 
	\begin{equation}\label{eq:tens-conc-bds-200}
		\calA\equiv
		\left(
		\Delta_1\le \sqrt{\dfrac{C_0d(\log d)}{n}},\,\,
		\Delta_2\le \sqrt{\dfrac{C_0d(\log d)}{n}},\,\,
		\Delta_3\le \sqrt{\dfrac{C_0d(\log d)}{n}}
		+\dfrac{Cd^{13/8}(\log d)}{n^{7/8}}
		\right)
	\end{equation}
	where $C_0$ is a numerical constant, while  $\scrE=\scrX-\displaystyle\sum_{k=1}^d\kappa_4(S_k)\ba_k^{\circ 4}$, and
	$$
	\Delta_{q}=\max_{1\le j\le d}\norm*{\scrE\times_{[4]/[q]}\ba_j}
	\text{ for }
	q=1,2,3.
	$$
	By Lemmas \ref{lem:eps3-bd}, \ref{lem:eps2-bd} and \ref{lem:eps1-bd}, we have that
	$$
	\PP(\calA)\ge 1-d^{-3}.
	$$
	
	Then under the event $\calA$, we have the upper bound as follows:
	\begin{align*}
		\lambda=&~
		~|\scrX\times_{1,2,3,4}\bu_*|\\
		\le&~
		~|\scrT\times_{1,2,3,4}\bu_*|
		+~|\scrE\times_{1,2,3,4}\bu_*|\\
		\le&~
		\abs*{
			\sum_{k=1}^d\kappa_4(S_k)\langle\ba_k,\bu_*\rangle^4
		}
		+(1-\rho^2)^2
		|\scrE\times_{1,2,3,4}\ba_j|
		+4(1-\rho^2)^{3/2}\rho\|\scrE\times_{1,2,3}\ba_j\|\\
		&+6(1-\rho^2)\rho^2\|\scrE\times_{1,2}\ba_j\|
		+4\rho^3\sqrt{1-\rho^2}\|\scrE\times_1\ba_j\|
		+\rho^4\|\scrE\|\\
		\le&~
		\kappa_4(S_j)(1-\rho^2)^2
		+\max_k{\kappa_4}(S_k)\rho^3
		+4\rho\Delta_1
		+6\rho^2\Delta_2
		+4\rho^3\Delta_3
		+\rho^4\|\scrE\|\\
		\le&~
		\kappa_4(S_j)(1-\rho^2)^2
		+\max_k{\kappa_4}(S_k)\rho^3
		+C\rho\sqrt{\dfrac{d\log d}{n}}
		+\dfrac{C\rho^3d^{13/8}(\log d)}{n^{7/8}}
		+\rho^4\|\scrE\|.
	\end{align*}
	Comparing this to the lower bound from \eqref{eq:lamb-lbd} implies that
	\begin{align*}
		(1-\rho^4)\|\scrE\|
		\le (1+(1-\rho^2)^2+\rho^3)\max_k\kappa_4(S_k)
		+C\sqrt{\dfrac{d(\log d)}{n}}
		+\dfrac{C\rho^3d^{13/8}(\log d)}{n^{7/8}}
	\end{align*}
	and hence
	\begin{align*}
		\sqrt{1-\rho^2}
		\le&~ 
		\left\{
		\dfrac{1}{\|\scrE\|}
		\cdot
		\left(
		\max_k\kappa_4(S_k)
		+C\sqrt{\dfrac{d(\log d)}{n}}
		+\dfrac{C\rho^3d^{13/8}(\log d)}{n^{7/8}}
		\right) 
		\right\}^{1/2}\\
		\le&~
		\left\{
		\dfrac{8n}{d^2}
		\cdot
		\left(
		\max_k\kappa_4(S_k)
		+C\sqrt{\dfrac{d(\log d)}{n}}
		+\dfrac{Cd^{13/8}(\log d)}{n^{7/8}}
		\right) 
		\right\}^{1/2}\\
		\le&\dfrac{C\sqrt{\log d}}{d^{1/8}},
	\end{align*}
	whenever $n\le d^2/C$ for a constant $C>0$. In the second inequality above, we use the lower bound on $\|\scrE\|$ from \eqref{eq:Enorm-lbd2}.
\end{proof}

\medskip

\begin{proof}[Proof of Theorem~\ref{th:ICA-infth}]
	We will first prove an upper bound on the error norm $\|\hat{\scrM}_4(\bX)-\scrM_4(\bX)\|$.	It follows from the moment assumption and Lemma \ref{lem:lin-comb-mmt} that 
	\begin{align*}
		\sup_{\bu\in \SS^{d-1}}\E \langle\bu,\bX\rangle^{8}
		=\sup_{\bu\in \SS^{d-1}}\EE\langle\bu,\bS\rangle^{8} \le L
	\end{align*}
	for some constant $L>0$. Thus
	$$
	{\rm Var}(\langle\bX_i,\bv\rangle^4)\le L'
	$$
	for some constant $L'>0$. Then we apply Proposition 2.4 of \cite{catoni2012challenging} to obtain that, for any $t\ge \exp(-n/2)$, we have
	\begin{equation}\label{eq:catoni-2}
		\left|\hat{\theta}_{\bu}-\E(\bu^\top \bX_i)^4\right|\le \sqrt{2L\log(t^{-1})\over n\left(1-2\log(t^{-1})/n\right)},
	\end{equation}
	with probability at least $1-2t$.
	
	It is clear that $\scrM_4(\bX)$ is in the feasible set of the optimization. We now apply \eqref{eq:catoni-2} with $t=\exp(-3d)$ to each $\bu\in \calN$. Taking the union bound over all $\bu\in \calN$, we have with probability at least 
	$$
	1-|\calN|\exp(-3d)=1-\exp(-(3-\log 9)d)\ge 1-2.2^{-d},
	$$
	that
	$$\underset{\bu\in \calN}{\max}|\hat{\theta}_\bu-\langle \calM_4(\bX),\,\bu\otimes \bu\otimes \bu\otimes \bu|\le\sqrt{\dfrac{6Ld}{n(1-6d/n)}}.$$
	
	Under this event, for any $\bu\in \calN,$
	\begin{eqnarray*}
		&&\langle \hat{\calM}_4(\bX)-\calM_4(\bX),\,\bu\otimes \bu\otimes \bu\rangle\\
		&\le&\left|\hat{\theta}_{\bu}-\langle\hat{\scrM}_4(\bX),\bu\otimes\bu\otimes\bu\otimes \bu\rangle\right|+\left|\hat{\theta}_{\bu}-\langle\scrM_4(\bX),\bu\otimes\bu\otimes\bu\otimes \bu\rangle\right|\\
		&\le&2\max_{\bu\in \calN}\left|\hat{\theta}_{\bu}-\langle\scrM_4(\bX),\bu\otimes\bu\otimes\bu\otimes \bu\rangle\right|\\
		&\le& 5\sqrt{Ld\over n\left(1-6d/n\right)}.
	\end{eqnarray*}
	from which we can conclude that
	$$\begin{aligned}
		&\|\hat{\scrM}_4(\bX)-\scrM_4(\bX)\|\\=&\underset{\bu\in\calS^{d-1}}{\sup}\left|\langle \hat{\scrM}_4(\bX)-\scrM_4(\bX),\,\bu\otimes \bu\otimes \bu\otimes \bu\rangle \right|
		\\\le &\underset{\bu\in\calN}{\sup}\left|\langle \hat{\scrM}_4(\bX)-\scrM_4(\bX),\,\bu\otimes \bu\otimes \bu\otimes \bu\rangle \right|+\dfrac{1}{4}\underset{\bu\in\calS^{d-1}}{\sup}\left|\langle \hat{\scrM}_4(\bX)-\scrM_4(\bX),\,\bu\otimes \bu\otimes \bu\otimes \bu\rangle \right|
	\end{aligned}
	$$
	which means that
	$$\|\hat{\scrM}_4(\bX)-\scrM_4(\bX)\|\le 7\sqrt{Ld\over n\left(1-6d/n\right)}$$
	with probability at least $1-1.4^{-d}.$	We now apply the tensor perturbation bounds from Theorems 2.3 and 2.4 of \cite{auddy2020perturbation} to obtain
	$$
	\ell_M(\hat{\bU},\bA)\le C_2\sqrt{\dfrac{d}{n}}.
	$$
	Note that $\ell_A(\hat{\bU},\bA)\le \ell_M(\hat{\bU},\bA)$ and this finishes the proof.
\end{proof}

\medskip

\begin{proof}[Proof of Theorem~\ref{th:comp-lower}]
	Let 
	$
	\bZ=(\bZ_1,\dots,\bZ_d)\in\RR^{d\times d},
	$ 
	be a matrix of i.i.d. standard Gaussian elements $Z_{ij}\stackrel{iid}{\sim}N(0,1)$. We consider the polynomial basis
	$$
	h_m(\bZ_j^{\top}\bX/\|\bX\|)
	\quad
	\text{ for }
	m\in[D],\,\,
	j\in [d],\,\,
	\text{and}\,\,
	i\in [n].
	$$
	Here $h_m$ is the $m^{\text{th}}$ probabilist's Hermite polynomial. We define
	$$
	\psi_{\alpha_1,\dots,\alpha_d}^{(j_1,\dots,j_d)}(\tilde{\bX})
	=
	\prod_{i=1}^n
	\prod_{j=1}^d
	\left({h_{\alpha_{i,j}}(\bZ_{j}^{\top}\bX_i/\|\bX_i\|)}
	-\EE_{\mu}
	{h_{\alpha_{i,j}}(\bZ_{j}^{\top}\bX_i/\|\bX_i\|)}
	\right)
	$$
	for $0\le \alpha_{i,j}\le D$. Note that this is a random polynomial basis. Since $\bZ_j$ are independent, it can be checked that this basis is incoherent. That is, there exists $0<\delta<1$ such that for any  polynomial $f$ in $\tilde{\bX}$, that has degree at most $C\log d$, we have
	$$
	(1-\delta)\langle f,f\rangle
	\le\sum_{t=1}^N\langle f,\psi_t\rangle^2
	\le 
	(1+\delta)\langle f,f\rangle.
	$$
	We will now show that Lemma~\ref{lem:comp-low} implies the non-existence of any consistent estimator of $\bA$ that is constructed from low-degree polynomials of $\bX_i$. To see this, suppose we have an estimator 
	$$
	\hat{\ba}
	={\rm poly}_{\le (D-1)/4}
	\left(\bX_i
	\text{ for }
	i=1,\dots,n-1
	\right).
	$$
	For the sake of contradiction, suppose $\hat{\ba}$ is a consistent estimator of $\ba_j$. Then, for $\bP\in\calO(d)$, consider the low-degree polynomial
	$$
	f_j(\tilde{\bX})=(\hat{\ba}^{\top}\bX_n)^4-(\|\hat{\ba}\|\bp_j^{\top}\bX_n)^4.
	$$
	
	\begin{lemma}\label{lem:comp-low2}
		For any $\bP,\bQ\in\calO(d)$, there exist constants $0<\eta_1,\eta_2<1$ depending only on $\EE S_1^6/\kappa_4(S_1)$, such that
		$$
		\dfrac{\EE_{\bA=\bQ}f_j(\tilde{\bX})}{\sqrt{\EE_{\bA=\bP}f_j^2(\tilde{\bX})}}
		\ge 4
		$$
		if $\ell(\bP,\bQ)=\dfrac{1}{\sqrt{2}}$ and $\PP_{\bA}\left(\|\langle\hat{\ba}-\ba_j\|\le\eta_1\right)\ge 1-\eta_2$ for some $1\le j\le d$ under $\bA=\bP$ and $\bA=\bQ$.
	\end{lemma}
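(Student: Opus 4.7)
The plan is to bound the denominator $\EE_{\bA=\bP} f_j^2$ and the numerator $\EE_{\bA=\bQ} f_j$ separately. Since $\hat{\ba}$ depends only on $\bX_1,\ldots,\bX_{n-1}$ while $f_j$ also involves $\bX_n$, I will throughout condition on $\hat{\ba}$ when taking expectations over $\bX_n$. The main idea is that under $\bA=\bP$, consistency forces $\hat{\ba}\approx\bp_j$, making $f_j$ nearly zero, whereas under $\bA=\bQ$, $\hat{\ba}\approx\bq_j$ is appreciably different from $\bp_j$ because $\ell_M(\bP,\bQ)=1/\sqrt{2}$, yielding an expectation bounded away from zero.

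For the denominator, I will write $\bX_n=\bP\bS_n$ and set $\bw=\bP^{\top}\hat{\ba}$ so that $f_j=(\bw^{\top}\bS_n)^4-\|\bw\|^4 S_{n,j}^4$. Split according to the good event $E=\{\|\hat{\ba}-\bp_j\|\le\eta_1\}$. On $E$, decompose $\bw=w_j\be_j+\bw_{-j}$ with $|w_j-1|\le\eta_1$ and $\|\bw_{-j}\|\le\eta_1$, and expand
\begin{equation*}
(\bw^{\top}\bS_n)^4=(w_j S_{n,j}+T)^4\quad\text{with}\quad T:=\sum_{k\ne j}w_k S_{n,k},
\end{equation*}
using the identity $\|\bw\|^4-w_j^4=\|\bw_{-j}\|^2(\|\bw\|^2+w_j^2)$. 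Every surviving monomial of $f_j$ then contains at least one factor of $T$ or of $\|\bw_{-j}\|^2$, each of which is $O(\eta_1)$ in the relevant $L^p$ norm (with constants depending on $\EE S_k^8\le M_2$), giving $\EE_{\bA=\bP}[f_j^2\mathbf{1}_E]\le C\eta_1^2$. On $E^c$, Cauchy--Schwarz with $\PP(E^c)\le\eta_2$ and eighth-moment bounds controls $\EE_{\bA=\bP}[f_j^2\mathbf{1}_{E^c}]\le C\sqrt{\eta_2}$, and hence $\EE_{\bA=\bP}f_j^2\le C(\eta_1^2+\sqrt{\eta_2})$.

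For the numerator, set $\bu=\bQ^{\top}\hat{\ba}$ and $\bv=\bQ^{\top}\bp_j$. Using the identity $\EE(\bx^{\top}\bS)^4=\sum_k x_k^4\kappa_4(S_k)+3\|\bx\|^4$, conditioning on $\hat{\ba}$ gives
\begin{equation*}
\EE_{\bA=\bQ}[f_j\mid\hat{\ba}]=\sum_{k=1}^d u_k^4\kappa_4(S_k)-\|\bu\|^4\sum_{k=1}^d v_k^4\kappa_4(S_k).
\end{equation*}
After realigning $\bQ$ by the optimal permutation in $\ell_M$, I will select $j$ where $\sin\angle(\bp_j,\bq_j)=1/\sqrt{2}$ is attained, so $v_j^2=1/2$ and $\sum_{k\ne j}v_k^2=1/2$. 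On $\{\|\hat{\ba}-\bq_j\|\le\eta_1\}$, $\bu\approx\be_j$ and $\|\bu\|\approx 1$, yielding $\EE_{\bA=\bQ}f_j\approx\kappa_4(S_j)-\sum_k v_k^4\kappa_4(S_k)$. The hard part will be bounding this gap below in absolute value by a constant, since this cannot be done from $\ell_M(\bP,\bQ)=1/\sqrt{2}$ alone: the $\kappa_4(S_k)$ may have mixed signs. The resolution will be to exploit the explicit two-dimensional rotation structure of the $\bQ$ produced by Lemma~\ref{lem:comp-low}, under which all but two of the $v_k$'s vanish, reducing the gap to a concrete expression in at most two $\kappa_4$'s that is bounded below by a constant depending only on $M_1$ and $\EE S_j^6/\kappa_4(S_j)$. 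Combining with the denominator estimate and taking $\eta_1,\eta_2$ sufficiently small (depending on the same quantities) will force the ratio to exceed $4$; if the sign of the gap happens to be negative for the chosen $j$, replacing $f_j$ by $-f_j$ preserves $\EE_{\bA=\bP}f_j^2$ and flips the numerator's sign.
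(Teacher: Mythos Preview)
Your overall strategy matches the paper's: condition on $\hat{\ba}$, use the kurtosis identity $\EE(\bx^{\top}\bS)^4=\sum_k x_k^4\kappa_4(S_k)+3\|\bx\|^4$ to lower-bound the numerator under $\bA=\bQ$, and upper-bound the denominator under $\bA=\bP$ via a good/bad event split. Two differences are worth flagging.

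For the denominator the paper does something sharper than Cauchy--Schwarz on $E^c$. After normalizing $\bv=\hat{\ba}/\|\hat{\ba}\|$ it carries out an explicit eighth-moment expansion to show, uniformly over unit vectors $\bu$,
\[
\EE\big[(\bu^{\top}\bS)^4-S_1^4\big]^2 \;\le\; C\,\EE S_1^6\,(1-u_1).
\]
Since $1-u_1\le 2$ always, the bad event contributes at most a constant times $\eta_2$ (not $\sqrt{\eta_2}$), and only eighth moments are required; this bound is also what produces the stated dependence on $\EE S_1^6/\kappa_4(S_1)$. Your step $\EE[f_j^2\mathbf{1}_{E^c}]\le\sqrt{\EE f_j^4}\sqrt{\eta_2}$ needs $\EE f_j^4<\infty$, which entails sixteenth moments of $S$ and control of $\|\hat{\ba}\|^{16}$ on $E^c$; neither follows from ``eighth-moment bounds.'' The pointwise conditional bound is the right device here.

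For the numerator your worry about mixed signs of $\kappa_4(S_k)$ and the appeal to the explicit two-dimensional rotation of $\bQ$ are unnecessary in this context: the sources in the lower-bound construction are i.i.d., so $\kappa_4(S_k)\equiv\kappa_4(S_1)$ factors out. The paper then bounds $\sum_k(\bq_k^{\top}\bv)^4-\sum_k(\bq_k^{\top}\bp_1)^4$ from below using only $(\bq_1^{\top}\bp_1)^2=\tfrac12$ together with $\sum_{k\ne 1}(\bq_k^{\top}\bp_1)^4\le(1-(\bq_1^{\top}\bp_1)^2)^2$, obtaining the clean lower bound $(\bq_1^{\top}\bv)^4-\tfrac12$, which is positive on the good event. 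Your route via the specific $\bQ$ also works but is not needed.
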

	
	Since $f_j(\tilde{\bX})\in\RR_{\le D}(\bX_1,\dots,\bX_n)$, equation~\eqref{eq:lrcs-low-deg} and Lemma~\ref{lem:comp-low2} give a contradiction to Lemma~\ref{lem:comp-low} whenever $n\le d^2/(CD^2(\log d)^5)$. We thus arrive at the given computational minimax lower bound.
\end{proof}

\subsection{Proofs of Results in Section~\ref{sec:tract}}
\medskip

\begin{proof}[Proof of Theorem \ref{th:init}] 
	We define the target matrix
	$$
	\bM=\sum_{k=1}^d\kappa_4(S_k)(\ba_k\otimes\ba_k)(\ba_k\otimes\ba_k)^{\top}.
	$$
	Note that 
	$$
	\bM(\bG_s)={\rm unvec}(\bM{\rm vec}(\bG_s))
	=\sum_{k=1}^d
	\kappa_4(S_k)
	Z_k(\bG_s)
	\ba_k\ba_k^{\top}.
	$$
	where for each fixed $s$, we have $Z_k(\bG_s)=\ba_k^{\top}\bG_s\ba_k\stackrel{iid}{\sim}N(0,1)$, since $\ba_k\otimes\ba_k$ are orthogonal unit vectors, and ${\rm vec}(\bG_s)\sim N(\mathbf{0},\II_{d^2})$. Let us define
	$$
	\hat{\bP}={\rm Proj}_{d}\left(
	\hat{\bM}
	-\calM_{(12)(3,4)}\left(\scrM_0\right)\right)
	$$
	the rank-$d$ projection of the matrix $\hat{\bM}-\calM_{(12)(3,4)}\left(\scrM_0\right)$. Our estimator can be written as
	$$
	\hat{\scrM}
	\times_{3,4}\hat{\bP}G
	=
	\sum_{k=1}^d
	\kappa_4(S_k)(\ba_k^{\top}{\rm unvec}(\hat{\bP}\bG)\ba_k)
	\ba_k\ba_k^{\top}
	+\scrE\times_{3,4}\hat{\bP}\bG.
	$$
	We further expand the rightmost hand side to write
	\begin{align*}\label{eq:init-dev0}
		\hat{\scrM}
		\times_{3,4}\hat{\bP}G
		=&~\sum_{k=1}^d
		\kappa_4(S_k)(\ba_k^{\top}{\rm unvec}(\bG)\ba_k)
		\ba_k\ba_k^{\top}\\
		&+\sum_{k=1}^d
		\kappa_4(S_k)(\ba_k^{\top}{\rm unvec}((\hat{\bP}-\bP)\bG)\ba_k)
		\ba_k\ba_k^{\top}
		+\scrE\times_{3,4}\hat{\bP}\bG,\numberthis
	\end{align*}
	where $\bP=\displaystyle\sum_{k=1}^d(\ba_k\otimes\ba_k)(\ba_k\otimes\ba_k)^{\top}$. We will first bound the second term in the above expression. Note that
	\begin{align*}
		&~\norm*{\sum_{k=1}^d
			\kappa_4(S_k)(\ba_k^{\top}{\rm unvec}((\hat{\bP}-\bP)\bG)\ba_k)
			\ba_k\ba_k^{\top}}\\
		=&~
		\max_{1\le k\le d}\kappa_4(S_k)(\ba_k^{\top}{\rm unvec}((\hat{\bP}-\bP)\bG)\ba_k)\\
		=&~
		\max_{1\le k\le d}\kappa_4(S_k)(\ba_k\otimes\ba_k)^{\top}(\hat{\bP}-\bP)\bG\\
		\le &~
		C
		\max_{1\le k\le d}\kappa_4(S_k)
		\norm*{(\hat{\bP}-\bP)(\ba_k\otimes\ba_k)}\sqrt{\log d}\\
		\le&~
		C\sqrt{\dfrac{d^2(\log d)}{n}}
	\end{align*}
	with probability at least $1-d^{-3}$. Here we use Gaussian concentration inequalities and a union bound in the second last step, since $\bG\sim N(0,\II_{d^2})$. We also apply Lemma~\ref{lem:mat-init-conc} in the last step. 
	
	Moving on to the third term in \eqref{eq:init-dev0} we have
	\begin{align*}
		\norm*{
			\scrE\times_{3,4}\hat{\bP}\bG 	
		}=&~
		\norm*{
			\left(\scrE\times_{3,4}\hat{\bP}\right)\times_{3,4}\bG 	
		}\\
		=&~
		\norm*{
			\sum_{i,j}g_{ij}
			\left(\scrE\times_{3,4}\hat{\bP}\right)_{..ij}
		}\\
		\le&~
		C\max\left\{
		\norm*{\calM_{(1)(234)}(\scrE\times_{3,4}\hat{\bP})},
		\norm*{\calM_{(2)(134)}(\scrE\times_{3,4}\hat{\bP})}
		\right\}
		\times\sqrt{\log d}\\
		\le&~
		C(t_1+t_2)\sqrt{\dfrac{d^2\log d}{n}}.
	\end{align*}
	In the second step, we write $..ij$ to refer to the $(i,j)$th $(3,4)$ slice of $\scrE\times_{3,4}\hat{\bP}$. The next step uses concentration inequalities for matrix Gaussian sequences from \cite{tropp2015introduction}. In the last step we apply Lemma~\ref{lem:mat-init-conc2}. Plugging these inequalities back into \eqref{eq:init-dev0}, we then have
	\begin{equation}\label{eq:init-dev1}
		\norm*{
			\hat{\scrM}\times_{3,4}\hat{\bP}\bG 
			-\sum_{k=1}^d((\ba_k\otimes\ba_k)^{\top}\bG)\ba_k\ba_k^{\top}
		}
		\le C\left(\sqrt{\dfrac{d^2(\log d)}{n}}\right)
	\end{equation}
	with high probability. Next, since $\ba_k$ are orthogonal, it follows that $Z_k:=(\ba_k\otimes\ba_k)^{\top}\bG\stackrel{iid}{\sim}N(0,1)$. Suppose now that we repeatedly sample $\bG_s$ for $s=1,\dots,L$. By anti-concentration bounds for Gaussian random variables, \citep[see Lemma B.1 of ][]{anand2014sample}, (fixing $k=1$) without loss of generality, we have that
	\begin{equation}\label{eq:gauss-anti-conc-2}
		\PP\left(\max_{1\le s \le L}
		(\ba_1\otimes\ba_1)^{\top}\bG_s
		\le \left(\sqrt{2\log L}-\dfrac{\log\log L}{4\sqrt{\log L}}
		-\sqrt{\log 8}\right)
		\right) \le \dfrac{1}{4}.
	\end{equation}
	Note that by orthogonality of $\ba_k$, we have
	$$
	(\ba_k^{\top}\otimes\ba_k)^{\top}\bG_s
	\text{ and }
	(\II_{d^2}-(\ba_k\otimes\ba_k)(\ba_k\otimes\ba_k)^{\top})(\bG_s) 
	\text{ are independent}.
	$$
	Let 
	$$
	s_*=\argmax\limits_{1\le s\le L}
	\kappa_4(S_1)(\ba_1\otimes\ba_1)^{\top}\bG_s.
	$$
	Since the definition of $s_*$ depends only on $(\ba_1\otimes\ba_1)^{\top}\bG$, this implies that the distribution of $(\II_{d^2}-(\ba_1\otimes\ba_1)(\ba_1\otimes\ba_1)^{\top})(\bG)$ does not depend on $s_*$. Thus we obtain the eigengap
	\begin{align*}
		&~\kappa_4(S_1)(\ba_1\otimes\ba_1)^{\top}\bG_{s_*}
		-
		\max_{2\le k\le d}
		\kappa_4(S_k)(\ba_k\otimes\ba_k)^{\top}\bG_{s_*}\\
		\ge&~
		\max_{1\le k \le d}\kappa_4(S_k)
		\left(
		\sqrt{2\log L}-\dfrac{\log\log L}{4\sqrt{\log L}}
		-\sqrt{\log 8}
		-2\sqrt{\log(d/4)}
		\right)\\
		\ge&~
		\sqrt{\log d}
	\end{align*}
	with probability at least $\frac{1}{2}$, whenever $L\ge Cd^{2}$  and $n\ge Cd^2t^2$. Since $\bG_s$ are independent samples, we instead take $L=Cd^2(\log d)$. Then writing $L=L_1+\dots+L_t$, for $t=3(\log d)/(\log 2)$, it follows that $L_1,\dots,L_k\ge Cd$. We define $s_*={\rm argmax}~\ba_k^{\top}\bG\ba_k$, and $s_*^{(m)}={\rm argmax}_{1\le s\le L_m}~\langle \ba_k\otimes \ba_k,\btheta_s\rangle$ for $m=1,\dots,t$. We then have, by the independence of $\btheta_s$, that the above statement holds with probability $1-d^{-3}$. Referring back to \eqref{eq:init-dev1}, we then apply Davis-Kahan theorem to obtain that
	$$
	\sin\angle\left(
	\hat{\ba},\ba_1 
	\right)\le C\sqrt{\dfrac{d^2}{n}}
	\le \dfrac{1}{4}
	$$
	with probability at least $1-d^{-3}$, provided $n\ge Cd^2$. Here $\hat{\ba}$ is the top singular vector of $(\hat{\scrM}-\scrM_0)\times_{3,4}\hat{\bP}\bG_{s_*}$. This finishes the proof of the initialization. 
\end{proof}	

When accounting for the deflation step, we have an additional perturbation term in the gaussian matrix above. This can be bounded as follows:
\begin{align*}
	&~\norm*{
		(\sum_{k=1}^{j}
		\kappa_4(S_k)\ba_k^{\circ 4}
		-
		\sum_{k=1}^{j}
		\hat{\kappa}_4(S_k)\hat{\ba}_k^{\circ 4})\times_{3,4}\bG
	}\\
	\le&~
	\norm*{\sum_{k=1}^{j}
		\abs*{\kappa_4(S_k)(\ba_k\otimes\ba_k)^{\top}\bG
			-\hat{\kappa}_4(S_k)(\hat{\ba}_k\otimes\hat{\ba}_k)^{\top}\bG}
		\ba_k\ba_k^{\top}}
	+
	\norm*{
		\sum_{k=1}^{j}
		\hat{\kappa}_4(S_k)(\hat{\ba}_k\otimes\hat{\ba}_k)^{\top}\bG
		\left\{
		\hat{\ba}_k\hat{\ba}_k^{\top}
		-\ba_k\ba_k^{\top}
		\right\}
	}\\
	\le&~
	\max_{1\le k\le j}\abs*{\kappa_4(S_k)(\ba_k\otimes\ba_k)^{\top}\bG
		-\hat{\kappa}_4(S_k)(\hat{\ba}_k\otimes\hat{\ba}_k)^{\top}\bG}
	+
	\max_{1\le k\le j}
	\hat{\kappa}_4(S_k)(\hat{\ba}_k\otimes\hat{\ba}_k)^{\top}\bG
	\norm*{
		\sum_{k=1}^j
		\left\{
		\hat{\ba}_k\hat{\ba}_k^{\top}
		-\ba_k\ba_k^{\top}
		\right\}
	}
		\end{align*}
\begin{align*}
	\le&~
	\max_{1\le k\le j}
	\abs*{\kappa_4(S_k)-\hat{\kappa}_4(S_k)}
	\sqrt{C\log d}
	+
	\max_{1\le k\le j}\kappa_4(S_k)
	\max_{1\le k\le j}
	\left(
	\ba_k\otimes\ba_k
	-\hat{\ba}_k\otimes\hat{\ba}_k
	\right)^{\top}\bG\\
	&~
	+\norm*{
		\sum_{k=1}^j
		\left\{
		\hat{\ba}_k\hat{\ba}_k^{\top}
		-\ba_k\ba_k^{\top}
		\right\}
	}
	\times\sqrt{C\log d}\\
	\le&~C\sqrt{\dfrac{(d+d^2\mathbbm{1}(\epsi<4))t^2}{n}}
\end{align*}
with probability at least $1-\exp(-t)-n^{-\epsi/8}$. The last line follows using Theorem~\ref{th:comp-rate} and Lemma~\ref{lem:ica-op-norm} on the previously recovered terms $\hat{\ba}_j$. Since $n\ge Cd^2$, this finishes the proof.

\medskip

\begin{proof}[Proof of Theorem~\ref{th:ica-piter}]
	Let us define the noise tensor 
	\begin{equation}\label{eq:def-ica-err}
		\scrE:=\hat{\scrM}_4^{\rm sample}(\bX)-{\scrM}_4(\bX).
	\end{equation}
	Define 
	\begin{equation}\label{eq:def-eps12}
		\Delta:=\|\scrE\|;
		\Delta_1:=\max\limits_{j}\|\scrE\times_{q\neq 1,2,3}\ba_j\|;
		\,\,
		\Delta_2:=\max\limits_{j}\|\scrE\times_{q\neq 1,2}\ba_j\|;
		\text{ and }
		\,
		\Delta_3:=\|\scrE\times_{2,3,4}\ba_j\|.
	\end{equation}	
	By Lemmas \ref{lem:tens-conc}, \ref{lem:eps2-bd} and \ref{lem:eps1-bd}, we have that for any fixed $j\in [d]$, and any $\delta\in (0,1)$,
	\begin{equation}\label{eq:tens-conc-bds}
		\PP\left(
		\Delta_3
		\le
		\sqrt{\dfrac{Cd(\log (1/\delta))}{n}},\,\,
		\Delta_2\le \sqrt{\dfrac{Cd(\log d)}{n}},\,\,
		\Delta\le \sqrt{\dfrac{C(d\log \delta)^2}{n}}\right)
		\ge 1-\delta-d^{-3}-n^{-\epsi/8}.
	\end{equation}
	
	By the initialization procedure, and also since we choose the largest singular value at the end of each iteration, we have nontrivial initialization for the component satisfying
	$\kappa_4(S_j)= \max_{i\ge j}\kappa_4(S_i)$. We will use the notation 
	$$
	\kappa_i:=|\kappa_4(S_i)|\,\,\text{ for }i=1,\dots,d.
	$$
	Let
	$
	\eta_t:=\sin\angle\left(\hat{\ba}_{[t]},\ba_j\right)
	$.
	By the calculation preceding the statement of Theorem~\ref{th:ica-piter}, we have
	\begin{align*}
		\eta_t
		\le&~ 2|\kappa_4(S_j)|^{-1}\Delta_3+6|\kappa_4(S_j)|^{-1}\Delta_2\eta_{t-1}+6|\kappa_4(S_j)|^{-1}\Delta_1\eta_{t-1}^2+2|\kappa_4(S_j)|^{-1}\Delta\eta_{t-1}^3\\
		\le&~
		\dfrac{2}{|\kappa_4(S_j)|}\Delta_3
		+
		\eta_{t-1}
		\left(
		\dfrac{6}{|\kappa_4(S_j)|}
		\sqrt{\dfrac{C(d\log d)}{n}}
		+
		\dfrac{6\eta_{t-1}}{|\kappa_4(S_j)|}
		\sqrt{\dfrac{C(d\log \delta)^2}{n}}
		\right)\\
		\le&~
		\dfrac{2\Delta_3}{|\kappa_4(S_j)|}
		+\dfrac{\eta_{t-1}}{2},
	\end{align*}
	by \eqref{eq:def-eps12} and \eqref{eq:tens-conc-bds} in the last two steps along with the induction hypothesis, since $n\ge Cd^2(\log \delta)^2$, along with our initialization assumption $L_0\le 1/4$. Thus repeating the argument above, it can be shown that $\eta_{t-1}\le 1/4$, and after $T\ge C(\log d)$ steps, we have, under the event in \eqref{eq:tens-conc-bds} that 
	\begin{equation}\label{eq:piter-del3}
		\sin\angle\left(\hat{\ba}_{[T]},\ba_j\right)
		=\eta_T\le \dfrac{3\Delta_3}{|\kappa_4(S_j)|}.
	\end{equation}
	The proof now follows by the upper bound on $\Delta_3$ from Lemma~\ref{lem:sample-conc}.	
\end{proof}

\medskip

\begin{proof}[Proof of Theorem \ref{th:comp-rate}]
	We prove the statements by induction over $j$, the iteration counter of 
	Algorithm \ref{alg:alg-ica}. By the induction hypothesis and \eqref{eq:piter-del3}, we already have estimators such that
	$$
	\max_{1\le k\le j-1}
	\sin\angle\left(
	\hat{\ba}_{\pi(k)},\ba_k 
	\right)
	\le \dfrac{3\Delta_3}{\kappa_4(S_{j-1})}
	$$
	with high probability. Applying Theorem \ref{th:init}, we obtain that at the $j^{\text{th}}$ step, we have
	$$
	\sin\angle\left(
	\hat{\ba}^{(s)},
	\ba_j 
	\right)\le \min\left\{\dfrac{1}{C},\dfrac{\min_k|\kappa_4(S_k)|}{2\max_k|\kappa_4(S_k)|}\right\}
	$$
	for at least one $s\in [L]$, and some $\ba_j$ which has not been estimated previously. Now we apply the power iteration steps with $\hat{\ba}_{[0]}:=\hat{\ba}^{(s)}$. Theorem \ref{th:ica-piter} then implies that $\hat{\ba}:=\hat{\ba}_{[T]}$ satisfies
	$$
	\PP\left(
	\sin
	\angle\left(
	\hat{\ba},\ba_j
	\right)
	\le C\sqrt{\dfrac{d\log(1/\delta)}{n}}\right)
	\ge 1-\delta.
	$$
	It follows that this estimator $\hat{\ba}$ satisfies
	\begin{align*}
		&~|(\hat{\scrM}_4^{\rm sample}(\bX)-\scrM_0)\times_{1,2,3,4}\hat{\ba}|\\
		\ge&~
		|({\scrM}_4^{\rm sample}(\bX)-\scrM_0)\times_{1,2,3,4}\hat{\ba}|
		-\|\hat{\scrM}_4^{\rm sample}(\bX)-{\scrM}_4(\bX)\|\\
		\ge&~
		|\sum_{k=1}^d\kappa_4(S_k)\langle \hat{\ba},\ba_k\rangle^4|
		-C\sqrt{\dfrac{d^2(\log 1/\delta)}{n}}\\
		\ge&~
		\kappa_4(S_j)(1-\sin^2\angle\left(\hat{\ba},\ba_j\right))^4
		-|\max\kappa_4(S_k)|\sin\angle\left(\hat{\ba},\ba_j\right)
		-C\sqrt{\dfrac{d^2(\log 1/\delta)}{n}}\\
		\ge&~
		|\kappa_4(S_j)|-C\sqrt{\dfrac{d^2(\log 1/\delta)}{n}}.
	\end{align*}
	We have used Lemma \ref{lem:tens-conc} in the second inequality. Note that we consider the best initialization $s\in [L]$ by using $s_*:={\rm argmax}|(\hat{\scrT}-\scrM_0)\times_{1,2,3,4}\hat{\ba}_{[T]}^{(s)}|$. By the calculation above, it is immediate that
	\begin{equation}\label{eq:max-low-bd}
		|(\hat{\scrT}-\scrM_0)\times_{1,2,3,4}\hat{\ba}_{[T]}^{(s_*)}|
		\ge 	|\kappa_4(S_j)|-C\sqrt{\dfrac{d^2(\log 1/\delta)}{n}}.
	\end{equation}
	If $|\langle \hat{\ba}_{[T]}^{s_*},\ba_k\rangle|\le \dfrac{\min|\kappa_4(S_k)|}{4\max|\kappa_4(S_k)|}$ for all $k\in [d]$, we have that
	\begin{align*}
		&|(\hat{\scrM}_4^{\rm sample}(\bX)
		-\scrM_0)\times_{1,2,3,4}\hat{\ba}_{[T]}^{(s_*)}|\\
		\le&~ \|\hat{\scrM}_4^{\rm sample}(\bX)-{\scrM}_4(\bX)\|+
		|({\scrM}_4(\bX)-\scrM_0)\times_{1,2,3,4}\hat{\ba}_{[T]}^{(s_*)}|\\
		\le&~
		C\sqrt{\dfrac{d^2(\log 1/\delta)}{n}}
		+\sum_k\kappa_4(S_k)(\langle\hat{\ba}_{[T]}^{(s)},\ba_k\rangle)^4\\
		\le&~		C\sqrt{\dfrac{d^2(\log 1/\delta)}{n}}
		+\max\kappa_4(S_k)\cdot \max_{k\in [d]}(\langle\hat{\ba}_{[T]}^{(s)},\ba_k\rangle)^4\\
		\le&~
		C\sqrt{\dfrac{d^2(\log 1/\delta)}{n}}+\dfrac{\min |\kappa_4(S_k)|}{4}
		\le \min|\kappa_4(S_k)|/2,
	\end{align*}
	which contradicts \eqref{eq:max-low-bd} above. This implies that for $s=s_*$, the initialization condition from Theorem \ref{th:init} must be satisfied. We next apply the power iteration argument from Theorem \ref{th:ica-piter} to claim that
	$$
	\sin\angle\left(\hat{\ba}_{\pi(j)},\ba_j\right)
	\le \dfrac{3\Delta_3}{\kappa_4(S_j)}.
	$$
	This finishes the proof for $\ell_M(\hat{\bA},\bA)$ by an induction argument along with the upper bound on $\Delta_3$ from Lemma~\ref{lem:sample-conc}. Finally noting that $\ell_A(\hat{\bA},\bA)\le \ell_M(\hat{\bA},\bA)$ the proof is completed.
\end{proof}

\medskip

\begin{proof}[Proof of Theorem~\ref{th:ica-gen-rate}]
	Let us write the SVD $\bA=\bU\bD\bV^{\top}$. Then $\bSigma=\texttt{cov}(\bX)=\bU\bD^2\bU^{\top}$, and $\tilde{\bA}=\bSigma^{-1/2}\bA=\bU\bV^{\top}$ is an orthogonal mixing matrix. Thus $\tilde{\bX}=\bSigma^{-1/2}\bX$ are observations from an ICA model with orthonormal mixing matrix $\tilde{\bA}$, on which our results from the previous section will apply. To simplify notation we write $n_1=|\calS_1|$ and $n_2=|\calS_2|$.
	
	We have defined
	$$
	\hat{\bSigma}=\dfrac{1}{n_1}\sum_{i\in S_1}\bX_i\bX_i^{\top}.
	$$
	It follows by a truncated matrix Bernstein inequality that
	\begin{equation}\label{eq:var-conc}
		\PP\left(\norm*{
			\hat{\bSigma}-\bSigma 
		}\le C\sqrt{\dfrac{dt}{n_1}}
		\right)\le\exp(-t)+n_1^{-\gamma/8}.
	\end{equation}
	We now show that it is possible to derive versions of the concentration inequalities in Lemmas \ref{lem:tens-conc}-\ref{lem:sample-conc} as follows. For brevity, we outline the proof of only Lemma~\ref{lem:tens-conc} as the rest follow similarly. We define
	$$
	\tilde{\bX}_i=\hat{\bSigma}^{-1/2}\bX_i,\text{ for }i\in S_2.
	$$
	Note that $\bX_i$ are independent of $\hat{\bSigma}$, and hence, conditional on $\hat{\bSigma}$,
	$$
	\tilde{\bX}_i
	=
	\hat{\bSigma}^{-1/2}\bA\bS_i
	=
	\hat{\bSigma}^{-1/2}\bSigma^{1/2}\tilde{\bA}\bS_i 
	$$
	are independent samples for $i\in S_2$. Thus,
	\begin{align*}
		&\norm*{
			\dfrac{1}{n_2}\sum_{i\in S_2}\tilde{\bX}_i^{\circ 4}
			-\left(
			\sum_{k=1}^d\kappa_4(S_k)\ba_k^{\circ 4}
			+\scrM_0 
			\right)	
		}\\
		\le&~
		\norm*{\hat{\bSigma}^{-1/2}\bSigma}
		\norm*{
			\dfrac{1}{n_2}
			\sum_{i=1}^{n_2}\bS_i^{\circ 4}-\EE \bS^{\circ 4}
		}
		+\norm*{
			\EE
			\left((\hat{\bSigma}^{-1/2}\bSigma-\II_d)\bA\bS^{\circ 4}|\hat{\bSigma}
			\right)
		}\\
		\le&~
		C(t_1+t_2)\sqrt{\dfrac{d^2}{n_2}}+C\sqrt{\dfrac{dt_3}{n_1}}
	\end{align*}
	with probability at least $1-\exp(-t_1-t_3)-\dfrac{1}{Cn_1^{\gamma/8}(t_2\wedge t_3)^{2+\gamma/4}}$	using Lemma \ref{lem:tens-conc} for the first bound and equation \eqref{eq:var-conc} for the second bound. In an identical fashion, we can also prove the upper bounds on the required matricizations of $\hat{\scrM}_4(\tilde{\bX}_i)$ conditional on $\hat{\bSigma}$. Proceeding this way the conclusions from Lemmas \ref{lem:eps2-bd} and \ref{lem:eps1-bd} follow. Therefore, if we define
	$$
	\scrE=\dfrac{1}{n_2}\sum_{i\in S_2}
	\tilde{\bX}_i^{\circ 4}-\scrM_0-\sum_{k=1}^d\kappa_4(S_k)\tilde{\ba}_k^{\circ 4},
	$$
	we have that for
	$$
	\Delta_q:=\max\limits_{1\le j\le d}\|\scrE\times_{[4]/[4-q]}\ba_j\|
	$$
	for $q=1,2,3$, the event
	\begin{equation}\label{eq:tens-conc-bds-3}
		\calA\equiv
		\left(
		\|\scrE\|\le \sqrt{\dfrac{Cd^2(\log \delta)^2}{n_2}},\,\,
		\Delta_1\le \sqrt{\dfrac{Cd^2(\log\delta)^2}{n_2}},\,\,
		\Delta_2\le \sqrt{\dfrac{Cd(\log d)}{n_2}},\,\,
		\Delta_3\le \sqrt{\dfrac{Cd(\log (1/\delta))}{n_2}}
		\right)
	\end{equation}
	holds with probability at least $1-\delta-d^{-3}-n_2^{-\gamma/8}$. One can then follow the proof of Theorem \ref{th:comp-rate} to get the conclusion:
	\begin{equation}\label{eq:alg1-conc}
		\max_{1\le j\le d}
		\sin\angle\left(
		\hat{\ba}_{\pi(j)},\tilde{\ba}_{\pi(j)}\right)
		\le C\sqrt{\dfrac{d(\log (1/\delta))}{n_2}}
	\end{equation}
	with probability at least $1-\delta-d^{-3}-n_2^{-\gamma/8}$, for some permutation $\pi:[d]\to [d]$, where $\hat{\ba}_j$ are the outputs of Algorithm \ref{alg:alg-ica} when used on $\tilde{\bX}_i$ defined as above. Finally note that we redefine
	$\hat{\ba}_j:=\hat{\bSigma}^{1/2}\hat{\ba}_j$. It follows that for the permutation $\pi$ defined above,
	\begin{align*}
		&\max_{1\le j\le d}
		\norm*{
			{\rm sign}(\langle\hat{\ba}_{\pi(j)},\ba_j\rangle)\hat{\ba}_{\pi(j)}
			-\ba_j 
		}\\
		\le &~\max_{1\le j\le d}
		\norm*{
			{\rm sign}(\langle\hat{\ba}_{\pi(j)},\ba_j\rangle)
			\hat{\ba}_{\pi(j)}
			-\hat{\bSigma}^{1/2}\tilde{\ba}_j 
		}
		+\max_{1\le j\le d}
		\norm*{
			\left(\hat{\bSigma}^{1/2}-\bSigma^{1/2}\right)\tilde{\ba}_j}\\
		\le&~
		\|\hat{\bSigma}^{1/2}\|
		\cdot 
		\max_{1\le j\le d}
		\norm*{
			{\rm sign}(\langle\hat{\ba}_{\pi(j)},\tilde{\ba}_j\rangle)
			\hat{\ba}_{\pi(j)}-\tilde{\ba}_{\pi(j)}
		}
		+C\|\hat{\bSigma}-\bSigma\|\\
		\le &~C\sqrt{d(\log (1/\delta))\cdot
			\left(
			\dfrac{1}{n_1}+\dfrac{1}{n_2}
			\right)}
	\end{align*}
	using the bounds from \eqref{eq:var-conc} and \eqref{eq:alg1-conc}  above. This finishes the proof since $n_1=n_2=n/2$.
\end{proof}

\subsection{Proofs of Results in Section~\ref{sec:asy-dist}}

\medskip

\begin{proof}[Proof of Theorem \ref{th:ica-perp}]
	We assume without loss of generality that $\pi=Id$ and that $\langle\hat{\ba}_j,\,\ba_j\rangle>0$ for $1\le j\le d$. We define
	$$
	\scrE=\hat{\scrM}_4^{\rm sample}(\bX)-\scrM_4(\bX)
	$$
	and then use the asymptotic expansion of $\hat{\ba}_j$ from Lemma \ref{lem:asy}.
	
	By \eqref{eq:def-eps12}, Lemmas \ref{lem:tens-conc}, \ref{lem:eps2-bd} and \ref{lem:eps1-bd}, we have that the event
	\begin{equation}\label{eq:tens-conc-bds-21}
		\calA\equiv
		\left(
		\Delta_0\le \sqrt{\dfrac{Cd}{n(\log d)}},\,\,
		\Delta_3\le \sqrt{\dfrac{Cd(\log d)}{n}},\,\,
		\Delta_2\le \sqrt{\dfrac{Cd(\log d)}{n}},\,\,
		\|\scrE\|\le \sqrt{\dfrac{Cd^2(\log d)^2}{n}}\right)
	\end{equation}
	holds with probability at least $1-d^{-3}-n^{-\epsi/8}$. Next note that we have
	$$
	\bE_j
	:=
	\bP_{j,\perp}^{\top}\scrE\times_{2,3,4}\ba_j
	=\dfrac{1}{n}
	\sum_{k=1}^nS_{jk}^3
	(S_{1k}^3\dots S_{(j-1)k}^3\,0\,S_{(j+1)k}^3\,
	\dots S_{dk}^3)^{\top}.
	$$
	Then by a truncated version of the vector Bernstein inequality \citep[see][]{minsker2017some}, we have
	$$
	\|\bE_j\|\le C\sqrt{\dfrac{d\log(d)}{n}}
	$$
	with probability at least $1-d^{-3}-n^{-\epsi/8}$.
%
%
%
	Given a unit vector $\bu$, we have
	\begin{equation}\label{eq:linform}
	\bu^{\top}(\hat{\ba}_j-\ba_j)
	=\bu^{\top}\bP_{\ba_j,\perp}\hat{\ba}_j
	+\langle\bu,\ba_j\rangle(\langle\hat{\ba}_j,\ba_j\rangle-1)
	\end{equation}
	First, we use part i) of Lemma~\ref{lem:asy} to write that
	$$
	|\langle\bu,\ba_j\rangle(\langle\hat{\ba}_j,\ba_j\rangle-1)|
	\le \dfrac{C\langle\bu,\ba_j\rangle d\log(d)}{n}.
	$$
	Under the event $\calA$ defined in \eqref{eq:tens-conc-bds-21}, it follows from part ii) of Lemma \ref{lem:asy}, that
	\begin{equation*}
		\sup_{\bu\perp\ba_j,\|\bu\|=1}
		\abs*{
			\bu^{\top}
			\left(
			\hat{\ba}_j
			-\dfrac{1}{\kappa_j}
			\cdot \scrE\times_{2,3,4}\ba_j 
			\right) 
		}
		\le \dfrac{Cd(\log d)^2}{n}
	\end{equation*}
	for all $\bu$. By the Cauchy-Schwarz inequality we thus have,  for any $\bu\in \RR^d$ that
	$$
		\abs*{
			\bu^{\top}\bP_{\ba_j,\perp}
			\left(
			\hat{\ba}_j
			-\dfrac{1}{\kappa_j}
			\cdot \scrE\times_{2,3,4}\ba_j 
			\right) 
		}
		\le \dfrac{Cd(\log d)^2\|\bP_{\ba_j,\perp}\bu\|}{n}.
	$$
	Combining this with the bound along $\langle\bu,\ba_j\rangle$, we have from~\eqref{eq:linform} that
	\begin{equation}\label{eq:clt-rem-0}
		\sup_{\bu\in \RR^d}
		\abs*{
			\bu^{\top}
			(\hat{\ba}_j-\ba_j)
			-\dfrac{1}{\kappa_j}
			\bu^{\top}\bP_{\ba_j,\perp}
			\scrE\times_{2,3,4}\ba_j 
		}
		\le \dfrac{Cd(\log d)^2\|\bP_{\ba_j,\perp}\bu\|}{n}
		+\dfrac{C\langle\bu,\ba_j\rangle d(\log d)}{n}.
	\end{equation}	
	It remains to use the randomness of $\scrE$ and derive the asymptotic distribution. Note that
	$$
	\scrE=\dfrac{1}{n}\sum_{k=1}^n\bX_k^{\circ 4}-\EE \bX_1^{\circ 4}.
	$$
	Moreover $\bX_k=\bA\bS_k$ for $k=1,\dots,n$. Thus
	\begin{equation}\label{eq:clt-exp-0}
		\dfrac{1}{\kappa_j}\cdot
		\scrE\times_1
		\left(\bP_{\ba_j,\perp}\bu\right)\times_{2,3,4}\ba_j
		=~	\dfrac{1}{\kappa_j}\cdot
		\left(\bP_{\ba_j,\perp}\bu\right)^{\top}\bA 
		\left(
		\dfrac{1}{n}\sum_{k=1}^n
		S_{kj}^3
		\bS_k 
		-\EE S_{1j}^3\bS_1 
		\right).
	\end{equation}
	Since $\bS_k$ have independent components $S_{kj}$ for $k=1,\dots,n$ and $j=1,\dots,d$, we obtain
	$$
	{\rm Var}(S_{1j}^3\bS_1)
	={\rm Var}(S_{1j}^3)\cdot\II_d
	+({\rm Var}(S_{1j}^4)-{\rm Var}(S_{1j}^3))\be_j\be_j^{\top},
	$$
	Therefore
	\begin{align*}
		&{\rm Var}
		\left(
		\dfrac{1}{\kappa_j}\cdot
		\left(\bP_{\ba_j,\perp}\bu\right)^{\top}\bA 
		S_{1j}^3\bS_1 
		\right)=
		\dfrac{{\rm Var}(S_{1j}^3)}{\kappa_j^2}
		\cdot \bu^{\top}\bP_{\ba_j,\perp}\bu.
	\end{align*}
 	By our assumptions, 
	$
	\EE\left(\abs*{\tfrac{1}{\kappa_j}S_{1j}^3}^4\right)
	\le C$.	Similarly it follows by Lemma \ref{lem:lin-comb-mmt} that\\
	$
	\sup_{\bu\in \SS^{d-1}}
	\EE\abs*{
		\langle\bS_1,\bu\rangle
	}^{12}
	\le C
	$. Then using Holder's inequality, we have that
	\begin{align*}
		\EE\left(\abs*{\dfrac{1}{\kappa_j}S_{1j}^3}^3
		|\langle \bS_1,\bP_{\ba_j,\perp}\bu\rangle|^3\right)
		\le&~ 
		\sup_{\bu\in \SS^{d-1}}
		\left\{
		\EE\left(\abs*{\sum_j\dfrac{1}{\kappa_j}S_{1j}^3}^4\right)
		\right\}^{3/4}
		\left(\EE|\langle \bS_1,\bP_{\ba_j,\perp}\bu\rangle|^{12}\right)^{1/4}\\
		\le&~ C\|\bP_{\ba_j,\perp}\bu\|^3
	\end{align*}
	for any $\bu$. Since $\bS_{k}$ are i.i.d. samples, using the Berry-Esseen theorem, we have that
	\begin{align*}\label{eq:ber-ess-0}
		\sup_{x\in \RR}
		\sup_{\bu\in \SS^{d-1}}
		&~
		\abs*{\PP\left(
			\dfrac{\sqrt{n}}{\sigma_{\bu}}	
			\dfrac{1}{\kappa_j}
			\scrE\times_1
			\left(\bP_{\ba_j,\perp}\bu\right)\times_{2,3,4}\ba_j
			\le x
			\right)
			-\Phi(x)
		}\\
		\le&~ \dfrac{6}{\sqrt{n}}
		\sup_{\bu\in \SS^{d-1}}
		\EE\left(\abs*{\dfrac{1}{\kappa_j}S_{1j}^3}^3
		\dfrac{
		\abs*{
			\langle\bS_1,\bP_{\ba_j,\perp}\bu\rangle
		}^3}{\sigma_{\bu}^3}\right)\\
		\le&~ \dfrac{C}{\sqrt{n}}
		\sup_{\bu\in \SS^{d-1}}
		\dfrac{
			\|\bP_{\ba_j,\perp}\bu\|^3}{\sigma_{\bu}^3}
		\le \dfrac{C}{\sqrt{n}}.\numberthis
	\end{align*}
	Note that under the event $\calA$ defined in \eqref{eq:tens-conc-bds-21}, we have using \eqref{eq:clt-rem-0} that
	\begin{align*}
		\PP\left(\dfrac{\sqrt{n}}{\sigma_{\bu}}
		\bu^{\top}(\hat{\ba}_j-\ba_j)
		\le x\right) 
		\in 
		\bigg(
		&~
		\PP\bigg(	
		\dfrac{\sqrt{n}}{\sigma_{\bu}}
		\dfrac{1}{\kappa_j}\cdot
		\scrE\times_1
		\left(\bP_{\ba_j,\perp}\bu\right)\times_{2,3,4}\ba_j 
		\le x
		-\dfrac{\sqrt{n}\xi}{\sigma_{\bu}}\bigg)
		,\\
		&~\PP\bigg(
		\dfrac{\sqrt{n}}{\sigma_{\bu}}
		\dfrac{1}{\kappa_j}\cdot
		\scrE\times_1
		\left(\bP_{\ba_j,\perp}\bu\right)\times_{2,3,4}\ba_j 
		\le x
		+\dfrac{\sqrt{n}\xi}{\sigma_{\bu}}\bigg)
		\bigg).
	\end{align*}
	for some $\xi\in \left(0,\dfrac{Cd(\log d)^2}{n}\right)$. Note that
	$$
	\sup_{x\in \RR}
	\abs*{\Phi\left(x+\dfrac{\sqrt{n}\xi}{\sigma_{\bu}}\right)-\Phi(x)}
	\le \dfrac{1}{\sqrt{2\pi}}\cdot \dfrac{\sqrt{n}\xi}{\sigma_{\bu}}
	\le \dfrac{Cd(\log d)^2}{\sqrt{n}\sigma_{\bu}},
	$$
	and therefore, using the bound above, when $\sigma_{\bu}\ge c$ for some constant $c>0$, we have 
	\begin{align*}
		&~\sup_{x\in \RR}
		\sup_{\bu\in \SS^{d-1}:\sigma_{\bu}>c}
		\abs*{
			\PP\left(\dfrac{\sqrt{n}}{\sigma_{\bu}}
			\bu^{\top}(\hat{\ba}_j-\ba_j)
			\le x\right) 
			-\Phi(x)
		}\\
		\le&~ \dfrac{Cd(\log d)^2}{\sqrt{n}}\cdot\PP(\calA)+\PP(\calA^c)
		\le  \dfrac{Cd(\log d)^2}{\sqrt{n}}
		+\dfrac{C(\log d)^{3/2}}{\sqrt{d}},
	\end{align*}
	by our assumption that $\EE S_k^{12}\le C$, along with $n\ge Cd^2(\log d)^3$ in the last step.	The proof is now complete since by assumption we consider $\bu$ such that there is a constant $c>0$ such that  $\sigma_{\bu}>c$ for sufficiently large $d$.
\end{proof}

\medskip

\begin{proof}[Proof of Corollary \ref{cor:aij-dist}]
	The claim follows by considering $\bu=\bP_{\ba_i,\perp}\be_j$ in Theorem \ref{th:ica-perp} and then applying part i) of Corollary \ref{cor:ica-par}. We omit the details for brevity.
\end{proof}

\medskip

\begin{proof}[Proof of Corollary \ref{cor:ica-par}]
	We assume without loss of generality that $\pi=Id$ and that $\langle\hat{\ba}_j,\,\ba_j\rangle >0$ for $1\le j\le d$. We will use the asymptotic expansion of $\hat{\ba}_j$ from Lemma \ref{lem:asy}.
	
	We first derive a simpler expression for $\bA_{j,\perp}^{\top}\scrE\times_{2,3,4}\ba_j$. Let $\bS'\in \RR^{d-1}$ are the vectors obtained by removing the $j^{\text{th}}$ element of $\bS$. The vectors $\bS_k'$ are derived from $\bS_k$ similarly, for $k=1,\dots,n$. By the independence of $S_l$ for $1\le l\le d$, we have 
	$$
	\EE S_{1j}^3\bS_{1}'=\EE S_{1j}^3\EE \bS_1'=\mathbf{0}.
	$$
	Then by the definition of $\scrE$,
	\begin{align*}
		\bY_j:=
		\bA_{j,\perp}^{\top}\scrE\times_{2,3,4}\ba_j
		=&
		\bA_{j,\perp}^{\top}
		\bA 
		\left( 
		\dfrac{1}{n}
		\sum_{k=1}^n
		S_{kj}^3\bS_k 
		-\EE S_{1j}^3\bS_1 
		\right) 
		=
		\dfrac{1}{n}
		\sum_{k=1}^n
		S_{kj}^3\bS_k' 
		-\EE S_{1j}^3\bS_1'
		=\dfrac{1}{n}
		\sum_{k=1}^n
		S_{kj}^3\bS_k'.
	\end{align*}
	It can be checked that
	$$
	\bSigma:={\rm Var}(S_{1j}^3\bS'_1)
	=\EE S_{1j}^6
	\II_{d-1}.
	$$
	\paragraph{Case i: Concentration bound}
	In general, we can follow the proof of Lemma \ref{lem:eps1-bd}, using the vector Bernstein inequality from \cite{minsker2017some}, to get that
	\begin{align*}
		\|\bY_j\|\le C\sqrt{\dfrac{d(\log (1/\delta))}{n}}
	\end{align*}
	with probability at least $1-\delta -d^{-3}-n^{-\epsi/8}$. 
	
	\paragraph{Case ii: $n\ge d^3(\log d)^2$:}
	
	We now derive the asymptotic distribution of $1-\langle\ba_j,\hat{\ba}_j\rangle^2$, in the case where $n\ge Cd^3(\log d)^2$. Using the Berry-Esseen theorem for hyper-rectangles, for a set of independent random variables $Z_k\sim N(0,1)$, for $k=1,\dots,d-1$, we apply part 2 of Proposition 2 from \cite{chernozhukov2017central} to obtain
	$$
	\sup_{x_1,\dots,x_{d-1}>0}
	\abs*{
		\PP\left(
		\dfrac{ 
			\sqrt{n}|Y_{jk}|}{
			\EE S_{1j}^6
		}
		\le x_k 
		\text{ for all }
		k\neq j
		\right)
		-\PP(|Z_k|\le x_k
		\text{ for all }
		k\neq j
		)
	}
	\le
	\max_{k\neq j}
	\dfrac{Cd^{\frac{1}{4}}\EE S_{1j}^{12}\EE S_{1k}^4}{\sqrt{n}}.
	$$ 
	It follows that
	\begin{align*}
		&	\sup_{y\ge 0}
		\abs*{
			\PP\left(
			\dfrac{n\bY_j^{\top}\bY_j}{(\EE S_{1j}^6)^2}
			\le y\right)
			-\PP(\chi^2_d\le y)
		}\\
		=&~
		\sup_{y\ge 0}
		\abs*{
			\int_0^y
			\int_0^{y-x_1}
			\dots 
			\int_0^{y-x_1-\dots-x_{d-2}}
			\left[
			d\mu_n(x_1,\dots,x_n)
			-
			f_{Z^2}(x_1)
			\dots 
			f_{Z^2}(x_{d-1})
			dx_1\dots d_{x_{d-1}}
			\right]
		}\\
		\le&~\max_{k\neq j}
		\dfrac{Cd^{\frac{1}{4}}\EE S_{1j}^{12}\EE S_{1k}^4}{\sqrt{n}}.
	\end{align*}
	In the above we write $d\mu_n$ to denote the law of $(nY_{j1}^2,\dots,nY_{j(d-1)}^2)$. The last line uses the difference in hyper-rectangle probabilities derived earlier. Plugging in the expression for $\bY_j$ into part i) of Lemma \ref{lem:asy} finishes the proof in the case $n\ge Cd^3(\log d)^2$.	
\end{proof}

\medskip

\begin{proof}[Proof of Theorem \ref{th:joint}]
	The conclusion follows by retracing the steps of Theorem~\ref{th:ica-perp}, by using the expression from Lemma~\ref{lem:asy} for each linear form, and then using multivariate Berry-Esseen theorem for hyperrectangles, i.e., from part 2 of Proposition 2 from \cite{chernozhukov2017central}. We derive the covariance matrix since it is the only difference.
	
	Using the expression from the second part of Lemma~\ref{lem:asy}, it is enough to compute the asymptotic covariance 
	$$
	\Cov\left(
	\sqrt{n}\bu_i^{\top}\scrE\times_{2,3,4}\ba_i/\kappa_4(S_i),
	\sqrt{n}\bu_i^{\top}\scrE\times_{2,3,4}\ba_j/\kappa_4(S_j)
	\right).
	$$
	Since 
	$$
	\scrE\times_{2,3,4}\ba_i
	=\dfrac{1}{n}\sum_{l=1}^n
	S_{li}^3\bX_l
	-\EE(S_{l}^3\bX)
	=\bA 
	\left(
	\dfrac{1}{n}\sum_{l=1}^n
	S_{li}^3\bS_l
	-\EE(S_{l}^3\bS)
	\right).
	$$
	Since $\bS_l$ are independent samples for $l=1,\dots,n$, it is enough to compute the asymptotic covariance (when $i\neq j$) which are given by
	\begin{align*}
		&~\dfrac{1}{\kappa_4(S_i)\kappa_4(S_j)}
		\Cov(S_i^3\bu_i^{\top}\bA\bS,S_j^3\bu_j^{\top}\bA\bS)\\
		=&~
		\dfrac{1}{\kappa_4(S_i)\kappa_4(S_j)}
		\bu_i^{\top}\bA
		\Cov(S_i^3\bS,S_j^3\bS)
		\bA\bu_j\\
		=&~
		\dfrac{1}{\kappa_4(S_i)\kappa_4(S_j)}
		\bu_i^{\top}\bA
		(\EE S_i^4\EE S_j^4\be_j\be_i^{\top})
		\bA\bu_j\\
		=&~
		\dfrac{1}{\kappa_4(S_i)\kappa_4(S_j)}
		\bu_i^{\top}\ba_j\bu_j^{\top}\ba_i\EE S_i^4\EE S_j^4.
	\end{align*}
	The variances remain exactly same as Theorem~\ref{th:ica-perp}, and this finishes the proof.
\end{proof}

\medskip

\begin{proof}[Proof of Theorem \ref{th:ica-clt}]
	We define the matrix $\hat{\bA}=\left[\hat{\ba}_1\,\hat{\ba}_2\,\dots\,\hat{\ba}_d\right]$. Then for any $\bu\in \SS^{d-1}$,
	\begin{align*}
		\bu^{\top}\left(\hat{\bA}-\bA\right)\bv 
		=&~\sum_{j}v_j\bu^{\top}(\hat{\ba}_j-\ba_j)
		=\sum_{j}v_j\langle\bu,\ba_j\rangle
		\left(\langle\hat{\ba}_j,\ba_j\rangle-1\right)
		+\sum_jv_j\bu^{\top}\bP_{\ba_j,\perp}\hat{\ba}_j.
	\end{align*}
	By part i) of Lemma \ref{lem:asy}, we have under the event $\calA$ that
	\begin{align*}
		\sup_{\bu,\bv\in \SS^{d-1}}\abs*{\sum_{j}v_j\langle\bu,\ba_j\rangle \left(\langle\hat{\ba}_j,\ba_j\rangle-1\right)}
		\le&~ \max_j\left(1-\langle\hat{\ba}_{j},\ba_j\rangle\right)
		\le \max_j\left(1-\langle\hat{\ba}_{\pi(j)},\ba_j\rangle\right)\\
		\le&~ \dfrac{\eps_1^2}{\kappa_j^2}+
		\left(\dfrac{Cd(\log d)}{n}\right)^{3/2}
		\le \dfrac{Cd(\log d)}{n}.
	\end{align*}
	
	Similarly, by part ii) of Lemma \ref{lem:asy}, under the event $\calA$ defined in \eqref{eq:tens-conc-bds-21} we have that
	\begin{align*}
		\sup_{\bu,\bv\in \SS^{d-1}}
		\abs*{
			\sum_{j}v_j	
			\bu^{\top}\bP_{\ba_j,\perp}
			\left(
			\hat{\ba}_j-\dfrac{1}{\kappa_j}\cdot\scrE\times_{2,3,4}\ba_j 
			\right)
		}\le \dfrac{Cd^{3/2}(\log d)}{n}.
	\end{align*}
	This step uses the Cauchy-Schwarz inequality on the remainder and may be sub-optimal. By the last two steps, we therefore have
	\begin{equation}\label{eq:clt-rem}
		\sup_{\bu,\bv\in \SS^{d-1}}
		\abs*{
			\bu^{\top}(\hat{\bA}-\bA)\bv 
			-\sum_{j}	
			\dfrac{v_j}{\kappa_j}\cdot
			\scrE\times_1
			\left(\bP_{\ba_j,\perp}\bu\right)\times_{2,3,4}\ba_j 
		}
		\le \dfrac{Cd^{3/2}(\log d)}{n}.	
	\end{equation}
	It remains to use the randomness of $\scrE$ and derive the asymptotic distribution. We remind the reader that
	$$
	\scrE=\dfrac{1}{n}\sum_{k=1}^n\bX_k^{\circ 4}-\EE \bX_1^{\circ 4}.
	$$
	Moreover $\bX_k=\bA\bS_k$ for $k=1,\dots,n$. Thus
	\begin{equation}\label{eq:clt-exp}
		\sum_{j}	
		\dfrac{v_j}{\kappa_j}\cdot
		\scrE\times_1
		\left(\bP_{\ba_j,\perp}\bu\right)\times_{2,3,4}\ba_j
		=~	\sum_{j}	
		\dfrac{v_j}{\kappa_j}\cdot
		\left(\bP_{\ba_j,\perp}\bu\right)^{\top}\bA 
		\left(
		\dfrac{1}{n}\sum_{k=1}^n
		S_{kj}^3
		\bS_k 
		-\EE S_{1j}^3\bS_1 
		\right).
	\end{equation}
	Since $\bS_k$ have independent components $S_{kj}$ for $k=1,\dots,n$ and $j=1,\dots,d$, we obtain
	$$
	{\rm Var}(S_{1j}^3\bS_1)
	={\rm Var}(S_{1j}^3)\cdot\II_d
	+({\rm Var}(S_{1j}^4)-{\rm Var}(S_{1j}^3))\be_j\be_j^{\top},
	$$
	and for $j_1\neq j_2$,
	$$
	{\rm Cov}(S_{1j_1}^3\bS_1,S_{1j_2}^3\bS_1)
	=\EE(S_{1j_1}^4)\EE(S_{1j_2}^4)\be_{j_2}\be_{j_1}^{\top}.
	$$
	Therefore
	\begin{align*}
		&{\rm Var}
		\left(
		\sum_{j}	
		\dfrac{v_j}{\kappa_j}\cdot
		\left(\bP_{\ba_j,\perp}\bu\right)^{\top}\bA 
		S_{1j}^3\bS_1 
		\right)\\
		=&~\sum_j \dfrac{v_j^2{\rm Var}(S_{1j}^3)}{\kappa_j^2}
		\cdot \bu^{\top}\bP_{\ba_j,\perp}\bu
		+\sum_{j_1\neq j_2}\dfrac{v_{j_1}v_{j_2}
			\EE(S_{1j_1}^4)\EE(S_{1j_2}^4)}{\kappa_{j_1}\kappa_{j_2}}
		\cdot \bu^{\top}\bP_{\ba_{j_1},\perp}\be_{j_2}\be_{j_1}^{\top}
		\bP_{\ba_{j_2},\perp}\bu\\
		=&~\sum_j \dfrac{v_j^2{\rm Var}(S_{1j}^3)}{\kappa_j^2}
		\cdot \bu^{\top}\bP_{\ba_j,\perp}\bu\\
		=&~\bu^{\top}\bA
		\bD_{\bv}
		\bA^{\top}\bu
	\end{align*}
	where $\bD_{\bv}:={\rm diag}
	\left(
	\displaystyle\sum_{j\neq 1}\dfrac{v_j^2{\rm Var}(S_{1j}^3)}{\kappa_j^2}\,\,
	\sum_{j\neq 2}\dfrac{v_j^2{\rm Var}(S_{1j}^3)}{\kappa_j^2}\,\,
	\dots\,\,
	\sum_{j\neq d}\dfrac{v_j^2{\rm Var}(S_{1j}^3)}{\kappa_j^2}
	\right)$ is a diagonal matrix.
	
	\medskip
	
	Since $\bS_{k}$ are i.i.d. samples, we have from \eqref{eq:clt-exp} that for fixed $\bu,\bv$ s.t. 
	\begin{equation}\label{eq:def-sigmauv}
		\sigma_{\bu,\bv}^2:=\bu^{\top}\bA\bD_{\bv}\bA^{\top}\bu\ge c
	\end{equation}
	we have, using the univariate Berry-Esseen theorem, that
	\begin{equation}\label{eq:ber-ess}
		\sup_{x\in \RR}
		\abs*{\PP\left(
			\dfrac{\sqrt{n}}{\sigma_{\bu,\bv}}
			\sum_{j}	
			\dfrac{v_j}{\kappa_j}
			\scrE\times_1
			\left(\bP_{\ba_j,\perp}\bu\right)\times_{2,3,4}\ba_j
			\le x
			\right)
			-\Phi(x)
		}
		\le \dfrac{6}{\sqrt{n}}
		\EE\left(\abs*{\sum_j\dfrac{v_j}{\kappa_j}S_{1j}^3}^3
		|\langle\bS_1,\bu\rangle|^3\right).
	\end{equation}
	Since we assume that $\EE |S_{1j}|^{12}\le C$, it follows by the independence of $S_{1j}$ that
	$$
	\sup_{\bv}\EE\left(\abs*{\sum_j\dfrac{v_j}{\kappa_j}S_{1j}^3}^4\right)
	\le C.
	$$
	Similarly it follows that for any $\bu\in \SS^{d-1}$, that
	$
	\EE|\langle\bS_1,\bu\rangle|^{12}\le C
	$
	using Lemma \ref{lem:lin-comb-mmt}.	Then by Holder's inequality, we have that
	\begin{align*}
		\sup_{\bv}
		\EE\left(\abs*{\sum_j\dfrac{v_j}{\kappa_j}S_{1j}^3}^3\|\bS_1\|^3\right)
		\le&~ 
		\sup_{\bv}
		\left\{
		\EE\left(\abs*{\sum_j\dfrac{v_j}{\kappa_j}S_{1j}^3}^4\right)
		\right\}^{3/4}
		\left(\EE|\langle\bS_1,\bu\rangle|^{12}\right)^{1/4}
		\le C.
	\end{align*}
	It therefore follows from \eqref{eq:ber-ess} that 
	\begin{equation}\label{eq:ber-ess-2}
		\sup_{x\in \RR}
		\sup_{\bu,\bv\in \SS^{d-1}}
		\abs*{\PP\left(
			\dfrac{\sqrt{n}}{\sigma_{\bu,\bv}}
			\sum_{j}	
			\dfrac{v_j}{\kappa_j}
			\scrE\times_1
			\left(\bP_{\ba_j,\perp}\bu\right)\times_{2,3,4}\ba_j
			\le x
			\right)
			-\Phi(x)
		}
		\le \dfrac{C}{\sqrt{n}}.
	\end{equation}
	Note that under the event $\calA$ defined in \eqref{eq:tens-conc-bds-21}, we have using \eqref{eq:clt-rem} that
	\begin{align*}
		\PP\left(\dfrac{\sqrt{n}}{\sigma_{\bu,\bv}}
		\bu^{\top}(\hat{\bA}-\bA)\bv
		\le x\right) 
		\in 
		\bigg(
		&~
		\PP\bigg(\sum_{j}	
		\dfrac{\sqrt{n}}{\sigma_{\bu,\bv}}
		\dfrac{v_j}{\kappa_j}\cdot
		\scrE\times_1
		\left(\bP_{\ba_j,\perp}\bu\right)\times_{2,3,4}\ba_j 
		\le x
		-\dfrac{\sqrt{n}\xi}{\sigma_{\bu,\bv}}\bigg)
		,\\
		&~\PP\bigg(\sum_{j}	
		\dfrac{\sqrt{n}}{\sigma_{\bu,\bv}}
		\dfrac{v_j}{\kappa_j}\cdot
		\scrE\times_1
		\left(\bP_{\ba_j,\perp}\bu\right)\times_{2,3,4}\ba_j 
		\le x
		+\dfrac{\sqrt{n}\xi}{\sigma_{\bu,\bv}}\bigg)
		\bigg).
	\end{align*}
	for some $\xi\in \left(0,\dfrac{Cd^{3/2}(\log d)^2}{n}\right)$. Note that
	$$
	\sup_{x\in \RR}
	\abs*{\Phi\left(x+\dfrac{\sqrt{n}\xi}{\sigma_{\bu,\bv}}\right)-\Phi(x)}
	\le \dfrac{1}{\sqrt{2\pi}}\cdot \dfrac{\sqrt{n}\xi}{\sigma_{\bu,\bv}}
	\le \dfrac{Cd^{3/2}(\log d)^2}{\sqrt{n}\sigma_{\bu,\bv}},
	$$
	and therefore, using the bound from \eqref{eq:ber-ess-2}, we get that, when $\sigma_{\bu,\bv}\ge c$ for some constant $c>0$, we have 
	\begin{align*}
		\sup_{x\in \RR}
		\sup_{\bu,\bv\in \SS^{d-1}:\sigma_{\bu,\bv}>c}
		\abs*{
			\PP\left(\dfrac{\sqrt{n}}{\sigma_{\bu,\bv}}
			\bu^{\top}(\hat{\bA}-\bA)\bv
			\le x\right) 
			-\Phi(x)
		}
		\le&~ \dfrac{Cd^{3/2}(\log d)}{\sqrt{n}}\cdot\PP(\calA)+\PP(\calA^c)
		\\
		\le&~  \dfrac{Cd^{3/2}(\log d)}{\sqrt{n}}
		+C\dfrac{(\log d)^{3/2}}{\sqrt{d}}
	\end{align*}
	by our assumption that $\EE |S_{kj}|^{12}\le C$. The proof is now complete since by assumption we consider $\bu,\bv$ such that there is a constant $c>0$ such that  $\sigma_{\bu,\bv}>c$ for sufficiently large $d$.
\end{proof}

\subsection{Proofs of Lemmas}
\label{sec:prooflemma}
\begin{proof}[Proof of Lemma~\ref{lem:comp-low}]
	We will use the low-degree polynomial method with a basis chosen as follows. $\bX_1,\dots,\bX_n$ are $n$ independent copies of $\bX=\bA\bS$, an observation from the ICA model. For $1\le k\le d$ we take the source random variables $S_k$ to be independent copies of
	$$
	S=RE/\sqrt{2}
	$$
	where $R$ and $E$ are Rademacher and $\chi^2_1$ random variables respectively. Fixing two matrices $\bP,\bQ\in\calO(d)$, we compute the low-degree projection of the likelihood ratio as follows. Writing $\tilde{\bX}=(\bX_1,\dots,\bX_n)$, we have
	\begin{equation}\label{eq:def-low-Lamb}
		\Lambda_{\le D}^2= \sum_{t=1}^N\langle L,\psi_t\rangle^2
		=\sum_{t=1}^N
		\left(\EE_{\mu^{\otimes n}}\left(L(\tilde{\bX})\psi_t(\tilde{\bX})\right)
		\right)^2
		=\sum_{t=1}^N
		\left(\EE_{\nu^{\otimes n}}\left(\psi_t(\tilde{\bX})\right)
		\right)^2.
	\end{equation}
	Let 
	$
	\bZ=(\bZ_1,\dots,\bZ_d)\in\RR^{d\times d},
	$ 
	be a matrix of i.i.d. standard Gaussian elements $Z_{ij}\stackrel{iid}{\sim}N(0,1)$. We consider the polynomial basis
	$$
	h_m(\bZ_j^{\top}\bX/\sqrt{d})
	\quad
	\text{ for }
	m\in[D],\,\,
	j\in [d],\,\,
	\text{and}\,\,
	i\in [n].
	$$
	Here $h_m$ is the $m^{\text{th}}$ probabilist's Hermite polynomial. We define
	$$
	\psi_{\alpha_1,\dots,\alpha_d}^{(j_1,\dots,j_d)}(\tilde{\bX})
	=
	\prod_{i=1}^n
	\prod_{j=1}^d
	\left({h_{\alpha_{i,j}}(\bZ_{j}^{\top}\bX_i/\sqrt{d})}
	-\EE_{\mu}
	{h_{\alpha_{i,j}}(\bZ_{j}^{\top}\bX_i/\sqrt{d})}
	\right)
	$$
	for $1\le \alpha_{i,j}\le D$
	and 
	$1\le j_1,\dots,j_k\le d$. 
	Note that we have orthonormal matrices $\bP$ and $\bQ$. It is immediate that $\|\bP\bz_j\|_2=\|\bQ\bz_j\|_2$ for all $1\le j\le d$. Moreover since the odd moments of $S$ are zero, it can be checked that
	under both the measures $\mu$ and $\nu$ induced by $\bP$ and $\bQ$ respectively, we have
	$$
	\EE_{\eta}h_{2}(\bZ^{\top}\bX/\sqrt{d})=\EE_{\eta}\psi_{\alpha_1,\dots,\alpha_{n,d}}^{(j_1,\dots,j_d)}(\tilde{\bX})=0\,\,\text{ for }
	\sum_{l=1}^k\alpha_l=2r+1;\,\,
	\eta\in \{\mu,\nu\}
	$$
	and 
	$j,j_1,\dots,j_k\in [d],k\in \NN$.
	Thus all vectors $\utilde{\alpha}=(\alpha_1,\dots,\alpha_{n,d})$ such that  $\alpha_{i,j}=2$ or $\alpha_{i,j}=2k+1$, contributes zero to the value $\Lambda$. We now bound the even powers contributing to $\Lambda_{\le D}$.  Let $\bS'_i$ be independent copies of $\bS_i$ for $i=1,\dots,n$. We will compute
	
	\begin{align*}\label{eq:mean-lambda}
		&~\EE_{\bZ}\Lambda_{\le D} \\
		=&~
		\EE_{\bZ}
		\sum_{\tilde{\alpha}\in [D]^{n\times d},\tilde{j}\in [d]^{d}}
		\prod_{i=1}^n
		\left(
		\EE_{\bS_i}
		\prod_{j=1}^d
		\left(
		h_{\alpha_{i,j}}
		\left(\dfrac{\bZ_{j}^{\top}\bQ\bS_i}{\sqrt{d}}\right)
		-
		h_{\alpha_{i,j}}\left(\dfrac{\bZ_{j}^{\top}\bP\bS_i}{\sqrt{d}}\right)
		\right)
		\right)^2\\
		=&~
		\sum_{\tilde{\alpha},\tilde{j}}
		\EE_{\bZ}
		\prod_{i=1}^n
		\EE_{\bS_i,\bS_i'}
		\prod_{j=1}^d
		\left(
		h_{\alpha_{i,j}}
		\left(\dfrac{\bZ_{j}^{\top}\bP\bS_i}{\sqrt{d}}\right)
		-
		h_{\alpha_{i,j}}\left(\dfrac{\bZ_{j}^{\top}\bQ\bS_i}{\sqrt{d}}\right)
		\right)
		\left(
		h_{\alpha_{i,j}}
		\left(\dfrac{\bZ_{j}^{\top}\bP\bS'_i}{\sqrt{d}}\right)
		-
		h_{\alpha_{i,j}}\left(\dfrac{\bZ_{j}^{\top}\bQ\bS'_i}{\sqrt{d}}\right)
		\right)
		\\
		=&~
		\sum_{\tilde{\alpha},\tilde{j}}
		\EE_{\bS}
		\prod_{j=1}^d
		\EE_{\bZ_j}
		\prod_{i:\alpha_{i,j}\ge 4}
		\left(
		h_{\alpha_{i,j}}
		\left(\dfrac{\bZ_{j}^{\top}\bP\bS_{i}}{\sqrt{d}}\right)
		-
		h_{\alpha_{i,j}}\left(\dfrac{\bZ_{j}^{\top}\bQ\bS_{i}}{\sqrt{d}}\right)
		\right)
		\left(
		h_{\alpha_{i,j}}
		\left(\dfrac{\bZ_{j}^{\top}\bP\bS'_{i}}{\sqrt{d}}\right)
		-
		h_{\alpha_{i,j}}\left(\dfrac{\bZ_{j}^{\top}\bQ\bS'_{i}}{\sqrt{d}}\right)
		\right) \numberthis
	\end{align*}
	
	We next use the following lemma.
	\begin{lemma}\label{lem:mean-diff}
		\begin{align*}
			&	
			\EE_{\bS}
			\prod_{j=1}^d
			\EE_{\bZ_j}
			\prod_{i:\alpha_{i,j}\ge 4}
			\left(
			h_{\alpha_{i,j}}
			\left(\dfrac{\bZ_{j}^{\top}\bP\bS_{i}}{\sqrt{d}}\right)
			-
			h_{\alpha_{i,j}}\left(\dfrac{\bZ_{j}^{\top}\bQ\bS_{i}}{\sqrt{d}}\right)
			\right)
			\left(
			h_{\alpha_{i,j}}
			\left(\dfrac{\bZ_{j}^{\top}\bP\bS'_{i}}{\sqrt{d}}\right)
			-
			h_{\alpha_{i,j}}\left(\dfrac{\bZ_{j}^{\top}\bQ\bS'_{i}}{\sqrt{d}}\right)
			\right)\\
			\le &~
			\left(
			\dfrac{C\|\bP-\bQ\|_{\rm F}^2(\log d)^4}{d}
			\right)^{\|\alpha\|_0}
			\left(
			\dfrac{(\log(d))^3}{d}
			\right)^{\|\alpha\|_1/2}
			.
		\end{align*}
		where $\|\balpha_0\|=\sum_{i,j}\mathbbm{1}(\alpha_{i,j}\ge 4)$ and $\|\balpha\|_1=\sum_{i,j}\alpha_{i,j}\mathbbm{1}(\alpha_{i,j}\ge 4)$.
	\end{lemma}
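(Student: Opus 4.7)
The plan is to bound the displayed expectation by (i) truncating the source vectors to a high-probability event on which tail bounds hold, (ii) computing the $\bZ_j$-expectation via Hermite generating-function identities, and (iii) applying Cauchy--Schwarz together with Wick's theorem to control the product structure.

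First, I would restrict to the good event $\calG=\{\max_{i,\ell}|S_{i,\ell}|,\,\max_{i,\ell}|S'_{i,\ell}|\le C_0\log d\}$. Since each $S_{i,\ell}=R_{i,\ell}E_{i,\ell}/\sqrt{2}$ is sub-exponential (with $R$ Rademacher and $E\sim\chi_1^2$), taking $C_0$ large yields $\PP(\calG^c)\le d^{-C}$ for any $C>0$. On $\calG$ we have $\|\bS_i\|^2\in[d/2,Cd(\log d)^2]$, and by Hanson--Wright the quadratic form $\rho_i:=\bS_i^\top\bP^\top\bQ\bS_i/\|\bS_i\|^2$ concentrates around its mean $1-\|\bP-\bQ\|_F^2/(2d)$ with error $O((\log d)^2/\sqrt{d})$. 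The contribution from $\calG^c$ is controlled using the crude bound $|h_m(x)|\le C^m(1+|x|)^m$ combined with the sub-exponential tails, yielding a negligible additive error.

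Conditional on $(\bS,\bS')\in \calG$ and fixing $j$, the pair $(u_{i,P},u_{i,Q}):=(\bZ_j^\top\bP\bS_i/\sqrt{d},\bZ_j^\top\bQ\bS_i/\sqrt{d})$ is jointly Gaussian with equal marginal variance $\sigma_i^2:=\|\bS_i\|^2/d$ (by orthogonality of $\bP,\bQ$) and correlation $\rho_i$. In particular $\EE_{\bZ_j}[h_m(u_{i,P})-h_m(u_{i,Q})]=0$. The generating function $\sum_m t^m h_m(x)/m!=e^{tx-t^2/2}$ and standard Gaussian integration give
\[
\sum_{m,n\ge 0}\frac{s^mt^n}{m!\,n!}\,\EE_{\bZ_j}\bigl[h_m(u_{i,P})h_n(u_{i,Q})\bigr]
=\exp\!\bigl(\tfrac12(s^2+t^2)(\sigma_i^2-1)+st\,\sigma_i^2\rho_i\bigr),
\]
from which one reads off
\[
\EE_{\bZ_j}\bigl(h_m(u_{i,P})-h_m(u_{i,Q})\bigr)^2=2m\cdot m!\cdot(1-\rho_i)+\text{corrections of order }(1-\rho_i)^2+(\sigma_i^2-1)^2.
\]
Averaging over $\bS_i$ with $\EE_{\bS_i}(1-\rho_i)=\|\bP-\bQ\|_F^2/(2d)$ then produces the key factor $\|\bP-\bQ\|_F^2/d$ per $(i,j)$ with $\alpha_{i,j}\ne 0$.

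To assemble the full product, I would apply Cauchy--Schwarz in $\bZ_j$ to split the (conditionally independent) $\bS$ and $\bS'$ sides, then evaluate each remaining Gaussian expectation $\EE_{\bZ_j}\prod_i(h_{\alpha_{i,j}}(u_{i,P})-h_{\alpha_{i,j}}(u_{i,Q}))^2$ via Wick's theorem on the Hermite-polynomial expansion in $\bZ_j$. The dominant pairings match each $u_{i,P}-u_{i,Q}$ to its own index, yielding $\|\bP-\bQ\|_F^2/d$ per $i$; off-diagonal pairings (mixing $i\ne i'$) carry additional factors of $\bS_i^\top\bS_{i'}/d\lesssim(\log d)^2/\sqrt{d}$ on $\calG$ and are lower-order. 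The $(\log d)^{3\|\alpha\|_1/2}$-type contribution arises from $m!\le(c_0\log d)^m$ (since $m\le D\le c_0\log d$), while the $(\log d)^{4\|\alpha\|_0}$ absorbs the truncation cost and the $\sigma_i^2-1$ corrections. The main obstacle is the combinatorics of the Wick pairings: naively the number of pairings grows factorially in $\|\alpha\|_0$, so one must argue that off-diagonal pairings are suppressed strongly enough by powers of $(\log d)^2/\sqrt{d}$ to cancel this factorial, while carefully tracking the aggregate $\log d$ exponents. This pairing enumeration is the delicate heart of the proof.
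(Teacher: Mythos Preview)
Your Cauchy--Schwarz step is where the argument breaks. The target bound carries two separate small factors: one of size $\|\bP-\bQ\|_F^2/d$ per active index (the $\|\balpha\|_0$ exponent), and an additional factor of order $d^{-\alpha_{i,j}/2}$ per active index (the $\|\balpha\|_1/2$ exponent). Your route captures the first but destroys the second. The second factor comes from the near-orthogonality of $\bS_i$ and the independent copy $\bS'_i$: conditionally on both, the Gaussians $\bZ_j^\top\bP\bS_i/\sqrt d$ and $\bZ_j^\top\bP\bS'_i/\sqrt d$ have correlation $\bS_i^\top\bS'_i/d=O((\log d)/\sqrt d)$, and the Hermite identity $\EE_{\bZ_j}h_\alpha(X)h_\alpha(Y)=\alpha!\,\mathrm{corr}(X,Y)^\alpha$ turns that small correlation into the required $d^{-\alpha/2}$ suppression. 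Once you apply Cauchy--Schwarz in $\bZ_j$, the $\bS$-side contains only $(h_\alpha(u_{i,P})-h_\alpha(u_{i,Q}))^2$ with no $\bS'_i$ at all; the diagonal Wick pairing you describe gives exactly your $2m\cdot m!(1-\rho_i)\asymp \|\bP-\bQ\|_F^2/d$, and nothing smaller. The resulting bound is at best $\bigl((\log d)^{C}\|\bP-\bQ\|_F^2/d\bigr)^{\|\balpha\|_0}$, which after summing against $\binom{nd}{m}$ in the proof of Lemma~\ref{lem:comp-low} would only yield a trivial constraint on $n$, not $n\lesssim d^2$.

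The paper avoids this loss by never decoupling $\bS$ from $\bS'$. It applies the mean value theorem to each Hermite difference, $h_\alpha(u_{i,P})-h_\alpha(u_{i,Q})=\alpha\,\bZ_j^\top(\bP-\bQ)\bS_i/\sqrt d\cdot h_{\alpha-1}(\bZ_j^\top\bw_i)$, so the product becomes a bilinear form $\bZ_j^\top\bu_i\bv_i^\top\bZ_j$ (bounded via Hanson--Wright by $\|\bP-\bQ\|_F^2(\log d)^2/d$) times $h_{\alpha-1}(\bZ_j^\top\bw_i)\,h_{\alpha-1}(\bZ_j^\top\bw'_i)$. The latter is then controlled by successive conditioning: writing $\bZ_j^\top\bw_m$ as its projection onto the span of the previous $\bw_i,\bw'_i$ plus a fresh Gaussian, both the projected part and the cross-correlation of the fresh parts are $O(\sqrt{(\log d)/d})$ because all of $\bS_m,\bS'_m,\bS_i,\bS'_i$ are mutually near-orthogonal. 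Lemma~\ref{lem:Herm-exp} then converts these small correlations into the $((\log d)^{5/2}/d)^{\alpha_{i,j}-1}$ factor. If you want to keep your generating-function viewpoint, you must compute the four-variable Hermite expectation $\EE_{\bZ_j}[h_\alpha(u_{i,P})-h_\alpha(u_{i,Q})][h_\alpha(u'_{i,P})-h_\alpha(u'_{i,Q})]$ directly, retaining the cross-covariances of order $\bS_i^\top\bS'_i/d$, rather than splitting them apart.
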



	Using~\eqref{eq:mean-lambda} and the Lemma above, we then have,
	\begin{align*}
		&~\EE_{\bZ}\Lambda_{\le D}\\
		=&~
		\sum_{s=4}^D
		\sum_{m=1}^{s/4}
		\sum_{
			\substack{
				\balpha\in\NN^n\\
				\|\balpha\|_0=m\\
				\|\balpha\|_1=s}}
		\prod_{j=1}^d\\
		&~\EE
		\prod_{i:\alpha_{i,j}\ge 4}
		\left(
		h_{\alpha_{i,j}}
		\left(\dfrac{\bZ_{j}^{\top}\bP\bS_{i}}{\sqrt{d}}\right)
		-
		h_{\alpha_{i,j}}\left(\dfrac{\bZ_{j}^{\top}\bQ\bS_{i}}{\sqrt{d}}\right)
		\right)
		\left(
		h_{\alpha_{i,j}}
		\left(\dfrac{\bZ_{j}^{\top}\bP\bS'_{i}}{\sqrt{d}}\right)
		-
		h_{\alpha_{i,j}}\left(\dfrac{\bZ_{j}^{\top}\bQ\bS'_{i}}{\sqrt{d}}\right)
		\right)\\
		\le &~
		\sum_{s=4}^D
		\sum_{m=1}^{s/4}
		\sum_{
			\substack{
				\balpha\in\NN^n\\
				\|\balpha\|_0=m
				\|\balpha\|_1=s}}
		\left(
		\dfrac{C\|\bP-\bQ\|_{\rm F}^2(\log d)^4}{d}
		\right)^{m}
		\left(
		\dfrac{(\log(d)^3)^2}{d}
		\right)^{s/2}	\\
		\le&~
		\sum_{s=4}^D
		\sum_{m=1}^{s/4}
		{{nd}\choose m}m^{s/2}
		\left(
		\dfrac{C\|\bP-\bQ\|_{\rm F}^2(\log d)^4}{d}
		\right)^{m}
		\left(
		\dfrac{(\log(d))^3}{d}
		\right)^{s/2}	\\
		\le&~
		\sum_{s=4}^D
		\left(
		\dfrac{s(\log(d))^3}{4d}
		\right)^{s/2}
		\sum_{m=1}^{s/4}
		\left(
		\dfrac{Cnd\|\bP-\bQ\|_{\rm F}^2(\log d)^4}{d}
		\right)^{m}\\
		=&~
		\sum_{s=4}^D
		\left(
		\dfrac{s(\log(d))^3}{4d}
		\right)^{s/2}
		\left(
		Cn\|\bP-\bQ\|_{\rm F}^2(\log d)^4
		\right)^{s/4}\\
		=&~
		\sum_{s=4}^D
		\left(
		\dfrac{Cs^2(\log d)^{10}n\|\bP-\bQ\|_{\rm F}^2}{16d^2}
		\right)^{s/4}\\
		=&~
		\sum_{s=4}^D
		\left(
		\dfrac{CD^2(\log d)^{10}n\|\bP-\bQ\|_{\rm F}^2}{d^2}
		\right)^{s/4}
		\le~
		1+
		\dfrac{CD^2(\log d)^{10}n\|\bP-\bQ\|_{\rm F}^2}{d^2}
	\end{align*}
	if $n\le d^{2}/\left(CD^2\|\bP-\bQ\|_{\rm F}^2(\log d)^{10}\right)$ for a sufficiently large constant $C>0$. To finish the proof we show how to construct $\bQ$ from $\bP$. For any $\bP\in\calO(d)$, we take
	$$
	\bq_1=\dfrac{1}{\sqrt{2}}\bp_1+\dfrac{1}{\sqrt{2}}\bp_2;
	\,\,
	\bq_2=\dfrac{1}{\sqrt{2}}\bp_1-\dfrac{1}{\sqrt{2}}\bp_2;
	\,\,
	\bq_k=\bp_k 
	\,\,
	\text{for }
	3\le k\le d.
	$$
	This implies $\|\bP-\bQ\|_{\rm F}^2=2(\sqrt{2}-1)$ but $\max_{1\le k\le d}\sin\angle\left(\bp_k,\bq_k\right)=\dfrac{1}{\sqrt{2}}$. Thus
	$$
	\EE_{\bZ}\Lambda_{\le D}
	\le 1+\eps
	\quad
	\text{when }
	n\le \dfrac{d^{2}\eps}{CD^2(\log d)^{10}}
	$$
	for a sufficiently large constant $C>0$. 
\end{proof}

\medskip

\begin{proof}[Proof of Lemma \ref{lem:tens-conc}]
	We will use Lemma \ref{lem:varY-conc} as an intermediate step. Let us define $\bY_i=\bS_i\otimes\bS_i$ and $\bY=\bS\otimes\bS$. We also define $\boldsymbol{\alpha}:=\EE(\bY)={\rm vec}(\II_d)$. Then, since $\bA$ is orthonormal, we have
	\begin{align*}\label{eq:tens-conc}
		&~\norm*{
			\left(\dfrac{1}{n}\displaystyle
			\sum_{l=1}^n\bX_l^{\circ 4}-\EE\bX_1^{\circ 4}\right)}\\
		=&~
		\sup_{\bv\in \SS^{d-1}}
		\abs*{
			\dfrac{1}{n}\displaystyle
			\sum_{l=1}^n\langle \bS_l,\bv\rangle^{4}-
			\EE\langle\bS,\bv\rangle^{4}
		}\\
		=&~
		\sup_{\bv\in \SS^{d-1}}
		\abs*{
			(\bv\otimes \bv)^{\top}
			\left(\dfrac{1}{n}\displaystyle
			\sum_{l=1}^n \bY_l\bY_l^{\top}-
			\EE\bY\bY^{\top}\right)
			(\bv\otimes\bv)
		}\\
		\le &~
		\sup_{\bv\in \SS^{d-1}}
		\abs*{
			(\bv\otimes \bv)^{\top}
			\left(\dfrac{1}{n}\displaystyle
			\sum_{l=1}^n (\bY_l-\boldsymbol{\alpha})(\bY_l-\boldsymbol{\alpha})^{\top}-
			\EE(\bY\bY^{\top}-\boldsymbol{\alpha}\boldsymbol{\alpha}^{\top})\right)
			(\bv\otimes\bv)
		}\\
		&~+
		2\sup_{\bv\in \SS^{d-1}}
		\abs*{
			(\bv\otimes\bv)^{\top}\alpha 
		}
		\abs*{
			\left	
			\langle
			\dfrac{1}{n}\sum_{l=1}^n \bY_l-\boldsymbol{\alpha},\bv\otimes\bv\right\rangle
		}	\\
		\le&~C(t_1+t_2)\sqrt{\dfrac{d^2}{n}}+
		2\sup_{\bv\in \SS^{d-1}}
		\abs*{
			\bv^{\top}
			\left(\dfrac{1}{n}\sum_{l=1}^n \bS_l\bS_l^{\top}-\EE(\bS\bS^{\top})\right)\bv}\\
		\le&~C(t_1+t_2)\sqrt{\dfrac{d^2}{n}}+
		2\norm*{
			\dfrac{1}{n}\sum_{l=1}^n \bS_l\bS_l^{\top}-\EE(\bS\bS^{\top})}.\numberthis
	\end{align*}
	with probability at least $1-\exp(-Cn^{\epsi/8})-\exp(-Ct_1)-\dfrac{1}{Cn^{\epsi/8}t_2^{2+\epsi/4}}$. Here we use Lemma \ref{lem:varY-conc} to bound the first term, and the fact that $\boldsymbol{\alpha}={\rm vec}(\II_d)$ to get that for all $\bv\in \SS^{d-1}$,
	$$
	\abs*{(\bv\otimes\bv)^{\top}\boldsymbol{\alpha}}
	=\sum_{i=1}^dv_i^2
	=\bv^{\top}\EE(\bS\bS^{\top})\bv
	=1.
	$$
	Finally note that $\EE\bS=0$. Then applying a truncation technique similar to the proofs of Lemmas \ref{lem:varY-conc}, \ref{lem:eps2-bd} and \ref{lem:eps1-bd} we obtain
	$$
	\norm*{
		\dfrac{1}{n}\sum_{l=1}^n \bS_l\bS_l^{\top}-\EE(\bS\bS^{\top})}
	\le C(t_1+t_2)\sqrt{\dfrac{d}{n}}
	$$
	with probability at least $1-\exp(-Cn^{\epsi/8})-\exp(-Ct_1)-\dfrac{1}{Cn^{\epsi/8}t_2^{2+\epsi/4}}$. Plugging in this bound into \eqref{eq:tens-conc} finishes the proof.
\end{proof}

\medskip



\begin{proof}[Proof of Lemma \ref{lem:mat-init-conc}]
	We define $\tilde{\bY}_i=\bS_i\otimes\bS_i$ for $i=1,\dots,n$ and $\tilde{\bY}=\bS\otimes\bS$. Moreover, let $\boldsymbol{\alpha}:={\rm vec}(\II_d)$. Note first that since $\bA$ is orthonormal,
	$$
	(\bA\otimes\bA)\boldsymbol{\alpha}
	={\rm vec}(\bA{\rm unvec}(\boldsymbol{\alpha})\bA^{\top})
	={\rm vec}(\bA(\II_d)(\bA)^{\top})
	={\rm vec}(\II_d)=\boldsymbol{\alpha}.
	$$
	As before, we define $\hat{\bGamma}:=\dfrac{1}{n}\displaystyle\sum_{i=1}^n\bX_i\bX_i^{\top}-\II_d$. Hence
	$$
	{\rm vec}(\hat{\bGamma})
	={\rm vec}\left(
	\dfrac{1}{n}\displaystyle\sum_{i=1}^n\bX_i\bX_i^{\top}-\II_d\right)
	=(\bA\otimes\bA)({\rm vec}(\bar{\bY})-\boldsymbol{\alpha}).
	$$
	Since $\bX=\bA\bS$ for an orthonormal matrix $\bA$, it follows by the above equations that
	\begin{align*}
		\hat{\bM}
		=&~
		(\bA\otimes\bA)
		\left(
		\dfrac{1}{n}
		\sum_{i=1}^n
		\tilde{\bY}_i\tilde{\bY}_i^{\top}
		-\tilde{\bY}\tilde{\bY}^{\top}
		\right)(\bA\otimes\bA)^{\top}+\balpha\balpha^{\top}\\
		=&~
		(\bA\otimes\bA)
		\left(
		\dfrac{1}{n}
		\sum_{i=1}^n
		(\tilde{\bY}_i-\balpha)(\tilde{\bY}_i-\balpha)^{\top}
		+\balpha(\tilde{\bY}-\balpha)^{\top}
		+(\tilde{\bY}-\balpha)\balpha^{\top}
		-\tilde{\bY}\tilde{\bY}^{\top}
		\right)(\bA\otimes\bA)^{\top}\\
		=&~
		(\bA\otimes\bA)
		\left(
		\dfrac{1}{n}
		\sum_{i=1}^n
		(\tilde{\bY}_i-\balpha)(\tilde{\bY}_i-\balpha)^{\top}
		-\balpha\balpha^{\top}
		-(\bar{\tilde{\bY}}-\balpha)
		(\bar{\tilde{\bY}}-\balpha)^{\top}
		\right)(\bA\otimes\bA)^{\top}.
	\end{align*}
	On the other hand,
	\begin{align*}
		\calM_{(12)(34)}(\scrM_4(\bX))
		=&~
		(\bA\otimes\bA)
		\EE(\tilde{\bY}\tilde{\bY}^{\top})
		(\bA\otimes\bA)^{\top}\\
		=&~
		(\bA\otimes\bA)
		\left(
		\EE
		(\tilde{\bY}-\balpha)(\tilde{\bY}-\balpha)^{\top}
		-\balpha\balpha^{\top}
		\right)
		(\bA\otimes\bA)^{\top}.
	\end{align*}
	Thus,
	\begin{align*}
		&~\norm*{\hat{\bM}-\calM_{(12)(34)}(\scrM_4(\bX))}\\
		=&~
		\norm*{
			\dfrac{1}{n}
			\sum_{i=1}^n
			(\tilde{\bY}_i-\balpha)(\tilde{\bY}_i-\balpha)^{\top}
			-\EE(\tilde{\bY}_1-\balpha)(\tilde{\bY}_1-\balpha)^{\top}
			-(\bar{\tilde{\bY}}-\balpha)
			(\bar{\tilde{\bY}}-\balpha)^{\top}
		}\\
		\le&~
		\norm*{
			\dfrac{1}{n}
			\sum_{i=1}^n
			(\tilde{\bY}_i-\balpha)(\tilde{\bY}_i-\balpha)^{\top}
			-\EE(\tilde{\bY}_1-\balpha)(\tilde{\bY}_1-\balpha)^{\top}}
		+\norm*{\bar{\tilde{\bY}}-\balpha}^2\\
		\le&~
		C(t_1+t_2)\sqrt{\dfrac{d^2}{n}}
	\end{align*}
	with the required probability. The last step of the proof follows from Lemma \ref{lem:varY-conc}, along with the fact that
	$$
	\norm*{\bar{\tilde{\bY}}-\balpha}
	=
	\norm*{\dfrac{1}{n}\sum_{i=1}^n\bS_i\bS_i^{\top}-\II_d}_{\rm F}
	\le C(t_1+t_2)\sqrt{\dfrac{d^2}{n}}
	$$
	with the same argument as in Lemma~\ref{lem:varY-conc} but now used for  concentration around the $d\times d$ matrix given by ${\rm Var}(\bS)$.
	.
\end{proof}

\medskip

\begin{proof}[Proof of Lemma~\ref{lem:mat-init-conc2}]
	Writing $\bY_k=\bX_k\otimes\bX_k$ for $k=1,\dots,n$, and $\scrE:=
	\mathcal{M}_{(12)(3,4)}^{-1}(\hat{\bM}_1)-\scrM_4(\bX)$, we have
	\begin{align*}
		&~{\calM}_{(12)(34)}(\scrE)\\
		=&~
		\dfrac{1}{n}\sum_{k=1}^n
		(\bX_i\otimes\bX_i)
		(\bX_i\otimes\bX_i)^{\top}
		-{\rm vec}(\II_d){\rm vec}(\hat{\bGamma})^{\top}
		-{\rm vec}(\hat{\bGamma}){\rm vec}(\II_d)^{\top}
		-{\calM}_{(12)(34)}(\scrM_4(\bX))\\
		=&~
		\dfrac{1}{n}\sum_{k=1}^n
		(\bY_k-{\rm vec}(\II_d))
		(\bY_k-{\rm vec}(\II_d))^{\top}
		+
		{\rm vec}(\II_d){\rm vec}(\II_d)^{\top}
		-\EE\bY_1\bY_1^{\top}\\
		=&~
		\dfrac{1}{n}\sum_{k=1}^n
		(\bY_k-{\rm vec}(\II_d))
		(\bY_k-{\rm vec}(\II_d))^{\top}
		+
		{\rm vec}(\II_d){\rm vec}(\II_d)^{\top}
		-\EE\bY_1\bY_1^{\top}\\
		=&~
		\dfrac{1}{n}\sum_{k=1}^n
		(\bY_k-{\rm vec}(\II_d))
		(\bY_k-{\rm vec}(\II_d))^{\top}
		-
		\EE\left(
		(\bY_1-{\rm vec}(\II_d))
		(\bY_1-{\rm vec}(\II_d))^{\top}
		\right).
	\end{align*}
	In the last two steps we use the fact that $\EE \bY_k={\rm vec}(\II_d)$. We define
	$$
	\hat{\bP}
	:={\rm Proj}_d
	\left(\hat{\bM}_2-\calM_{(12)(3,4)}(\scrM_0)\right).
	$$
	which is independent of $\scrE$. Let us also define the true projecion
	$$
	{\bP}
	:={\rm Proj}_d
	\left(\calM_{(12)(3,4)}(\scrM_4(\bX)-\scrM_0)\right)
	=\sum_{k=1}^d
	(\ba_k\otimes\ba_k)(\ba_k\otimes\ba_k)^{\top}.
	$$
	By Lemma~\ref{lem:mat-init-conc} and the Davis-Kahan theorem it follows that
	$$
	\norm*{\hat{\bP}-\bP}\le C\sqrt{\dfrac{d^2}{n}}
	$$
	with high probability. Note that our estimator can be written as
	\begin{align*}
		\hat{\scrM}
		=&~
		(\scrM_4(\bX)-\scrM_0+\scrE)
		\times_{(3,4)}
		\hat{\bP}\\
		=&~
		\scrM_4(\bX)-\scrM_0
		+(\scrM_4(\bX)-\scrM_0)
		\times_{(3,4)}
		(\hat{\bP}-\bP)
		+\scrE
		\times_{(3,4)}
		\hat{\bP}
	\end{align*}
	which implies
	\begin{align*}
		&~\calM_{(1)(234)}(\hat{\scrM}-
		(\scrM_4(\bX)-\scrM_0))\\
		=&~
		\sum_{k=1}^d\kappa_4(S_k)
		\ba_k(\ba_k\otimes ((\hat{\bP}-\bP)(\ba_k\otimes\ba_k)))^{\top}
		+\calM_{(1)(234)}
		(\scrE
		\times_{(3,4)}
		\hat{\bP}).
	\end{align*}
	Note that the first matrix on the right hand side above can be bounded as
	$$
	\norm*{\sum_{k=1}^d\kappa_4(S_k)
		\ba_k(\ba_k\otimes ((\hat{\bP}-\bP)(\ba_k\otimes\ba_k)))^{\top}}
	\le \max_{1\le k\le d}\kappa_4(S_k)\cdot \|\hat{\bP}-\bP\|
	\le C\sqrt{\dfrac{d^2}{n}}
	$$
	with high probability. We thus bound the second term
	\begin{align*}
		&~\norm*{
			\calM_{(1)(234)}(\scrE
			\times_{(3,4)}\hat{\bP}) 
		}\\
		=&~
		\sup_{\bu\in\SS^{d-1},
			\bv\in\SS^{d^3-1}}
		\sum_{i,j,k,l}
		u_iv_{jkl}
		(
		((\EE_n-\EE)
		(\bY-{\rm vec}(\II_d))
		(\bY-{\rm vec}(\II_d))^{\top}
		)
		\hat{\bP})_{(i,j)(k,l)}
		.
	\end{align*}
	where $\EE_n$ is the sample averaging operator over $n/2$ i.id. samples. Let $\tilde{\bX}_m$ be independent copies of $\bX$ for $m=1,\dots,n$ that are also independent of $\bX_m$. Note first that $\hat{\bP}$ is independent of $\bY_i$s. Then by a decoupling argument, we have for a constant $C>0$ that
	\begin{align*}
		&~ \norm*{
			{\calM}_{(1)(234)}
			(\scrE
			\times_{(3,4)}\hat{\bP}
			)
		}\\
		\le&~
		C
		\sup_{\bu\in\SS^{d-1},
			\bv\in\SS^{d^3-1}}
		\sum_{i,j,k,l}
		u_iv_{jkl}
		(
		(\EE_n
		(\bY-{\rm vec}(\II_d))
		(\tilde{\bY}-{\rm vec}(\II_d))^{\top}
		)
		\hat{\bP})_{(i,j)(k,l)}
		.
	\end{align*}
	with high probability. We now compute the norm on the right. Note that we have
	\begin{align*}\label{eq:m123-bd0}
		&~
		\sup_{\bu,\bv}
		\sum_{i,j,k,l}
		u_iv_{jkl}
		(
		((\EE_n-\EE)
		(\bY-{\rm vec}(\II_d))
		(\bY-{\rm vec}(\II_d))^{\top}
		)
		\hat{\bP})_{(i,j)(k,l)}\\
		=&~
		\sup_{\bu,\bv}
		\sum_{j=1}^d
		\bigg\langle
		\bv_j,
		\bigg\{
		\dfrac{2}{n}
		\sum_{m=1}^{n/2}
		(\bY_m-{\rm vec}(\II_d))
		\hat{\bP}
		(\bY_m-{\rm vec}(\II_d))^{\top}\\
		&\hspace{4cm}~-\EE((\bY_m-{\rm vec}(\II_d))
		\hat{\bP}
		(\bY_m-{\rm vec}(\II_d))^{\top})
		\bigg\}(\II_d\otimes\ba_j)\bu 
		\bigg\rangle\\
		\le&~
		\sup_{\bu,\bv}
		\left(\sum_{j=1}^d\|\bv_j\|^2\right)^{1/2}
		\times\\
		&~\quad\times 
		\bigg(\sum_{j=1}^d
		\bigg\Vert
		\bigg\{
		\dfrac{2}{n}
		\sum_{m=1}^{n/2}
		(\bY_m-{\rm vec}(\II_d))
		\hat{\bP}
		(\bY_m-{\rm vec}(\II_d))^{\top}\\
		&\hspace{4cm}~
		-\EE((\bY_m-{\rm vec}(\II_d))
		\hat{\bP}
		(\bY_m-{\rm vec}(\II_d))^{\top})
		\bigg\}(\II_d\otimes\ba_j)\bu 
		\bigg\Vert^2\bigg)^{1/2}\\
		\le&~
		\bigg(\sum_{j=1}^d
		\bigg\Vert
		\bigg\{
		\dfrac{2}{n}
		\sum_{m=1}^{n/2}
		(\bY_m-{\rm vec}(\II_d))
		\hat{\bP}
		(S_{jm}\bS_m-\be_j)^{\top}
		-\EE((\bY_m-{\rm vec}(\II_d))
		\hat{\bP}
		(S_{jm}\bS_m-\be_j)^{\top})
		\bigg\}
		\bigg\Vert^2\bigg)^{1/2}.\numberthis
	\end{align*}
	Here we define $\bv_j:=(\II_d\otimes\ba_j\otimes\II_d)\bv$ for $j=1,\dots,d$. Conditional on $\hat{\bP}$ we now apply truncated matrix Bernstein inequality (following the arguments of Lemmas~\ref{lem:eps2-bd}-\ref{lem:eps1-bd}) to obtain
	\begin{align*}
		\label{eq:m123-bd1}
		&\bigg\Vert
		\bigg\{
		\dfrac{2}{n}
		\sum_{m=1}^{n/2}
		(\bY_m-{\rm vec}(\II_d))
		\hat{\bP}
		(S_{jm}\bS_m-\be_j)^{\top}
		-\EE((\bY_m-{\rm vec}(\II_d))
		\hat{\bP}
		(S_{jm}\bS_m-\be_j)^{\top})
		\bigg\}
		\bigg\Vert\\
		\le&~
		C(t_1+t_2)
		\bigg\{\sqrt{
			\dfrac{{\rm trace}(\hat{\bP}{\rm Var}(\bY))
				\norm*{{\rm Var}(S_{j}\bS)}}{n}}	
		+\sqrt{
			\dfrac{{\rm trace}({\rm Var}(S_j\bS))\norm*{{\rm Var}(\bY)}}{n}}
		\bigg\}
		\numberthis
	\end{align*}
	with probability at least $1-n^{-C\eps/8}-\exp(-Ct_1)-\dfrac{1}{Cn^{\eps/8}t_2^{2+\eps/4}}$. To see this, we first truncate the observations according to $\calS(t):=\{m\in [n/2]:\|\bS_m\|\le t\}$, and then choose $t$ such that $|[n/2]/\calS(t)|\le C$ with high probability. On $m\in\calS(t)$, we apply matrix Bernstein inequality. See the proofs of Lemmas~\ref{lem:eps2-bd}-\ref{lem:eps1-bd} for the details and more of the similar arguments. Note that
	\begin{align*}
		\hat{\bP}
		=&~{\rm Proj}_d
		\left(
		\dfrac{1}{n}
		\sum_{m=1}^n
		\bY_m\bY_m^{\top}
		-{\rm vec}(\II_d)(\bar{\bY}-{\rm vec}(\II_d))^{\top}
		-(\bar{\bY}-{\rm vec}(\II_d)){\rm vec}(\II_d)^{\top}
		\right)\\
		=&~{\rm Proj}_d
		\left(
		(\bA\otimes\bA)
		\left(
		\dfrac{1}{n}
		\sum_{m=1}^n
		(\bS_m\otimes\bS_m)
		(\bS_m\otimes\bS_m)^{\top}
		-{\rm vec}(\II_d)\tilde{\bGamma}^{\top}
		-\tilde{\bGamma}{\rm vec}(\II_d)^{\top}
		\right)
		(\bA\otimes\bA)^{\top}
		\right)\\
		=&~
		(\bA\otimes\bA)
		{\rm Proj}_d
		\left(
		\dfrac{1}{n}
		\sum_{m=1}^n
		(\bS_m\otimes\bS_m)
		(\bS_m\otimes\bS_m)^{\top}
		-{\rm vec}(\II_d)(\tilde{\bGamma})^{\top}
		-(\tilde{\bGamma}){\rm vec}(\II_d)^{\top}
		\right)
		(\bA\otimes\bA)^{\top}\\
		=:&~(\bA\otimes\bA)
		\tilde{\bP}
		(\bA\otimes\bA)^{\top}.
	\end{align*}
	where ${\rm Proj}_d$ refers to the projection to the top $d$ singular vectors, and $\tilde{\Gamma}=\dfrac{1}{n}
	\displaystyle\sum_{m=1}^n(\bS_m\otimes\bS_m)
	-{\rm vec}(\II_d)$. This implies that 
	$
	{\rm trace}(\hat{\bP}{\rm Var}({\bY}))
	={\rm trace}(\tilde{\bP}{\rm Var}(\bS\otimes\bS))
	$. We compute
	\begin{align*}
		({\rm Var}(\bS\otimes\bS))_{ijkl}
		={\rm Cov}(S_iS_j,S_kS_l)
		=\begin{cases}
			{\rm Var}(S_i^2)\quad&\text{if }i=j=k=l\\
			1\quad&\text{if }{\rm card}(\{i,j,k,l\})=2\\
			0\quad\text{otherwise.}
		\end{cases}
	\end{align*}
	It can be checked by a direct calculation that this implies 
	\begin{align*}
		&~{\rm trace}(\tilde{\bP}{\rm Var}(\bS\otimes\bS))\\
		\le &~
		\max{\rm Var}(S_i^2){\rm trace}(\tilde{\bP})
		+\sum_{i,j}(\be_i\otimes\be_j)^{\top}\tilde{\bP}(\be_j\otimes\be_i)^{\top}
		+\sum_i 
		(\be_i\otimes\be_i)^{\top}\tilde{\bP}
		\sum_j(\be_j\otimes\be_j)^{\top}\\
		\le&~
		d\max{\rm Var}(S_i^2)
		+\sum_{k=1}^d\sum_{i,j}u_{k,ij}u_{k,ji}
		+\norm*{\tilde{\bP}}
		\times\norm*{\sum_{i}\be_i\otimes\be_i}^2\\
		\le&~
		d\max{\rm Var}(S_i^2)
		+\sum_{k=1}^d\sum_{i,j}u_{k,ij}^2
		+d\\
		\le&~ Cd.
	\end{align*}
	In the second inequality we use an SVD of $\tilde{\bP}=\bU\bU^{\top}$ where $\bU\in \RR^{d^2\times d}$ is an orthonormal matrix. We also use the fact that
	${\rm trace}(\tilde{\bP})={\rm rank}(\tilde{\bP})=d$ and that $\|\tilde{\bP}\|=1$. Plugging this bound into \eqref{eq:m123-bd1} we obtain
	\begin{align*}
		&~\norm*{
			\dfrac{1}{n}
			\sum_{m=1}^n
			\hat{\bP}
			(\bY_m-{\rm vec}(\II_d))
			(S_{jm}\bS_m-\be_j)^{\top}}\\
		\le&~
		C(t_1+t_2)\sqrt{
			\dfrac{d}{n}	
			\left(
			\norm*{{\rm Cov}(\bY)}
			+C\sqrt{\dfrac{d^2}{n}}
			\right)
		}
		\le C(t_1+t_2)\sqrt{
			\dfrac{d}{n}}.
	\end{align*}
	In the second inequality we use Lemma~\ref{lem:varY-conc}. The third inequality uses the fact that $\|{\rm Var}(\bY)\|\le C$ and $n\ge Cd^2$. Finally plugging this bound into \eqref{eq:m123-bd0} and \eqref{eq:m123-bd1} we have
	\begin{align*}
		\sup_{\bu,\bv}
		\sup_{\bu,\bv}
		\sum_{i,j,k,l}
		u_iv_{jkl}
		(
		((\EE_n-\EE)
		(\bY-{\rm vec}(\II_d))
		(\bY-{\rm vec}(\II_d))^{\top}
		)
		\hat{\bP})_{(i,j)(k,l)}
		\le&~C(t_1+t_2)\sqrt{\dfrac{d^2}{n}}.
	\end{align*}
	with probability at least $1-n^{-C\eps/8}-\exp(-Ct_1)-\dfrac{1}{Cn^{\eps/8}t_2^{2+\eps/4}}$. This finishes the proof for $\calM_{(1)(234)}(\scrE)$. The proof for $\calM_{(2)(134)}$ follows by an analogous argument by swapping the indices $i$ and $j$.
\end{proof}

\medskip

\begin{proof}[Proof of Lemma~\ref{lem:sample-conc}]
	The proof follows from Lemmas~\ref{lem:tens-conc}, \ref{lem:eps2-bd} and \ref{lem:eps1-bd} using the definitions of $\Delta_k$. 
\end{proof}

\medskip

\begin{proof}[Proof of Lemma~\ref{lem:comp-low2}]
	Without loss of generality, we assume that
	$$
	1-(\bp_1^{\top}\bq_1)^2
	=\max_{1\le k\le d}
	1-(\bp_k^{\top}\bq_k)^2
	=\dfrac{1}{2}.
	$$
	Then we prove the statement for $j=1$. We define
	$$
	\bv=\hat{\ba}/\|\hat{\ba}\|.
	$$	
	By assumption on $\hat{\ba}$, and since $\ba_1\in\SS^{d-1}$, we can show that
	$$
	\abs*{1-\|\hat{\ba}\|}\le\eta_1,
	\quad
	\text{and}
	\quad
	|\langle\ba_1,\bv\rangle|
	\ge
	\dfrac{1-\eta_1}{\sqrt{1+\eta_1}}.
	$$
	Note that $\bX_n$ is independent of $\hat{\ba}$. Thus we first take the expectation over $\bX_n$ to get
	\begin{align*}
		\EE_{\bA=\bQ}f_1(\tilde{\bX}|\bX_1,\dots,\bX_{n-1})
		=&~
		\EE(\hat{\ba}^{\top}\bQ\bS_n)^4
		-\EE(\|\hat{\ba}\|\bp_1^{\top}\bQ\bS_n)^4\\
		=&~
		\kappa_4(S_1)\|\hat{\ba}\|^4
		\sum_{k=1}^d
		((\bq_k^{\top}\bv)^4
		-(\bq_k^{\top}\bp_1)^4)\\
		\ge&~
		\kappa_4(S_1)\|\hat{\ba}\|^4
		\left(
		(\bq_1^{\top}\bv)^4
		-(\bq_1^{\top}\bp_1)^4
		\right)
		-
		\kappa_4(S_1)\|\hat{\ba}\|^4
		(1-(\bq_1^{\top}\bp_1)^2)^2\\
		=&~
		\kappa_4(S_1)\|\hat{\ba}\|^4\left((\bq_1^{\top}\bv)^4-1\right)
		+2\kappa_4(S_1)\|\hat{\ba}\|^4
		(1-(\bq_1^{\top}\bp_1)^2)
		(\bq_1^{\top}\bp_1)^2\\
		=&~
		\kappa_4(S_1)\|\hat{\ba}\|^4
		\left(
		(\bq_1^{\top}\bv)^4-
		\dfrac{1}{2}
		\right)
		.
	\end{align*}
	Since $\PP\left(\|\hat{\ba}-\bq_1\|\le \eta_1\right)\ge 1-\eta_2$, we obtain
	\begin{equation}\label{eq:pr-comp-num}
		\EE_{\bA=\bQ}f(\tilde{\bX})
		\ge\kappa_4(S_1)(1-\eta_1)^4
		\left(
		\dfrac{(1-\eta_1)^4}{(1+\eta_1)^2}-\eta_2/2\right).
	\end{equation}
	To write the denominator, note that for $\bu\in\SS^{d-1}$, we have
	\begin{align*}
		&~\EE\left(
		(\bu^{\top}\bS)^4-S_1^4)
		\right)^2\\
		=&~
		\EE(\bu^{\top}\bS)^8+\EE S_1^8-2\EE(\bu^{\top}\bS)^4S_1^4\\
		=&~
		\kappa_8(S_1)\sum u_i^8
		+\sum_{k=0}^7
		{7\choose k}\EE(\bu^{\top}\bS)^{7-k}
		\kappa_{k+1}(S_1)
		\sum_{i=1}^du_i^{k+1}
		+\EE S_1^8
		-2
		\left(
		\EE S_1^8u_1^4
		+(\EE S_1^4)^2
		\sum_{j\neq 1}u_j^4
		\right)\\
		&~-6
		\left(\EE S_1^6u_1^2(1-u_1^2)
		+\EE S_1^4\sum_{i\ge 2}u_i^2(1-u_1^2)\right)
		\\
		=&~
		7\EE(\bu^{\top}\bS)^{6}
		+
		35\kappa_4(S_1)\EE(\bu^{\top}\bS)^{4}
		\sum u_i^4
		+
		21\kappa_6(S_1)\sum u_i^6
		+\kappa_8(S_1)\sum u_i^8\\
		&~
		+\EE S_1^8
		-2
		\left(
		\EE S_1^8u_1^4
		+(\EE S_1^4)^2
		\sum_{j\neq 1}u_j^4
		\right)
		-6
		\left(\EE S_1^6u_1^2(1-u_1^2)
		+\EE S_1^4\sum_{i\ge 2}u_i^2(1-u_1^2)\right)\\
		\le&~ C\EE S_1^6(1-u_1)
	\end{align*}
	for a numerical constant $C>0$. Then, conditional on $\bX_1,\dots,\bX_{n-1}$ we can write that
	\begin{align*}
		\EE\left(
		(\bv^{\top}\bP\bS)^4-S_1^4\right)
		\le ~(C\EE S_1^6)^2\EE(1-\bp_1^{\top}\hat{\bv})
		=(C\EE S_1^6)^2\left(1-\dfrac{1-\eta_1}{\sqrt{1+\eta_1}}+\eta_2\right)
	\end{align*}
	since $\PP(\|\hat{\bv}-\bp_1\|\le \eta_1)\ge 1-\eta_2$ when $\bA=\bP$.	Dividing equation~\eqref{eq:pr-comp-num} by the expression above, we have 
	$$
	\dfrac{\EE_{\bQ}f(\tilde{\bX})}{\sqrt{\EE_{\bP}f^2(\tilde{\bX})}}
	\ge 
	\dfrac{\kappa_4(S_1)(1-\eta_1)^4
		\left(
		\dfrac{(1-\eta_1)^4}{(1+\eta_1)^2}-\eta_2/2\right)}{
		C\EE S_1^6\left(1-\dfrac{1-\eta_1}{\sqrt{1+\eta_1}}+\eta_2\right)
	}
	\ge 2
	$$
	provided $\eta_1$ and $\eta_2$ are sufficiently small.
\end{proof}

\medskip

\begin{proof}[Proof of Lemma~\ref{lem:mean-diff}]
	Writing $\bu_i=(\bP-\bQ)\bS_i/\sqrt{d}$ and $\bv_i=(\bP-\bQ)\bS'_i/\sqrt{d}$, we have using Hanson-Wright inequality conditional on $\bS,\bS'$ that
	\begin{align*}
		\PP(\bZ^{\top}\bu_i\bv_i^{\top}\bZ\ge C\bu^{\top}\bv(\log d))\le d^{-C}
	\end{align*}
	for a sufficiently large constant $C>0$. Note next that
	\begin{align*}
		\bu_i^{\top}\bv_i
		=
		\dfrac{1}{d}
		\bS^{\top}(\bP-\bQ)^2\bS'
		\le&~
		\dfrac{1}{d}
		(\bS^{\top}(\bP-\bQ)^2\bS)^{1/2}
		(\bS'^{\top}(\bP-\bQ)^2\bS')^{1/2}\\
		\le&~
		\dfrac{1}{d}
		({\rm tr}(\bP-\bQ)^2+C\|(\bP-\bQ)^2\|_{\rm F}(\log d))\\
		\le&~
		\dfrac{C\|\bP-\bQ\|_{\rm F}^2(\log d)}{d}
	\end{align*}
	with probability at least $1-d^{-C}$, once again using Hanson-Wright inequality and the sub-Gaussianity of $S_i$. Combining this bound with the Gaussian quadratic form above we get
	\begin{equation}\label{eq:mean-diff1}
		\PP\left(
		\max_{1\le i\le n}
		|\bS_i^{\top}\bS'_i|\ge C\sqrt{d};
		\max_{1\le i\le n}
		\max_{1\le j\le d}
		\bZ_j^{\top}\bu_i\bv_i^{\top}\bZ_j\ge 
		\dfrac{C\|\bP-\bQ\|_{\rm F}^2(\log d)^2}{d}
		\right)\le d^{-C},
	\end{equation}
	provided $n={\rm poly}(d)$.	Now by mean value theorem, and properties of derivatives of Hermite polynomials, we obtain
	\begin{align*}
		h_{\alpha_{i,j}}\left(\dfrac{\bZ_j^{\top}\bP\bS_i}{\sqrt{d}}\right)-
		h_{\alpha_{i,j}}\left(\dfrac{\bZ_j^{\top}\bQ\bS_i}{\sqrt{d}}\right)
		=~\alpha_{i,j}
		\bZ_j^{\top}\bu 
		h_{\alpha_{i,j}-1}(\bZ_j^{\top}\bw)
	\end{align*}
	where
	$$
	\bw=(\lambda\bP\bS_i+(1-\lambda)\bQ\bS_i)/\sqrt{d}
	$$
	for some $\lambda\in [0,1]$. Similarly writing the term involving $\bS_i'$ we have
	\begin{align*}\label{eq:herm-taylr}
		&~\left(
		h_{\alpha_{i,j}}\left(\dfrac{\bZ_j^{\top}\bP\bS_i}{\sqrt{d}}\right)-
		h_{\alpha_{i,j}}\left(\dfrac{\bZ_j^{\top}\bQ\bS_i}{\sqrt{d}}\right)
		\right)
		\left(
		h_{\alpha_{i,j}}\left(\dfrac{\bZ_j^{\top}\bP\bS'_i}{\sqrt{d}}\right)-
		h_{\alpha_{i,j}}\left(\dfrac{\bZ_j^{\top}\bQ\bS'_i}{\sqrt{d}}\right)
		\right)\\
		=&~
		\alpha_{i,j}^2
		\bZ_j^{\top}\bu\bv^{\top}\bZ_j
		h_{\alpha_{i,j}-1}(\bZ_j^{\top}\bw_i)
		h_{\alpha_{i,j}-1}(\bZ_j^{\top}\bw'_i)\\
		\le&~
		\dfrac{C\alpha_{i,j}^2\|\bP-\bQ\|_{\rm F}^2(\log d)^2}{d}
		\cdot 
		h_{\alpha_{i,j}-1}(\bZ_j^{\top}\bw_i)
		h_{\alpha_{i,j}-1}(\bZ_j^{\top}\bw'_i)
		\numberthis
	\end{align*}
	where $\bw_i=(\lambda\bP+(1-\lambda)\bQ)\bS_i/\sqrt{d}$ and $\bw'_i=(\lambda'\bP+(1-\lambda')\bQ)\bS'_i/\sqrt{d}$ for some $\lambda,\lambda'\in [0,1]$. We now derive an upper bound on
	\begin{align*}
		\prod_{j=1}^d
		\EE_{\bZ_j}
		\prod_{i:\alpha_{i,j}\ge 4}
		h_{\alpha_{i,j}-1}(\bZ_j^{\top}\bw_i)
		h_{\alpha_{i,j}-1}(\bZ_j^{\top}\bw'_i).
	\end{align*}
	that holds with high probability over $\bS_i,\bS'_i$. For a fixed $j\in[d]$, let the set $\calS_j:=\{i:\alpha_{i,j}\ge 4\}$ be $\calS_j=\{i_1,\dots,i_m\}$. Note that $\bS_i$ and $\bS'_i$ are independent samples for $i\in\calS_j$. On the other hand by properties of Gaussian random variables we can write, for example, that conditional on the $\bS_i$'s
	$$
	\bZ_j^{\top}\bS_m 
	=\bZ_j^{\top}\calP_{-m}\bS_m
	+\|\calP_{-m}^{\perp}\bS_m\|Z
	$$
	where $\calP_{-m}$ is the projection onto the span of $\{\bS_i,\,\bS'_i:i\in \calS_j/i_m\}$, and $Z\sim N(0,1)$ is independent of $\bZ_j^{\top}\bS_i$ and $\bZ_j^{\top}\bS'_i$ for $i\in\calS_j$, $i\neq i_m$. We compute the above expectation by successively conditioning on  $\{\bZ_j^{\top}\bS_i,\bZ_j^{\top}\bS'_i:i\in\{i_1,\dots,i_k\}\}$ for $k=1,\dots,m-1$. By Lemma~\ref{lem:Herm-exp}, we have
	\begin{align*}\label{eq:herm-bd1}
		&~\abs*{\EE\left( h_{{\alpha}_{i_m,j}-1}
			\left(
			\bZ_j^{\top}\bw_{i_m}
			\right)
			h_{{\alpha}_{i_m,j}-1}
			\left(
			\bZ_j^{\top}\bw'_{i_m}
			\right)
			\big| \bS_i,\bS_i':i\in\{i_1,\dots,i_{m-1}\}
			\right)}\\
		\le&~
		C\EE\left(
		\dfrac{\|\calP_{-m}\bS_m\|}{\sqrt{d}}
		\max\left\{
		\dfrac{\|\calP_{-m}\bS_m\|}{\sqrt{d}},
		\dfrac{|\bS_m^{\top}\calP^{\perp}_{-m}\bS_m|}{
			\|\calP^{\perp}_{-m}\bS_m\|\times \|\calP^{\perp}_{-m}\bS'_m\|}
		\right\}
		\right)^{\alpha_{i_m,j}-1}\\
		&\times
		h_{\alpha_{i_m,j}-1}
		\left(\dfrac{\bZ_j^{\top}\calP_{-m}\bS_m}{\|\calP_{-m}\bS_m\|}\right)
		h_{\alpha_{i_m,j}-1}
		\left(\dfrac{\bZ_j^{\top}\calP_{-m}\bS'_m}{\|\calP_{-m}\bS'_m\|}\right)\\
		\le&~
		C\left(
		\dfrac{(\log d)^{5/2}}{d}
		\right)^{\alpha_{i_m,j}-1}.\numberthis
	\end{align*}
	In the last inequality we use the following high probability bounds on $\bS_i,\bS'_i$. Note that
	$$
	\PP\left(
	\bS_m^{\top}\calP_{-m}\bS_m\ge {tr}(\calP_{-m})+C_1\|\calP_{-m}\|_{\rm F}(\log d) 
	\right)\le d^{-C_2}.
	$$
	for a sufficiently large constants $C_1,C_2>0$. Since $\calP_{-m}$ has rank at most $D=C(\log d)$, the bound on the norms follow. Similarly note that
	$$
	\PP\left(
	\bS_m^{\top}\bS'_m\ge C\sqrt{d} 
	\right)\le d^{-C}
	$$
	by the independence of $\bS$ and $\bS'$. Thus $\bS_m^{\top}\calP_{-m}^{\perp}\bS'_m\le C\sqrt{d}$ with probability at least $1-d^{-C}$. Finally note that 
	$$
	\PP\left(\max_{1\le j\le d}\max_{1\le k\le nd}\bZ_j^{\top}\bw_k\ge C_1\sqrt{\log d}\right)\le d^{-C_2}
	$$
	provided $n={\rm poly}(d)$. Here we write $\bw_k$ to denote unit vectors along $\calP_{-{k}}\bS_k$ and $\calP_{-{k}}\bS'_k$ for $k=2,\dots,m$. Then using upper bounds on Hermite polynomials it follows that, for example,
	$$
	|h_{\alpha_{i_m,j}-1}
	\left(\bZ_j^{\top}\bw_k\right)|
	\le |h_{\alpha_{i_m,j}-1}(C\sqrt{\log d})|
	\le C(\log d)^{(\alpha_{i_m,j}-1)/2}.
	$$
	The other terms can be bounded similarly. Repeating the argument in \eqref{eq:herm-bd1} for all $i,j$, along with \eqref{eq:herm-taylr}, we have
	\begin{align*}\label{eq:hprod-bd1}
		&~
		\EE_{\bS}
		\prod_{j=1}^d
		\EE_{\bZ_j}
		\prod_{i:\alpha_{i,j}\ge 4}
		\left(
		h_{\alpha_{i,j}}
		\left(\dfrac{\bZ_{j}^{\top}\bP\bS_{i}}{\sqrt{d}}\right)
		-
		h_{\alpha_{i,j}}\left(\dfrac{\bZ_{j}^{\top}\bQ\bS_{i}}{\sqrt{d}}\right)
		\right)
		\left(
		h_{\alpha_{i,j}}
		\left(\dfrac{\bZ_{j}^{\top}\bP\bS'_{i}}{\sqrt{d}}\right)
		-
		h_{\alpha_{i,j}}\left(\dfrac{\bZ_{j}^{\top}\bQ\bS'_{i}}{\sqrt{d}}\right)
		\right)\\
		\le&~
		\EE 
		\left(
		h_{\alpha_{i_1,1}}
		\left(\dfrac{\bZ_{1}^{\top}\bP\bS_{i_1}}{\sqrt{d}}\right)
		-
		h_{\alpha_{i_1,1}}\left(\dfrac{\bZ_{j}^{\top}\bQ\bS_{i_1}}{\sqrt{d}}\right)
		\right)
		\left(
		h_{\alpha_{i_1,1}}
		\left(\dfrac{\bZ_{1}^{\top}\bP\bS'_{i_1}}{\sqrt{d}}\right)
		-
		h_{\alpha_{i_1,1}}\left(\dfrac{\bZ_{1}^{\top}\bQ\bS'_{i_1}}{\sqrt{d}}\right)
		\right)\\
		&\times\prod_{(i,j)\neq (i_1,1):\alpha_{ij}\ge 4}
		\dfrac{C\alpha_{i,j}^2\|\bP-\bQ\|_{\rm F}^2(\log d)^2}{d}
		\left(
		\dfrac{(\log d)^{5/2}}{d}
		\right)^{\alpha_{i,j}-1}\numberthis 
	\end{align*}
	Finally we bound the first term in the product as:
	\begin{align*}
		&\EE_{\bS_{i_1},\bS'_{i_1}}
		\EE_{\bZ_1}
		\left(
		h_{\alpha_{i_1,1}}
		\left(\dfrac{\bZ_{1}^{\top}\bP\bS_{i_1}}{\sqrt{d}}\right)
		-
		h_{\alpha_{i_1,1}}\left(\dfrac{\bZ_{j}^{\top}\bQ\bS_{i_1}}{\sqrt{d}}\right)
		\right)
		\left(
		h_{\alpha_{i_1,1}}
		\left(\dfrac{\bZ_{1}^{\top}\bP\bS'_{i_1}}{\sqrt{d}}\right)
		-
		h_{\alpha_{i_1,1}}\left(\dfrac{\bZ_{1}^{\top}\bQ\bS'_{i_1}}{\sqrt{d}}\right)
		\right)\\
		=&~
		\EE_{\bS}
		\bigg(
		\EE_{\bZ_1}
		h_{\alpha_{i_1,1}}
		\left(\dfrac{\bZ_1^{\top}\bP\bS_{i_1}}{\sqrt{d}}\right)
		h_{\alpha_{i_1,1}}
		\left(\dfrac{\bZ_1^{\top}\bP\bS'_{i_1}}{\sqrt{d}}\right)
		+
		\EE_{\bZ_1}
		h_{\alpha_{i_1,1}}
		\left(\dfrac{\bZ_1^{\top}\bQ\bS_{i_1}}{\sqrt{d}}\right)
		h_{\alpha_{i_1,1}}
		\left(\dfrac{\bZ_1^{\top}\bQ\bS'_{i_1}}{\sqrt{d}}\right)\\
		&
		-\EE_{\bZ_1}
		h_{\alpha_{i_1,1}}
		\left(\dfrac{\bZ_1^{\top}\bQ\bS_{i_1}}{\sqrt{d}}\right)
		h_{\alpha_{i_1,1}}
		\left(\dfrac{\bZ_1^{\top}\bP\bS'_{i_1}}{\sqrt{d}}\right)
		-\EE_{\bZ_1}
		h_{\alpha_{i_1,1}}
		\left(\dfrac{\bZ_1^{\top}\bP\bS_{i_1}}{\sqrt{d}}\right)
		h_{\alpha_{i_1,1}}
		\left(\dfrac{\bZ_1^{\top}\bQ\bS'_{i_1}}{\sqrt{d}}\right)
		\bigg)\\
		=&~
		2\EE\left(\dfrac{\bS_i^{\top}\bS_i'}{d}\right)^{\alpha_{i_1}}
		-2\EE\left(\dfrac{\bS_i^{\top}\bP^{\top}\bQ\bS_i'}{d}\right)^{\alpha_{i_1}}.
	\end{align*}
	The equalities follow using part 1 of Lemma~\ref{lem:Herm-exp} and the property that $\EE h_{\alpha}^2(X)=1$ when $X\sim N(0,1)$. We use the lemmas:
	\begin{lemma}\label{lem:herm-mdiff2}
		$\EE\left(\dfrac{\bS_i^{\top}\bS_i'}{d}\right)^{\alpha}
		-\EE\left(\dfrac{\bS_i^{\top}\bP^{\top}\bQ\bS_i'}{d}\right)^{\alpha}
		\le ~
		\dfrac{C\alpha\|\bP-\bQ\|_{\rm F}^2}{d^2}
		\left(\dfrac{2}{d}\right)^{\alpha/2}		
		$
		.
	\end{lemma}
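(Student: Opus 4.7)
My plan is to exploit the orthogonality of $\bM := \bP^\top\bQ \in \calO(d)$. Setting $\bL := \bM - \II$, the constraint $\bM^\top\bM = \II$ gives $\bL + \bL^\top = -\bL^\top\bL$, from which $\tr(\bL) = -\tfrac12\|\bL\|_{\rm F}^2$, $|\tr(\bL^2)| \le \|\bL\|_{\rm F}^2$, $\|\bL\|_{\rm F}\le 2$, and $\|\bL\|_{\rm F}^2 = \|\bP-\bQ\|_{\rm F}^2$. Writing $X := \bS_i^\top\bS_i'/d$ and $Y := \bS_i^\top\bL\bS_i'/d$, I would analyze term by term the binomial expansion
$$\EE X^\alpha - \EE(X+Y)^\alpha = -\sum_{k=1}^{\alpha}\binom{\alpha}{k}\EE X^{\alpha-k}Y^k,$$
using the independence of $\bS_i,\bS_i'$ and the symmetry $S\stackrel{d}{=}-S$ of the source distribution.

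The heart of the argument is a cancellation at the leading $d^{-\alpha/2-1}$ order between the $k=1$ and $k=2$ summands, which is exactly solvable for $\alpha=2$: direct computation gives $\EE XY = \mu_2^2\tr(\bL)/d^2$ and $\EE Y^2 = \mu_2^2\|\bL\|_{\rm F}^2/d^2$, so $-2\EE XY - \EE Y^2 = 0$ by $\tr(\bL) = -\|\bL\|_{\rm F}^2/2$. For general even $\alpha$, conditioning on $\bS_i'$ and applying the moment--partition formula for products of independent symmetric variables expresses each $\EE X^{\alpha-k}Y^k$ as a sum over pair partitions $(\pi,\sigma)$ of $[\alpha]$ carrying a contraction in $\bL$. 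A parity count shows that the only leading contraction surviving for $k=1$ is $\tr(\bL)$ (every other index must pair with its partner, leaving a single trace against $\bL$), while for $k=2$ the leading contraction is $\|\bL\|_{\rm F}^2$; combined with the Wick-pairing identity $\alpha(\alpha-1)!! = 2\binom{\alpha}{2}(\alpha-3)!!$ and $\tr(\bL) = -\|\bL\|_{\rm F}^2/2$, the two leading contributions annihilate. Conceptually, this cancellation reflects the fact that if $\bS_i,\bS_i'$ were $N(\bzero,\II_d)$, orthogonal invariance would force $\EE X^\alpha - \EE(X+Y)^\alpha = 0$ identically, so the leading Edgeworth correction coincides for $X$ and $X+Y$.

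The residual terms are $O(\|\bL\|_{\rm F}^2 d^{-\alpha/2-2})$ or smaller. Sub-leading pairings at $k=1,2$ produce higher $\bL$-contractions such as $\tr(\bL^2)$, $\sum_j L_{jj}^2$, and $\|\bL\|_{\rm F}^4$, all controlled by $C\|\bL\|_{\rm F}^2$ via the orthogonality identities above. For $k\ge 3$, Cauchy--Schwarz combined with standard hypercontractive moment estimates $\EE X^{2r} \lesssim C^r r!\,\mu_2^{2r}/d^r$ and $\EE Y^{2r}\lesssim C^r r!\,\mu_2^{2r}\|\bL\|_{\rm F}^{2r}/d^{2r}$ (for bilinear forms in independent sub-exponential entries, in the spirit of Lemma~\ref{lem:lin-comb-mmt}) gives $|\EE X^{\alpha-k}Y^k| \lesssim C^\alpha(\alpha!)^{1/2}\mu_2^\alpha\|\bL\|_{\rm F}^k d^{-(\alpha+k)/2}$; since $\|\bL\|_{\rm F}\le 2$, the geometric tail over $k\ge 3$ sums to $O(\alpha\|\bL\|_{\rm F}^2\mu_2^\alpha d^{-\alpha/2-2})$. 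Assembling all pieces yields the claimed bound, with the factor $(2/d)^{\alpha/2}$ absorbing the universal $\mu_2^\alpha$ constant. The principal obstacle is the uniform-in-$\alpha$ bookkeeping needed to verify the $k=1$ versus $k=2$ cancellation; I expect the cleanest route is the Gaussian-comparison viewpoint above, which isolates the non-Gaussian Edgeworth defect $(\mu_4-3\mu_2^2)(\sum_{j,k}M_{jk}^4 - d) = O(\|\bL\|_{\rm F}^2)$ as the sole source of the residual.
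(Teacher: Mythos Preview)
Your approach is sound and arrives at the same controlling quantity as the paper, but by a different decomposition. The paper first passes to the normalized vector $\bS'_u=\bS'/\|\bS'\|$ via the high-probability event $\|\bS'\|^2\ge d/2$ (this is where the factor $(2/d)^{\alpha/2}$ enters), then expands the conditional moments $\EE_{\bS}\langle\bS,\bS'_u\rangle^\alpha$ in cumulants through incomplete Bell polynomials $B_{\alpha,k}$. Because $\kappa_2(\langle\bS,\bS'_u\rangle)=\kappa_2(\langle\bS,\bO\bS'_u\rangle)=1$ identically, the top layer $k=\alpha/2$ cancels structurally without any combinatorial identity; the first surviving layer $k=\alpha/2-1$ involves $\kappa_4$ and produces $\kappa_4(S_1)\sum_i(1-\sum_j o_{ij}^4)$, which the paper then bounds by $2\|\bP-\bQ\|_{\rm F}^2$. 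Your binomial route with $\bL=\bM-\II$ and the Wick-pairing cancellation between $k=1$ and $k=2$ reproduces the same mechanism---indeed your ``Edgeworth defect'' $(\mu_4-3)\bigl(\sum_{j,k}M_{jk}^4-d\bigr)$ is exactly (minus) the paper's $\kappa_4\sum_i(1-\sum_j o_{ij}^4)$---but the cancellation is harder to see directly and requires the identity $\alpha(\alpha-1)!!=2\binom{\alpha}{2}(\alpha-3)!!$ you cite. The cumulant route buys a cleaner leading-order cancellation (it is forced by $\kappa_2$ being invariant under $\bO$, not by a numerical coincidence), while your route avoids the sphere-projection truncation and the attendant $(1/d)^\alpha$ tail. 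Both proofs only sketch the higher-order remainder; the paper defers ``$k\le\alpha/2-2$'' to ``bounded similarly,'' and your Cauchy--Schwarz tail bound is of comparable rigor. One caution: your crude estimate $|\EE X^{\alpha-k}Y^k|\lesssim C^\alpha(\alpha!)^{1/2}\|\bL\|_{\rm F}^k d^{-(\alpha+k)/2}$ carries an $(\alpha!)^{1/2}$ factor that, for $\alpha\asymp\log d$, is $d^{O(\log\log d)}$ and would swamp the target unless tightened; the Bell-polynomial bookkeeping avoids this because each layer carries the correct double-factorial rather than a Cauchy--Schwarz surrogate.
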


	\begin{lemma}\label{lem:Herm-exp} For $\rho\in (-1,1)$, any random variable $X$, and $Z\sim N(0,1)$, we have 
		$$\EE h_{\alpha}(\rho X+\sqrt{1-\rho^2}Z|X)=\rho^{\alpha}h_{\alpha}(X)$$
		Moreover, 
		$$
		\abs*{\EE\left(
			h_{\alpha}(\rho_1 X_1+\sqrt{1-\rho_2^2}Z_1)
			h_{\alpha}(\rho_2 X_2+\sqrt{1-\rho_2^2}Z_2)\big| X_1,X_2\right)}
		\le C\rho_1^{\alpha}\max\{\rho_2,\rho_{12}\}^{\alpha}h_{\alpha}(X_1)h_{\alpha}(X_2)
		$$
		for a constant $C>0$ and $Z_1,Z_2\sim N(0,1)$ with ${\rm corr}(Z_1,Z_2)=\rho_{12}$.
	\end{lemma}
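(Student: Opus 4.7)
The proof will rely on the Hermite generating function identity
\[
\sum_{\alpha=0}^{\infty}\frac{t^{\alpha}}{\alpha!}h_{\alpha}(y)=\exp\!\bigl(ty-t^{2}/2\bigr),
\]
together with the moment generating function of a (possibly correlated) Gaussian vector, to read off the two identities as coefficients in a power series. I expect the bookkeeping in the bivariate case to be the main obstacle; the univariate statement is an immediate one-line computation.

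For the first identity I would write $Y=\rho X+\sqrt{1-\rho^{2}}Z$ and compute
\[
\EE\!\bigl[e^{tY-t^{2}/2}\,\big|\,X\bigr]=e^{t\rho X-t^{2}/2}\cdot\EE\!\bigl[e^{t\sqrt{1-\rho^{2}}Z}\bigr]=e^{(t\rho)X-(t\rho)^{2}/2},
\]
since the Gaussian MGF supplies the factor $e^{t^{2}(1-\rho^{2})/2}$ that cancels the $\rho$-independent part of the quadratic. Reading off the coefficient of $t^{\alpha}/\alpha!$ on both sides yields $\EE[h_{\alpha}(Y)\mid X]=\rho^{\alpha}h_{\alpha}(X)$.

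For the bivariate bound I would run the same generating-function argument on $Y_{1}=\rho_{1}X_{1}+\sqrt{1-\rho_{1}^{2}}Z_{1}$ and $Y_{2}=\rho_{2}X_{2}+\sqrt{1-\rho_{2}^{2}}Z_{2}$, using the joint MGF of $(Z_{1},Z_{2})$ (which carries the correlation $\rho_{12}$). A direct computation gives
\[
\EE\!\bigl[e^{sY_{1}-s^{2}/2+tY_{2}-t^{2}/2}\,\big|\,X_{1},X_{2}\bigr]=e^{s\rho_{1}X_{1}-(s\rho_{1})^{2}/2}\,e^{t\rho_{2}X_{2}-(t\rho_{2})^{2}/2}\,e^{st\rho_{12}\sqrt{(1-\rho_{1}^{2})(1-\rho_{2}^{2})}},
\]
which is exactly the conditional version of Mehler's kernel. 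Expanding each of the three exponentials as a power series (the first two as Hermite generating functions, the third as a standard Taylor series) and extracting the coefficient of $s^{\alpha}t^{\alpha}$ gives the explicit formula
\[
\EE\!\bigl[h_{\alpha}(Y_{1})h_{\alpha}(Y_{2})\,\big|\,X_{1},X_{2}\bigr]=\sum_{k=0}^{\alpha}\binom{\alpha}{k}^{2}k!\,\bigl(\rho_{12}\sqrt{(1-\rho_{1}^{2})(1-\rho_{2}^{2})}\bigr)^{k}(\rho_{1}\rho_{2})^{\alpha-k}\,h_{\alpha-k}(X_{1})h_{\alpha-k}(X_{2}).
\]

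From here I would obtain the advertised inequality by factoring out $\rho_{1}^{\alpha}$ and bounding each term by $\max\{\rho_{2},\rho_{12}\}^{\alpha}$ (since $\rho_{12}^{k}\rho_{2}^{\alpha-k}\le\max\{\rho_{2},\rho_{12}\}^{\alpha}$, using $|1-\rho_{i}^{2}|\le 1$), while the $k$-dependent combinatorial factors together with $h_{\alpha-k}(X_{i})/h_{\alpha}(X_{i})$ are absorbed into the constant $C$ on the range of $X_{1},X_{2}$ where Lemma~\ref{lem:mean-diff} applies the bound (there $X_{1},X_{2}$ are normalized Gaussian-like inner products, so $|h_{\alpha-k}(X_{i})|\lesssim |h_{\alpha}(X_{i})|$ up to a combinatorial factor bounded by a constant depending on the degree $D\le c_{0}\log d$). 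The main technical point is to verify that the Mehler expansion's coefficients $\binom{\alpha}{k}^{2}k!$ combined with the $\sqrt{(1-\rho_{1}^{2})(1-\rho_{2}^{2})}$ factors telescope into a uniform constant; this is the only nontrivial step and is where I expect to spend the most care.
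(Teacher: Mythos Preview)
Your generating-function approach is a different route from the paper's. The paper expands $h_\alpha(x+y)$ via the Hermite addition formula and then evaluates $\EE h_l(\sqrt{1-\rho^2}Z)$ term by term; for the bivariate part it conditions on $Z_1$, writes $Z_2=\rho_{12}Z_1+\sqrt{1-\rho_{12}^2}Z_3$, and iterates the univariate identity. Your method---computing the conditional bivariate MGF and reading off coefficients as a Mehler-type expansion---is cleaner and delivers the explicit sum
\[
\sum_{k=0}^{\alpha}\binom{\alpha}{k}^{2}k!\,\gamma^{k}(\rho_1\rho_2)^{\alpha-k}h_{\alpha-k}(X_1)h_{\alpha-k}(X_2)
\]
directly. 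Both approaches are valid for the first identity and for deriving this bivariate formula.

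There is, however, a real gap in your final bounding step. You write ``factoring out $\rho_1^\alpha$,'' but the $k$-th term carries $(\rho_1\rho_2)^{\alpha-k}$, so extracting $\rho_1^\alpha$ leaves a residual $\rho_1^{-k}$ which is unbounded as $\rho_1\to 0$---and in the application $\rho_1$ is small, of order $\sqrt{\log d/d}$. This is not cosmetic: set $\rho_1=0$ in your formula and only the $k=\alpha$ term survives, giving $\alpha!\,\gamma^\alpha\neq 0$, whereas the claimed bound $C\rho_1^\alpha\max\{\rho_2,\rho_{12}\}^\alpha h_\alpha(X_1)h_\alpha(X_2)$ vanishes. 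So the second inequality as literally stated cannot hold for all $\rho_1\in(-1,1)$. The paper's own proof is equally loose at exactly this point (it asserts the bound ``using the expansion of Hermite polynomials'' without a clean argument), so you have not missed an idea the paper supplies; but you have not closed the gap either. To make this rigorous you would need either to weaken the stated inequality---for instance, your formula immediately gives a bound of the form $C(\rho_1\rho_2+\gamma)^\alpha$ times Hermite factors, which is what Lemma~\ref{lem:mean-diff} actually needs---or restrict to the specific parameter regime in which the lemma is invoked and argue directly there.
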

	Plugging in the above bound into \eqref{eq:hprod-bd1} we have
	\begin{align*}
		&~\EE_{\bS}
		\prod_{j=1}^d
		\EE_{\bZ_j}
		\prod_{i:\alpha_{i,j}\ge 4}
		\left(
		h_{\alpha_{i,j}}
		\left(\dfrac{\bZ_{j}^{\top}\bP\bS_{i}}{\sqrt{d}}\right)
		-
		h_{\alpha_{i,j}}\left(\dfrac{\bZ_{j}^{\top}\bQ\bS_{i}}{\sqrt{d}}\right)
		\right)
		\left(
		h_{\alpha_{i,j}}
		\left(\dfrac{\bZ_{j}^{\top}\bP\bS'_{i}}{\sqrt{d}}\right)
		-
		h_{\alpha_{i,j}}\left(\dfrac{\bZ_{j}^{\top}\bQ\bS'_{i}}{\sqrt{d}}\right)
		\right)\\
		\le&~
		\dfrac{C\alpha_{i_1,1}\|\bP-\bQ\|_{\rm F}^2}{d^2}
		\left(\dfrac{2}{d}\right)^{\alpha_{i_1,1}/2}
		\times\prod_{(i,j)\neq (i_1,1):\alpha_{ij}\ge 4}
		\dfrac{C\alpha_{i,j}^2\|\bP-\bQ\|_{\rm F}^2(\log d)^2}{d}
		\left(
		\dfrac{(\log d)^{5/2}}{d}
		\right)^{\alpha_{i,j}-1}\\
		\le&~
		\left(
		\dfrac{C\alpha_{i,j}^2\|\bP-\bQ\|_{\rm F}^2(\log d)^2}{d}
		\right)^{\|\balpha\|_0}
		\prod_{i,j:\alpha_{i,j}\ge 4}
		\left(
		\dfrac{(\log d)^{5/2}}{d}
		\right)^{\alpha_{i,j}/2}\\
		=&~
		\left(
		\dfrac{C\|\bP-\bQ\|_{\rm F}^2(\log d)^4}{d}
		\right)^{\|\balpha\|_0}
		\left(
		\dfrac{(\log d)^{5/2}}{d}
		\right)^{\|\balpha\|/2}.
	\end{align*}
	This finishes the proof.
\end{proof}

\medskip

\begin{proof}[Proof of Lemma~\ref{lem:herm-mdiff2}]
	Since $S_j$ are independent random variables with a distribution symmetric around zero, we have
	$$
	\EE (S_j/\|\bS\|)^{2k-1}=0
	\quad
	\text{for all }k\in\NN.
	$$
	By assumption $S_k$ are independent sub-exponential random variables with $\EE S_k=0$, ${\rm Var}(S_k)=1$. Thus,
	\begin{align*}
		\PP(\|S\|^2\le d/2)
		\le&~
		\PP\left(\sum_{k=1}^d(S_k^2-1)\ge d\right)
		\le \dfrac{C}{(d/2)^{\alpha}}
	\end{align*}
	by Lemma~\ref{lem:Sk-tail-bds} for $4\alpha$, where $\alpha\le D$. Thus
	\begin{align*}\label{eq:mean-diff}
		&~
		\EE_{\bS,\bS'}
		\left\{
		\left(
		\dfrac{\langle\bS,\bS'\rangle}{d}
		\right)^{\alpha_{i}}
		-
		\left(
		\dfrac{\langle\bS,\bP^{\top}\bQ\bS'\rangle}{d}
		\right)^{\alpha_{i}}
		\right\}\\
		=&~
		\EE_{\bS,\bS'}
		\left(
		\dfrac{1}{d^{\alpha_i}}
		\big[
		\langle\bS,\bS'\rangle^{\alpha_i}
		-\langle\bS,\bP^{\top}\bQ\bS'\rangle^{\alpha_i}
		\big]
		\right)\\
		\le&~
		\left(\dfrac{2}{d}\right)^{\alpha_i}
		\abs*{
			\EE\langle\bS,\bS'\rangle^{\alpha_i}
			-
			\EE\langle\bS,\bP^{\top}\bQ\bS'\rangle^{\alpha_i}
		}
		+\PP(\min\{\|\bS\|\}\le d/2)\\
		\le&~
		\left(\dfrac{C}{d}\right)^{\alpha_i/2}
		\abs*{
			\EE\langle\bS,\bS'_u\rangle^{\alpha_i}
			-
			\EE\langle\bS,\bP^{\top}\bQ\bS'_u\rangle^{\alpha_i}
		}
		+C\left(
		\dfrac{1}{d}
		\right)^{\alpha_i}
		,\numberthis
	\end{align*}
	where we write $\bS'_u:=\bS/\|\bS\|$, and use the high probability lower bound on $\|\bS\|$. We next expand the moments in terms of cumulants as follows. Writing $\bO:=\bP^{\top}\bQ\in\calO(d)$, we have:
	\begin{align*}
		&~\EE_{\bS}\langle\bS,\bS'_u\rangle^{\alpha}
		-
		\EE_{\bS}\langle\bS,\bO\bS'_u\rangle^{\alpha}\\
		=&~
		\sum_{k=1}^{\alpha}
		\left(B_{\alpha,k}(\kappa_1(\langle\bS,\bS'_u\rangle),\dots,\kappa_{\alpha-k+1}(\langle\bS,\bS'_u\rangle))
		-B_{\alpha,k}(\kappa_1(\langle\bS,\bO\bS'_u\rangle),\dots,\kappa_{\alpha-k+1}(\langle\bS,\bO\bS'_u\rangle))
		\right)
		\\
		=&~
		\sum_{k=1}^{\alpha}
		\bigg(
		B_{\alpha,k}(0,1,0,\kappa_4(\langle\bS,\bS'_u\rangle),\dots,\kappa_{\alpha-k+1}(\langle\bS,\bS'_u\rangle))\\
		&~\hspace{2cm}-B_{\alpha,k}(0,1,0,\kappa_4(\langle\bS,\bO\bS'_u\rangle),\dots,\kappa_{\alpha-k+1}(\langle\bS,\bO\bS'_u\rangle))
		\bigg)\\
		=:&~
		\sum_{k=1}^{\alpha}
		\left(
		g(\alpha,k,\langle \bS,\bS'_u\rangle)
		-g(\alpha,k,\langle \bS,\bO\bS'_u\rangle)
		\right)
	\end{align*}
	where $B_{\alpha,k}$ are incomplete Bell polynomials. Note that the odd cumulants $\kappa_{2k+1}(\langle\bS,\bS'_u\rangle)=0$ and $\kappa_{2k+1}(\langle\bS,\bO\bS'_u\rangle)=0$ since $S_k$ are symmetric around zero. Moreover $\kappa_{2}(\langle\bS,\bS'_u\rangle)=\kappa_{2}(\langle\bS,\bO\bS'_u\rangle)=\sum (S_i')^2/\|\bS'\|^2=1$. Finally we have by the independence of $S_k$ that
	$$
	\kappa_{2k}(\langle\bS,\bS'_u\rangle)=\sum_{i=1}^d\kappa_{2k}(S_i(S_u)_i')
	\quad 
	\text{and}
	\quad 
	\kappa_{2k}(\langle\bS,\bO\bS'_u\rangle)=\sum_{i=1}^d\kappa_{2k}
	(S_i\bo_i^{\top}\bS'_u).
	$$
	The incomplete Bell polynomials can be written as:
	\begin{align*}
		B_{\alpha,k}(x_1,\dots,x_{\alpha-k+1})=
		\sum\dfrac{\alpha!}{j_1!j_2!\dots j_{\alpha-k+1}!}\prod_{l=1}^{\alpha-k+1}
		\left(
		\dfrac{x_l}{l!}
		\right)^{j_l}.
	\end{align*}
	Here the sum is over all $j_1,\dots,j_{\alpha-k+1}\in \{0\}\cup\NN$ such that
	$$
	\sum_{l=1}^{\alpha-k+1}j_l=k\quad
	\text{and}
	\quad
	\sum_{l=1}^{\alpha-k+1}lj_l=\alpha.
	$$
	Since odd cumulants are zero in our case, the sum only runs over $(0,j_2,0,j_4,\dots,j_{2\floor{(n-k+1)/2}})$. By the above equations it can then be checked that
	$$
	g(\alpha,k,\langle \bS,\bS'_u\rangle)
	=g(\alpha,k,\langle \bS,\bO\bS'_u\rangle)=0
	\quad
	\text{for }k>\alpha/2.
	$$
	When $k=\alpha/2$, the only possible configuration of $j_l$'s for which $g(\cdot)$ above takes nonzero values is given by $j_2=\alpha/2$ and $j_l=0$ for $l\neq 2$. Thus,
	\begin{align*}
		&g(\alpha,\alpha/2,\langle \bS,\bS'_u\rangle)
		-g(\alpha,\alpha/2,\langle \bS,\bO\bS'_u\rangle)\\
		=&~
		\dfrac{\alpha!}{(\alpha/2)!}
		\left[
		\left( \dfrac{\kappa_2(\langle\bS,\bS'_u\rangle)}{2!}\right)^{\alpha/2}
		-\left( \dfrac{\kappa_2(\langle\bS,\bO\bS'_u\rangle)}{2!}\right)^{\alpha/2}\right]
		=~
		\dfrac{\alpha!}{(\alpha/2)!}
		\left[
		\left( \dfrac{1}{2!}\right)^{\alpha/2}
		-\left( \dfrac{1}{2!}\right)^{\alpha/2}\right]
		=0.
	\end{align*}
	Similarly for $k=\alpha/2-1$, the only possible configuration of $j_l$'s for which $g(\cdot)$ above takes nonzero values is given by $j_2=(\alpha-4)/2$, $j_4=1$ and $j_l=0$ for $l\neq 2,4$. Thus,
	\begin{align*}
		&g(\alpha,\alpha/2-1,\langle \bS,\bS'_u\rangle)
		-g(\alpha,\alpha/2-1,\langle \bS,\bO\bS'_u\rangle)\\
		=&~
		\dfrac{\alpha!}{(\alpha/2-1)!}
		\left[
		\left( \dfrac{\kappa_2(\langle\bS,\bS'_u\rangle)}{2!}\right)^{\alpha/2-2}
		\dfrac{\kappa_4(\langle\bS,\bS'_u\rangle)}{4!}
		-\left( \dfrac{\kappa_2(\langle\bS,\bO\bS'_u\rangle)}{2!}\right)^{\alpha/2-2}
		\dfrac{\kappa_4(\langle\bS,\bO\bS'_u\rangle)}{4!}
		\right]\\
		=&~
		\dfrac{\alpha!}{(\alpha/2-1)!}
		\left( \dfrac{1}{2!}\right)^{\alpha/2-2}
		\times
		\dfrac{1}{4!}
		\left[
		\sum_{i=1}^d
		(\kappa_4(S_iS'_i/\|\bS'\|)-\kappa_4(S_i\bo_i^{\top}\bS'/\|\bS'\|))
		\right]\\
		=&~
		\dfrac{\alpha!}{(\alpha/2-1)!}
		\left( \dfrac{1}{2!}\right)^{\alpha/2-2}
		\times
		\dfrac{1}{4!}
		\left[
		\kappa_4(S_1)
		\sum_{i=1}^d
		\dfrac{(S'_i)^4-(\bo_i^{\top}\bS')^4}{\|\bS'\|^4}
		\right]
	\end{align*}
	Thus taking a second expectation over $\bS'$ we have
	\begin{align*}
		&~\EE_{\bS'}\left(
		g(\alpha,\alpha/2-1,\langle \bS,\bS'_u\rangle)
		-g(\alpha,\alpha/2-1,\langle \bS,\bO\bS'_u\rangle)
		\right)
		\\
		\le&~
		\dfrac{\kappa_4(S_1)\alpha!}{(d/2)^24!(\alpha/2-1)!}
		\left( \dfrac{1}{2!}\right)^{\alpha/2-2}
		\left[
		\sum_{i=1}^d
		\{\EE(S'_i)^4-\EE(\bo_i^{\top}\bS')^4\}
		\right]+C\left(
		\dfrac{1}{d}
		\right)^{\alpha}\\
		\le&~
		\dfrac{C\kappa_4(S_1)^2\alpha!}{(d/2)^24!(\alpha/2-1)!}
		\left( \dfrac{1}{2!}\right)^{\alpha/2-2}
		\left[
		\sum_{i=1}^d
		\{
		1-\sum_{j=1}^do_{ij}^4\}
		\right]\\
		=&~
		\dfrac{C\kappa_4(S_1)^2\alpha!}{(d/2)^24!(\alpha/2-1)!}
		\left( \dfrac{1}{2!}\right)^{\alpha/2-2}
		\left[
		\sum_{i=1}^d
		\sum_{j=1}^d
		\{
		o_{ij}^2(1-o_{ij}^2)\}
		\right]
	\end{align*}
	where we use the high probability event $\PP(\|\bS'\|\ge \sqrt{d/2})\ge 1-d^{-\alpha}$ in the first inequality. The higher order terms, for $k\le \alpha/2-2$ can be bounded similarly using successive differences in the products of cumulants of orders $(j_1,\dots,j_{\alpha-k+1})$.

	We then have
	\begin{align*}
		&~\EE_{\bS,\bS'}\langle\bS,\bS'_u\rangle^{\alpha}
		-
		\EE_{\bS,\bS'}\langle\bS,\bO\bS'_u\rangle^{\alpha}\\
		\le&~
		C\alpha 
		\left(\dfrac{\kappa_4(S_1)^2}{d^2}
		\left[
		\sum_{i=1}^d
		\sum_{j=1}^d
		\{
		o_{ij}^2(1-o_{ij}^2)\}
		\right]
		\right)
		\max\{
		\EE_{\bS,\bS'}
		\langle\bS,\bS'_u
		\rangle^{\alpha},
		\EE_{\bS,\bS'}
		\langle\bS,\bO\bS'_u
		\rangle^{\alpha}
		\}
		+C\left(
		\dfrac{1}{d}
		\right)^{\alpha}\\
		\le&~
		C\alpha 
		\left(\dfrac{\kappa_4(S_1)^2}{d^2}
		\left[
		\sum_{i=1}^d
		\sum_{j=1}^d
		\{
		o_{ij}^2(1-o_{ij}^2)\}
		\right]
		\right)
		\left\{
		1+
		\EE_{\bS'}
		\sum_{k=1}^d
		(\max\{S'_k,\bo_k^{\top}\bS'\}/\|\bS'\|)^{\alpha}
		\right\}
		\\
		\le&~
		C\alpha 
		\left(\dfrac{\kappa_4(S_1)^2}{d^2}
		\left[
		\sum_{i=1}^d
		\sum_{j=1}^d
		\{
		o_{ij}^2(1-o_{ij}^2)\}
		\right]
		\right)
		.
	\end{align*}
	The first inequality follows by comparing the expression for the difference in expectations to the case where $\bO=\II_d$. To finish the proof, we use the fact that $\bO=\bP^{\top}\bQ$ and simplify the expression. In particular,
	\begin{align*}
		~\sum_{i=1}^d
		\sum_{j=1}^d
		\{
		o_{ij}^2(1-o_{ij}^2)\}
		\le &~
		\sum_{i=1}^d (1-o_{ii}^2)+\sum_{i\neq j}o_{ij}^2
		\le ~
		2\sum_{i=1}^d (1-o_{ii})+\sum_{i\neq j}o_{ij}^2\\
		=&~
		\sum_{i=1}^d (2-2\bp_i^{\top}\bq_i)+\sum_{i\neq j}(\bP^{\top}\bQ)_{ij}^2
		=~
		\sum_{i=1}^d \|\bp_i-\bq_i\|^2+\sum_{i\neq j}(\bP^{\top}\bQ)_{ij}^2\\
		\le&~
		\|\bP-\bQ\|_{\rm F}^2+\|\II_d-\bP^{\top}\bQ\|_{\rm F}^2\\
		=&~
		2\|\bP-\bQ\|_{\rm F}^2.
	\end{align*}	
	Plugging this bound into \eqref{eq:mean-diff} we have the lemma.
\end{proof}

\medskip

\begin{proof}[Proof of Lemma~\ref{lem:Herm-exp}]
	The proof follows equations 82-83 of \cite{mao2021optimal}. 
	It is a standard fact that
	$$
	h_{\alpha}(x+y)=\sum_{l=0}^{\alpha}
	\sqrt{\dfrac{l!}{\alpha!}}
	{\alpha\choose l}x^{\alpha-l}h_l(y).
	$$
	Thus
	\begin{align*}
		&\EE
		h_{\alpha}(\rho X+\sqrt{1-\rho^2}Z)\\
		=&~
		\sum_{l=0}^{\alpha}
		\sqrt{\dfrac{l!}{\alpha!}}
		{\alpha\choose l}
		\left(\rho X\right)^{\alpha-l}
		\EE h_l\left(
		\sqrt{1-\rho^2}Z
		\right)\\
		=&~
		\sum_{l=0}^{\alpha}
		\sqrt{\dfrac{l!}{\alpha!}}
		{\alpha\choose l}
		\left(\rho X\right)
		^{\alpha-l}
		(l-1)!!(l!)^{-1/2}
		\left(
		(1-\rho^2)-1
		\right)^{l/2}\mathbbm{1}(l\text{ is even})\\
		=&~
		\sum_{l=0}^{\alpha}
		\sqrt{\dfrac{l!}{\alpha!}}
		{\alpha\choose l}
		\left(\rho X\right)
		^{\alpha-l}
		(l-1)!!(l!)^{-1/2}
		\left(
		(1-\rho^2)-1
		\right)^{l/2}\mathbbm{1}(l\text{ is even})\\
		=&~
		\rho^{\alpha}
		\sum_{l=0}^{\alpha}
		\sqrt{\dfrac{l!}{\alpha!}}
		{\alpha\choose l}
		X^{\alpha-l}
		(l-1)!!(l!)^{-1/2}
		\mathbbm{1}(l\text{ is even})\\
		=&~
		\rho^{\alpha}h_{\alpha}(X).
	\end{align*}
	In the third equality, we use the standard fact that
	$$
	\EE h_{l}(\sqrt{1-\rho^2}Z)=(l-1)!!(l!)^{-1/2}(-\rho)^l\mathbbm{1}(l\text{ is even}).
	$$	
	The second last equality follows by using the third equality for $\rho=1$. Similarly,
	\begin{align*}
		&\EE
		h_{\alpha}(\rho_1 X_1+\sqrt{1-\rho_2^2}Z_1)
		h_{\alpha}(\rho_2 X_2+\sqrt{1-\rho_2^2}Z_2)\\
		=&~
		\sum_{l_1=0}^{\alpha}
		\sum_{l_2=0}^{\alpha}
		\sqrt{\dfrac{l_1!l_2!}{(\alpha!)^2}}
		{\alpha\choose l_1}
		{\alpha\choose l_2}
		\left(\rho_1 X_1\right)^{\alpha-l_1}
		\left(\rho_2 X_2\right)^{\alpha-l_2}
		\EE h_{l_1}\left(
		\sqrt{1-\rho_1^2}Z_1
		\right)
		h_{l_2}\left(
		\sqrt{1-\rho_2^2}Z_2
		\right)
	\end{align*}
	We can then write $Z_2=\rho_{12}Z_1+\sqrt{1-\rho_{12}^2}Z_3$ where $Z_3$ is independent of $Z_1$. Then following the previous calculation, we get
	\begin{align*}
		&~\EE\left( h_{l_1}\left(
		\sqrt{1-\rho_1^2}Z_1
		\right)
		h_{l_2}\left(
		\sqrt{1-\rho_2^2}Z_2
		\right)\right)\\
		=&~
		\EE
		\EE\left( h_{l_1}\left(
		\sqrt{1-\rho_1^2}Z_1
		\right)
		h_{l_2}\left(
		\sqrt{1-\rho_2^2}Z_2
		\right)	|Z_1\right)\\
		=&~
		\EE
		h_{l_1}\left(
		\sqrt{1-\rho_1^2}Z_1
		\right)
		\EE 
		h_{l_2}
		\left(
		\sqrt{1-\rho_2^2}
		(\rho_{12}Z_1+\sqrt{1-\rho_{12}^2}Z_3)
		\right)		\\
		=&~
		\EE
		h_{l_1}\left(
		\sqrt{1-\rho_1^2}Z_1
		\right)
		\EE 
		h_{l_2}
		\left
		(\rho_{12}\sqrt{1-\rho_2^2}Z_1+
		\sqrt{1-\rho_{12}^2-\rho_2^2+\rho_2^2\rho_{12}^2}Z_3
		\right)
		\\
		=&~\rho_{12}^{l_2}
		\EE
		h_{l_1}\left(
		\sqrt{1-\rho_1^2}Z_1
		\right)
		h_{l_2}\left(Z_1\right)
		\le C\rho_{12}^{l_2}\rho_1^{l_1}
	\end{align*}
	using the expansion of Hermite polynomials in the last step. This implies that
	$$
	\abs*{\EE
		h_{\alpha}(\rho_1 X_1+\sqrt{1-\rho_2^2}Z_1)
		h_{\alpha}(\rho_2 X_2+\sqrt{1-\rho_2^2}Z_2)}
	\le C\rho_1^{\alpha}\max\{\rho_2,\rho_{12}\}^{\alpha}
	|\EE h_{\alpha}(X_1)h_{\alpha}(X_2)|.
	$$ 
	Here $\rho_{12}$ is the correlation between $Z_1$ and $Z_2$.
\end{proof}

\medskip

\begin{lemma}\label{lem:ica-op-norm}
	Let $\bX_1,\ldots,\bX_n$ be $n$ independent copies of $\bX$ such that $\calL(\bX)\in\calP_{\rm ICA}(\bA;\epsi, M_1, M_2)$ for some $\bA\in \calO(d)$ and $M_1,M_2>0$. Let $\hat{\bA}_j$ and $\bA_j$ denote the matrix consisting of the first $j$ columns of $\hat{\bA}$ and $\bA$ respectively. We then have
	$$
	\max_{1\le j\le d}
	\norm*{\hat{\bA}_j-\bA_j}
	\le C\sqrt{\dfrac{d(\log d)}{n}}+\dfrac{Cd^{3/2}(\log d)}{n}
	+C\sqrt{\dfrac{d^2\log(\delta)^2}{n}}\mathbbm{1}(\epsi<4)
	$$
	with probability at least $1-d^{-3}-\delta-n^{-\epsi/8}-n^{-(\epsi-4)/12}\mathbbm{1}(\epsi\ge 4)$, provided $n\ge (d\log\delta)^2$.
\end{lemma}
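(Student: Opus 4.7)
The plan is to derive the operator-norm estimate by linearising each column of $\hat{\bA}_j-\bA_j$ via the asymptotic expansion in Lemma~\ref{lem:asy} and then applying matrix concentration to the leading stochastic term. On the high-probability event $\calA$ from \eqref{eq:tens-conc-bds-21}, Lemma~\ref{lem:asy} yields, for every $k\le j$,
\[
\hat{\ba}_k-\ba_k=\dfrac{1}{\kappa_4(S_k)}\bP_{\ba_k,\perp}\bigl(\scrE\times_{2,3,4}\ba_k\bigr)+\br_k+\bigl(\langle\hat{\ba}_k,\ba_k\rangle-1\bigr)\ba_k,
\]
where part~(i) controls the $\ba_k$-component by $O(d(\log d)/n)$ and part~(ii) gives $\|\br_k\|\le Cd(\log d)^2/n$. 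Stacking columns produces the decomposition $\hat{\bA}_j-\bA_j=\bL_j+\bR_j$, where $\bL_j$ collects the leading noise terms and $\bR_j$ absorbs the $\br_k$ together with the diagonal corrections.

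For $\bL_j$, I observe first that $\EE(S_k^3\bX)=\EE(S_k^4)\ba_k$ lies in the kernel of $\bP_{\ba_k,\perp}$, so I can write $\bL_j=n^{-1}\sum_{i=1}^n\bM_i^{(j)}$ as an empirical mean of i.i.d.\ mean-zero $d\times j$ matrices whose $k$-th column is $\kappa_4(S_k)^{-1}S_{ik}^3\bP_{\ba_k,\perp}\bX_i$. Exploiting orthonormality of $\bA$, independence of the $S_{ik}$'s, and the vanishing of odd moments, a direct computation shows
\[
\EE\,(\bM_i^{(j)})^{\top}\bM_i^{(j)}=\diag\!\left(\dfrac{\EE S_k^6\sum_{l\ne k}\EE S_l^2}{\kappa_4(S_k)^2}\right),
\]
together with an analogous formula for $\EE\,\bM_i^{(j)}(\bM_i^{(j)})^{\top}$; both have operator norm $O(d)$. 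A truncated matrix Bernstein argument mirroring Lemmas~\ref{lem:eps2-bd}--\ref{lem:eps1-bd} (truncating $\|\bS_i\|$ at order $\sqrt{d}$ and invoking Lemma~\ref{lem:Sk-tail-bds} to absorb the truncation failure probability into $n^{-\epsi/8}$, and if needed $n^{-(\epsi-4)/12}$ for the tail of the truncated summands when $\epsi\ge 4$) then produces $\|\bL_j\|\le C\sqrt{d(\log d)/n}$ with the claimed probability. A union bound over $j\in\{1,\dots,d\}$ merely absorbs an additional $\log d$ into the constants.

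For the remainder piece $\bR_j$, I would use the crude but adequate inequality
\[
\|\bR_j\|\le \|\bR_j\|_{\rm F}\le \sqrt{j}\max_{1\le k\le j}\|\br_k\|+\sqrt{j}\max_{1\le k\le j}|\langle\hat{\ba}_k,\ba_k\rangle-1|\le C\dfrac{d^{3/2}(\log d)}{n}
\]
on $\calA$, matching the second term in the target bound. When $\epsi<4$, the truncated Bernstein step loses control because the $S_{ik}^3$ factors only have $(8+\epsi)/3<4$ moments; in that regime I would instead bound $\|\bL_j\|\le\sqrt{j}\max_k\kappa_4(S_k)^{-1}\|\scrE\times_{2,3,4}\ba_k\|\le C\sqrt{j}\|\scrE\|$ and apply Lemma~\ref{lem:tens-conc} to recover exactly the additional $C\sqrt{d^2(\log\delta)^2/n}\mathbbm{1}(\epsi<4)$ correction.

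The main obstacle I anticipate is executing the matrix Bernstein step cleanly under only polynomial moment assumptions on the sources: careful truncation is needed so the variance proxy remains $O(d)$ and the resulting probability bounds line up with the three distinct failure terms $d^{-3}$, $n^{-\epsi/8}$ and $n^{-(\epsi-4)/12}\mathbbm{1}(\epsi\ge 4)$ appearing in the statement. Both hurdles are analogous to the truncation arguments already used in Lemmas~\ref{lem:eps2-bd}--\ref{lem:eps1-bd} and Lemma~\ref{lem:mat-init-conc2}, so I expect them to transfer with mostly bookkeeping adjustments.
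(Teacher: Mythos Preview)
Your approach is essentially the paper's: the paper writes $\hat{\bA}_j-\bA_j=\bE_j+(\hat{\bA}_j-\bA_j-\bE_j)$ with $\bE_j=[\scrE\times_{2,3,4}\ba_1\,\cdots\,\scrE\times_{2,3,4}\ba_j]$, bounds the remainder by $\sqrt{j}(\eps_1\eps_2+\eps_1^2\|\scrE\|)\le Cd^{3/2}(\log d)/n$ via Lemma~\ref{lem:asy}, and then controls $\|\bE_j\|\le\|\bE_d\|$ through a separately packaged truncated matrix Bernstein computation (Lemma~\ref{lem:Ejnorm}). Your $\bL_j$ and $\bR_j$ are exactly these two pieces, and your sketch of the matrix Bernstein step matches the content of Lemma~\ref{lem:Ejnorm}.

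There is one genuine gap in your $\epsi<4$ fallback. The chain $\|\bL_j\|\le\sqrt{j}\max_k\|\scrE\times_{2,3,4}\ba_k\|\le C\sqrt{j}\,\|\scrE\|$ followed by Lemma~\ref{lem:tens-conc} yields $\sqrt{d}\cdot C\sqrt{d^2(\log\delta)^2/n}=C\sqrt{d^3(\log\delta)^2/n}$, which is a factor $\sqrt{d}$ larger than the target $C\sqrt{d^2(\log\delta)^2/n}$. The paper avoids this by \emph{still} running matrix Bernstein in the $\epsi<4$ regime, just with a coarser truncation level $\eta=C\sqrt{nd^2}$ (rather than $\sqrt{nd}$); the Bernstein bound $\sqrt{dt/n}+t\eta/n$ then produces the correct $\sqrt{d^2/n}$ contribution from the second term. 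Alternatively, you could repair your Frobenius fallback by stopping at $\sqrt{j}\max_k\|\scrE\times_{2,3,4}\ba_k\|$ and invoking Lemma~\ref{lem:eps1-bd} directly to get $\sqrt{d}\cdot\sqrt{d(\log d)/n}$, which also lands on the right order.
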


\medskip

\begin{proof}[Proof of Lemma \ref{lem:ica-op-norm}]
	We assume that $n\ge C(d\log\delta)^2$ so that we have
	$$
	\scrE:=\hat{\scrM}_4^{\rm sample}(\bX)-{\scrM}_4(\bX);\,\,	
	\Delta_1:=\max_{1\le j\le d}\|\scrE\times_{2,3,4}\ba_j\|,\,\,
	\Delta_2:=\max_{1\le j\le d}\|\scrE\times_{3,4}\ba_j\|
	.	
	$$ 
	satisfies
	\begin{equation}\label{eq:tens-conc-bds-1}
		\PP\left(
		\Delta_1
		\le
		\sqrt{\dfrac{Cd(\log (d))}{n}},\,\,
		\Delta_2\le \sqrt{\dfrac{Cd(\log d)}{n}},\,\,
		\Delta\le \sqrt{\dfrac{C(d\log \delta)^2}{n}}\right)
		\ge 1-\delta-d^{-3}-n^{-\epsi/8}.
	\end{equation}
	by the concentration bounds from Lemmas \ref{lem:tens-conc}-\ref{lem:eps1-bd}. For any $1\le j\le d$, let us define the matrix $\bE_j\in\RR^{d\times j}$
	$$
	\bE:=[\scrE\times_{2,3,4}\ba_1\,
	\scrE\times_{2,3,4}\ba_2\,
	\dots\,
	\scrE\times_{2,3,4}\ba_j].
	$$
	By the expression from Lemma \ref{lem:asy}, we have that
	$$
	\norm*{
		\hat{\bA}-\bA-\bE
	}
	\le
	\sqrt{j}\times (\eps_1\eps_2+\eps_1^2\|\scrE\|)
	\le \dfrac{Cd^{3/2}(\log d)}{n}.
	$$
	The proof follows by the bound on $\|\bE_j\|\le\|\bE_d\|$ from Lemma~\ref{lem:Ejnorm}.
\end{proof}

\medskip


%

\begin{lemma}\label{lem:piter} Let $\gamma_j:={\rm sign}(\langle\hat{\ba}_{j,[t]},\ba_j\rangle)$. For $\scrE, \eps_1,\eps_2$ defined in \eqref{eq:def-eps12}, we have that
	$$	\sin\angle\left(\ba_j,
	\hat{\ba}_{j,[t+1]}
	-\dfrac{\gamma_j}{\kappa_j}\scrE\times_{2,3,4}\ba_j
	\right)
	\le \dfrac{CL_t\eps_2+L_t^2(\kappa_j+C\|\scrE\|)}
	{\kappa_j}.$$
\end{lemma}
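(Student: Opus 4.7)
The plan is to unroll one fixed-point iteration explicitly and read off $\frac{\gamma_j}{\kappa_j}\scrE\times_{2,3,4}\ba_j$ as the leading linear-in-noise contribution to $\hat{\ba}_{j,[t+1]}$. First I would write $\hat{\ba}_{j,[t]} = \gamma_j\sqrt{1-L_t^2}\,\ba_j + L_t\bv$ with $\bv\in\ba_j^\perp$ a unit vector, and use the orthogonal decomposition \eqref{eq:odec} to unfold the unnormalized update as
\begin{align*}
\bz \,:=\, \bigl(\hat{\scrM}_4^{\rm sample}(\bX)-\scrM_0\bigr)\times_{2,3,4}\hat{\ba}_{j,[t]}^{\otimes 3}
\,=\, (1-L_t^2)^{3/2}\gamma_j\bigl[\kappa_j\ba_j + \scrE\times_{2,3,4}\ba_j\bigr] + \bR_1 + \bR_2,
\end{align*}
where $\bR_1 := \sum_{k\neq j}\kappa_4(S_k)L_t^3\langle\ba_k,\bv\rangle^3\ba_k$ is the signal spillover into $\ba_j^\perp$ (so $\|\bR_1\|\le M_1 L_t^3$) and $\bR_2 := \scrE\times_{2,3,4}\hat{\ba}_{j,[t]}^{\otimes 3} - (1-L_t^2)^{3/2}\gamma_j\scrE\times_{2,3,4}\ba_j$ gathers the noise cross-contractions. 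By the same splitting used in the paragraph preceding Theorem~\ref{th:ica-piter}, $\|\bR_2\|\le C(L_t\eps_2 + L_t^2\eps_1 + L_t^3\|\scrE\|)$, and since $\eps_1\le\|\scrE\|$ this gives $\|\bR\|\le C(L_t\eps_2 + L_t^2\|\scrE\|)$ for $\bR:=\bR_1+\bR_2$.

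Next I would normalize and compute the difference of interest as
\begin{align*}
\hat{\ba}_{j,[t+1]} - \frac{\gamma_j}{\kappa_j}\scrE\times_{2,3,4}\ba_j
\,=\, \frac{(1-L_t^2)^{3/2}\gamma_j\kappa_j}{\|\bz\|}\ba_j \,+\, \Bigl(\tfrac{(1-L_t^2)^{3/2}\gamma_j}{\|\bz\|}-\tfrac{\gamma_j}{\kappa_j}\Bigr)\scrE\times_{2,3,4}\ba_j \,+\, \frac{\bR}{\|\bz\|}.
\end{align*}
The first summand is parallel to $\ba_j$ and does not contribute to $\sin\angle(\ba_j,\cdot)$. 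A two-sided bound $\|\bz\| = |\kappa_j|(1-L_t^2)^{3/2}(1+\tilde\delta)$ with $|\tilde\delta|\le C(\Delta_3/|\kappa_j| + \|\bR\|/|\kappa_j|)$ turns the scalar in front of the middle summand into a quantity of order $|\tilde\delta|/|\kappa_j|$; multiplying by $\|\bP_{\ba_j,\perp}(\scrE\times_{2,3,4}\ba_j)\|\le\Delta_3\le\|\scrE\|$ and combining with $\|\bP_{\ba_j,\perp}\bR\|/\|\bz\|$ produces a perpendicular norm bounded by $\frac{CL_t\eps_2 + L_t^2(|\kappa_j|+C\|\scrE\|)}{|\kappa_j|}$. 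Finally, since the target difference vector has norm within a constant of $1$ on the event of Lemma~\ref{lem:sample-conc} (the subtracted term $\frac{\gamma_j}{\kappa_j}\scrE\times_{2,3,4}\ba_j$ has norm $\Delta_3/|\kappa_j|\ll 1$), dividing by this near-unit length only costs a constant, yielding the stated bound.

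The main obstacle is the normalization bookkeeping around $\|\bz\|$: matching the iteration-produced prefactor $(1-L_t^2)^{3/2}\gamma_j/\|\bz\|$ to the intended coefficient $\gamma_j/\kappa_j$ requires tight two-sided control of $\|\bz\|$, and signs need care since $\gamma_j={\rm sign}(\langle\hat{\ba}_{j,[t]},\ba_j\rangle)$ is tied to the $t$-th iterate and may differ from ${\rm sign}(\langle\hat{\ba}_{j,[t+1]},\ba_j\rangle)$ when $\kappa_j<0$. I would split on the sign of $\kappa_j$ and use the identity $\frac{(1-L_t^2)^{3/2}\gamma_j}{\|\bz\|} - \frac{\gamma_j}{\kappa_j} = \frac{\gamma_j}{\kappa_j}\cdot\frac{\mathrm{sign}(\kappa_j)|\kappa_j|(1-L_t^2)^{3/2}-\|\bz\|}{\|\bz\|}$ to reduce everything to the scalar estimate $\bigl|\,|\kappa_j|(1-L_t^2)^{3/2}-\|\bz\|\,\bigr|\le C\|\bR\|$, which follows from the decomposition of $\bz$ above by reverse triangle inequality. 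Once that sign/normalization step is in place the remaining estimates are routine substitutions of the $\|\bR\|$ bound and the definitions of $\eps_2$ and $\|\scrE\|$.
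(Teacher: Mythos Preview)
Your approach is correct and essentially identical to the paper's: both expand the unnormalized update $(\hat{\scrM}_4^{\rm sample}-\scrM_0)\times_{2,3,4}\hat{\ba}_{j,[t]}$ around $\ba_j$, isolate the leading term $\kappa_j\ba_j+\scrE\times_{2,3,4}\ba_j$ (up to the factor $\gamma_j(1-L_t^2)^{3/2}$), bound the signal spillover by $\kappa_{\max}L_t^3$ and the noise cross-terms by $CL_t\eps_2+CL_t^2\|\scrE\|$, then control the normalization via a lower bound on $\|\bz\|$ and the scalar correction $\tfrac{1}{\kappa_j}-\tfrac{1}{\|\bz\|}$. Your treatment of the sign bookkeeping (splitting on $\mathrm{sign}(\kappa_j)$ and tracking $\gamma_j$ through the normalization) is in fact more careful than the paper's, which simply sets $\gamma_j=1$ and implicitly takes $\kappa_j>0$.
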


\medskip

\begin{proof}[Proof of Lemma \ref{lem:piter}]
	
	By the induction hypothesis, we have estimates $\hat{\ba}_{[t]}$ such that 
	$$
	\norm*{\ba_j-{\rm sign}((\ba_j)^\top\hat{\ba}_{[t]})
		\hat{\ba}_{[t]}}
	\le \sqrt{2}L_t.
	$$
	To simplify the proof, we assume that $\gamma_j:={\rm sign}(\langle\hat{\ba}_{j,[t]},\ba_j\rangle)=1$. The other cases can be handled by reversing the sign of $\hat{\ba}_{j,[t]}$ accordingly. We have, for any $\bv\perp \ba_j$ that
	\begin{equation}\label{eq:piter-num}
		\langle\bv,(\hat{\scrT}-\scrM_0)\times_{2,3,4}\hat{\ba}_{j,[t]}\rangle
		=\sum_{i\neq j}
		\kappa_4(S_i)
		\langle\bv,\ba_i\rangle
		(\langle\hat{\ba}_{[t]},\ba_i\rangle)^3
		+\scrE\times_1\bv\times_{q>1}\hat{\ba}_{[t]}
	\end{equation}
	By Cauchy-Schwarz inequality, the first term is
	\begin{equation}\label{eq:piter-sigbd}
		\abs*{\sum_{i\neq j}\kappa_i
			\langle\bv,\ba_i\rangle
			(\langle\hat{\ba}_{[t]},\ba_i\rangle)^3}
		\le \max_{i\neq j}
		\{\kappa_i |\langle\bv,\ba_i\rangle|\}
		(\sin\angle\left(\hat{\ba}_{[t]},\ba_j\right))^3
		\le \kappa_{\max}L_t^3.
	\end{equation}
	For the second term, note that
	\begin{align*}\label{eq:piter-errbd}
		\scrE\times_1\bv\times_{q>1}\hat{\ba}_{[t]}
		=&~\scrE\times_1\bv\times_{q>1}(
		\ba_j+
		\hat{\ba}_{j,[t]}-\ba_j)\\
		\le& ~\scrE\times_1\bv\times_{q>1}\ba_j
		+3\sqrt{2}L_t
		\norm*{\scrE\times_{2,3,4}\ba_j}\\
		&~+\sum_{A\subset [p]/\{1\},\, |A|\ge 2}
		\scrE\times_1\bv\times_{q\in A}(\hat{\ba}_{j,[t]}-\ba_j)
		\times_{q'\notin A}\ba_j\\
		\le&
		~\scrE\times_1\bv\times_{2,3,4}\ba_j
		+CL_t\eps_2
		+CL_t^2\|\scrE\|.\numberthis
	\end{align*}	
	Plugging in the upper bounds from \eqref{eq:piter-sigbd}, \eqref{eq:piter-errbd} into \eqref{eq:piter-num}, yields
	\begin{equation}\label{eq:sinth-num}
		\sup_{\bv\perp\bu_j^{(1)}}
		\langle\bv,
		(\hat{\scrT}-\scrM_0)\times_{2,3,4}\hat{\ba}_{j,[t]}
		-\scrE\times_{2,3,4}\ba_j
		\rangle
		\le 
		CL_t\eps_2
		+L_t^2(L_t\kappa_{\max}+C\|\scrE\|).
	\end{equation}
	It remains to bound the norm. We have
	\begin{align*}\label{eq:sinth-den}
		\|(\hat{\scrT}-\scrM_0)\times_{2,3,4}\hat{\ba}_{j,[t]}\|
		\ge &~(\hat{\scrT}-\scrM_0)\times_1\ba_{j}
		\times_{2,3,4}\hat{\ba}_{j,[t]}\\
		\ge &~\kappa_j
		(\langle \hat{\ba}_{j,[t]},\ba_j\rangle^3
		-\scrE\times_1\ba_j\times_{2,3,4}\hat{\ba}_{j,[t]}
		\\
		\ge&~\kappa_j(1-L_t^2)^3
		-\scrE\times_{1,2,3,4}\ba_j-CL_t\eps_2-CL_t^2\|\scrE\|
		\numberthis
	\end{align*}
	following the steps of \eqref{eq:piter-sigbd}, \eqref{eq:piter-errbd} with $\bu_j^{(1)}$ instead of $\bv\perp\bu_j^{(1)}$.
	Dividing \eqref{eq:sinth-num} by \eqref{eq:sinth-den}, one has
	\begin{align*}
		\sup_{\bv\perp\bu_j^{(1)}}
		\dfrac{
			\langle\bv,(\hat{\scrT}-\scrM_0)\times_{2,3,4}\hat{\ba}_{j,[t]}
			-\scrE\times_{2,3,4}\ba_j
			\rangle}{\|(\hat{\scrT}-\scrM_0)
			\times_{2,3,4}\hat{\ba}_{j,[t]}\|}
		\le &
		\dfrac{CL_t\eps_2+L_t^2(L_t\kappa_{\max}+C\|\scrE\|)}
		{\kappa_j(1-L_t^2)^3
			-\scrE\times_{1,2,3,4}\ba_j
			-CL_t\eps_2-CL_t^2\|\scrE\|}
		\\
		\le & \dfrac{CL_t\eps_2+L_t^2(\kappa_j+C\|\scrE\|)}
		{\kappa_j}
	\end{align*}
	since by assumption, $L_0\le  \min\{1/C,\frac{\kappa_{\min}}{2\kappa_{\max}}\}$, and by induction hypothesis it can be verified that, $L_t\le \min\{1/C,\frac{\kappa_{\min}}{2\kappa_{\max}}\}$. Then  
	by Lemma \ref{lem:tens-conc},
	$$
	3\scrE\times_{1,2,3,4}\ba_j+CL_t\eps_2+CL_t^2\|\scrE\|
	\le C\sqrt{\dfrac{d^2(\log d)^2}{n}}
	\le \kappa_j.
	$$
	Moreover, by similar calculations, we also have that 
	\begin{align*}
		\sup_{\bv\perp\ba_j}
		\scrE\times_1\bv\times_{2,3,4}\ba_j
		\left(\dfrac{1}{\kappa_j}-
		\dfrac{1}
		{\|(\hat{\scrT}-\scrM_0)\times_{2,3,4}\hat{\ba}_{j,[t]}\|}
		\right)
		\le &
		~\dfrac{C\eps_1\eps_2L_t+\eps_1L_t^2(\kappa_j+C\|\scrE\|)
		}{\kappa_j^2}.
	\end{align*}
	Combining all the bounds we have
	\begin{equation}\label{eq:sinth-fo-bd}
		\sin\angle\left(\ba_j,
		\hat{\ba}_{j,[t+1]}
		-\dfrac{1}{\kappa_j}\scrE\times_{2,3,4}\ba_j
		\right)
		\le \dfrac{CL_t\eps_2+L_t^2(\kappa_j+C\|\scrE\|)}
		{\kappa_j}.
	\end{equation}
	Thus,
	$$
	L_{t+1}\le \dfrac{\eps_1}{\kappa_j}
	+\dfrac{CL_t\eps_2+L_t^2(\kappa_j+C\|\scrE\|)}
	{\kappa_j}.
	$$
\end{proof}

\medskip

\begin{lemma}\label{lem:varY-conc}
	Let $\bX_1,\dots,\bX_n$ be $n$ independent copies of $\bX$ s.t. $\calL(\bX)\in\calP_{\rm ICA}(\bA;\epsi,M_1,M_2)$ for some $\bA\in\calO(d)$ and $\epsi,M_1,M_2>0$. Let $\boldsymbol{\alpha}:={\rm vec}(\II_d)$. Then defining $\bY_i:=\bS_i\otimes\bS_i\in \RR^{d^2}$, one has for any $t_1,t_2>0$ that
	$$
	\norm*{
		\dfrac{1}{n}\sum_{i=1}^n(\bY_i-\boldsymbol{\alpha})(\bY_i-\boldsymbol{\alpha})^{\top}
		-\EE (\bY-\boldsymbol{\alpha})(\bY-\boldsymbol{\alpha})^{\top}
	}
	\ge C(t_1+t_2)\sqrt{\dfrac{d^2}{n}}$$
	with probability at most 
	$
	e^{-Cn^{\epsi/8}}+\exp(-Ct_1)+\dfrac{1}{Cn^{\epsi/8}t_2^{2+\epsi/4}}
	$.
\end{lemma}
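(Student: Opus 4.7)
Set $\bZ_i := \bY_i - \boldsymbol\alpha \in \RR^{d^2}$, so each $\bZ_i$ is mean zero with coordinates $\bZ_{i,(j,k)} = S_{ij}S_{ik} - \delta_{jk}$; the target is the operator norm of the sum of i.i.d.\ centered rank-one PSD matrices $\frac{1}{n}\sum_i (\bZ_i\bZ_i^\top - \EE\bZ\bZ^\top)$ in $\RR^{d^2\times d^2}$. A short coordinate computation using the independence of the $S_j$ and $|\kappa_4(S_j)|\le M_1$ shows $\|\EE\bZ\bZ^\top\|\le M_1+2$ and $\EE\|\bZ\|^2 = \operatorname{tr}(\EE\bZ\bZ^\top)\le (M_1+2)d^2$. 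Direct matrix Bernstein is infeasible because $\|\bZ_i\bZ_i^\top\| = \|\bZ_i\|^2\asymp\|\bS_i\|^4$ is only polynomially integrable under the $(8+\epsi)$-th moment assumption, so the almost-sure ceiling Bernstein requires is heavy-tailed; a truncation step is mandatory.

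\textbf{Truncation and matrix Bernstein on the bounded sum.} Choose a threshold $\tau$ and define $\calE := \{\max_{1\le i\le n}\|\bS_i\|^2 \le \tau^2\}$. Lemma~\ref{lem:Sk-tail-bds} applied to $\|\bS\|^2 - d = \sum_j(S_j^2 - 1)$ (a sum of independent mean-zero variables with finite $(4+\epsi/2)$-moment), followed by a union bound over the $n$ samples, yields $\PP(\calE^c)\le e^{-Cn^{\epsi/8}}$ for $\tau$ calibrated to the moment budget. On $\calE$, the truncated summands $\tilde{\bZ}_i\tilde{\bZ}_i^\top := \bZ_i\bZ_i^\top \mathbbm{1}(\|\bS_i\|^2\le\tau^2)$ coincide with the originals, and are bounded in operator norm by $\tau^4$. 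Using the rank-one identity $(\bZ\bZ^\top)^2 = \|\bZ\|^2\bZ\bZ^\top$ together with Cauchy--Schwarz, the bound $\EE\|\bZ\|^4\le Cd^4$, and $\sup_{\|\bu\|=1}\EE(\bu^\top\bZ)^4\le C$ (via Lemma~\ref{lem:lin-comb-mmt}), the matrix variance satisfies $\bigl\|\sum_i\EE[(\tilde{\bZ}_i\tilde{\bZ}_i^\top - \EE\tilde{\bZ}\tilde{\bZ}^\top)^2]\bigr\|\le Cnd^2$. Matrix Bernstein in dimension $d^2$ then gives
$$
\Bigl\|\sum_{i=1}^n\bigl(\tilde{\bZ}_i\tilde{\bZ}_i^\top - \EE\tilde{\bZ}\tilde{\bZ}^\top\bigr)\Bigr\|\le C\bigl(t_1\sqrt{nd^2} + t_1\tau^4\bigr)
\quad\text{w.p.}\ \ge 1 - e^{-Ct_1},
$$
and for $\tau^2$ comparable to $d$ (the range where truncation is essentially lossless) the variance term dominates, yielding the $t_1\sqrt{d^2/n}$ contribution after dividing by $n$.

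\textbf{Truncation bias and union bound.} The residual bias $\EE\bZ\bZ^\top - \EE\tilde{\bZ}\tilde{\bZ}^\top = \EE[\bZ\bZ^\top\mathbbm{1}(\|\bS\|^2>\tau^2)]$ is controlled in operator norm via Cauchy--Schwarz and the $(8+\epsi)$-th moment assumption: it is at most $(\EE\|\bZ\|^{4+\epsi/2})^{4/(4+\epsi/2)}\PP(\|\bS\|^2>\tau^2)^{\epsi/(8+\epsi)}$, with $\EE\|\bZ\|^{4+\epsi/2}\le Cd^{4+\epsi/2}$ by Rosenthal. Markov's inequality with the same moment converts the tail probability into the polynomial-decay term $1/(Cn^{\epsi/8}t_2^{2+\epsi/4})$ carrying the $t_2\sqrt{d^2/n}$ contribution. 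A union bound over the three error events---failure of $\calE$, Bernstein deviation on $\calE$, and the tail event controlling the bias---produces the stated result.

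\textbf{Main obstacle.} The crux is the calibration of the truncation radius $\tau$. It must be small enough that the Bernstein deterministic ceiling (proportional to $\tau^4$) is dominated by the variance term $\sqrt{nd^2}$, and simultaneously large enough that $\PP(\calE^c)$ can be squeezed down to $e^{-Cn^{\epsi/8}}$ while the truncation bias matches the advertised polynomial probability in $t_2$ under only the $(8+\epsi)$-th moment assumption on the sources. Tracking which moment bound powers which of the three competing error sources, and ensuring they all close at rate $\sqrt{d^2/n}$, is the technical heart of the argument.
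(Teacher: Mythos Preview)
Your overall architecture—truncation plus matrix Bernstein—matches the paper's, but two steps do not close, and both stem from the same structural choice.

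First, the claim that Lemma~\ref{lem:Sk-tail-bds} plus a union bound over $n$ samples yields $\PP(\calE^c)\le e^{-Cn^{\epsi/8}}$ is incorrect. That lemma gives only a \emph{polynomial} tail $\PP(\|\bS\|^2>\tau^2)\le Cd^{2+\epsi/4}/(\tau^2-d)^{4+\epsi/2}$, so after a union bound you get at best a polynomially small $\PP(\calE^c)$. Forcing $\PP(\calE^c)$ down to $e^{-Cn^{\epsi/8}}$ would require $\tau^2-d$ exponential in $n$, which in turn makes the Bernstein ceiling $\tau^4/n$ explode and destroys the $\sqrt{d^2/n}$ rate. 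There is no calibration of $\tau$ that resolves this tension under only $(8+\epsi)$-th moments.

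Second, the truncation bias $\EE[\bZ\bZ^\top\mathbbm{1}(\|\bS\|^2>\tau^2)]$ is a deterministic quantity; it cannot carry a probability parameter $t_2$. In the paper the $t_2$ enters through the \emph{random} untruncated sample sum, which you have suppressed by insisting that all samples lie below threshold.

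The paper's resolution is to drop the requirement that $\calE$ hold. It sets the truncation level at $\|\bY_l\|\le C(nd^2)^{1/4}$ and splits into (i) truncated Bernstein (the $e^{-Ct_1}$ piece), (ii) the sample sum over the outlier set $M=\{l:\|\bY_l\|>C(nd^2)^{1/4}\}$, and (iii) the deterministic bias. For (ii), because $\PP(l\in M)\le n^{-1-\epsi/8}$ individually, one has $\EE|M|\le n^{-\epsi/8}$, and a Chernoff bound on the binomial count gives $\PP(|M|\ge C)\le e^{-Cn^{\epsi/8}}$; on the complementary event $\{|M|\le C\}$, the outlier sum is at most $\tfrac{C}{n}\max_l\|\bS_l\|^4$, whose tail is controlled by Markov at level $t_2$ and yields the polynomial term $1/(Cn^{\epsi/8}t_2^{2+\epsi/4})$. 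In short, the exponential probability comes from bounding the \emph{number} of outliers, not from excluding them entirely, and $t_2$ governs the random magnitude of those few outliers, not the bias.
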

\medskip

\begin{proof}[Proof of Lemma \ref{lem:varY-conc}]	
	We use the truncation strategy which is standard in proving concentration bounds of this nature, previously used in \cite{adamczak2010} and \cite{vershynin2011approximating}, for example. We use the fact that random vectors $\bS_l\in \RR^d$ with independent components. Note that $\EE\bY_l={\rm vec}(\II_d)=\boldsymbol{\alpha}$.	Then we divide the empirical deviation as follows:
	

	\begin{align*}
		&~\norm*{
			\dfrac{1}{n}\sum_{i=1}^n(\bY_i-\boldsymbol{\alpha})(\bY_i-\boldsymbol{\alpha})^{\top}
			-\EE (\bY-\boldsymbol{\alpha})(\bY-\boldsymbol{\alpha})^{\top}}\\
		=&~
		\norm*{
			\dfrac{1}{n}
			\sum_{l=1}^n
			(\bS_l\otimes \bS_l-\boldsymbol{\alpha})(\bS_l\otimes\bS_l-\boldsymbol{\alpha})^{\top}
			-\EE (\bS_1\otimes \bS_1-\boldsymbol{\alpha})(\bS_1\otimes\bS_1-\boldsymbol{\alpha})^{\top}}\\
		\le &~
		\norm*{
			\dfrac{1}{n}
			\sum_{l=1}^n
			(\bY_l-\boldsymbol{\alpha})(\bY_l-\boldsymbol{\alpha})^{\top}\mathbbm{1}(\|\bY_l\|\le C(nd^2)^{1/4})
			-\EE (\bY_1-\boldsymbol{\alpha})(\bY_1-\boldsymbol{\alpha})^{\top}\mathbbm{1}(\|\bY_1\|\le C(nd^2)^{1/4})}	\\
		&+\norm*{
			\dfrac{1}{n}
			\sum_{l=1}^n(\bY_l-\boldsymbol{\alpha})(\bY_l-\boldsymbol{\alpha})^{\top}\mathbbm{1}(\|\bY_l\|\ge C(nd^2)^{1/4})}\\
		&+\norm*{
			\EE(\bY_1-\boldsymbol{\alpha})(\bY_1-\boldsymbol{\alpha})^{\top}\mathbbm{1}(\|\bY_1\|\ge C(nd^2)^{1/4})
		}.
	\end{align*}
	We will bound each of these terms separately. To bound the first term, we use matrix Bernstein inequality. We define
	$$
	\bM_l=(\bY_l-\boldsymbol{\alpha})(\bY_l-\boldsymbol{\alpha})^{\top}\mathbbm{1}(\|\bY_l\|\le C(nd^2)^{1/4}).
	$$
	Then since $\|\boldsymbol{\alpha}\|=\sqrt{d}$, it is immediate that
	$$
	\|\bM_l\|=\|\bY_l-\boldsymbol{\alpha}\|^2\mathbbm{1}(\|\bY_l\|\le C(nd^2)^{1/4})\le C\sqrt{nd^2}
	$$
	almost surely. On the other hand, note that
	\begin{align*}
		(\EE((\bY-\boldsymbol{\alpha})(\bY-\boldsymbol{\alpha})^{\top})^2)_{(k,k)(k,k)}	
		=&~\EE(\sum_{i=1}^d(S_i^2-1)^2+\sum_{i\neq j}S_i^2S_j^2)(S_k^2-1)^2\le Cd^2\\
		(\EE((\bY-\boldsymbol{\alpha})(\bY-\boldsymbol{\alpha})^{\top})^2)_{(k,k)(l,l)}
		=&~2\EE(S_k^2(S_k^2-1)S_l^2(S_l^2-1))\\
		=&~2(\kappa_4(S_k)+2)(\kappa_4(S_l)+2)\hspace{2cm}\text{if }k\neq l\\
		(\EE((\bY-\boldsymbol{\alpha})(\bY-\boldsymbol{\alpha})^{\top})^2)_{(k,k)(m,n)}=&~0
		\hspace{6.2cm}
		\text{if }m\neq n,\\
		(\EE((\bY-\boldsymbol{\alpha})(\bY-\boldsymbol{\alpha})^{\top})^2)_{(k,l)(m,n)}
		=&~Cd^2\mathbbm{1}(\{k,l\}=\{m,n\})\,
		\hspace{2.5cm}
		\text{if }k\neq l.
	\end{align*}
	Note that the $d$ rows, with indices $(k,k)$, contribute to a singular value of at most $Cd^2+C\sqrt{d}\le Cd^2$. The other rows each contribute to at most $Cd^2$.
	
	By the orthogonality of the rows $\be_i\otimes\be_j$, it then follows by a direct calculation that
	$$
	\norm*{
		\EE((\bY-\boldsymbol{\alpha})(\bY-\boldsymbol{\alpha})^{\top})^2
	}
	\le Cd^2.
	$$
	Then one has
	$$
	\|\bN_l\|=\|\EE(\bM_l-\EE\bM_l)^2\|
	\le Cd^2.
	$$
	By matrix Bernstein inequality \citep[see, e.g., Theorem 5.4.1 of ][]{vershynin2018high} we then have
	\begin{align}\label{eq:norm-trunc}
		&\norm*{
			\dfrac{1}{n}
			\sum_{l=1}^n
			(\bY_l-\boldsymbol{\alpha})(\bY_l-\boldsymbol{\alpha})^{\top}\mathbbm{1}(\|\bY_l\|\le C(nd^2)^{1/4})
			-\EE (\bY-\boldsymbol{\alpha})(\bY-\boldsymbol{\alpha})^{\top}\mathbbm{1}(\|\bY\|\le C(nd^2)^{1/4})}\nonumber\\
		\le&~ Ct_1\sqrt{\dfrac{d^2}{n}}
	\end{align}
	with probability at least $1-\exp(-Ct_1)$. 
	
	It remains to bound the operator norm of the untruncated average. In the remainder of the proof we will assume that $n\ge Cd^2$ for a large enough $C>0$. This means $nd^2\ge Cd^4$. By Lemma \ref{lem:Sk-tail-bds}, we have
	$$
	p:=
	\PP(\|\bY_l\|\ge C(nd^2)^{1/4})
	=\PP(\|\bS\|\ge C(nd^2)^{1/8})
	=\PP(\|\bS\|^2-d\ge C(nd^2)^{1/4})
	\le \dfrac{d^{2+\epsi/4}}{(nd^2)^{1+\epsi/8}}.
	$$
	Let us define $M=\{1\le l\le n: \|\bS_l\|\ge C(nd^2)^{1/8}\}$. Since $\mathbbm{1}(\|\bS_l\|\ge C(nd^2)^{1/8})$ are independent ${\rm Bernoulli}(p)$ random variables, we have 
	$$
	\EE|M|=np\le 
	\dfrac{nd^{2+\epsi/4}}{(nd^2)^{1+\epsi/8}}\le n^{-\epsi/8}.
	$$
	Moreover, 	
	$$
	\PP\left(|M|\ge C\right)
	\le \exp(-Cn^{\epsi/8})
	$$
	by the Chernoff bound for Bernoulli random variables. We have using Lemma \ref{lem:Sk-tail-bds} that for any $t>0$,
	\begin{equation}\label{eq:trunc-tail-bd}
		\PP(\max_{1\le l\le n} \|\bS_l\|^2\ge d+t)
		\le n\PP(\|\bS\|^2-d\ge t)
		\le \dfrac{nd^{2+\epsi/4}}{Ct^{4+\epsi/2}}
	\end{equation}
	Note that $\|\bY_l\|=\|\bS_l\|^2$. Thus we can bound the ``untruncated average" as
	\begin{align*}\label{eq:norm-untrunc-11}
		&~\PP\left(
		\norm*{\dfrac{1}{n}\sum_{l=1}^n
			(\bY_k-\boldsymbol{\alpha})(\bY_k-\boldsymbol{\alpha})^{\top}
			\mathbbm{1}(\|\bY_l\|\ge C(nd^2)^{1/4})
		}\ge (d/\sqrt{n}+t_2)^2
		\right)	\\
		\le &
		~\PP\left(
		\dfrac{1}{n}\cdot |M|\max\limits_{1\le l\le n}\|\bS_l\|^4
		\ge (d/\sqrt{n}+t_2)^2
		\right)\\
		\le & 	
		~\PP(|M|\ge C)+\PP(|M|\le C,\, \max\limits_{1\le k\le n}\|\bS_k\|^4
		\ge (d+C\sqrt{n}t_2)^2)
		\\
		\le &
		~\exp(-Cn^{\epsi/8})+
		n\PP(\|\bS\|^2-d\ge C\sqrt{n}t_2)
		\\
		\le&
		~ \exp(-Cn^{\epsi/8})+
		\dfrac{nd^{2+{\epsi/4}}}
		{Cn^{2+\epsi/4}t_2^{4+\epsi/2}}	\\
		\le& \exp(-Cn^{\epsi/8})
		+\dfrac{d^{2+\epsi/4}}{Cn^{1+\epsi/4}t_2^{4+{\epsi/2}}}.\numberthis
	\end{align*}
	Finally, once again using the fact that $\bY_1:=\bS_1\otimes \bS_1$, we have
	\begin{align*}\label{eq:norm-untrunc-2}
		\norm*{\EE (\bY-\boldsymbol{\alpha})(\bY-\boldsymbol{\alpha})^{\top}
			\mathbbm{1}(\|\bY\|\ge C(nd^2)^{1/4})}
		\le&
		~(\sup_{\bv:\|\bv\|=1}\EE\langle\bY,\bv\rangle^4)^{1/4}
		(\PP(\|\bS\|^2-d\ge C(nd^2)^{1/4}))^{1/2}\\
		\le&~ C
		\cdot \dfrac{1}{n^{1+\epsi/8}}.\numberthis
	\end{align*}
	Here we use Cauchy-Schwarz inequality in the first step, and the fact that for a unit vector $\bv$ we have
	\begin{align*}
		\EE \langle\bY,\bv\rangle^4
		=&~\sum_{(i,j)=1}^{d^2}\EE(S_i^4S_j^4)v_{(i,j)}^4
		+\sum_{(i,j)\neq (k,l)}\EE(S_i^2S_j^2S_k^2S_l^2)v_{(i,j)}^2v_{(k,l)}^2\\
		\le&~ C\sum_{(i,j)=1}^{d^2}v_{(i,j)}^4+C\sum_{(i,j)\neq (k,l)}v_{(i,j)}^2v_{(k,l)}^2\le C.
	\end{align*}
	Adding equations \eqref{eq:norm-trunc}, \eqref{eq:norm-untrunc-11} and \eqref{eq:norm-untrunc-2} finishes the proof.
\end{proof}


\medskip

\begin{lemma}\label{lem:eps3-bd} Let $\bX_1,\dots,\bX_n$ be $n$ independent copies of a random vector $\bX$ such that $\calL(\bX)\in\calP_{\rm ICA}(\bA;\epsi,M_1,M_2)$ for $\epsi,M_1,M_2>0$. Then for any $t>C\log d$, we have
	$$
	\max\limits_{1\le j\le d}
	~\norm*{\dfrac{1}{n}
		\displaystyle
		\sum_{l=1}^n
		\langle\bX_l,\ba_j\rangle\bX_l\circ\bX_l\circ\bX_l
		-\EE\langle\bX,\ba_j\rangle\bX\circ\bX\circ\bX}
	\le 
	C\sqrt{\dfrac{dt}{n}}
	+\dfrac{Cd^{13/8}t}{n^{7/8}}
	$$
	with probability at least $1-\exp(-t)-
	(nd)^{-\tfrac{\epsi}{8}}$.
\end{lemma}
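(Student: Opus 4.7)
The plan is to combine a sample-path truncation of $\|\bS_l\|$ with an $\epsilon$-net argument, followed by a union bound over $j$. Exploiting the orthonormality of $\bA$ and the invariance of the tensor operator norm under rotation of each mode, I would first reduce to the case $\bA=\II$, so that the problem becomes uniform control of
$$\scrT_j:=\frac{1}{n}\sum_{l=1}^n S_{lj}\bS_l^{\circ 3}-\EE[S_j\bS^{\circ 3}].$$
Choosing a $1/4$-net $\calN$ of $\SS^{d-1}$ with $|\calN|\le 9^d$ gives $\|\scrT_j\|_{\rm op}\le C\max_{u,v,w\in\calN}|\scrT_j(u,v,w)|$ via the standard covering argument for tensor norms.

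Next I would introduce the truncation event $\calA=\{\max_l\|\bS_l\|\le T\}$ for $T\sim (nd^2)^{1/8}$. Lemma~\ref{lem:Sk-tail-bds} combined with a union bound over $l$ yields $\PP(\calA^c)\le(nd)^{-\epsilon/8}$. On $\calA$ the summand $Y_l(u,v,w;j):=S_{lj}\langle\bS_l,u\rangle\langle\bS_l,v\rangle\langle\bS_l,w\rangle$ is bounded pointwise by $\|\bS_l\|^4\le T^4$, while Holder's inequality together with the $(8+\epsilon)$-moment hypothesis and Lemma~\ref{lem:lin-comb-mmt} give a uniform second-moment bound $\EE Y_l^2\le C$ in $(u,v,w,j)$.

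Bernstein's inequality applied to each fixed $(u,v,w,j)$ then produces
$$\Bigl|\tfrac{1}{n}\textstyle\sum_l(Y_l-\EE Y_l)\Bigr|\le C\sqrt{t/n}+CT^4 t/n\quad\text{with probability }1-\exp(-t),$$
and taking a union bound over $\calN^3\times[d]$ (of cardinality at most $d\cdot 9^{3d}$) replaces $t$ by $t+Cd+C\log d$, which is absorbed by the hypothesis $t>C\log d$ and produces the $\sqrt{dt/n}$ Bernstein contribution of the claim.

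The main obstacle is tightening the truncation remainder from the naive $T^4(t+d)/n\sim d(t+d)/\sqrt{n}$ down to the claimed $d^{13/8}t/n^{7/8}$. A single hard truncation at $T\sim (nd^2)^{1/8}$ is not enough: in the regime $n\ge d^2$ the naive remainder is strictly larger than the advertised rate. To close this gap I would apply a dyadic peeling of $\|\bS_l\|$, setting $T_k=2^kT$ for $k\ge 0$ and separately estimating the contribution from samples in each annulus $\{\|\bS_l\|\in(T_k,T_{k+1}]\}$, whose count is Binomial with mean at most $Cnd^{2+\epsilon/4}/T_k^{8+\epsilon}$ by Lemma~\ref{lem:Sk-tail-bds}. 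A Bernstein bound on each dyadic scale, summed as a geometric series and balanced against the $(8+\epsilon)$-moment tail of $\|\bS\|$, produces the fractional exponents $13/8$ and $7/8$; carefully tracking these exponents through the peeling, while keeping the net-induced $9^{3d}$ factor under control, is the most delicate technical step of the proof.
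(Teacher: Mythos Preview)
Your Gaussian/variance term is fine: with $\EE Y_l(u,v,w;j)^2\le C$ and a net of cardinality $\exp(Cd)$, scalar Bernstein after the union bound indeed gives $C\sqrt{dt/n}$. The gap is entirely in the sub-exponential (truncation) term, and dyadic peeling will not close it.

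The problem is structural. After the union bound over $\calN^3\times[d]$, the confidence parameter in Bernstein is forced from $t$ to $t+Cd$, so the sub-exponential contribution is $L(t+Cd)/n$ where $L$ is any almost-sure bound on $|Y_l(u,v,w;j)|$ \emph{uniform in the direction}. But $\sup_{u,v,w}|Y_l(u,v,w;j)|=|S_{lj}|\,\|\bS_l\|^3$, and even with the sharpest high-probability truncation $|S_{lj}|\le (nd)^{1/8}$, $\|\bS_l\|\lesssim\sqrt d$, you get $L\gtrsim n^{1/8}d^{13/8}$. Hence the net route yields at best $n^{1/8}d^{13/8}\cdot d/n=d^{21/8}/n^{7/8}$, a full factor of $d$ larger than the target $d^{13/8}t/n^{7/8}$. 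Peeling over annuli of $\|\bS_l\|$ only redistributes the truncation level $L$ across shells; it does nothing to the extra $d$ coming from the $\exp(Cd)$ union bound, which multiplies $L$ regardless of how you slice it.

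The paper avoids this by never touching a net. It bounds the third-order tensor operator norm by the spectral norm of the $(d^2\times d)$-matricization
\[
\bM_l \;=\; S_{lj}\bigl(\bS_l\otimes\bS_l-\mathrm{vec}(\II_d)\bigr)\bS_l^{\top},
\]
after first splitting off the easy $\II_d\circ(\cdot)$ piece, and then applies the \emph{matrix} Bernstein inequality. Matrix Bernstein pays only a $\log d$ (from the ambient matrix dimension) rather than $d$ (from a net) in front of the truncation level, and a direct calculation gives $\max\{\|\EE\bM\bM^\top\|,\|\EE\bM^\top\bM\|\}\le Cd$. With the truncation $\|\bM_l\|\le Cd^{13/8}n^{1/8}$ (which is exactly $d^{3/2}$ times the $(nd)^{1/8}$ tail of $|S_{lj}|$), matrix Bernstein delivers $C\sqrt{dt/n}+Cd^{13/8}n^{1/8}t/n$ directly. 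The matricization step, not any refinement of the truncation, is the missing idea.
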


\medskip

\begin{proof}[Proof of Lemma \ref{lem:eps3-bd}]
	Note that for any $j\in [d]$, 
	$$
	\langle\bX_l,\ba_j\rangle=\be_j^{\top}\bS_l:=S_{lj}.
	$$
	By our model assumptions, $\{S_{lj}\}$ are independent random  variables with mean zero, variance one and $\EE |S_{lj}|^{8+\eps}\le C$. 
	\begin{align*}
		&\norm*{
			\dfrac{1}{n}
			\displaystyle
			\sum_{l=1}^n
			\langle\bX_l,\ba_j\rangle\bX_l\circ\bX_l\circ\bX_l
			-
			\EE\langle\bX,\ba_j\rangle\bX\circ\bX\circ\bX
		}\\
		=&\norm*{
			\dfrac{1}{n}
			\displaystyle
			\sum_{l=1}^n
			S_{lj}
			\bS_l\circ\bS_l\circ\bS_l
			-
			\EE S_j\bS\circ\bS\circ\bS
		}\\
		\le&\norm*{
			\dfrac{1}{n}
			\displaystyle
			\sum_{l=1}^n
			S_{lj}
			(\bS_l\circ\bS_l-(\II_d))\circ\bS_l
			-
			\EE\S_j(\bS\circ\bS-\II_d)\circ\bS
		}
		+\norm*{\II_d\circ\left(\dfrac{1}{n}\sum_{l=1}^nS_{lj}\bS-\be_j\right)}.
	\end{align*}
	We bound these two terms separately. Note that the second term can be bounded using truncated matrix Bernstein inequality (see the proof of Lemma \ref{lem:eps1-bd}) to obtain
	$$
	\norm*{\II_d\circ\left(\dfrac{1}{n}\sum_{l=1}^nS_{lj}\bS-\be_j\right)}
	\le \dfrac{Cdt}{n}
	$$
	with probability at least $1-\exp(-t)-d^{-1-\epsi/8}(n)^{-\epsi/8}$.
	
	For the first term, we use a similar truncation as done in Lemma \ref{lem:varY-conc}. We define
	$$
	\bM:=S_j(\bS\otimes\bS-{\rm vec}(\II_d))\circ\bS^{\top}
	\in \RR^{d^2\times d}
	$$
	and let $\bM_l$ be independent copies of $\bM$ for $l=1,2,\dots,n$.	We write
	\begin{align*}
		&\norm*{
			\dfrac{1}{n}
			\displaystyle
			\sum_{l=1}^n
			S_{lj}
			(\bS_l\circ\bS_l-(\II_d))\circ\bS_l
			-
			\EE\S_j(\bS\circ\bS-\II_d)\circ\bS
		}\\
		\le&~
		\norm*{
			\dfrac{1}{n}
			\displaystyle
			\sum_{l=1}^n
			S_{lj}
			(\bS_l\otimes\bS_l-{\rm vec}(\II_d))\bS_l^{\top}
			-
			\EE\S_j(\bS\otimes\bS-{\rm vec}(\II_d))\circ\bS^{\top}
		}\\
		=&~
		\norm*{
			\dfrac{1}{n}\sum_{l=1}^n\bM_{l}\mathbbm{1}(\|\bM_l\|\le Cd^{13/8}n^{1/8})
			-\EE \bM\mathbbm{1}(\|\bM\|\le Cd^{13/8}n^{1/8})
		}\\
		&+
		\norm*{
			\dfrac{1}{n}
			\sum_{l=1}^n
			\bM_{l}\mathbbm{1}(\|\bM_l\|\ge Cd^{13/8}n^{1/8})
		}\\
		&+
		\norm*{
			\EE \bM\mathbbm{1}(\|\bM\|\le Cd^{13/8}n^{1/8})
		}.
	\end{align*}
	One can check by a direct calculation that
	$$
	\max
	\left\{\norm*{\EE\bM\bM^{\top}},
	\norm*{\EE\bM^{\top}\bM}
	\right\}
	\le Cd.
	$$
	Then the first term can be bounded by matrix Bernstein inequality as follows. We have
	\begin{align*}
		\norm*{
			\dfrac{1}{n}\sum_{l=1}^n\bM_{l}\mathbbm{1}(\|\bM_l\|\le Cd^{13/8}n^{1/8})
			-\EE \bM\mathbbm{1}(\|\bM\|\le Cd^{13/8}n^{1/8})
		}
		\le C\sqrt{\dfrac{dt}{n}}+\dfrac{Cd^{13/8}n^{1/8}t}{n}
	\end{align*}
	with probability at least $1-\exp(-t)$.
	
	For the second term, as before we compute $p=\PP(\|\bM\|\ge Cd^{3/4}n^{1/2})$. Note that
	\begin{align*}
		\abs*{\norm*{\bM}^2-d^3S_j^2}
		=&
		\abs*{|S_j|^2\|\bS\|^2\cdot\|(\bS\otimes \bS)-{\rm vec}(\II_d)\|^2
			-d^3S_j^2}\\
		=&
		\abs*{S_j^2\left(\sum S_k^2\right)
			\left(
			(\sum S_k^2)^2
			+d
			-2\sum S_k^2 
			\right)
			-d^3S_j^2}\\
		\le& 
		S_j^2\abs*{
			\sum_k(S_k^2-1)
		}^3.
	\end{align*}
	It follows by the moment inequalities in Lemma \ref{lem:Sk-tail-bds}, see also Remark 2 of \cite{latala1997estimation}, that
	$$
	\EE\abs*{\norm*{\bM}^2-d^3S_j^2}^{(4+\epsi/2)}
	\le C\EE\abs*{
		\sum_k(S_k^2-1)
	}^{(8+\epsi)/2}
	\le Cd^{4+\epsi/2}.
	$$
	Then by Markov's inequality, we obtain
	\begin{align*}
		\PP(\|\bM\|\ge Cd^{3/2}\cdot (nd)^{1/8} +Cn^{1/8}d^{5/8})\le \dfrac{1}{(dn)^{1+\epsi/8}}.
	\end{align*}
	Therefore if we define ${\calQ}=\{1\le l\le n: \|\bM_l\|
	\ge Cd^{3/2}\cdot (nd)^{1/8} +n^{1/8}d^{5/8}\}$, we have
	$$
	|\calQ|\le C
	$$
	with probability at least $1-\exp(-n^{\epsi/8})$, using Chernoff bounds on Binomial random variables. Moreover, it also follows by a union bound over all possible $l$ and $j$ that
	$$
	\max_{1\le j\le d}
	\max_{1\le l\le n}
	\|\bM_l\|
	\le Cd^{3/2}\cdot (nd)^{1/8} +Cn^{1/8}d^{5/8}
	$$
	with probability at least $1-(nd)^{-\epsi/8}$. Thus the second term in the deviation bound  satisfies
	\begin{align*}\label{eq:eps32-untrunc-1}
		\norm*{
			\dfrac{1}{n}\sum_{l=1}^n
			\bM_l
			\mathbbm{1}
			\left(
			\|\bM_{l}\|\ge Cn^{1/8}d^{13/8}+Cn^{1/8}d^{5/8}
			\right)}
		\le &~\dfrac{1}{n}\cdot |\calQ|\cdot 	
		\max_{1\le l\le n}\max_{1\le j\le d}
		|\bM_{l}|
		\le \dfrac{Cn^{1/8}d^{13/8}}{n}\numberthis
	\end{align*}
	with probability at least $1-(nd)^{-\epsi/8}$. The third term can be bounded by Cauchy-Schwarz inequality through an argument identical to \eqref{eq:norm-untrunc-2}, and we omit it for brevity. Taking a union bound over $j\in [d]$ in the first term (note that the other terms have been bounded uniformly in $j$), we finish the proof.
\end{proof}

\medskip

\begin{lemma}\label{lem:eps2-bd}
	Let $\bX_1,\ldots,\bX_n$ be $n$ independent copies of $\bX$ s.t.  $\calL(\bX)\in\calP_{\rm ICA}(\bA;\epsilon, M_1, M_2)$ for some $\bA\in \calO(d)$ and $\epsilon, M_1,M_2>0$. We have, for any $t>C\log d$ that
	$$
	\Delta_2:=\max_{1\le j\le d}\|(\hat{\scrM}_4^{\rm sample}(\bX)-\scrM_4(\bX))\times_3\ba_j\times_4\ba_j\|\ge Ct\sqrt{\dfrac{d}{n}}
	$$
	with probability at most $\exp(-t)+(nd)^{-\epsi/8}+n^{-\epsi/2}$.
\end{lemma}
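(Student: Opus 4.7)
The plan is to bound, for each $j$, the operator norm of a centered empirical sum of random matrices via matrix Bernstein after truncation, following the template used in the proofs of Lemma~\ref{lem:varY-conc} and Lemma~\ref{lem:eps3-bd}. First I would reduce to an ICA-intrinsic quantity: since $\bA\in\calO(d)$ and $\bX_i=\bA\bS_i$, one has $\ba_j^\top\bX_i=S_{ij}$, so
$$
(\hat{\scrM}_4^{\rm sample}(\bX)-\scrM_4(\bX))\times_3\ba_j\times_4\ba_j=\bA\bH_j\bA^\top,\qquad \bH_j:=\frac{1}{n}\sum_{i=1}^n S_{ij}^2\bS_i\bS_i^\top-\EE[S_j^2\bS\bS^\top].
$$
Operator norm is preserved under orthogonal conjugation, so $\|\bM_j\|=\|\bH_j\|$ and it suffices to control $\max_j\|\bH_j\|$.

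Fix $j$ and set $\bW_i:=S_{ij}^2\bS_i\bS_i^\top-\EE[S_j^2\bS\bS^\top]$, so $\bH_j=n^{-1}\sum_i\bW_i$. I would truncate at level $T$ (of order $\sqrt{nd}$), decomposing $\bW_i=\bW_i\mathbbm{1}_{\calA_i}+\bW_i\mathbbm{1}_{\calA_i^c}$ with $\calA_i=\{|S_{ij}|^2\|\bS_i\|^2\le T\}$. The truncated summands then satisfy $\|\bW_i\mathbbm{1}_{\calA_i}\|\le 2T$ a.s. The variance parameter is computed directly using independence of the coordinates of $\bS$ and the moment assumptions: the dominant quantity $\EE[S_j^4\|\bS\|^2\bS\bS^\top]$ is a diagonal matrix (odd-moment off-diagonal entries cancel) whose $(j,j)$ entry is $O(d)$ and whose other diagonal entries are $O(d)$, giving $\|\EE[S_j^4\|\bS\|^2\bS\bS^\top]\|\le Cd$ and hence $\|\sum_i\EE(\bW_i\mathbbm{1}_{\calA_i})^2\|\le Cnd$. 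Matrix Bernstein then produces
$$
\Bigl\|n^{-1}\sum_i\bW_i\mathbbm{1}_{\calA_i}\Bigr\|\le C\Bigl(\sqrt{\tfrac{dt}{n}}+\tfrac{Tt}{n}\Bigr)\le Ct\sqrt{\tfrac{d}{n}}
$$
with probability at least $1-2d\,e^{-t}$, once $T\lesssim\sqrt{nd}$.

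The remaining pieces are the truncation tails. The event $\bigcap_i\calA_i$ fails with probability at most $n\cdot\PP(|S_j|^2\|\bS\|^2>T)\le n\bigl(\PP(\|\bS\|^2\ge 2d)+\PP(|S_j|^2\ge T/(2d))\bigr)$, where the first term is controlled by Lemma~\ref{lem:Sk-tail-bds} and the second by Markov's inequality applied to $\EE|S_j|^{8+\epsilon}\le M_2$. A union bound over $i\in[n]$ and $j\in[d]$ gives a failure probability of the advertised order $(nd)^{-\epsilon/8}+n^{-\epsilon/2}$ for $T=C\sqrt{nd}$. The truncation bias $\|\EE\bW_i\mathbbm{1}_{\calA_i^c}\|$ is handled by Cauchy--Schwarz exactly as in equation~\eqref{eq:norm-untrunc-2} and contributes only lower-order terms. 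Finally, taking $t\ge C\log d$ in the Bernstein tail absorbs the union bound over the $d$ choices of $j$.

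The main obstacle is the careful balancing of the truncation threshold $T$, which must simultaneously (i) be small enough that $Tt/n\lesssim t\sqrt{d/n}$, forcing $T\lesssim\sqrt{nd}$, (ii) be large enough that $nd\cdot\PP(|S_j|^2\|\bS\|^2>T)$ matches the target failure probability, and (iii) leave the Bernstein variance parameter at $O(nd)$. A cleaner route is to split $\bS=S_j\be_j+\tilde{\bS}^{(j)}$ with $\tilde{\bS}^{(j)}$ independent of $S_j$, expanding $S_j^2\bS\bS^\top$ into terms of the form $S_j^4\be_j\be_j^\top$, $S_j^3(\be_j\tilde{\bS}^\top+\tilde{\bS}\be_j^\top)$, and $S_j^2\tilde{\bS}\tilde{\bS}^\top$; each piece is then a scalar, vector, or matrix Bernstein problem with factorized tails, circumventing the coupling between $|S_j|$ and $\|\bS\|$ in the truncation event.
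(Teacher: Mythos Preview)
Your proposal is correct and follows essentially the same route as the paper: reduce via the orthogonality of $\bA$ to $\frac{1}{n}\sum_l S_{lj}^2\bS_l\bS_l^\top-\EE[S_j^2\bS\bS^\top]$, truncate on $|S_{lj}|\,\|\bS_l\|$ at scale $(nd)^{1/4}$, apply matrix Bernstein with variance proxy $O(nd)$, and dispose of the truncation bias via Cauchy--Schwarz as in \eqref{eq:norm-untrunc-2}. The only place the paper differs is in handling the un-truncated samples: instead of arguing that $\bigcap_i\calA_i$ holds (which is slightly delicate to push to the stated failure probability uniformly in $n$, since $n\,\PP(\|\bS\|^2\ge 2d)$ need not be small for large $n$), the paper allows a bounded number of outliers---showing via Chernoff that the index set $M''$ of truncation failures satisfies $|M''|\le C$, and then separately bounding $\max_{l,j}\|\bY'_{l(j)}\|^2$ at a larger scale---which is what yields the $(nd)^{-\epsi/8}+n^{-\epsi/2}$ contribution cleanly.
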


\medskip

\begin{proof}[Proof of Lemma \ref{lem:eps2-bd}]
	Note that for any $j\in [d]$, 
	$
	\langle\bX_l,\ba_j\rangle=\be_j^{\top}\bS_l:=S_{lj}
	$.
	By our model assumptions, $\{S_{lj}\}$ are independent random  variables with mean zero, variance one and $\EE |S_{lj}|^{8+\epsi}\le C$. Let us define the vectors $\bY'_{l(j)}=S_{lj}\bS_l$.
	
	By Markov's inequality, for any $t>d^{-1/2}$,
	$$
	\PP(|S_{lj}|\ge (ndt)^{1/8})\le \left(
	ndt\right)^{-(1+\epsi/8)}.
	$$	
	Since $\{S_{l(j)}:1\le j\le d\}$ are independent,
	\begin{align*}
		\EE |\sum_{i\neq j}S_{lj}^2(S_{li}^2-1)|^{\tfrac{8+\epsi}{2}}
		=&~\EE|S_{lj}|^{8+\epsi}\EE\left(
		\abs*{\sum_{i\neq j}(S_{li}^2-1)}^{\tfrac{8+\epsi}{2}}\right)
		\le  ~ Cd^{\tfrac{8+\epsi}{4}}.
	\end{align*}	
	using \eqref{eq:mmnt-bd} for  $\alpha=(8+\epsi)$. Thus by Markov inequality, we also have, for any $t>0$, that
	$$
	\PP\left(
	\abs*{\sum_{i\neq j}S_{lj}^2(S_{li}^2-1)}\ge t
	\right)
	\le \dfrac{Cd^{(8+\epsi)/4}}{t^{(8+\epsi)/2}}.
	$$
	Thus, for any $t>d^{-1/2}$,
	\begin{align*}\label{eq:Ylj-1-nrm}
		\PP\left(\max_{1\le j\le d}\|\bY'_{l(j)}\|\ge C(ndt)^{1/4}\right)
		\le&~ 
		d^{-\tfrac{\epsi}{8}}(nt)^{-(1+\epsi/8)}
		+
		\dfrac{d}{(nt)^{(8+\epsi)/4}}.
		\numberthis
	\end{align*}
	\begin{align*}
		&\norm*{\dfrac{1}{n}\sum_{l=1}^n
			\langle\bX_l,\ba_j\rangle^2\bX_l\bX_l^{\top}
			-\EE\langle\bX_1,\ba_j\rangle^2\bX_1\bX_1^{\top}}\\	
		=&~
		\norm*{
			\dfrac{1}{n}
			\sum_{l=1}^n
			S_{lj}^2\bS_l\bS_l^{\top}
			-\EE S_{1j}^2\bS_1\bS_1^{\top}}\\
		\le&~
		\norm*{
			\dfrac{1}{n}
			\sum_{l=1}^n
			\bY'_{l(j)}(\bY'_{l(j)})^{\top}
			\mathbbm{1}\left(\|\bY'_{l(j)}\|\le \dfrac{C(nd)^{1/4}}{(\log d)}\right)
			-\EE \bY'_{1(j)}(\bY'_{1(j)})^{\top}
			\mathbbm{1}\left(\|\bY'_{1(j)}\|\le \dfrac{C(nd)^{1/4}}{(\log d)}\right)}\\
		&~+\norm*{
			\dfrac{1}{n}\sum_{l=1}^n
			\bY'_{l(j)}(\bY'_{l(j)})^{\top}
			\mathbbm{1}\left(\|\bY'_{l(j)}\|\ge \dfrac{C(nd)^{1/4}}{(\log d)}\right)
		}
		+\norm*{
			\EE \bY'_{1(j)}(\bY'_{1(j)})^{\top}
			\mathbbm{1}\left(\|\bY'_{1(j)}\|\ge \dfrac{C(nd)^{1/4}}{(\log d)}\right)
		}.
	\end{align*}
	The rest of the proof is identical to that of Lemma \ref{lem:varY-conc}. By matrix Bernstein inequality, we can bound
	\begin{align*}\label{eq:eps12-trunc}
		&~\norm*{
			\dfrac{1}{n}
			\sum_{l=1}^n
			\bY'_{l(j)}(\bY'_{l(j)})^{\top}
			\mathbbm{1}\left(\|\bY'_{l(j)}\|\le \dfrac{C(nd)^{1/4}}{(\log d)}\right)
			-\EE \bY'_{1(j)}(\bY'_{1(j)})^{\top}
			\mathbbm{1}\left(\|\bY'_{1(j)}\|\le \dfrac{C(nd)^{1/4}}{(\log d)}\right)}\\
		\le&~ \sqrt{\dfrac{Cdt}{n}}	\numberthis
	\end{align*}
	with probability at least $1-\exp(-t)$. 
	
	For the second term, as before, let us define 
	$$
	M'':=\left\{1\le l\le n:\max_{1\le j\le d}\|\bY'_{l(j)}\|\ge \dfrac{C(nd)^{1/4}}{(\log d)}\right\}.$$ 
	It can be checked by \eqref{eq:Ylj-1-nrm} that
	$$
	\EE(M'')\le (nd)^{-\epsi/8}+n^{-2/3-\epsi/4},
	\,\,
	\text{ and }
	\,\,
	\PP(M''\ge C)\le \exp(-n^{2/3})
	$$
	by Chernoff bound for binomial random variables. Following the steps of \eqref{eq:norm-untrunc-11} , we then have
	\begin{align*}\label{eq:eps12-untrunc-1}
		&~\norm*{
			\dfrac{1}{n}\sum_{l=1}^n
			\bY'_{l(j)}(\bY'_{l(j)})^{\top}
			\mathbbm{1}\left(\|\bY'_{l(j)}\|\ge \dfrac{C(nd)^{1/4}}{(\log d)}\right)}\\
		\le &~\dfrac{1}{n}\cdot |M''|\cdot 	
		\max_{1\le l\le n}\max_{1\le j\le d}
		|\bY_{l(j)}|^2
		\cdot \max_{1\le l\le n}
		\|\bS_l\|^2\\
		\le&~ \dfrac{C(nd)^{1/2}}{n}
		=C\sqrt{\dfrac{d}{n}}\numberthis
	\end{align*}
	with probability at least $1-(nd)^{-\epsi/8}-n^{-\epsi/2}$, where we use the bounds from \eqref{eq:trunc-tail-bd} and \eqref{eq:norm-untrunc-11}. The third term can be bounded by Cauchy-Schwarz inequality through an argument identical to \eqref{eq:norm-untrunc-2}. In particular,
	\begin{align*}
		&~\norm*{
			\EE \bY'_{1(j)}(\bY'_{1(j)})^{\top}
			\mathbbm{1}\left(\|\bY'_{1(j)}\|\ge \dfrac{C(nd)^{1/4}}{(\log d)}\right)
		}	\\
		\le&~
		\sup_{\bv\in \SS^{d-1}}
		\left(\EE\langle\bY'_{1(j)},\bv\rangle^4\right)^{1/2}
		\cdot \left(\PP\left(\|\bY'_{1(j)}\|\ge \dfrac{C(nd)^{1/4}}{(\log d)}\right)\right)^{1/2}\\
		\le&~
		\sup_{\bv\in \SS^{d-1}}
		\left(\EE S_{1(j)}^8\langle\bS_{1(j)},\bv\rangle^8\right)^{1/4}
		\cdot \dfrac{1}{\sqrt{n}}
		\le \dfrac{C}{\sqrt{n}}
	\end{align*} 
	using \eqref{eq:Ylj-1-nrm} with $t=\log d$ in the last step. Taking a union bound over $j\in [d]$ in the first term (note that the other terms have been bounded uniformly in $j$), we finish the proof.
\end{proof}

\medskip

\begin{lemma}\label{lem:eps1-bd} Let $\bX_1,\dots,\bX_n$ be $n$ independent copies of a random vector $\bX$ such that $\calL(\bX)\in\calP_{\rm ICA}(\bA;\epsi,M_1,M_2)$ for $\epsi,M_1,M_2>0$. Given any $\delta\in(0,1)$, we have for any $j\in \{1,\dots,d\}$, that
	$$\PP\left(
	\max\limits_{1\le j\le d}
	~\norm*{\dfrac{1}{n}
		\displaystyle
		\sum_{l=1}^n
		\langle\bX_l,\ba_j\rangle^3\bX_l
		-\EE\langle\bX_1,\ba_j\rangle^3\bX_1}
	\ge 
	C\sqrt{\dfrac{d(\log (1/\delta))}{n}}\right)~\le 
	\dfrac{\delta}{2}
	+(nd)^{-\epsi/8}+n^{-\epsi/2}.
	$$
\end{lemma}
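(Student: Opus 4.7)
}
The plan is to mimic the truncation-plus-Bernstein strategy used in Lemmas~\ref{lem:varY-conc}, \ref{lem:eps3-bd}, and \ref{lem:eps2-bd}. First, since $\bA\in\calO(d)$ is an isometry, $\langle\bX_l,\ba_j\rangle=S_{lj}$ and $\langle\bX_l,\ba_j\rangle^{3}\bX_l=\bA(S_{lj}^{3}\bS_l)$, so the quantity to control equals $\|\tfrac{1}{n}\sum_{l=1}^{n}\bY_l^{(j)}-\EE\bY_1^{(j)}\|$, where $\bY_l^{(j)}:=S_{lj}^{3}\bS_l\in\RR^{d}$. Using independence of the coordinates of $\bS_l$, one directly computes $\EE\|\bY^{(j)}\|^{2}=\EE S_{j}^{8}+(d-1)\EE S_{j}^{6}\le Cd$ and $\|\EE\bY^{(j)}(\bY^{(j)})^{\top}\|\le C$, which are the variance proxies needed for Bernstein.

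Second, fix a truncation level $T\asymp (nd)^{1/4}/\log(1/\delta)^{1/2}$ and split
$$
\dfrac{1}{n}\sum_{l=1}^{n}\bY_l^{(j)}-\EE\bY_1^{(j)}
=\bigg(\dfrac{1}{n}\sum_{l=1}^{n}\tilde{\bY}_l^{(j)}-\EE\tilde{\bY}_1^{(j)}\bigg)
+\dfrac{1}{n}\sum_{l=1}^{n}\bY_l^{(j)}\mathbbm{1}(\|\bY_l^{(j)}\|>T)
-\EE\bY_1^{(j)}\mathbbm{1}(\|\bY_1^{(j)}\|>T),
$$
where $\tilde{\bY}_l^{(j)}=\bY_l^{(j)}\mathbbm{1}(\|\bY_l^{(j)}\|\le T)$. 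For the truncated average I would apply the vector Bernstein inequality of \cite{minsker2017some} (equivalently, a matrix Bernstein bound treating vectors as the first column of a Hermitian dilation). With $\|\tilde{\bY}_l^{(j)}\|\le T$ a.s. and $\sum_{l}\EE\|\tilde{\bY}_l^{(j)}\|^{2}\le Cnd$, one obtains
$$
\Big\|\tfrac{1}{n}\sum_{l=1}^{n}\tilde{\bY}_l^{(j)}-\EE\tilde{\bY}_1^{(j)}\Big\|
\le C\sqrt{\tfrac{d\log(1/\delta)}{n}}+\tfrac{CT\log(1/\delta)}{n}
\le C\sqrt{\tfrac{d\log(1/\delta)}{n}}
$$
with probability $\ge 1-\delta/(2d)$, since the linear term is dominated by the square-root term in the regime $n\gtrsim\log(1/\delta)$.

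Third, handle the untruncated tails. Using $\|\bY^{(j)}\|\le |S_{j}|^{3}\|\bS\|$, Markov's inequality with the $(8+\epsi)/2$ moment gives $\PP(\|\bY^{(j)}\|>T)\le C(nd)^{-1-\epsi/8}$ via the same device as in~\eqref{eq:Ylj-1-nrm} (separating $S_j$ from $\bS$ and applying Lemma~\ref{lem:Sk-tail-bds} to $\sum_{i\ne j}(S_i^{2}-1)$). A Chernoff bound on the Bernoulli indicators shows $|\{l:\|\bY_l^{(j)}\|>T\}|\le C$ with probability at least $1-\exp(-n^{\epsi/8})$, and on this event the tail empirical sum is bounded by $Cn^{-1}\cdot\max_l\|\bY_l^{(j)}\|\le C\sqrt{d/n}$ using the analogue of~\eqref{eq:eps12-untrunc-1} and~\eqref{eq:trunc-tail-bd}. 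The deterministic tail $\|\EE\bY^{(j)}\mathbbm{1}(\|\bY^{(j)}\|>T)\|$ is controlled by Cauchy--Schwarz via $\sup_{\bv\in\SS^{d-1}}\EE\langle\bY^{(j)},\bv\rangle^{2}\cdot\PP(\|\bY^{(j)}\|>T)^{1/2}$, which is of order $n^{-1/2}$.

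Finally, a union bound over $j\in[d]$ inflates the failure probability of the Bernstein step to $\delta/2$, while the tail-truncation probabilities remain $(nd)^{-\epsi/8}+n^{-\epsi/2}$ after union bounding. Combining everything yields the stated rate $C\sqrt{d\log(1/\delta)/n}$. The main obstacle, as in Lemmas~\ref{lem:eps3-bd}--\ref{lem:eps2-bd}, is balancing the truncation threshold $T$: it must be small enough that the Bernstein deviation term $T\log(1/\delta)/n$ does not dominate, yet large enough that the polynomial-tail bound $\PP(\|\bY^{(j)}\|>T)$ (driven solely by the $(8+\epsi)$-th moment assumption) stays summable after a union bound over the $n$ samples and the $d$ choices of $j$. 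Choosing $T\asymp (nd)^{1/4}$ up to logarithmic factors achieves this balance since the cube $S_{lj}^{3}$ is the worst offender in the moment budget.
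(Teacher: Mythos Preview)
Your strategy---truncation, vector Bernstein on the truncated part, Chernoff on the number of untruncated samples, and Cauchy--Schwarz on the mean of the tail---matches the paper's exactly. The gap is in the truncation level: $T\asymp(nd)^{1/4}$ is the threshold appropriate to Lemma~\ref{lem:eps2-bd}, where $\bY'_{l(j)}=S_{lj}\bS_l$, but here $\bY_{l(j)}=S_{lj}^{3}\bS_l$ carries an extra factor $S_{lj}^{2}$, and the correct threshold is $T\asymp\sqrt{nd}/\log d$. Concretely, since $\|\bY_{l(j)}\|\approx |S_{lj}|^{3}\sqrt{d}$, the event $\{\|\bY_{l(j)}\|>(nd)^{1/4}\}$ is essentially $\{|S_{lj}|>(n/d)^{1/12}\}$; Markov with the available $(8+\epsi)$-th moment gives only $\PP\lesssim(d/n)^{(8+\epsi)/12}$, and summing over the $n$ samples yields $n(d/n)^{(8+\epsi)/12}$, which \emph{diverges} for $n\asymp d^{2}$ and small $\epsi$. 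Hence neither your claimed tail bound $\PP(\|\bY^{(j)}\|>T)\le C(nd)^{-1-\epsi/8}$ nor the Chernoff step ``$|\{l:\|\bY_l^{(j)}\|>T\}|\le C$'' holds at this level.

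Raising $T$ to $\sqrt{nd}/\log d$ (as the paper does) fixes everything: the tail event becomes $|S_{lj}|\gtrsim n^{1/6}$, giving $\PP\lesssim n^{-(8+\epsi)/6}$, which \emph{is} summable over $n$ and $d$; and the Bernstein linear term $T\log(1/\delta)/n\asymp\sqrt{d/n}\cdot\log(1/\delta)/\log d$ stays below the square-root term $\sqrt{d\log(1/\delta)/n}$ for $\log(1/\delta)\lesssim\log d$. Also note that invoking ``the $(8+\epsi)/2$ moment of $\|\bY^{(j)}\|$'' directly is not possible, since that would require $3(8+\epsi)/2>8+\epsi$ moments of $S_j$; the separation into $|S_{lj}|$ and $\sum_{i\neq j}S_{lj}^{6}(S_{li}^{2}-1)$, as in the paper's~\eqref{eq:Ylj-nrm}, is what makes the moment budget close.
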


\medskip

\begin{proof} As before, $\langle\bX_l,\ba_j\rangle=S_{lj}$. We define the random vectors 
	$
	\bY_{l(j)}:=\langle\bX_l,\ba_j\rangle^3\bS_l=S_{lj}^3\bS_l
	$. 
	Then,
	$$
	\|\bY_{l(j)}\|^2=S_{lj}^8+(d-1)S_{lj}^6+\sum_{i\neq j}S_{lj}^6(S_{li}^2-1).
	$$	
	By Markov's inequality, for any $t>d^{-1/2}$,
	$$
	\PP(|S_{lj}|\ge (ndt)^{1/8})\le \left(
	ndt\right)^{-(1+\epsi/8)}.
	$$	
	Moreover, since $\{S_{l(j)}:1\le j\le d\}$ are independent,
	\begin{align*}
		\EE |\sum_{i\neq j}S_{lj}^6(S_{li}^2-1)|^{\tfrac{8+\epsi}{6}}
		=&~\EE|S_{lj}|^{8+\epsi}\EE\left(
		\abs*{\sum_{i\neq j}(S_{li}^2-1)}^{\tfrac{8+\epsi}{6}}\right)
		\le C\abs*{\EE\left(
			\abs*{\sum_{i\neq j}(S_{li}^2-1)}^{\tfrac{8+\epsi}{3}}\right)}^{1/2}
		\le ~ Cd^{\tfrac{8+\epsi}{12}}.
	\end{align*}	
	using Cauchy-Schwarz inequality, along with \eqref{eq:mmnt-bd} for  $\alpha=2(8+\epsi)/3$. Thus by Markov inequality, we also have, for any $t>d^{-1/2}$, that
	$$
	\PP\left(
	\abs*{\sum_{i\neq j}S_{lj}^6(S_{li}^2-1)}\ge Cndt
	\right)
	\le \dfrac{Cd^{(8+\epsi)/12}}{(ndt)^{(8+\epsi)/6}}
	=\left(
	nt\right)^{-\tfrac{8+\epsi}{6}}d^{-\tfrac{8+\epsi}{12}}.
	$$
	Thus, for any $t>d^{-1/2}$,
	\begin{align*}\label{eq:Ylj-nrm}
		\PP\left(\max_{1\le j\le d}\|\bY_{l(j)}\|^2\ge Cndt\right)
		\le&~ \left(
		nt\right)^{-\tfrac{8+\epsi}{6}}
		d^{1-\tfrac{8+\epsi}{12}}+
		\left(nt\right)^{-\tfrac{8+\epsi}{8}}d^{-\tfrac{\epsi}{8}}\\
		\le&~ n^{-\tfrac{8+\epsi}{6}}d^{-\tfrac{8+7\epsi}{12}}t^{-\tfrac{8+\epsi}{6}}
		+n^{-\tfrac{8+\epsi}{8}}d^{-\tfrac{\epsi}{8}}
		t^{-\tfrac{(8+\epsi)}{8}}\numberthis
	\end{align*}
	We then use the truncation strategy to bound
	\begin{align*}
		&\max_{1\le j\le d}\norm*{\dfrac{1}{n}\sum_{l=1}^n
			\langle\bX_l,\ba_j\rangle^3\bX_l
			-\EE\langle\bX_1,\ba_j\rangle^3\bX_1}\\	
		=&~
		\max_{1\le j\le d}
		\norm*{
			\dfrac{1}{n}
			\sum_{l=1}^n
			\bY_{l(j)}
			-\EE \bY_{l(j)}}\\
		\le&~
		\max_{1\le j\le d}
		\norm*{
			\dfrac{1}{n}
			\sum_{l=1}^n
			\bY_{l(j)}
			\mathbbm{1}
			\left(\max_{1\le j\le d}\|\bY_{l(j)}\|\le \dfrac{C\sqrt{nd}}{(\log d)}\right)
			-\EE \bY_{1(j)}\mathbbm{1}
			\left(\max_{1\le j\le d}\|\bY_{1(j)}\|\le \dfrac{C\sqrt{nd}}{(\log d)}\right)}\\
		&~+\max_{1\le j\le d}\norm*{
			\dfrac{1}{n}\sum_{l=1}^n
			\bY_{lj}
			\mathbbm{1}\left(\max_{1\le j\le d}\|\bY_{l(j)}\|\ge \dfrac{C\sqrt{nd}}{(\log d)}\right)
		}
		+\max_{1\le j\le d}\norm*{
			\EE \bY_{1(j)}
			\mathbbm{1}
			\left(\|\bY_{1(j)}\|\ge \dfrac{C\sqrt{nd}}{(\log d)}\right)
		}.
	\end{align*}
	Note that ${\rm trace}({\rm Var}(\bY_{l(j)}))\le Cd$. Then, we bound the first term using a vector Bernstein inequality to get
	\begin{align*}\label{eq:eps1-trunc}
		\max_{1\le j\le d}&~
		\norm*{
		\dfrac{1}{n}
		\sum_{l=1}^n
		\bY_{l(j)}
		\mathbbm{1}
		\left(\max_{1\le j\le d}\|\bY_{l(j)}\|\le \dfrac{C\sqrt{nd}}{(\log d)}\right)
		-\EE \bY_{1(j)}\mathbbm{1}
		\left(\max_{1\le j\le d}\|\bY_{1(j)}\|\le \dfrac{C\sqrt{nd}}{(\log d)}\right)}\\
		\le&~ \sqrt{\dfrac{Cd(t+t^2/(\log d))}{n}}\numberthis
	\end{align*}
	with probability at least $1-\exp(-t)$. Following the strategy of \eqref{eq:norm-untrunc-11}, we define $M':=\{1\le l\le n:\max_{1\le j\le d}\|\bY_{l(j)}\|^2\ge Cnd/(\log d)^2\}$. Then we have, using \eqref{eq:Ylj-nrm} for $t=\dfrac{1}{(\log d)^2}$ that
	$$
	\EE|M'|\le d^{-2/3}+n^{-\epsi/8}\,\,
	\text{ and }\,\,
	\PP(|M'|\ge C)\le \exp(-Cd^{2/3}\wedge n^{\epsi/8}),
	$$
	by Chernoff bounds for Binomial random variables. As previously done in Lemmas \ref{lem:varY-conc} and \ref{lem:eps2-bd}, we bound the second term as 
	\begin{align*}
		\norm*{
			\dfrac{1}{n}\sum_{l=1}^n
			\bY_{lj}
			\mathbbm{1}(\|\bY_{l(j)}\|\ge C\sqrt{nd}/(\log d))
		}
		\le&~ \dfrac{1}{n}\cdot |M'|\cdot \max_{1\le l\le n}\max_{1\le j\le d}\|\bY_{l(j)}\|
		\le~ \dfrac{C\sqrt{nd}}{n}
	\end{align*}
	with probability at least $1-\exp(-d^{2/3}\wedge n^{\epsi/8})-n^{-\tfrac{1}{3}}d^{-\tfrac{8+7\epsi}{12}}
	+n^{-\tfrac{\epsi}{8}}d^{-\tfrac{\epsi}{8}}$, taking a union bound over $1\le l\le n$ in \eqref{eq:Ylj-nrm} with $t=1$ in the last step. Finally by Cauchy-Schwarz inequality, we have
	\begin{align*}
		&~\norm*{
			\EE \bY_{1(j)}
			\mathbbm{1}(\|\bY_{1(j)}\|\ge C\sqrt{nd}/(\log d))
		}\\
		\le&~
		\sup_{\bv\in \SS^{d-1}}\left(\EE S_{1(j)}^6\langle \bS_1,\bv\rangle^2\right)^{1/2}
		\cdot
		\left(\PP(\|\bY_{1(j)}\|\ge \dfrac{C\sqrt{nd}}{\log d}\right)^{1/2}
		\le  \dfrac{C}{\sqrt{n}}
	\end{align*}
	using \eqref{eq:Ylj-nrm} with $t=\log d$ in the last step. Taking $t=4(\log (1/\delta))$ in \eqref{eq:eps1-trunc}  finishes the proof. 
\end{proof}

\medskip

\begin{lemma}\label{lem:Ejnorm}
	Let $\bX_1,\dots,\bX_n$ be $n$ independent copies of a random vector $\bX$ such that $\calL(\bX)\in\calP_{\rm ICA}(\bA;\epsi,M_1,M_2)$ for $M_1,M_2>0$. Let us define
	$$
	\scrE=\dfrac{1}{n}\sum_{l=1}^n\bX_l^{\circ 4}-\EE(\bX^{\circ 4})
	$$
	and the matrix $\bE=[\scrE\times_{1,2,3}\ba_1\,\scrE\times_{1,2,3}\ba_2\,
	\dots \scrE\times_{1,2,3}\ba_d]$.
	Then
	$$
	\PP\left(\|\bE\|\le C(\sqrt{t}+1)\sqrt{\dfrac{d+d^2\mathbbm{1}(\epsi<4)}{n}}\right)\le \exp(-t)+n^{-\epsi/8}+n^{-(\epsi-4)/12}\mathbbm{1}(\epsi>4).
	$$
\end{lemma}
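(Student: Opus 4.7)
The plan is to first exploit orthogonality of $\bA$ to reduce the problem to bounding a matrix of i.i.d.\ empirical averages, then split into two regimes depending on whether $\epsilon \ge 4$ or $\epsilon < 4$. Since $\bX_l = \bA\bS_l$ with $\bA \in \calO(d)$, we have $\scrE\times_{1,2,3}\ba_j = \bA\bV_j$ where
\[
\bV_j \;=\; \frac{1}{n}\sum_{l=1}^n S_{lj}^3 \bS_l - \EE(S_{1j}^3 \bS_1).
\]
Because $\{S_{li}\}$ are independent with mean zero, $\EE(S_{1j}^3 S_{1k}) = \delta_{jk}\EE S_{1j}^4$, so stacking the $\bV_j$ as columns gives $\bE = \bA\bV$ with
\[
\bV \;=\; \frac{1}{n}\sum_{l=1}^n \bM_l, \qquad \bM_l \;=\; \bs_l\,(\bs_l^{\circ 3})^{\top} - \diag(\EE S_{1j}^4).
\]
Since $\bA$ is orthonormal, $\|\bE\| = \|\bV\|$, and the $\bM_l$ are independent mean-zero $d\times d$ matrices.

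For the case $\epsilon < 4$, I would simply apply Lemma \ref{lem:eps1-bd} column by column and use $\|\bE\| \le \|\bE\|_F$: taking a union bound with $\delta = e^{-t}/d$,
\[
\|\bE\|_F^2 \;=\; \sum_{j=1}^d \|\scrE\times_{1,2,3}\ba_j\|^2 \;\le\; d\cdot\max_{1\le j\le d}\|\scrE\times_{1,2,3}\ba_j\|^2 \;\le\; C\,\frac{d^2(t+\log d)}{n},
\]
which yields the claimed $\sqrt{d^2/n}$ rate with the stated failure probability $\exp(-t) + n^{-\epsilon/8}$.

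For the case $\epsilon \ge 4$, the strategy is a truncated matrix Bernstein argument on the $\bM_l$'s. Two explicit variance calculations drive the bound: first, $\EE[\bs_1(\bs_1^{\circ 3})^{\top}(\bs_1^{\circ 3})\bs_1^{\top}] = \EE[\|\bs_1^{\circ 3}\|^2 \bs_1\bs_1^{\top}]$ is diagonal by independence of components, with entries $\EE(S_{1i}^8) + \sum_{k\ne i}\EE S_{1k}^6 \le Cd$; subtracting $\diag(\EE S_{1j}^4)^2$ (bounded in norm by $C$) gives $\|\EE \bM_l\bM_l^{\top}\| \le Cd$, and the same holds for $\|\EE\bM_l^{\top}\bM_l\|$. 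For truncation, set $T = d^{1/(8+\epsilon)}n^{1/12}$ and work on the event $\calT = \{\max_{l,j}|S_{lj}| \le T\}$; Markov's inequality with the $(8+\epsilon)$-moment bound gives $\PP(\calT^c) \le ndM_2 T^{-(8+\epsilon)} = n^{(4-\epsilon)/12} = n^{-(\epsilon-4)/12}$. On $\calT$, together with the high-probability control $\|\bs_l\|^2 \le Cd$ from Lemma \ref{lem:Sk-tail-bds} (which contributes the $n^{-\epsilon/8}$ tail), we get $\|\bM_l\| \le CT^2 d \le C\sqrt{nd}$ whenever $n \ge Cd^2$. Applying the matrix Bernstein inequality to the truncated $\bM_l$'s then yields
\[
\|\bV\| \;\le\; C\sqrt{\frac{d\,t}{n}} + \frac{C\sqrt{nd}\cdot t}{n} \;\le\; C(\sqrt{t}+1)\sqrt{\frac{d}{n}},
\]
with probability at least $1 - \exp(-t)$, and the contribution from the mean of the truncated tails is of order $1/\sqrt{n}$ by a Cauchy--Schwarz argument analogous to \eqref{eq:norm-untrunc-2}.

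The main obstacle is calibrating the truncation level $T$ so that (i) the tail $\PP(\calT^c)$ matches the announced $n^{-(\epsilon-4)/12}$ probability, and simultaneously (ii) the Bernstein deviation $R\log d/n$ with $R = T^2 d$ does not exceed the leading term $\sqrt{d/n}$. The choice $T = d^{1/(8+\epsilon)}n^{1/12}$ just barely achieves both, and this balance is precisely why the dichotomy at $\epsilon = 4$ appears: below this threshold the Bernstein-based bound degrades and one must fall back on the cruder Frobenius estimate, producing the $\sqrt{d^2/n}$ rate via Lemma \ref{lem:eps1-bd}.
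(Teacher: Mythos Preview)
Your proposal is correct and, for the regime $\epsilon\ge 4$, follows essentially the same truncated matrix Bernstein argument as the paper: reduce $\bE$ to $\bV=\frac{1}{n}\sum_l\bs_l(\bs_l^{\circ 3})^\top-\diag(\EE S_{1j}^4)$, compute the variance proxy $\|\EE\bM_l\bM_l^\top\|\le Cd$, truncate, and apply Bernstein. The only cosmetic difference is that the paper truncates at the matrix level $\|\bS\bY^\top\|\le\eta$ (using Latała's moment inequality on $\sum S_i^6$ to get $\|\bY\|\le C(nd)^{1/4}$ directly), whereas you truncate entrywise at $|S_{lj}|\le T$ and then bound $\|\bM_l\|\le CT^2d$; both routes land at the same cutoff $\eta\asymp\sqrt{nd}$ and the same failure term $n^{-(\epsilon-4)/12}$.

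Where you genuinely diverge from the paper is in the $\epsilon<4$ case. The paper does \emph{not} abandon matrix Bernstein there: it simply raises the truncation level to $\eta=C\sqrt{nd^2}$ (using only the cruder tail bound $\PP(\|\bS\bY^\top\|\ge t)\le d^{(8+\epsilon)/4}t^{-(8+\epsilon)/4}$, which holds for all $\epsilon>0$), and the Bernstein deviation term $\eta t/n=Ct\sqrt{d^2/n}$ then produces the $\sqrt{d^2/n}$ rate directly. Your alternative---bounding $\|\bE\|\le\|\bE\|_F\le\sqrt{d}\max_j\|\scrE\times_{1,2,3}\ba_j\|$ and invoking Lemma~\ref{lem:eps1-bd}---is more elementary and recycles an existing result, at the cost of losing a factor of $\sqrt{d}$ through the Frobenius--operator gap rather than through the truncation. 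Both give the same rate and failure probability, so your shortcut is perfectly valid here; the paper's version has the aesthetic advantage of a single unified argument for both regimes. One small remark: Lemma~\ref{lem:eps1-bd} already contains the maximum over $j$, so the extra union bound with $\delta=e^{-t}/d$ you mention is unnecessary.
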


\begin{proof}[Proof of Lemma \ref{lem:Ejnorm}]
	The proof follows the same truncation strategy from Lemmas \ref{lem:tens-conc}, \ref{lem:eps2-bd} and \ref{lem:eps1-bd}. Note that
	$$
	\scrE\times_{1,2,3}\ba_j
	=
	\bA
	\left(\dfrac{1}{n}\sum_{l=1}^nS_{lj}^3\bS_l-\EE S_j^3\bS\right).
	$$
	Writing
	$$
	\bY=(S_1^3\,S_2^3\,\dots\,S_d^3)
	$$
	and letting $\bY_1,\dots,\bY_n$ denote independent copies of $\bY$, we have 
	\begin{align*}
		\|\bE\|
		=&~\norm*{
			\dfrac{1}{n}\sum_{l=1}^n\bS_l\bY_l^{\top}
			-\EE\bS\bY^{\top}	
		}.
	\end{align*}
	Note that
	$
	\|\bS\bY^{\top}\|^2=\displaystyle\sum_{i=1}^dS_i^2\sum_{j=1}^dS_j^6
	$,
	so that
	\begin{align*}
		\PP\left(
		\|\bS\bY^{\top}\|
		\ge t
		\right)
		\le \dfrac{\EE\left(\|\bS\bY^{\top}\|^2\right)^{(8+\epsi)/8}}{t^{(8+\epsi)/4}}
		\le 
		\dfrac{d^{(8+\epsi)/4}}{t^{(8+\epsi)/4}}
	\end{align*}
	by Theorem 1 of \cite{latala1997estimation}. Thus,
	$$
	\PP(\max\{\|\bS\bY^{\top}\|\}\le C(nd^2t)^{1/2})
	\le \dfrac{1}{n^{\eps/8}t^{(8+\epsi)/8}}.
	$$
	On the other hand if $\epsi\ge 4$, we have
	$$
	\PP\left(\sum (S_i^6-\EE S_i^6)\ge t\right)
	\le \dfrac{d^{(8+\eps)/12}}{t^{(8+\eps)/6}}
	$$
	by Corollary 2 of \cite{latala1997estimation}, which implies that
	$$
	\PP(\max\{\|\bY\|\}\le C(ndt)^{1/4})
	\le \dfrac{1}{n^{(\epsi-4)/12}t^{(8+\epsi)/12}}.
	$$
	A similar bound over $\|\bS\|$ holds and this implies,
	$$
	\PP(\max\{\|\bS\bY^{\top}\|\}\le C(ndt)^{1/2})
	\le \dfrac{1}{n^{(\epsi-4)/12}t^{(8+\epsi)/12}}.
	$$
	
	Next, as done in the previous lemmas, we write
	\begin{align*}
		&\norm*{
			\dfrac{1}{n}\sum_{l=1}^n\bS_l\bY_l^{\top}
			-\EE\bS\bY^{\top}	
		}\\
		\le&~
		\norm*{\dfrac{1}{n}\sum_{l=1}^n\bS_l\bY_l^{\top}
			\mathbbm{1}(\max\{\|\bS_l\bY_l^{\top}\|\}\le \eta)
			-\EE\bS\bY^{\top}\mathbbm{1}(\max\{\|\bS_l\bY_l^{\top}\|\}\le \eta)
		}\\
		&+\norm*{
			\dfrac{1}{n}
			\sum_{l=1}^n\bS_l\bY_l^{\top}
			\mathbbm{1}(\max\{\|\bS_l\bY_l^{\top}\|\}\ge \eta)
		}\\
		&+\norm*{
			\EE 
			\bS_l\bY_l^{\top}
			\mathbbm{1}(\max\{\|\bS_l\bY_l^{\top}\|\}\ge \eta)
		}.
	\end{align*}
	Note that
	\begin{align*}
		\EE(\bS\bY^{\top})(\bS\bY^{\top})^{\top}
		=&\EE\left(
		(\sum S_j^6)\bS\bS^{\top}
		\right)
		={\rm diag}
		\left(
		\EE S_1^8+\sum_{j\neq 1} S_j^6
		\,\dots\,
		\EE S_d^8+\sum_{j\neq d}S_j^6
		\right).
	\end{align*}
	By a similar calculation on $\EE (\bS\bY^{\top})^{\top}(\bS\bY^{\top})$, it follows that
	\begin{align*}
		\max\left\{
		\norm*{\EE(\bS\bY^{\top})(\bS\bY^{\top})^{\top}},
		\norm*{\EE(\bS\bY^{\top})^{\top}(\bS\bY^{\top})}
		\right\}
		\le Cd.
	\end{align*}
	Thus by matrix Bernstein inequality,
	\begin{align*}
		\norm*{\dfrac{1}{n}\sum_{l=1}^n\bS_l\bY_l^{\top}
			\mathbbm{1}(\max\{\|\bS_l\bY_l^{\top}\|\}\le \eta)
			-\EE\bS\bY^{\top}\mathbbm{1}(\max\{\|\bS_l\bY_l^{\top}\|\}\le \eta)
		}
		\le \sqrt{\dfrac{Cdt}{n}}+\dfrac{Ct\eta}{n}
	\end{align*}
	with probability at least $1-\exp(-t)$. For the second term, as before we define
	$$
	\tilde{M}:=\{1\le l\le n:\max\{\|\bS_l\bY_l^{\top}\|\}\ge \eta\}.
	$$
	It follows that
	$$
	\EE|\tilde{M}|\le n^{-\epsi/8}
	\text{ and }
	\PP(|\tilde{M}|\ge C)\le \exp(-n^{\epsi/8})
	\text{ if }
	\begin{cases}
		\eta=C\sqrt{nd^2}\text{ and }\epsi>0\\
		\eta=C\sqrt{nd}\text{ and }\epsi\ge 4.
	\end{cases}
	$$
	by Chernoff bounds. Hence the second term can be bounded by
	\begin{align*}
		\norm*{\dfrac{1}{n}\sum_{l=1}^n\bS_l\bY_l^{\top}
			\mathbbm{1}(\max\{\|\bS_l\bY_l^{\top}\|\}\ge C\eta}
		\le\dfrac{1}{n}\cdot |\tilde{M}|\cdot
		\max_{1\le l\le n}\max\{\|\bS_l\bY_l^{\top}\|\}
		\le \dfrac{C\eta}{n}
	\end{align*}
	with probability at least $1-n^{-\epsi/8}-n^{-(\epsi-4)/12}\mathbbm{1}(\epsi\ge 4)$. The third term can be bounded by Cauchy-Schwarz inequality as done in Lemmas \ref{lem:eps2-bd} and \ref{lem:eps1-bd}.
\end{proof}

\medskip 

\begin{lemma}\label{lem:asy} Let $\pi$ be the permutation from Theorem \ref{th:comp-rate}. We define the $d\times (d-1)$ matrix $\bA_{j,\perp}=[\ba_1\,\ba_2\,\dots\,\ba_{j-1}\,\ba_{j+1}\,\dots\,\ba_d]$, i.e.,  with all the columns of $\bA$ except $\ba_j$. We then have
	$$
	\langle\hat{\ba}_{\pi(j)},\ba_j\rangle^2-1+
	\dfrac{1}{\kappa_j^2}\cdot(\bA_{j,\perp}^{\top}\scrE\times_{2,3,4}\ba_j )^{\top}(\bA_{j,\perp}^{\top}\scrE\times_{2,3,4}\ba_j)
	\le \left(\dfrac{Cd(\log d)}{n}\right)^{3/2}
	$$
	and 
	$$
	\sup_{\bu\in\SS^{d-1},\bu\perp\ba_j}
	\abs*{
		\langle\bu,\hat{\ba}_{\pi(j)}\rangle
		-\dfrac{\kappa_j+\scrE\times_{1,2,3,4}\ba_j}{\kappa_j^2}
		\left(
		\bu^{\top}\scrE\times_{2,3,4}\ba_j
		+3\bu^{\top}(\scrE\times_{3,4}\ba_j)^2\ba_j 
		\right)	
	}	\le \dfrac{Cd^2(\log d)^{2}}{n^{3/2}}.
	$$
	for all $1\le j\le d$, with probability at least $1-d^{-3}-n^{-\epsi/8}$.	
\end{lemma}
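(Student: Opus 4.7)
The plan is to exploit the near-fixed-point relation satisfied by the power-iteration output. After $T = O(\log d)$ iterations, the contraction recursion in the proof of Theorem~\ref{th:ica-piter} shows that one further iteration moves $\hat{\ba}_{\pi(j)}$ by an amount well below the target error, so we may treat $\hat{\ba}_{\pi(j)}$ as satisfying the exact fixed-point identity $\lambda\, \hat{\ba}_{\pi(j)} = (\hat{\scrM}_4^{\rm sample}(\bX) - \scrM_0)\times_{2,3,4}\hat{\ba}_{\pi(j)}$ with $\lambda := \|(\hat{\scrM}_4^{\rm sample}(\bX) - \scrM_0)\times_{2,3,4}\hat{\ba}_{\pi(j)}\|$. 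Writing $\bh = \hat{\ba}_{\pi(j)} - \ba_j$, so that $\|\bh\| = O(\sqrt{d\log d/n})$ by Theorem~\ref{th:comp-rate}, and using the ODECO decomposition of $\scrM_4(\bX) - \scrM_0$, the identity becomes
\begin{equation*}
\lambda\, \hat{\ba}_{\pi(j)} = \kappa_j\, \langle \ba_j, \hat{\ba}_{\pi(j)}\rangle^3\, \ba_j + \sum_{k\neq j}\kappa_k\langle \ba_k,\hat{\ba}_{\pi(j)}\rangle^3 \ba_k + \scrE\times_{2,3,4}\hat{\ba}_{\pi(j)},
\end{equation*}
in which the $k\neq j$ sum has norm $O((d\log d/n)^{3/2})$ and can be absorbed into the remainder. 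What is left is a careful Taylor expansion of $\scrE\times_{2,3,4}\hat{\ba}_{\pi(j)}$ around $\ba_j$ together with a matching expansion of $\lambda$ around $\kappa_j$.

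For the second statement, I would project the displayed identity onto a direction $\bu\perp\ba_j$ and expand the noise term by multilinearity, using the symmetry of $\scrE$, as
\begin{equation*}
\scrE\times_{2,3,4}\hat{\ba}_{\pi(j)} = \scrE\times_{2,3,4}\ba_j + 3(\scrE\times_{3,4}\ba_j)\bh + 3\,\scrE\times_4\ba_j\times_{2,3}\bh^{\circ 2} + \scrE\times_{2,3,4}\bh^{\circ 3}.
\end{equation*}
The quadratic and cubic remainders are bounded using the matricized norms of Lemmas~\ref{lem:tens-conc} and \ref{lem:sample-conc}; the worst of these, $\|\scrE\|\cdot\|\bh\|^3 = O(d^{5/2}(\log d)^{3/2}/n^2)$, fits within the target $O(d^2(\log d)^2/n^{3/2})$ whenever $n\ge Cd^2(\log d)^2$. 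Substituting the leading-order relation $\bh \approx \kappa_j^{-1}\bP_{\ba_j,\perp}\scrE\times_{2,3,4}\ba_j$ into the linear correction $3(\scrE\times_{3,4}\ba_j)\bh$ produces the cross term $3\kappa_j^{-1}\bu^\top(\scrE\times_{3,4}\ba_j)^2\ba_j$, while taking inner product with $\ba_j$ in the fixed-point identity yields $\lambda = \kappa_j + \scrE\times_{1,2,3,4}\ba_j + O(d\log d/n)$. Dividing by $\lambda$ and regrouping (with $1/\lambda$ rewritten so that the numerator carries $\kappa_j + \scrE\times_{1,2,3,4}\ba_j$ and the denominator $\kappa_j^2$) reproduces the claimed expansion. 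The first statement follows from the same fixed-point identity projected onto $\bA_{j,\perp}$: since $\sin^2\angle(\hat{\ba}_{\pi(j)},\ba_j) = 1 - \langle \ba_j,\hat{\ba}_{\pi(j)}\rangle^2 = \|\bA_{j,\perp}^\top\hat{\ba}_{\pi(j)}\|^2$, squaring the projected equation and replacing $\hat{\ba}_{\pi(j)}$ by $\ba_j$ inside $\scrE\times_{2,3,4}(\cdot)$ as well as $\lambda$ by $\kappa_j$ each introduces a perturbation of size $O((d\log d/n)^{3/2})$.

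The main obstacle is the bookkeeping of the different matricized norms of $\scrE$. Each Taylor correction contracts $\scrE$ against a different combination of $\ba_j$ and $\bh$, and the four relevant norms range from $\Delta_3 = O(\sqrt{d/n})$ up to the full operator norm $\|\scrE\| = O(\sqrt{d^2/n})$. For every remainder one must identify which matricization the inequality $\bu^\top\scrE\times_{2,3,4}\bh = O(\|\bu\|\cdot \|\text{(matricization)}\|\cdot \|\bh\|^k)$ calls for, apply the appropriate bound from Lemma~\ref{lem:sample-conc}, and verify the combined prefactor stays below $Cd^2(\log d)^2/n^{3/2}$ throughout the regime $n \ge Cd^2(\log d)^2$. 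Maintaining this consistently across both statements, and simultaneously uniformly over $\bu\in\SS^{d-1}$ in the second statement, is the chief technical burden.
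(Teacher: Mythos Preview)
Your approach is correct but takes a genuinely different route from the paper. You work directly with the (approximate) fixed-point identity $\lambda\,\hat{\ba}_{\pi(j)} = (\hat{\scrM}_4^{\rm sample}-\scrM_0)\times_{2,3,4}\hat{\ba}_{\pi(j)}$ and Taylor-expand the right side in $\bh=\hat{\ba}_{\pi(j)}-\ba_j$. The paper instead uses that $\hat{\ba}_j=\hat{\ba}_{j,[T+1]}$ is exactly the normalized output of one power step applied to $\check{\ba}_j:=\hat{\ba}_{j,[T]}$, i.e.\ the top eigenvector of the rank-one matrix $\bM=\bw\bw^\top$ with $\bw=(\hat{\scrM}_4^{\rm sample}-\scrM_0)\times_{2,3,4}\check{\ba}_j$. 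Decomposing $\bM=\hat{\kappa}_j^2\,\ba_j\ba_j^\top+\bG$ and applying the resolvent-based projector expansion of Koltchinskii--Lounici and Xia (Theorem~1 of \cite{xia2021normal}) yields a closed-form series for $\hat{\ba}_j\hat{\ba}_j^\top-\ba_j\ba_j^\top$, from which both statements follow by sandwiching with $\ba_j$ (Case~1) or with $\bu\perp\ba_j$ and $\ba_j$ (Case~2). Your approach is more elementary and self-contained, with the minor cost that treating $\hat{\ba}_{\pi(j)}$ as an exact fixed point requires the side argument you sketched; the paper's approach needs no such approximation since $\hat{\ba}_j=\bw/\|\bw\|$ holds exactly, and the resolvent series packages all higher-order corrections into a single remainder $R_3(\bG)$ with $\|R_3(\bG)\|\le C\|\bG\|^3/\hat{\kappa}_j^6$. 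Both routes ultimately feed the same matricized-norm bounds on $\scrE$ from Lemma~\ref{lem:sample-conc} into the residuals.
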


\medskip

\begin{proof}[Proof of Lemma \ref{lem:asy}]
	We assume without loss of generality that $\pi=Id$. Note that we use the power iteration estimator $\hat{\ba}_j:=\hat{\ba}_{j,[T+1]}$ for some $T>C\log d$. Thus, $\check{\ba}_j:=\hat{\ba}_{j,[T]}$ satisfy
	$$
	\sin\angle\left(\check{\ba}_j,\ba_j\right)\le \sqrt{\dfrac{Cd(\log d)}{n}}
	$$ 
	by Theorem \ref{th:comp-rate}. In addition to the error quantities used earlier, we also define
	$$
	\eps_0=\max_j\scrE\times_{1,2,3,4}\ba_j=\max_j
	\abs*{\dfrac{1}{n}\sum_{l=1}^n(S_{lj}^4-\EE S_{1j}^4)}.
	$$
	By \eqref{eq:mmnt-bd}, we have that
	$$
	\PP\left(\eps_0\ge \sqrt{\dfrac{Cd}{n(\log d)}}\right)\le 
	\dfrac{(\log d)^{1+\epsi/8}}{d^{\epsi/8}}.
	$$	
	Let us define the event
	\begin{equation}\label{eq:tens-conc-bds-2}
		\calA\equiv
		\left(
		\eps_0\le \sqrt{\dfrac{Cd}{n(\log d)}},\,\,
		\eps_1\le \sqrt{\dfrac{Cd(\log d)}{n}},\,\,
		\eps_2\le \sqrt{\dfrac{Cd(\log d)}{n}},\,\,
		\|\scrE\|\le \sqrt{\dfrac{Cd^2(\log d)^2}{n}}\right)
	\end{equation}
	By Lemmas \ref{lem:tens-conc}, \ref{lem:eps2-bd} and \ref{lem:eps1-bd}, we have that the event $\calA$ holds with probability at least $1-d^{-3}-n^{-\epsi/8}$.	
	
	By the definition of the power iteration estimator, $\hat{\ba}_j$ is the top eigenvector of the symmetric matrix
	\begin{align*}\label{eq:defG}
		\bM:=\left((\scrT-\scrM_0)\times_{2,3,4}\check{\ba}_j\right)
		\left((\scrT-\scrM_0)\times_{2,3,4}\check{\ba}_j\right)^{\top}
		=&~\hat{\kappa}_j^2\ba_j\left(\ba_j\right)^\top
		+\bY\bY^{\top}
		+\hat{\kappa}_j\ba_j\bY^{\top}
		+\hat{\kappa}_j\bY\left(\ba_j\right)^{\top}\\
		=:&~\hat{\kappa}_j^2\ba_j\left(\ba_j\right)^\top+\bG
		\numberthis
	\end{align*}
	where $\hat{\kappa}_j:=\kappa_j(\langle\ba_j,\check{\ba}_j\rangle)^3$
	and 
	\begin{equation}\label{eq:defY}
		\bY=\sum_{i\neq j}\kappa_i
		\left(\langle\ba_i,\,\check{\ba}_j\rangle\right)^3\ba_i
		+\scrE\times_{2,3,4}\check{\ba}_j
		=:\bA_{j,\perp}\tilde{\boldsymbol{\kappa}}+\hat{\bE},
	\end{equation}
	where $\tilde{\boldsymbol{\kappa}}\in \RR^{d-1}$ has entries $\kappa_i\left(\langle\ba_i,\,\check{\ba}_j\rangle\right)^3$, and 
	$\hat{\bE}:=\scrE\times_{2,3,4}\check{\ba}_j$. 
	
	By Theorem \ref{th:comp-rate}, we obtain
	\begin{equation}\label{eq:kapp-hat}
		\kappa_j\left(1-\dfrac{Cd(\log d)}{n}\right)^3
		\le\kappa_j|\langle\check{\ba}_j,\ba_j\rangle|^3
		=|\hat{\kappa}_j|\le \kappa_{\max}.
	\end{equation}
	On the other hand, under the event $\calA$,
	\begin{align*}
		\|\tilde{\boldsymbol{\kappa}}\|\le\kappa_{\max}
		\max\limits_{i\neq j}
		\left(\langle\ba_i,\check{\ba}_j\rangle\right)^2
		\cdot \sin\angle
		\left(\check{\ba}_j,\ba_j\right)
		\le \left(\dfrac{Cd(\log d)}{n}\right)^{3/2}
	\end{align*}
	by Theorem \ref{th:comp-rate}. Following the steps of  \eqref{eq:piter-errbd}, we next have, under the event $\calA$,
	\begin{align*}\label{eq:asy-err-bd}
		\|\hat{\bE}\|
		=\|\scrE\times_{2,3,4}\check{\ba}_j\|
		\le&~\|\scrE\times_{2,3,4}\ba_j\|
		+C\sin\angle\left(\check{\ba}_j,\ba_j\right)\cdot \eps_2
		~+C\sin\angle\left(\check{\ba}_j,\ba_j\right)^2\cdot\|\scrE\|\\
		\le&~\sqrt{\dfrac{Cd(\log d)}{n}}\numberthis
	\end{align*}
	by \eqref{eq:def-eps12} and Lemmas \ref{lem:tens-conc}, \ref{lem:eps2-bd} and \ref{lem:eps1-bd}. We then have by equations ~\eqref{eq:defG} and \eqref{eq:defY} that, under $\calA$,
	\begin{equation}\label{eq:YGnorm-bd}
		\max\{\|\bG\|,\,\|\bY\|\}\le \sqrt{\dfrac{Cd(\log d)}{n}}.
	\end{equation}

	Using resolvent based series expansion of projection matrices, we have the following expression. See Theorem 1 from \cite{xia2021normal} and Lemma 1 of \cite{koltchinskii2016asymptotics}. We use the notation $\bP_{j,\perp}:=\bA_{j,\perp}\bA_{j,\perp}^{\top}$.
	\begin{align*}\label{eq:proj-expr}
		\hat{\ba}_j\hat{\ba}_j^{\top}-\ba_j\ba_j^{\top}
		=&~\dfrac{1}{\hat{\kappa}_j^2}
		\bP_{j,\perp}
		\bG\ba_j\ba_j^{\top}+
		\dfrac{1}{\hat{\kappa}_j^2}
		\ba_j\ba_j^{\top}
		\bG
		\bP_{j,\perp}\\
		&~
		+\dfrac{1}{\hat{\kappa}_j^4}
		\left(
		\ba_j\ba_j^{\top}\bG\bP_{j,\perp}\bG\bP_{j,\perp}
		+\bP_{j,\perp}\bG\ba_j\ba_j^{\top}\bG\bP_{j,\perp}
		+\bP_{j,\perp}\bG\bP_{j,\perp}\bG\ba_j\ba_j^{\top}
		\right)\\
		&~-
		\dfrac{1}{\hat{\kappa}_j^4}
		\left(
		\bP_{j,\perp}\bG\ba_j\ba_j^{\top}\bG\ba_j\ba_j^{\top}
		+\ba_j\ba_j^{\top}\bG\bP_{j,\perp}\bG\ba_j\ba_j^{\top}
		+\ba_j\ba_j^{\top}\bG\ba_j\ba_j^{\top}\bG\bP_{j,\perp}
		\right)\\
		&~+R_3(\bG).\numberthis
	\end{align*}
	Moreover, $\|R_3(\bG)\|\le \dfrac{C\|\bG\|^3}{\hat{\kappa}_j^6}\le \left(\dfrac{Cd(\log d)}{n}\right)^{3/2}$ under the event $\calA$.
	
	To derive an asymptotic expression for linear forms $\langle\bu,\hat{\ba}_j\rangle$ we make two cases.
	
	\paragraph{Case 1:~$\bu=\ba_j$.} We pre and post multiply the matrix valued expressions in \eqref{eq:proj-expr} by $\ba_j$. Then,
	\begin{align*}
		\langle\hat{\ba}_j,\ba_j\rangle^2-1
		=&~-\dfrac{1}{\hat{\kappa}_j^4}\cdot\ba_j^{\top}\bG\bP_{j,\perp}\bG\ba+\ba_j^{\top}R_3(\bG)\ba_j\\
		=&~-\dfrac{1}{\hat{\kappa}_j^4}\cdot(\bA_{j,\perp}^{\top}\bG\ba_j)^{\top}(\bA_{j,\perp}^{\top}\bG\ba_j)
		+\ba_j^{\top}R_3(\bG)\ba_j.
	\end{align*}
	By \eqref{eq:defG} and \eqref{eq:defY},
	\begin{align*}
		\bA_{j,\perp}^{\top}\bG\ba_j
		=&~(\hat{\kappa}_j+\ba_j^{\top}\hat{\bE})
		(\tilde{\boldsymbol{\kappa}}+\bA_{j,\perp}^{\top}\hat{\bE})\\
		=&~\hat{\kappa}_j\bA_{j,\perp}^{\top}\scrE\times_{2,3,4}\ba_j
		+\hat{\kappa}_j\cdot \tilde{\boldsymbol{\kappa}}
		+(\ba_j^{\top}\hat{\bE})(\tilde{\boldsymbol{\kappa}}+\bA_{j,\perp}^{\top}\hat{\bE})
		+\hat{\kappa}_j
		\bA_{j,\perp}^{\top}
		(\hat{\bE}-\scrE\times_{2,3,4}\ba_j)
	\end{align*}
	and hence, under the event $\calA$,
	\begin{align*}
		&~\norm*{
			\bA_{j,\perp}^{\top}\bG\ba_j-
			\hat{\kappa}_j\bA_{j,\perp}^{\top}\scrE\times_{2,3,4}\ba_j }\\
		\le&~
		\|\hat{\bE}\|^2
		+(\kappa_{\max}+\|\hat{\bE}\|)(\|\tilde{\boldsymbol{\kappa}}\|
		+C\sin\angle\left(\check{\ba}_j,\ba_j\right)\eps_2
		+C\sin\angle\left(\check{\ba}_j,\ba_j\right)^2\|\scrE\|)\\
		\le&~\dfrac{Cd(\log d)}{n}
	\end{align*}
	by \eqref{eq:asy-err-bd}, along with Lemmas \ref{lem:tens-conc}, \ref{lem:eps2-bd} and Theorem \ref{th:comp-rate}. This implies that under the event $\calA$,
	$$
	\|\bA_{j,\perp}^{\top}\bG\ba_j\|\le |\hat{\kappa}_j|\cdot\eps_1+
	\dfrac{Cd(\log d)}{n}
	\le \sqrt{\dfrac{Cd(\log d)}{n}}
	$$
	by \eqref{eq:def-eps12}, \eqref{eq:kapp-hat} and Theorem \ref{th:comp-rate}. We then have, using the bounds on $\hat{\kappa}_j$, $\|\bG\|$ and $\|R_3(\bG)\|$, that
	\begin{align*}\label{eq:asy-aj}
		&~
		\abs*{\langle\hat{\ba}_j,\ba_j\rangle^2-1+
			\dfrac{1}{\kappa_j^2}\cdot(\bA_{j,\perp}^{\top}\scrE\times_{2,3,4}\ba_j )^{\top}(\bA_{j,\perp}^{\top}\scrE\times_{2,3,4}\ba_j)}\\
		\le &~
		\dfrac{1}{\kappa_j^4}
		\norm*{
			\bA_{j,\perp}^{\top}\bG\ba_j-
			\hat{\kappa}_j\bA_{j,\perp}^{\top}\scrE\times_{2,3,4}\ba_j }
		\cdot \|\bA_{j,\perp}^{\top}\bG\ba_j\|
		+\ba_j^{\top}R_3(\bG)\ba_j
		\\
		\le&~
		\dfrac{Cd(\log d)}{n}\cdot\|\bA_{j,\perp}^{\top}\bG\ba_j\|
		+\|R_3(\bG)\|\\
		\le&~\dfrac{Cd^{3/2}(\log d)^{3/2}}{n^{3/2}}.\numberthis
	\end{align*}
	
	\paragraph{Case 2: $\bu\in \SS^{d-1},\bu\perp\ba_j$.}  We pre and post multiply the matrix valued expressions in \eqref{eq:proj-expr} by $\bu$ and $\ba_j$ respectively. Then,
	\begin{align*}
		\langle\bu,\hat{\ba}_j\rangle\langle\hat{\ba}_j,\ba_j\rangle
		=&~\dfrac{1}{\hat{\kappa}_j^2}\bu^{\top}\bG\ba_j 
		+\dfrac{1}{\hat{\kappa}_j^4}
		\left(\bu^{\top}\bG\bP_{j,\perp}\bG\ba_j
		-\bu^{\top}\bG\ba_j\ba_j^{\top}\bG\ba_j 
		\right)\\
		=&~\dfrac{1}{\hat{\kappa}_j^2}\bu^{\top}\bG\ba_j 
		+\dfrac{1}{\hat{\kappa}_j^4}
		\left(\bu^{\top}\bG\bA_{j,\perp}\bA_{j,\perp}^{\top}\bG\ba_j
		-\bu^{\top}\bG\ba_j\ba_j^{\top}\bG\ba_j 
		\right).
	\end{align*}
	As before we use \eqref{eq:defG} and \eqref{eq:defY} to write	
	$$
	\bu^{\top}\bG\ba_j=\bu^{\top}\bY\bY^{\top}\ba_j+\hat{\kappa}_j\bu^{\top}\bY
	=
	\bu^{\top}(\bA_{j,\perp}\tilde{\boldsymbol{\kappa}}+\hat{\bE})
	(\ba_j^{\top}\hat{\bE}+\hat{\kappa}_j).
	$$	
	We will write, following the steps of \eqref{eq:piter-errbd}, that
	\begin{align*}
		\hat{\bE}
		=&~\scrE\times_{2,3,4}\check{\ba}_j
		=~\scrE\times_{2,3,4}\ba_j+3(\scrE\times_{3,4}\ba_j)^2\ba_j 
		+O\left(\eps_1^2(\eps_2+\|\scrE\|)\right). 
	\end{align*}
	Under the event $\calA$, we obtain
	\begin{align*}
		\abs*{\bu^{\top}(\bA_{j,\perp}\tilde{\boldsymbol{\kappa}}+\hat{\bE})
			-\bu^{\top}\scrE\times_{2,3,4}\ba_j
			-3\bu^{\top}(\scrE\times_{3,4}\ba_j)^2\ba_j 
		}
		\le&~\|\tilde{\boldsymbol{\kappa}}\|+C\eps_1^2(\eps_2+\|\scrE\|)
		\le~\dfrac{Cd^2(\log d)^2}{n^{3/2}}.
	\end{align*}
	Thus, under $\calA$,
	\begin{align*}\label{eq:asy-uGa}
		&\abs*{
			\bu^{\top}\bG\ba_j 
			-(\kappa_j+\scrE\times_{1,2,3,4}\ba_j)
			\left(
			\bu^{\top}\scrE\times_{2,3,4}\ba_j
			+3\bu^{\top}(\scrE\times_{3,4}\ba_j)^2\ba_j 
			\right)
		}\\
		\le&~ 
		(\|\hat{\kappa}_j-\kappa_j\|
		+
		|\ba_j^{\top}\hat{\bE}-\scrE\times_{1,2,3,4}\ba_j|)
		\|\bu^{\top}(\bA_{j,\perp}\tilde{\boldsymbol{\kappa}}+\hat{\bE})\|
		+(\hat{\kappa}_j+\ba^{\top}\hat{\bE})
		\cdot \dfrac{Cd^2(\log d)^2}{n^{3/2}}\\
		\le&~
		\dfrac{Cd(\log d)\eps_2}{n}+\dfrac{Cd^2(\log d)^2}{n^{3/2}}
		\le~\dfrac{Cd^2(\log d)^2}{n^{3/2}}.\numberthis
	\end{align*}
	Similarly by \eqref{eq:YGnorm-bd}, under the event $\calA$,
	$$
	\|\bu^{\top}\bG\bA_{j,\perp}\|
	=\|\bu^{\top}\bY\bY^{\top}\bA_{j,\perp}\|
	\le \|\bY\|^2\le \dfrac{Cd(\log d)}{n}.
	$$
	Finally, under the event $\calA$,
	$$
	\ba_j^{\top}\bG\ba_j
	=(\ba_j^{\top}\hat{\bE})^2+2\hat{\kappa}_j\ba_j^{\top}\hat{\bE}
	\le \|\scrE\times_{1,2,3,4}\ba_j\|+\dfrac{Cd(\log d)}{n}
	\le \dfrac{Cd(\log d)}{n}.
	$$
	Plugging in the bounds into the expression for $\langle\bu,\ba_j\rangle$ we thus have
	\begin{align*}
		\abs*{
			\langle\bu,\hat{\ba}_j\rangle\langle\hat{\ba}_j,\ba_j\rangle
			-\dfrac{1}{\hat{\kappa}_j^2}\bu^{\top}\bG\ba_j}
		\le&~ \dfrac{1}{\hat{\kappa}_j^4}\cdot
		\left(\|\bu^{\top}\bG\bA_{j,\perp}\|\|\bA_{j,\perp}^{\top}\bG\ba_j\|
		+|\bu^{\top}\bG\ba_j|\cdot|\ba_j^{\top}\bG\ba_j|
		\right)\\
		\le&~ \dfrac{2\|\bG\|}{\kappa_j^4}
		\cdot \dfrac{Cd(\log d)}{n}
		=\left(\dfrac{Cd(\log d)}{n}\right)^{3/2}.
	\end{align*}
	Finally using \eqref{eq:asy-uGa} we have
	\begin{align*}
		\abs*{
			\langle\bu,\hat{\ba}_j\rangle\langle\hat{\ba}_j,\ba_j\rangle
			-\dfrac{\kappa_j+\scrE\times_{1,2,3,4}\ba_j}{\hat{\kappa}_j^2}
			\left(
			\bu^{\top}\scrE\times_{2,3,4}\ba_j
			+3\bu^{\top}(\scrE\times_{3,4}\ba_j)^2\ba_j 
			\right)	
		}	\le \dfrac{Cd^2(\log d)^{2}}{n^{3/2}}.
	\end{align*}
	Note that by Theorem \ref{th:comp-rate},
	$
	\langle\hat{\ba}_j,\ba_j\rangle\ge 1-\tfrac{Cd(\log d)}{n},
	$
	and thus, also using \eqref{eq:kapp-hat}, we have
	\begin{equation}\label{eq:asy-ajperp}
		\abs*{
			\langle\bu,\hat{\ba}_j\rangle
			-\dfrac{\kappa_j+\scrE\times_{1,2,3,4}\ba_j}{\kappa_j^2}
			\left(
			\bu^{\top}\scrE\times_{2,3,4}\ba_j
			+3\bu^{\top}(\scrE\times_{3,4}\ba_j)^2\ba_j 
			\right)	
		}	\le \dfrac{Cd^2(\log d)^{2}}{n^{3/2}}.
	\end{equation}
	This finishes the proof.
\end{proof}

\medskip

\begin{lemma}\label{lem:Sk-tail-bds}
	If $\bS\in \RR^d$ has independent random entries $S_j$ satisfying $\EE S_j=0$, $\EE S_j^2=1$ and $\EE |S_j|^{\alpha}\le C$ for some $\alpha>4$, then we have
	$$
	\mathbb{P}\left(\abs*{\|\bS\|^2-d}\ge t\right)\le 
	\dfrac{\max\{
		(16d)^{\alpha/4},
		d\max_j\EE|2S_j|^{\alpha}
		\}}{t^{\alpha/2}}.
	$$
\end{lemma}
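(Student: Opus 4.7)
The plan is to reduce the problem to a tail bound for a sum of independent centered random variables. Setting $Y_j := S_j^2 - 1$, we have $\EE Y_j = 0$, the $Y_j$ are independent, and $\|\bS\|^2 - d = \sum_{j=1}^d Y_j$. By Markov's inequality applied to the $(\alpha/2)$-th moment (note $\alpha/2 > 2$ by assumption),
\[
\PP\bigl(|\tsum_j Y_j| \ge t\bigr) \le t^{-\alpha/2} \, \EE \bigl|\tsum_j Y_j\bigr|^{\alpha/2}.
\]
It therefore suffices to control the $(\alpha/2)$-th absolute moment of $\sum_j Y_j$.

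For this I would invoke a Rosenthal-type inequality for sums of independent, mean-zero variables with $p := \alpha/2 > 2$:
\[
\EE \bigl|\tsum_j Y_j\bigr|^{p} \le K_p \, \max\!\left\{ \Bigl(\tsum_j \EE Y_j^2\Bigr)^{p/2}, \ \tsum_j \EE |Y_j|^p\right\},
\]
where $K_p$ can be tracked explicitly (see, e.g., Latała's sharp moment inequalities, which are already invoked elsewhere in the appendix). The two pieces on the right will match the two entries appearing inside the $\max$ in the statement. Specifically, since $\EE S_j^2 = 1$, one has $\EE Y_j^2 = \EE S_j^4 - 1 \le \EE S_j^4$, which by Lyapunov and the finite $\alpha$-th moment is bounded by a constant, so $\sum_j \EE Y_j^2 \lesssim d$; raising to the $p/2 = \alpha/4$ power recovers the $(16d)^{\alpha/4}$ branch once the absolute constant is absorbed. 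For the second branch, the convexity bound $|S_j^2 - 1|^{\alpha/2} \le 2^{\alpha/2-1}(|S_j|^\alpha + 1)$ together with $\EE S_j^2 = 1 \le \EE |S_j|^\alpha$ yields $\EE |Y_j|^{\alpha/2} \lesssim 2^{\alpha} \EE |S_j|^\alpha = \EE |2S_j|^\alpha$, and summing over $j$ gives the $d\,\max_j \EE|2S_j|^\alpha$ branch.

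The main obstacle — and essentially the only substantive point — is matching the explicit constants $16$ and $2$ in the statement. This is a bookkeeping issue: one has to pick a version of Rosenthal's inequality with tracked constants (or equivalently the Marcinkiewicz--Zygmund route via symmetrization and decoupling), and carefully absorb the universal factor $K_{\alpha/2}$ and the factor $2^{\alpha/2-1}$ from the $|S_j^2-1|^{\alpha/2}$ expansion into the base $16$ inside $(16d)^{\alpha/4}$ and the base $2$ inside $\EE|2S_j|^\alpha$. Since the inequality is only used as a crude tail bound elsewhere in the paper (to control truncation events), any constants of this flavor would be enough; the stated form is simply the cleanest packaging. Once the Rosenthal step is invoked with explicit constants, combining it with the Markov step in the first display completes the proof.
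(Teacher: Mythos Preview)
Your proposal is correct and follows essentially the same route as the paper: write $\|\bS\|^2-d=\sum_j(S_j^2-1)$, apply Markov's inequality at the $(\alpha/2)$-th moment, and control that moment via a Rosenthal--Lata\l a inequality, yielding the two-branch $\max$. The paper's proof is terser about the constants (it quotes the factor $2^{\alpha}$ directly from symmetrization plus Lata\l a's Corollary~2), while you spell out how the $16$ and the $2$ absorb the universal factors, but the argument is the same.
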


\begin{proof}[Proof of Lemma \ref{lem:Sk-tail-bds}]
	Note that 
	$$
	\|\bS\|^2-d=\sum_{j=1}^dY_j
	$$
	where $Y_j=S_j^2-1$ are independent mean zero random variables. By symmetrization, followed by an application of Rosenthal inequalities (see Corollary 2 and Remark 2 of \cite{latala1997estimation}), we have
	\begin{equation}\label{eq:mmnt-bd}
		\EE\left(\abs*{\sum_{j=1}^dY_j}\right)^{\alpha/2}\le 
		2^{\alpha}\max\left\{
		\left(\sum_j\EE Y_j^2\right)^{\alpha/4},\,\sum_{j}\EE|Y_j|^{\alpha/2}
		\right\}
		\le 2^{\alpha}C\max\{
		d^{\alpha/4},
		d\max_j\EE|S_j|^{\alpha}
		\}.
	\end{equation}
	The conclusion then follows by Markov's inequality.
\end{proof}	

\medskip

\begin{lemma}\label{lem:lin-comb-mmt}
	For a random vector $\bS\in \RR^d$ whose elements are independent random variables $S_k$ satisfying $\EE S_k=0$, $\EE S_k^2=1$ and $\EE |S_k|^{\alpha}\le C$ for some $\alpha,C>0$, we have
	$$
	\sup_{\bu\in \SS^{d-1}}
	\EE|\langle\bS,\bu\rangle|
	\le 2^{\alpha}\max\EE|S_k|^{\alpha}
	\le C.
	$$
\end{lemma}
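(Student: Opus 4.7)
I read the displayed bound as $\EE|\langle\bS,\bu\rangle|^\alpha\le 2^\alpha\max_k\EE|S_k|^\alpha$; the absence of an exponent on the left-hand side appears to be a typographical slip, since this form is what is actually invoked throughout the paper (e.g., in the proofs of Theorems~\ref{th:ica-perp} and \ref{th:ica-clt}). I will aim to prove this corrected statement.

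The plan is to write $T:=\langle\bS,\bu\rangle=\sum_{k=1}^d u_kS_k$ as a sum of independent, mean-zero, variance-$u_k^2$ random variables, and then split according to the value of $\alpha$. If $\alpha\le 2$, a single application of Jensen's inequality gives $\EE|T|^\alpha\le (\EE T^2)^{\alpha/2}=(\sum_ku_k^2)^{\alpha/2}=1$, which is stronger than claimed. The interesting regime is therefore $\alpha>2$, where I intend to apply the Rosenthal-type moment inequality of Latala (Corollary~2 of \cite{latala1997estimation}) to the independent centered summands $Y_k:=u_kS_k$. This is the same tool already used to derive~\eqref{eq:mmnt-bd} in the proof of Lemma~\ref{lem:Sk-tail-bds}, so the form of the constant is consistent with what the paper invokes elsewhere. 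The inequality yields
$$
\EE|T|^\alpha\le 2^\alpha\max\Bigl\{\Bigl(\sum_k u_k^2\,\EE S_k^2\Bigr)^{\alpha/2},\;\sum_k|u_k|^\alpha\,\EE|S_k|^\alpha\Bigr\}.
$$

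The first term in the maximum equals $(\sum_ku_k^2)^{\alpha/2}=1$ by the assumption $\EE S_k^2=1$ and $\bu\in\SS^{d-1}$. For the second term I use the elementary observation that since $|u_k|\le\|\bu\|=1$ and $\alpha\ge 2$ we have $|u_k|^\alpha\le u_k^2$, hence $\sum_k|u_k|^\alpha\le\sum_ku_k^2=1$. Consequently the second term is bounded by $\max_k\EE|S_k|^\alpha$, and taking the maximum of these two estimates gives $\EE|T|^\alpha\le 2^\alpha\max_k\EE|S_k|^\alpha$. Combining the two cases and using the hypothesis $\EE|S_k|^\alpha\le C$ delivers the advertised inequality uniformly over $\bu\in\SS^{d-1}$.

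There is no genuine obstacle here: the only care required is to confirm that the Rosenthal constant quoted is consistent with the version used earlier in the paper (which it is, via the same reference), and to notice the useful inequality $|u_k|^\alpha\le u_k^2$ for unit vectors and $\alpha\ge 2$, which converts the $\ell_\alpha$ norm of the coefficients that appears in Rosenthal's bound into the $\ell_2$ norm fixed by the constraint $\bu\in\SS^{d-1}$.
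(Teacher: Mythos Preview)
Your proposal is correct and follows essentially the same route as the paper: apply a Rosenthal--Lata{\l}a moment bound to the independent summands $u_kS_k$, then control the two resulting terms via $\sum_k u_k^2=1$ and $|u_k|^\alpha\le u_k^2$. The only cosmetic difference is that the paper inserts an explicit symmetrization step (introducing Rademacher signs before invoking Corollary~2 of \cite{latala1997estimation}, which is stated for symmetric variables), whereas you apply the inequality directly to the centered summands and separately dispose of the case $\alpha\le 2$ by Jensen; both routes lead to the same bound with the same constant.
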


\begin{proof}For any $\bu\in \SS^{d-1}$, we have
	\begin{align*}
		\EE|\langle \bS,\bu\rangle|^{\alpha}
		=&~\EE\abs*{\sum_kS_ku_k}^{\alpha}
		\le C\EE\abs*{\sum_k \eps_kS_ku_k}^{\alpha}\\
		\le&~
		2^{\alpha}
		\max\left\{
		\left(\EE(\sum_k S_ku_k)^2\right)^{\alpha/2},\,
		\sum_{k=1}^d
		\EE|S_ku_k|^{\alpha}
		\right\}\\
		\le &~2^{\alpha}
		\max\left\{
		\left(\sum_k u_k^2\right)^{\alpha/2},\,
		\sum_{k=1}^d
		\EE|S_ku_k|^{\alpha}
		\right\}
		\le 2^{\alpha}\max\EE|S_k|^{\alpha}
	\end{align*}
	In the above we have used $\eps_k$ to denote independent Rademacher random variables, and Khintchine inequalities, conditional on $S_k$, for the first inequality. The second inequality uses Rosenthal inequalities on the symmetrized random variables $\eps_kS_k$. The last step uses moment assumptions on $S_k$. See Remark 2 of \cite{latala1997estimation} for more details.
\end{proof}

\end{document}